\colorlet{Black}{black}
\newenvironment{tz}[1][]{%
                                \begin{tikzpicture}[baseline={([yshift=-.8ex]current bounding                        box.center)},#1] %
                                }{%
                        \end{tikzpicture} %
                        }
\DeclareRobustCommand{\SkipTocEntry}[5]{}
\tikzstyle{none}=[inner sep=0pt]
\tikzstyle{rn}=[circle,fill=Red,draw=Black,line width=0.8 pt]
\tikzstyle{gn}=[circle,fill=Lime,draw=Black,line width=0.8 pt]
\tikzstyle{bl}=[circle,fill=Blue,draw=Black,line width=0.8 pt]
\tikzstyle{simple}=[-,draw=Black,thick]
\tikzstyle{arrow}=[-,draw=Black,postaction={decorate},decoration={markings,mark=at position .5 with {\arrow{>}}},thick]
\tikzstyle{tick}=[-,draw=Black,postaction={decorate},decoration={markings,mark=at position .5 with {\draw (0,-0.1) -- (0,0.1);}},line width=2.000]
\def\thickness{0.7pt}
\tikzstyle{dot}=[circle, draw=black, fill=black, inner sep=.5ex, line width=\thickness, node on layer=foreground]
     \gdef\node@@on@layer{%
      \setbox\tikz@tempbox=\hbox\bgroup\pgfonlayer{#1}\unhbox\tikz@tempbox\endpgfonlayer\egroup}
\def\node@on@layer{\aftergroup\node@@on@layer}
\def\calign@preamble{%
   &\hfil\strut@
    \setboxz@h{\@lign$\m@th\displaystyle{##}$}%
    \ifmeasuring@\savefieldlength@\fi
    \set@field
    \hfil
    \tabskip\alignsep@
}
\let\cmeasure@\measure@
\patchcmd\cmeasure@{\divide\@tempcntb\tw@}{}{}{}
\patchcmd\cmeasure@{\divide\@tempcntb\tw@}{}{}{}
\patchcmd\cmeasure@{\ifodd\maxfields@
  \global\advance\maxfields@\@ne
  \fi}{}{}{}    
\newenvironment{calign}
{%
  \let\align@preamble\calign@preamble
  \let\measure@\cmeasure@
  \align
}
{%
  \endalign
}  
\tikzset{
    master/.style={
        execute at end picture={
            \coordinate (lower right) at (current bounding box.south east);
            \coordinate (upper left) at (current bounding box.north west);
        }
    },
    slave/.style={
        execute at end picture={
            \pgfresetboundingbox
            \path (upper left) rectangle (lower right);
        }
    }
}
\tikzset{blob/.style={draw, circle, fill=white, inner sep=1pt, minimum width=15pt, font=\scriptsize, line width=0.7pt}}
\tikzset{greenregion/.style={fill=green, fill opacity=0.3, draw=none}}
\tikzset{redregion/.style={fill=red, fill opacity=0.3, draw=none}}
\tikzset{blueregion/.style={fill=blue, fill opacity=0.3, draw=none}}
\tikzset{yellowregion/.style={fill=yellow, fill opacity=0.5, draw=none}}
\tikzset{cyanregion/.style={fill=cyan, fill opacity=0.3, draw=none}}
\tikzset{orangeregion/.style={fill=orange, fill opacity=0.6, draw=none}}
\tikzset{solidgreenregion/.style={fill=green!30, fill opacity=1, draw=none}}
\tikzset{solidredregion/.style={fill=red!30, fill opacity=1, draw=none}}
\tikzset{solidblueregion/.style={fill=blue!30, fill opacity=1, draw=none}}
\tikzset{solidyellowregion/.style={fill=yellow!30, fill opacity=1, draw=none}}
\tikzset{string/.style={line width=0.7pt}}
\tikzset{zig/.style={decoration={zigzag,segment length=3, amplitude=0.5}}}
\tikzset{bnd/.style={draw,string}}   
\tikzset{projector/.style={circle, draw, font=\scriptsize, inner sep=-5pt, minimum width=0.35cm, string, fill=white}}
\tikzset{dimension/.style={font=\scriptsize, inner sep=1pt}}
\tikzset{arrow data/.style 2 args={
      decoration={
         markings,
         mark=at position #1 with \arrow{#2}},
         postaction=decorate}
}
\tikzset{along path/.style={every path/.style={}, sloped, allow upside down}}
\def\zxnormal {
                \def \zxscale{0.55}
                \def\zxnodescale{0.8}
                \def\vertexscale{0.7}
                \def\zxshift{0.075cm}
                \def\hadscale{0.8}
                \def\trianglescale{1}
                \def\boxscale{1}
                }
\def\zxgreen{white}
\def\zxwhite{white}
\def\zxblack{black!50}
\tikzset{front/.style ={node on layer=foreground}}
\tikzset{zx/.style = {string, scale=\zxscale}}
\tikzset{zxnode/.style n args={1}{blob,scale=\zxnodescale,fill=#1,node on layer=foreground}}
\tikzset{box/.style={draw, rectangle, fill=white, inner sep=1pt, minimum width=10pt,minimum height=10pt, font=\scriptsize, line width=0.7pt,scale=\zxnodescale,node on layer=foreground}}
\tikzset{boxvertex/.style={draw, rectangle, fill=white, line width=0.733pt,scale=0.75*\vertexscale}}
\tikzset{bigbox/.style={draw, rectangle, fill=white,  minimum width=\boxscale *18pt,minimum height=\boxscale*8pt, line width=0.7pt,scale=\zxnodescale}}
\newlength{\unitbox}
\tikzset{widebox/.style ={draw,rectangle, fill=white, line width=0.7pt,scale=0.75*\zxnodescale,minimum height=15pt,inner sep=1pt,  minimum width = \unitbox,   anchor=center }}
\tikzset{wideboxm/.style n args={1}{draw,rectangle, fill=white, line width=0.7pt,scale=0.75*\zxnodescale,minimum height=15pt,inner sep=1pt,  minimum width =2\unitbox+#1\unitbox,   anchor=center }}
\tikzset{triangleup/.style n args={1}{draw, shape=isosceles triangle, isosceles triangle stretches, fill=white, line width=0.7pt,scale=0.75*\zxnodescale,minimum height=15pt,inner sep=1pt,  minimum width = #1*\trianglescale cm +0.15*\trianglescale cm,  shape border rotate=90, anchor=south }}
\tikzset{triangledown/.style n args={1}{draw, shape=isosceles triangle, isosceles triangle stretches, fill=white, line width=0.7pt,scale=0.75*\zxnodescale,minimum height=15pt,inner sep=1pt,  minimum width = #1*\trianglescale cm +0.15*\trianglescale cm,  shape border rotate=-90, anchor=north }}
\tikzset{zxvertex/.style n args={1}{draw,fill=#1,circle,line width=0.7pt,scale=0.75*\vertexscale}}
\tikzset{zxdown/.style={yshift=-\zxshift}}
\tikzset{zxup/.style={yshift=\zxshift}}
\newcommand\mult[3]{ 
\draw[string] (#1.center) to [out=up, in=-135] +(0.5*#2,#3) to [out=-45, in=up] +(0.5*#2,-#3);
\node[zxvertex=\zxgreen,zxdown] at ($(#1)+(0.5*#2,#3)$){};
}
\newcommand\comult[3]{ 
\draw[string] (#1.center) to [out=down, in=135] +(0.5*#2,-#3) to [out=45, in=down] +(0.5*#2,#3);
\node[zxvertex=\zxgreen,zxup] at ($(#1) +(0.5*#2,-#3)$){};}
\newcommand\unit[2]{ 
\draw[string] (#1.center) to + (0, -#2);
\node[zxvertex=\zxgreen] at ($(#1) +(0,-#2)$){};
}
\newcommand{\Tr}{\mathrm{Tr}}
\renewcommand{\to}[1][]{\ensuremath{\xrightarrow{#1}}}
\theoremstyle{plain} 
\newtheorem{theorem}{Theorem}[section]
\newtheorem{lemma}[theorem]{Lemma}
\newtheorem{corollary}[theorem]{Corollary}          
\newtheorem{proposition}[theorem]{Proposition}
\theoremstyle{definition} 
\newtheorem{definition}[theorem]{Definition}
\newtheorem{construction}[theorem]{Construction}
\newtheorem{remark}[theorem]{Remark}
\newtheorem{example}[theorem]{Example}
\theoremstyle{remark}  
\newtheoremstyle{special_statement} 
        {\topskip}
        {\topskip}
        {\addtolength{\leftskip}{2.5em} \itshape }
        {}
        {\bfseries}
        {:}
        {.5em}
        {}
\theoremstyle{special_statement}
\DeclareMathOperator{\Hom}{Hom}
\DeclareMathOperator{\End}{End}
\DeclareMathOperator{\Fun}{Fun}
\newcommand{\id}{\mathrm{id}}
\newcommand{\wt}{\mathrm{wt}}
\newcommand{\For}{\mathrm{For}}
\renewcommand{\Vec}{\mathrm{Vec}}
\newcommand{\Rep}{\mathrm{Rep}}
\newcommand{\Hilb}{\ensuremath{\mathrm{Hilb}}}
\newcommand{\Obj}{\ensuremath{\mathrm{Obj}}}
\newcommand{\QGraph}{\ensuremath{\mathrm{QGraph}}}
\newcommand{\Corep}{\ensuremath{\mathrm{Corep}}}
\newcommand{\F}{\ensuremath{\mathrm{SSFA}}}
\newcommand{\Gal}{\ensuremath{\mathrm{Gal}}}
\newcommand{\Fib}{\ensuremath{\mathrm{Fib}}}
\DeclareFontFamily{OMX}{MnSymbolE}{}
\DeclareSymbolFont{MnLargeSymbols}{OMX}{MnSymbolE}{m}{n}
\DeclareFontShape{OMX}{MnSymbolE}{m}{n}{
    <-6>  MnSymbolE5
   <6-7>  MnSymbolE6
   <7-8>  MnSymbolE7
   <8-9>  MnSymbolE8
   <9-10> MnSymbolE9
  <10-12> MnSymbolE10
  <12->   MnSymbolE12
}{}
\DeclareFontShape{OMX}{MnSymbolE}{b}{n}{
    <-6>  MnSymbolE-Bold5
   <6-7>  MnSymbolE-Bold6
   <7-8>  MnSymbolE-Bold7
   <8-9>  MnSymbolE-Bold8
   <9-10> MnSymbolE-Bold9
  <10-12> MnSymbolE-Bold10
  <12->   MnSymbolE-Bold12
}{}
\let\llangle\@undefined
\let\rrangle\@undefined
\DeclareMathDelimiter{\llangle}{\mathopen}%
                     {MnLargeSymbols}{'164}{MnLargeSymbols}{'164}
\DeclareMathDelimiter{\rrangle}{\mathclose}%
                     {MnLargeSymbols}{'171}{MnLargeSymbols}{'171}
\newcounter{DRcomment}
\newcounter{DVcomment}
\newcounter{BMcomment}
\newcounter{JVcomment}
\newcommand\ignore[1]{}
\tikzstyle{blackdot}=[circle, draw=black, fill=black, inner sep=.5ex, line width=\thickness, node on layer=foreground]
\tikzstyle{whitedot}=[circle, draw=black, fill=white, inner sep=.5ex, line width=\thickness, node on layer=foreground]
\tikzset{proofdiagram/.style={scale=1}}
\newlength\morphismheight
\newlength\minimummorphismwidth
\newlength\stateheight
\title{Unitary transformations of fibre functors}
\author{Dominic Verdon}
\date{School of Mathematics \\ University of Bristol \\[1ex] \href{mailto:dominic.verdon@bristol.ac.uk}{dominic.verdon@bristol.ac.uk} \\[2ex]
\today}
\begin{document}

\normalsize
\zxnormal
\maketitle

\begin{abstract}
We study unitary pseudonatural transformations (UPTs) between fibre functors $\Rep(G) \to \Hilb$, where $G$ is a compact quantum group. For fibre functors $F_1, F_2$ we show that the category of UPTs $F_1 \to F_2$ and modifications is isomorphic to the category of finite-dimensional $*$-representations of the corresponding bi-Hopf-Galois object. We give a constructive classification of fibre functors accessible by a UPT from the canonical fibre functor, as well as UPTs themselves, in terms of Frobenius algebras in the category $\Rep(A_G)$, where $A_G$ is the Hopf $*$-algebra dual to the compact quantum group. As an example, we show that finite-dimensional quantum isomorphisms from a quantum graph $X$ are UPTs between fibre functors on $\Rep(G_X)$, where $G_X$ is the quantum automorphism group of $X$.
\end{abstract}

\maketitle
\section{Introduction}
\footnotetext{2020 \emph{Mathematics Subject Classification}. Primary 20G42, 18M40, 81R50; Secondary 18M30.}
\subsection{Overview}
Compact symmetry groups play a crucial role in quantum physics. By Tannaka duality, a compact group $G$ is interchangeable with its category of finite dimensional representations $\Rep(G)$, with canonical unitary monoidal \emph{fibre functor} $F: \Rep(G) \to \Hilb$, where $\Hilb$ is the category of finite-dimensional Hilbert spaces and linear maps. One can therefore equivalently say that categories of representations of compact groups with a fibre functor play a crucial role in quantum physics. One way to interpret such categories is as encoding a consistent structure of system types, fusion rules, permissible transitions, etc. The fibre functor then assigns state spaces to all of the systems in the theory, and can be seen as a representation of this compositional structure as part of finite-dimensional quantum theory. Schematically:
\begin{calign}
&\textrm{Compositional category} &\to &\textrm{Representation}
\\
&\Rep(G) & \to & \Hilb
\end{calign}
One can generalise this notion of representation to theories more general than those described by the representation category of a compact group. In particular, we can consider representations of \emph{$C^*$-tensor categories with conjugates}. Such categories and their fibre functors are described by the representation theory of compact quantum groups, and their associated Hopf-Galois objects.

In~\cite{Verdon2020a} we introduced a notion of \emph{unitary pseudonatural transformation} relating two monoidal functors, or more generally two pseudofunctors. This paper is a study of unitary pseudonatural transformations between fibre functors on $C^*$-tensor categories with conjugates, or equivalently (provided a fibre functor exists) representation categories of compact quantum groups. The physical significance of these transformations will be explained in forthcoming work. 
 
\paragraph{Unitary pseudonatural transformations.} Unitary pseudonatural transformations are a generalisation of unitary monoidal natural isomorphisms, defined as follows. Let $\mathcal{C}$ be a $C^*$-tensor category with conjugates, and let $F,F': \mathcal{C} \to \Hilb$ be fibre  functors. Then a unitary pseudonatural transformation specifies:
\begin{itemize}
\item A Hilbert space $H$.
\item For every object $X$ of $\mathcal{C}$, a unitary linear map $F(X) \otimes H \to H \otimes F'(X)$.
\end{itemize}
These unitaries must obey equations generalising the monoidality and naturality conditions for unitary monoidal natural isomorphisms $F \to F'$, which are recovered when $H = \mathbb{C}$.

\paragraph{Hopf-Galois theory.}
Let $\mathcal{C}$ be a $C^*$-tensor category with conjugates. Whenever a fibre functor $F:\mathcal{C} \to \Hilb$ exists, we can construct a monoidal equivalence $\mathcal{C} \simeq \Rep(G)$ for a \emph{compact quantum group} $G$. The category $\mathcal{C}$ can therefore be understood in terms of the compact quantum group $G$, or rather its dual Hopf $*$-algebra $A_G$.

Let $F_1,F_2: \mathcal{C} \to \Hilb$ be fibre functors corresponding to compact quantum groups $G_1,G_2$. Then one can construct an $A_{G_2}$-$A_{G_1}$-bi-Hopf-Galois object $Z$ linking the two fibre functors. This is a $*$-algebra with a compatible left and right coactions of the algebras $A_{G_2},A_{G_1}$. 

Here we show (Theorem~\ref{thm:higherhopfgal}) that there is an isomorphism of categories between:
\begin{itemize}
\item The category $\Rep(Z)$ of finite-dimensional $*$-representations of $Z$ and intertwining linear maps.
\item The category $\Hom(F_1,F_2)$ of unitary pseudonatural transformations $F_1 \to F_2$ and modifications.
\end{itemize}
This generalises the known fact~\cite[Thm 4.4.1]{Bichon1999} that the 1-dimensional $*$-representations of an $A_{G_2}$-$A_{G_1}$-bi-Hopf-Galois object correspond to unitary monoidal natural transformations $F_1 \to F_2$. 
\ignore{
This theorem implies that a UPT exists between two fibre functors precisely when the corresponding bi-Hopf-Galois object admits a finite-dimensional $*$-representation. In particular, for all finite compact quantum groups, any fibre functor is accessible from the canonical fibre functor by a UPT.}

\paragraph{Morita theory.}

In~\cite{Verdon2020a} we showed that the 2-category $\Fun(\mathcal{C},\Hilb)$ of unitary fibre functors, unitary pseudonatural transformations and modifications has certain nice properties; in particular, it is a pivotal dagger 2-category with split dagger idempotents. This makes it an appropriate setting for Morita theory~\cite[Appendix]{Musto2019}.

Let us fix some fibre functor $F: \mathcal{C} \to \Hilb$. By the results just discussed, the endomorphism category $\End(F)$ of UPTs $F \to F$ and modifications is isomorphic to the category $\Rep(A_G)$ of f.d. $*$-representations of the associated compact quantum group algebra. 

We use Morita theory to classify fibre functors $F'$ such that there exists a UPT $F \to F'$, as well as UPTs $F \to F'$, in terms of certain algebraic structures called \emph{simple Frobenius monoids} in the category $\Rep(A_G)$. In particular, we give constructions setting up a correspondence between the following structures (Theorem~\ref{thm:moritaclass}):
\begin{itemize}
\item Unitary monoidal isomorphism classes of unitary fibre functors accessible from $F$ by a UPT; and Morita equivalence classes of simple Frobenius monoids in $\Rep(A_G)$.
\item Equivalence classes of UPTs $\alpha: F \to F'$ for some accessible fibre functor $F'$; and $*$-isomorphism classes of simple Frobenius monoids in $\Rep(A_G)$.
\end{itemize}
As a consequence, we obtain a concrete construction of fibre functors accessible from $F$ by a UPT in terms of idempotent splitting (Theorem~\ref{thm:splitting}). 

\paragraph{Quantum graph isomorphisms.} As an example of UPTs between fibre functors, we show that, for finite quantum graphs $X,Y$, the finite-dimensional quantum graph isomorphisms $X \to Y$ (Definition~\ref{def:qgraphiso}) considered in quantum information theory~\cite{Atserias2019,Brannan2019} are UPTs between accessible fibre functors on the category $\Rep(G_X)$ of representations of the quantum automorphism group of $X$. This sets up an equivalence between the following 2-categories (Theorem~\ref{thm:qgraphequiv}):
\begin{itemize}
\item $\QGraph_X$: Objects --- quantum graphs f.d. quantum isomorphic to $X$. 1-morphisms --- f.d. quantum isomorphisms. 2-morphisms --- intertwiners.
\item $\Fun(\Rep(G_X),\Hilb)_X$: Objects --- Fibre functors accessible by a UPT from the canonical fibre functor on $\Rep(G_X)$. 1-morphisms --- UPTs. 2-morphisms --- modifications.
\end{itemize}
\ignore{
A quantum graph $X = (A,\Gamma)$~\cite{} is a Frobenius monoid $A$ in $\Hilb$ satisfying a certain symmetry condition, together with a linear map $\Gamma: A \to A$ called its adjacency matrix. The category $\Rep(G_X)$ of quantum automorphisms of 
}
\subsection{Acknowledgements} The author thanks Julien Bichon, Amaury Freslon, Laura Man\v{c}inska, Ashley Montanaro, Benjamin Musto, David Reutter, David Roberson, Changpeng Shao, Jamie Vicary and Makoto Yamashita for useful discussions related to this work. The work was supported by EPSRC.

\subsection{Structure}
In Section~\ref{sec:background} we introduce necessary mathematical background material for this paper. In Section~\ref{sec:hopfgal} we discuss the relationship between UPTs and Hopf-Galois theory. In Section~\ref{sec:morita} we discuss the Morita classification/construction of accessible UPTs and fibre functors. In Section~\ref{sec:qgraphiso} we show that finite-dimensional quantum graph isomorphisms are UPTs.

\section{Background}\label{sec:background}
\subsection{Pivotal dagger categories and their diagrammatic calculus}\label{sec:pivdagbackground}
\subsubsection{Monoidal categories}
We assume the reader is familiar with the definition of a monoidal category~\cite{MacLane2013}. We use the standard coherence theorem~\cite{MacLane1963} to assume that all our monoidal categories are strict, allowing the use of the following well-known diagrammatic calculus~\cite{Selinger2010}. 

We read diagrams from bottom to top. Objects are drawn as wires, while morphisms are drawn as boxes whose type corresponds to their input and output wires. Composition of morphisms is represented by vertical juxtaposition, while monoidal product is represented by horizontal juxtaposition. For example, two morphisms $f:X\to Y $ and $g: Y \to Z$ can be composed as follows:
\begin{calign}
\includegraphics{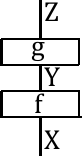}
&
\includegraphics{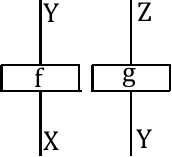} \\
g \circ f: X \to Z 
&
f \otimes g: X \otimes Y \to Y \otimes Z
\end{calign}
The wire for the monoidal unit $\mathbbm{1}$, and the identity morphism $\id_X$ for any object $X$, are invisible in the diagram. Two diagrams which are planar isotopic represent the same morphism~\cite{Selinger2010}. 
\subsubsection{Pivotal categories}
We recall the notion of duality in a monoidal category.
\begin{definition}\label{def:duals}
Let $X$ be an object in a monoidal category. A \emph{right dual} $[X^*,\eta,\epsilon]$ for $X$ is:
\begin{itemize}
\item An object $X^*$.
\item Morphisms $\eta: \mathbbm{1} \to X^* \otimes X$ and $\epsilon: X \otimes X^* \to \mathbbm{1}$ satisfying the following \emph{snake equations}: 
\begin{calign}\label{eq:rightsnakes}
\includegraphics{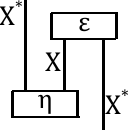}
~~=~~
\includegraphics{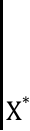}
&
\includegraphics{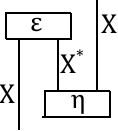}
~~=~~
\includegraphics{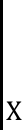}
\end{calign}
\end{itemize}
A \emph{left dual} $[{}^*X,\eta,\epsilon]$ is defined similarly, with morphisms $\eta: \mathbbm{1} \to X \otimes {}^*X$ and $\epsilon: {}^*X \otimes X \to \mathbbm{1}$ satisfying the mirror images of~\eqref{eq:rightsnakes}. 

We say that a monoidal category $\mathcal{C}$ \emph{has right duals} (resp. \emph{has left duals}) if every object $X$ in $\mathcal{C}$ has a chosen right dual $[X^*,\eta,\epsilon]$ (resp. a chosen left dual).
\end{definition}
\noindent
To represent duals in the graphical calculus, we draw an upward-facing arrow on the $X$-wire and a downward-facing arrow on the $X^*$- or ${}^*X$-wire, and write $\eta$ and $\epsilon$ as a cup and a cap, respectively. Then the equations~\eqref{eq:rightsnakes} become purely topological:
\begin{calign}\nonumber
\includegraphics{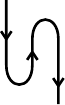}
~~=~~
\includegraphics{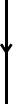}
~~~~~~~~\quad
\includegraphics{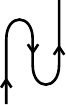}
~~=~~
\includegraphics{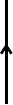}
&&
\includegraphics{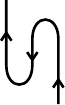}
~~=~~
\includegraphics{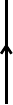}
~~~~~~~~\quad
\includegraphics{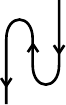}
~~=~~
\includegraphics{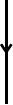}\\\nonumber
\text{right dual} &&\text{left dual}
\end{calign}
\begin{proposition}[{\cite[Lemmas 3.6, 3.7]{Heunen2019}}]\label{prop:nestedduals}
If $[X^*,\eta_X,\epsilon_X]$ and $[Y^*,\eta_Y,\epsilon_Y]$ are right duals for $X$ and $Y$ respectively, then $[Y^* \otimes X^*, \eta_{X \otimes Y},\epsilon_{X \otimes Y}]$ is right dual to $X \otimes Y$, where $\eta_{X \otimes Y}$ and $\epsilon_{X \otimes Y}$ are defined by:
\begin{calign}\nonumber\includegraphics{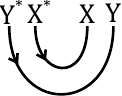}
&&
\includegraphics{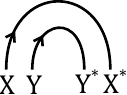}
\\\label{eq:nestedduals}
\eta_{X \otimes Y} && \epsilon_{X \otimes Y}
\end{calign}
Moreover, $[\mathbbm{1},\id_{\mathbbm{1}},\id_{\mathbbm{1}}]$ is right dual to $\mathbbm{1}$. Analogous statements hold for left duals.
\end{proposition}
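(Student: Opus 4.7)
The proof is a routine application of the graphical calculus, and I would present it entirely diagrammatically. The plan is to verify the two snake equations for the proposed nested dual, then handle the monoidal unit separately, then invoke mirror symmetry for the left-dual case.

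For the first part, I define $\eta_{X \otimes Y}$ and $\epsilon_{X \otimes Y}$ by nesting $\eta_X, \eta_Y$ and $\epsilon_X, \epsilon_Y$ as depicted in~\eqref{eq:nestedduals}. To check the first snake equation for $(X \otimes Y, Y^* \otimes X^*)$, I would draw the composite $(\id_{X \otimes Y} \otimes \epsilon_{X \otimes Y}) \circ (\eta_{X \otimes Y} \otimes \id_{X \otimes Y})$ in the graphical calculus. Because the convention is that planar-isotopic diagrams represent the same morphism, the diagram visibly separates into an "inner" $Y$-snake (the $\eta_Y/\epsilon_Y$ loop wrapped around a straight $Y$-strand) and an "outer" $X$-snake. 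Applying the snake equation for $Y$ first straightens the inner loop, leaving precisely the $X$-snake, and applying the snake equation for $X$ then straightens it to $\id_{X \otimes Y}$. The second snake equation is verified by exactly the same argument with the roles of $\eta$ and $\epsilon$ (and the ordering of $X$ and $Y$) reversed.

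For the unit case, the composites $(\id_{\mathbbm{1}} \otimes \id_{\mathbbm{1}}) \circ (\id_{\mathbbm{1}} \otimes \id_{\mathbbm{1}}) = \id_{\mathbbm{1}}$ are immediate from strictness, so $[\mathbbm{1}, \id_{\mathbbm{1}}, \id_{\mathbbm{1}}]$ satisfies the snake equations trivially. The analogous statements for left duals follow by reflecting every diagram in a horizontal axis; concretely, one takes $[{}^*Y \otimes {}^*X, \eta_{X \otimes Y}, \epsilon_{X \otimes Y}]$ with $\eta, \epsilon$ defined by the mirror-image nesting, and the same diagrammatic straightening argument applies to the mirror-image snake equations.

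There is no genuine obstacle here; the whole content is that nested cups and caps can be straightened one layer at a time, which is the essence of why the graphical calculus is topologically sound. The only thing one must be careful about is the ordering convention: the right dual of $X \otimes Y$ must be $Y^* \otimes X^*$ (not $X^* \otimes Y^*$), and correspondingly the $X$-cup/cap sits \emph{outside} the $Y$-cup/cap in the definitions of $\eta_{X \otimes Y}$ and $\epsilon_{X \otimes Y}$. Once this bookkeeping is set up correctly, the calculation is immediate.
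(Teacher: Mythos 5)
Your proof is correct and is exactly the standard argument: the paper itself offers no proof of this proposition (it is quoted directly from \cite[Lemmas 3.6, 3.7]{Heunen2019}, where the proof is precisely this ``straighten the nested snakes one layer at a time'' computation), and your handling of the ordering convention $Y^* \otimes X^*$ and of the unit and left-dual cases is all as it should be. The only blemish is notational: in the snake composite $(\id \otimes \epsilon_{X\otimes Y}) \circ (\eta_{X\otimes Y} \otimes \id)$ the identity strands should be on $Y^* \otimes X^*$ rather than $X \otimes Y$ (that composite is the snake for the dual object; the one for $X \otimes Y$ is $(\epsilon_{X\otimes Y} \otimes \id_{X\otimes Y})\circ(\id_{X\otimes Y} \otimes \eta_{X\otimes Y})$), but this does not affect the substance of your argument.
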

\noindent
Duals are unique up to isomorphism.
\begin{proposition}[{\cite[Lemma 3.4]{Heunen2019}}]\label{prop:relateduals}
Let $X$ be an object of a monoidal category, and let $[X^*,\eta,\epsilon],[X^*{}',\eta',\epsilon']$ be right duals. Then there is a unique isomorphism $\alpha: X^* \to X^*{}'$ such that 
\begin{calign}\label{eq:relateduals}
\includegraphics{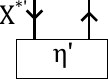}
~~=~~
\includegraphics{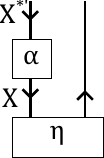}
&
\includegraphics{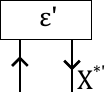}
~~=~~
\includegraphics{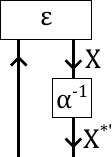}
\end{calign}
An analogous statement holds for left duals.
\end{proposition}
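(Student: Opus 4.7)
The result is the standard ``uniqueness of duals up to a unique compatible isomorphism'', so the plan is to construct $\alpha$ by a snake composition, verify the two equations of~\eqref{eq:relateduals} by planar isotopy, exhibit an inverse, and deduce uniqueness by a one-line calculation.

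Concretely, I would set
\[
\alpha \;:=\; (\id_{X^*{}'} \otimes \epsilon) \circ (\eta' \otimes \id_{X^*}) \;:\; X^* \to X^*{}',
\]
which diagrammatically is the cup $\eta'$ hooked to the cap $\epsilon$ along a shared $X$-strand, and take as candidate inverse the mirror composite $\beta := (\id_{X^*} \otimes \epsilon') \circ (\eta \otimes \id_{X^*{}'})$. Each of the two equations of~\eqref{eq:relateduals} is then verified by substituting this definition into one side and straightening the resulting snake: the $\eta$-equation reduces to an identity after one application of the snake equation from Definition~\ref{def:duals} (using $\eta'$ and $\epsilon$), and the $\epsilon$-equation is symmetric. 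Invertibility is likewise immediate: $\beta \circ \alpha$ and $\alpha \circ \beta$ are each double-snaked composites which collapse to the identity after applying one snake equation from each of the two dualities.

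For uniqueness, suppose $\gamma : X^* \to X^*{}'$ satisfies the same relations. The $\eta$-equation says (up to the diagrammatic convention) that $\eta' = (\gamma \otimes \id_X) \circ \eta$; substituting this into the formula for $\alpha$ yields
\[
\alpha \;=\; (\id_{X^*{}'} \otimes \epsilon) \circ ((\gamma \otimes \id_X) \otimes \id_{X^*}) \circ (\eta \otimes \id_{X^*}) \;=\; \gamma \circ \bigl[(\id_{X^*} \otimes \epsilon) \circ (\eta \otimes \id_{X^*})\bigr] \;=\; \gamma,
\]
the final equality being the snake equation for $[X^*,\eta,\epsilon]$. The left dual case follows by mirroring the entire argument. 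No step is really difficult; the only bookkeeping point is matching the formulas above to the orientation conventions of the figures in~\eqref{eq:relateduals}, and citing the snake equation from the correct duality at each step.
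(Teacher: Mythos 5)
Your proof is correct and is exactly the standard argument that the paper defers to by citing \cite[Lemma 3.4]{Heunen2019}: define $\alpha$ as the snake composite $(\id_{X^*{}'}\otimes\epsilon)\circ(\eta'\otimes\id_{X^*})$, verify~\eqref{eq:relateduals} and invertibility by the snake equations, and get uniqueness by substituting the $\eta$-relation back into the formula for $\alpha$. No issues; the only care needed is, as you note, matching the tensor-factor orderings to the orientation conventions of the figures.
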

\noindent
In a category with duals, we can define a notion of transposition for morphisms.
\begin{definition}
Let $X,Y$ be objects with chosen right duals $[X^*,\eta_X,\epsilon_X]$ and $[Y^*,\eta_Y,\epsilon_Y]$. For any morphism $f:X \to Y$, we define its \emph{right transpose} $f^T: Y^* \to X^*$ as follows:
\begin{calign}\label{eq:rtranspose}
\includegraphics{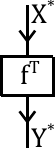}
~~=~~
\includegraphics{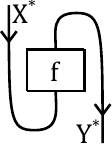}
\end{calign}
For left duals ${}^*X,{}^*Y$, a \emph{left transpose} may be defined analogously.
\end{definition}
\noindent
In this work we are mostly interested in categories with compatible left and right duals. Such categories are called \emph{pivotal}. 
 
Let $\mathcal{C}$ be a monoidal category with right duals. It is straightforward to check that the following defines a monoidal functor $\mathcal{C} \to \mathcal{C}$, which we call the \emph{double duals} functor:
\begin{itemize} 
\item Objects $X$ are taken to the double dual $X^{**}:=(X^*)^*$.
\item Morphisms $f: X \to Y$ are taken to the double transpose $f^{TT}:=(f^T)^T$.
\item The monoidal structure is defined using the isomorphisms of Proposition~\ref{prop:relateduals}.
\end{itemize}
\begin{definition}\label{def:pivcat}
We say that a monoidal category $\mathcal{C}$ with right duals is \emph{pivotal} if the double duals functor is monoidally naturally isomorphic to the identity functor. 
\end{definition}
\noindent
Roughly, the existence of a monoidal natural isomorphism in Definition~\ref{def:pivcat} comes down to the following statement:
\begin{itemize}
\item For every object $X: r \to s$, there is an isomorphism $\iota_X: X^{**} \to X$.
\item These $\{\iota_X\}$ can be chosen compatibly with composition and monoidal product in $\mathcal{C}$.
\end{itemize}
In a pivotal category, for any object $X$ the right dual $X^*$ is also a left dual for $X$ by the following cup and cap (we have drawn a double upwards arrow on the double dual):
\begin{calign}\label{eq:pivldualdef}
\includegraphics{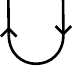}
~~:=~~
\includegraphics{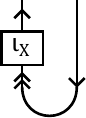}
&
\includegraphics{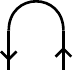}
~~:=~~
\includegraphics{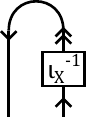}
\end{calign}
With these left duals, the left transpose of a morphism is equal to the right transpose. Whenever we refer to a pivotal category from now on, we suppose that the left duals are chosen in this way. 

There is a very useful graphical calculus for these compatible left and right dualities in a pivotal category.
To represent the transpose, we make our morphism boxes asymmetric by tilting the right vertical edge. We now write the transpose by rotating the boxes, as though we had `yanked' both ends of the wire in the RHS of~\eqref{eq:rtranspose}:
\begin{calign}\nonumber
\includegraphics{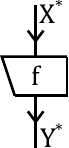}
:=
\includegraphics{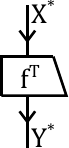}
\end{calign}
Using this notation, morphisms now freely slide around cups and caps.
\begin{proposition}[{\cite[Lemma 3.12, Lemma 3.26]{Heunen2019}}]\label{prop:sliding}
Let $\mathcal{C}$ be a pivotal category and $f:X \to Y$ a morphism. Then:
\begin{calign}\label{eq:sliding}
\includegraphics{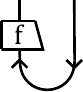}
~~=~~
\includegraphics{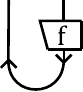}
&
\includegraphics{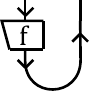}
~~=~~
\includegraphics{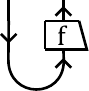}
&
\includegraphics{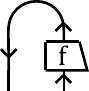}
~~=~~
\includegraphics{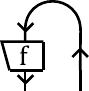}
&
\includegraphics{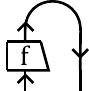}
~~=~~
\includegraphics{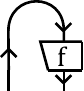}
\end{calign}
\end{proposition}
\noindent
The diagrammatic calculus is summarised by the following theorem.
\begin{theorem}[{\cite[Theorem 4.14]{Selinger2010}}]\label{thm:graphcalcpiv}
Two diagrams for a morphism in a pivotal category represent the same morphism if there is a planar isotopy between them, which may include sliding of morphisms as in Proposition~\ref{prop:sliding}.
\end{theorem}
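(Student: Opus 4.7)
The plan is to follow the standard strategy for coherence theorems of graphical calculi: reduce arbitrary planar isotopies to a finite list of local moves, then check each local move against the axioms of a pivotal category. I would build the proof in layers, starting from the strict monoidal case and successively adding duals and the pivotal structure.

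First, I would put every diagram into a \emph{progressive} or \emph{rectilinear} normal form: all wires are piecewise smooth, all morphism boxes are placed at pairwise distinct heights with purely vertical wires entering and leaving them, and cups and caps appear as isolated local maxima and minima of the height function. Standard Morse-theoretic arguments show every planar diagram in the calculus is isotopic to one in this form, and any planar isotopy between two rectilinear diagrams can be decomposed into a finite sequence of elementary local moves: (i) reordering of morphism boxes at different heights when their projections do not interact (horizontal slides), (ii) creation/annihilation of cup-cap pairs along a wire (the snake moves), (iii) passing a morphism box through a maximum or minimum (the sliding moves), and (iv) monoidal reassociations, which are trivial because we have passed to strict monoidal categories via Mac Lane's coherence theorem.

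With this normal form in hand, the argument reduces to verifying that each elementary move preserves the denoted morphism. For (i) this is the interchange law $(f\otimes \id)(\id \otimes g) = (\id \otimes g)(f\otimes \id)$ in any monoidal category. For (ii), it is exactly the snake equations~\eqref{eq:rightsnakes} for right duals together with their mirror images for the induced left duals~\eqref{eq:pivldualdef}. For (iv), strictness disposes of it. The heart of the argument is (iii): passing a morphism box over a cup or a cap, which is the content of Proposition~\ref{prop:sliding}. By the definition of the left dual in a pivotal category via~\eqref{eq:pivldualdef}, these slides reduce to the definition of the right transpose~\eqref{eq:rtranspose} together with the pivotal isomorphism $\iota_X: X^{**}\to X$, and one checks that the four equations in~\eqref{eq:sliding} suffice to push any box past any critical point.

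The main obstacle, and the reason the pivotal hypothesis is essential, is the bookkeeping of iterated duals. When a wire is yanked by a large isotopy, a morphism box $f: X\to Y$ may end up on a wire labelled by $X^{**\cdots *}$, and two different isotopies can lead to two \emph{different} identifications of the resulting arrow with $f^{TT\cdots T}$, differing by iterates of the canonical isomorphisms of Proposition~\ref{prop:relateduals}. Ensuring these ambiguities are resolved consistently is precisely what the monoidal naturality of the pivotal isomorphism $\iota$ guarantees: it forces the two sides of any such ambiguity to agree. Once this consistency is established — which is the nontrivial step and is what promotes the coherence of strict monoidal categories (Joyal--Street) to the pivotal setting — one concludes that any two rectilinear diagrams related by a planar isotopy with sliding denote the same morphism, as required.
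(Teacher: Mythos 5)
The first thing to say is that the paper does not prove this statement at all: Theorem~\ref{thm:graphcalcpiv} is imported verbatim from Selinger's survey (his Theorem 4.14), which in turn attributes the pivotal coherence theorem to Freyd and Yetter. So there is no internal proof to compare yours against; what you have written is a sketch of the argument that lives in the cited literature. Your overall strategy --- rectilinear/Morse normal form, decomposition of isotopies into a finite list of local moves, verification of each move against the axioms (interchange, snake equations, the sliding equations~\eqref{eq:sliding}, strictness for reassociation) --- is indeed the standard route, and your identification of the bookkeeping of iterated duals and the monoidality of the pivotal isomorphism $\iota$ as the point where pivotality (rather than mere existence of duals) is essential is exactly right.

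That said, as a proof rather than a proof outline, the two load-bearing steps are asserted rather than established. First, the claim that ``any planar isotopy between two rectilinear diagrams can be decomposed into a finite sequence of elementary local moves'' of types (i)--(iv) is the topological core of the theorem; it requires a genericity/transversality argument for families of height functions (isotopies passing through non-generic configurations where two critical points or two boxes share a height, wires become momentarily vertical at a box, etc.), and this is where the bulk of Joyal--Street's and Freyd--Yetter's work goes. Writing ``standard Morse-theoretic arguments show'' does not discharge it. Second, the full-rotation consistency you describe in your last paragraph --- that a box carried around a closed loop of isotopies returns to a morphism equal to the original, with all identifications $X^{**\cdots*}\cong X$ agreeing --- needs an actual verification that the monoidal naturality of $\iota$ kills every such ambiguity, for instance by checking it on a generating set of loops; you state that it does but do not exhibit the check. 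Neither gap indicates a wrong approach, but both are precisely the places where the proof is hard, which is presumably why this paper (like most) cites the result rather than reproving it.
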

\noindent
In a pivotal category we can define notions of dimension for objects and trace for morphisms.  
\begin{definition}\label{def:trace}
Let $X$ be an object and let $f: X \to X$ be a morphism in a pivotal category $\mathcal{C}$. We define the \emph{right trace} of $f$ to be the following morphism $\Tr_{R}(f): \mathbbm{1} \to \mathbbm{1}$: 
\begin{calign}\nonumber
\includegraphics{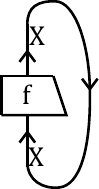}
\end{calign}
We define the \emph{right dimension} $\dim_R(r)$ of an object $X$ of $\mathcal{C}$ to be $\Tr_R(\id_X): \mathbbm{1} \to \mathbbm{1}$. 
The \emph{left traces} $\Tr_L$ and \emph{left dimensions} $\dim_L$ are defined analogously using the right cup and left cap. 
\end{definition}
\begin{definition}
We call a pivotal category $\mathcal{C}$ \emph{spherical} if, for object $X$, and any morphism $f:X\to X$, $\Tr_L(f) = \Tr_R(f) =:\Tr(f)$. In this case we call $\Tr(f)$ and $\dim(f)$ simply the \emph{trace} and the \emph{dimension}.
\end{definition}

\subsubsection{Pivotal dagger categories}
\begin{definition}\label{def:dagcat}
A \emph{dagger structure} on a monoidal category $\mathcal{C}$ is specified by a contravariant identity-on-objects $\dagger:\mathcal{C} \to \mathcal{C}$, written as a power on morphisms, i.e. $\dagger(f) =: f^{\dagger}$, which is:
\begin{itemize}
\item \emph{Involutive}: for any morphism $f: X \to Y$, $\dagger(\dagger(f)) = f$.
\item \emph{Compatible with monoidal product}: for any objects $X,X',Y,Y'$ and morphisms $\alpha: X \to X'$ and $\beta: Y \to Y'$ we have $(\alpha \otimes \beta)^{\dagger} = \alpha^{\dagger}  \otimes \beta^{\dagger}$.
\end{itemize}
\ignore{
\item \emph{Compatible with composition}: For any 2-morphisms $f:X \to Y,g:Y \to Z$ in $\mathcal{C}(r,s)$ and $g$ in $\mathcal{C}(s,t)$, $(f \circ g)^{\dagger_{r,t}} = f^{\dagger_{r,s}} \circ g^{\dagger_{s,t}}$.
\item \emph{Preserves the identity 2-morphisms.} and $(\id_{X})^{\dagger_{r,s}} = \id_X$.
\end{itemize} 
}
We call $f^{\dagger}$ the \emph{dagger} of $f$.
\end{definition}
\noindent
In the graphical calculus, we represent the dagger of a morphism by reflection in a horizontal axis, preserving the direction of any arrows:
\begin{calign}\label{eq:graphcalcdagger}
\includegraphics[scale=1]{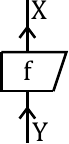}
~~:=~~
\includegraphics[scale=1]{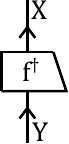}
\end{calign}
\begin{definition}
Let $\mathcal{C}$ be a dagger category.  We say that a morphism $\alpha: X \to Y$ is an \emph{isometry} if $\alpha^{\dagger} \circ \alpha = \id_X$. We say that it is \emph{unitary} if it is an isometry and additionally $ \alpha \circ \alpha^{\dagger}  = \id_Y$.
\end{definition}
\ignore{
\begin{definition}
Let $\mathcal{C}$ be a dagger 2-category and let $r,s$ be objects. We say that a 1-morphism $X: r \to s$ is a \emph{dagger equivalence} if it is an equivalence (Definition~\ref{}) and the invertible 2-morphisms $\alpha: \id_r \to X \circ X^{-1}$ and $\beta: \id_s \to X^{-1} \circ X$ are unitary. 
\end{definition}
}
\noindent
We now give the condition for  compatibility of dagger and pivotal structure.
\begin{definition}\label{def:pivdagcat}
Let $\mathcal{C}$ be a pivotal category which is also a monoidal dagger category. \ignore{Let $X: r \to s$ be a 1-morphism in $\mathcal{C}$, and and let $[X^*,\eta,\epsilon]$ be the chosen right dual; we represent $\eta,\epsilon$ in the graphical calculus by a cup and a cap.} We say that $\mathcal{C}$ is \emph{pivotal dagger} when, for all objects $X$:
\begin{calign}\label{eq:daggerdual}
\includegraphics[scale=1]{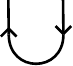}
~~=~~
\left(
\includegraphics[scale=1]{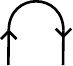}\right)^{\dagger}
&
\includegraphics[scale=1]{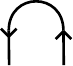}
~~=~~
\left(\includegraphics[scale=1]{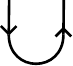}\right)^{\dagger}
\end{calign}
\ignore{
\begin{calign}\label{eq:daggerdual}
\includegraphics[scale=1]{Figures/svg/2cats/daggerdual1.pdf}
~~=~~
\includegraphics[scale=1]{Figures/svg/2cats/daggerdual2.pdf}
\end{calign}
A left dagger dual may be defined analogously.
A \emph{pivotal dagger 2-category} is a 2-category where the chosen right duals are all dagger duals.
} 
\end{definition}
\begin{remark}
For any object $X$ in a monoidal dagger category, a right dual $[X^*,\eta_X,\epsilon_X]$ induces a left dual $[X^*,\epsilon_X^{\dagger},\eta_X^{\dagger}]$. This means that a monoidal dagger category with right duals also has left duals. The pivotal structure gives another way to obtain left duals from right duals~\eqref{eq:pivldualdef}.  The equation~\eqref{eq:daggerdual} implies that the left duals obtained from the dagger structure are the same as those obtained from the pivotal structure.

Practically, when taking the dagger of a cup or a cap in a pivotal dagger category, the equation~\eqref{eq:daggerdual} implies we should reflect the cup or cap in a horizontal axis, preserving the direction of the arrows.
\end{remark}
\noindent
For any morphism $f:X \to Y$, a pivotal dagger structure implies the following \emph{conjugate} morphism $f^*$ is graphically well-defined:
\begin{calign}\label{eq:conjugate}
\includegraphics[scale=1]{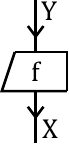}
~~:=~~
\includegraphics[scale=1]{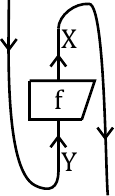}
~~=~~
\includegraphics[scale=1]{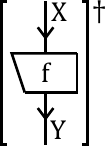}
\end{calign}
\begin{remark}
Following the bra-ket convention, we draw morphisms $f:\mathbbm{1} \to X$ and $f^{\dagger}: X \to \mathbbm{1}$ --- called \emph{states} and \emph{effects} of $X$ respectively --- as triangles rather than as boxes. The morphisms $f$ and $f^{\dagger}$ can be distinguished from $f^T$ and $f^*$ by the direction of the arrows:
\begin{calign}
\includegraphics[scale=1]{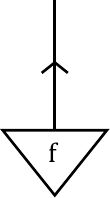}
&
\includegraphics[scale=1]{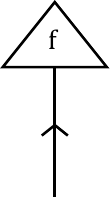}
&
\includegraphics[scale=1]{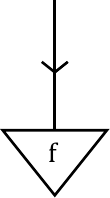}
&
\includegraphics[scale=1]{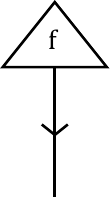}\\
f:\mathbbm{1} \to X
&
f^{\dagger}:X \to \mathbbm{1}
&
f^*:\mathbbm{1} \to X^*
&
f^T: X^* \to \mathbbm{1}
\end{calign}
\end{remark}
\subsubsection{Example: the category $\Hilb$}
A basic example of a pivotal dagger category is the category $\Hilb$. 
The objects of the monoidal category $\Hilb$ are finite-dimensional Hilbert spaces, and the morphisms are linear maps between them; composition of morphisms is composition of linear maps. The monoidal product is given on objects by the tensor product of Hilbert spaces, and on morphisms by the tensor product of linear maps; the unit object is the 1-dimensional Hilbert space $\mathbb{C}$.

For any object $H$, its right dual is defined to be the dual Hilbert space $H^*$. Any basis $\{\ket{v_i}\}$ for $H$ defines a cup and cap:
\def\pv{\vphantom{V^*}}
\begin{calign}\label{eq:cupscapsHilb}
\begin{tz}[zx,xscale=-1]
\draw[arrow data ={0.15}{<}, arrow data={0.89}{<}] (0,0) to [out=up, in=up, looseness=2.5] (2,0) ;
\node[dimension, left] at (2.0,0) {$\pv V$};
\node[dimension, right] at (0,0) {$V^*$};
\end{tz}
&
\begin{tz}[zx,yscale=-1]
\draw[arrow data ={0.15}{>}, arrow data={0.89}{>}] (0,0) to [out=up, in=up, looseness=2.5] (2,0) ;
\node[dimension, right] at (2.05,0) {$\pv V$};
\node[dimension, left] at (0,0) {$V^*$};
\end{tz}\\\nonumber
\ket{v} \otimes \bra{w}\mapsto \braket{w|v}
&
~~1\mapsto \sum_i \bra{v_i} \otimes \ket{v_i}
\end{calign}
It may easily be checked that this cup and cap fulfil the snake equations~\eqref{eq:rightsnakes}. This duality is pivotal; the monoidal natural isomorphism from the identity functor to the double duals functor is given by the standard isomorphism from a Hilbert space to its double dual.

The dagger structure is given by the Hermitian adjoint of a linear map. As long as the basis $\{\ket{v_i}\}$ is orthonormal $\Hilb$ is pivotal dagger. The transpose~\eqref{eq:rtranspose} and conjugate~\eqref{eq:conjugate} are simply the usual transpose and complex conjugate of a linear map with respect to the orthonormal basis defining the duality.

In fact, $\Hilb$ is a \emph{compact closed} category --- it is symmetric monoidal in a way which is compatible with its pivotal dagger structure. Because it is symmetric monoidal,  diagrams in $\Hilb$ should be considered as embedded in four-dimensional space. In particular, for any two Hilbert spaces $V,W$ there is a \emph{swap map} $\sigma_{V,W}: V \otimes W \to W \otimes V$. In four dimensions there is no difference between overcrossings and undercrossings, so we simply draw this as an intersection:
\begin{calign}
\includegraphics[scale=1]{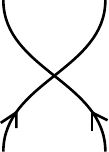} \\
\ket{v} \otimes \ket{w} \mapsto \ket{w} \otimes \ket{v}
\end{calign}
The four-dimensional calculus allows us to untangle arbitrary diagrams and remove any twists, as exemplified by the following equations, which hold regardless of the direction of the arrows on the wires:
\begin{calign}
\includegraphics[scale=1]{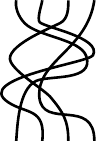}
~~=~~
\includegraphics[scale=1]{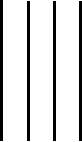}
&
\includegraphics[scale=1]{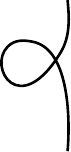}
~~=~~
\includegraphics[scale=1]{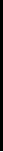}
~~=~~
\includegraphics[scale=1]{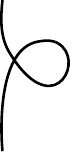}
&
\includegraphics[scale=1]{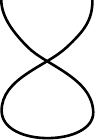}
~~=~~
\includegraphics[scale=1]{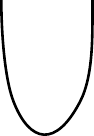}
&
\includegraphics[scale=1]{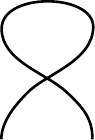}
~~=~~
\includegraphics[scale=1]{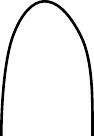}
\end{calign}
It immediately follows that $\Hilb$ is spherical. The trace and dimension of Definition~\ref{def:trace} reduce to the usual notion of trace and dimension of linear maps and Hilbert spaces.

\paragraph{The endomorphism algebra.} The diagrammatic calculus in $\Hilb$ also allows us to conveniently express the endomorphism algebra $B(H)$ of a Hilbert space $H$ using the pivotal dagger structure.
\begin{definition}\label{def:endoalgebra}
Let $H$ be a Hilbert space. We define the following \emph{endomorphism $*$-algebra} on $H \otimes H^*$:
\begin{calign}\label{eq:endoalgebra}
\includegraphics[scale=1]{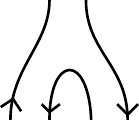}
&
\includegraphics[scale=1]{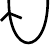}
&
\includegraphics[scale=1]{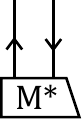}
~~:=~~
\includegraphics[scale=1]{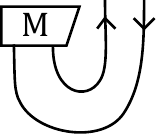}
\\
m: (H \otimes H^*) \otimes (H \otimes H^*) \to H \otimes H^*
&
u: \mathbb{C} \to H \otimes H^*
&
*: H \otimes H^* \to H \otimes H^*
\end{calign}
\end{definition}
\noindent
It is straightforward to check that the endomorphism algebra is indeed a $*$-algebra using the diagrammatic calculus of the pivotal dagger category $\Hilb$. In fact, it is a Frobenius monoid (Definition~\ref{def:Frobenius}).
\begin{proposition}
There is a $*$-isomorphism between the endomorphism algebra $H \otimes H^*$ and the $*$-algebra $B(H)$. 
\end{proposition}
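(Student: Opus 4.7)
The plan is to exhibit the standard ``outer product'' map $\Phi : H \otimes H^* \to B(H)$ defined on rank-one tensors by $\Phi(\ket{v}\otimes\bra{w}) = \ket{v}\bra{w}$ (extended linearly) and to verify that it intertwines the three pieces of structure --- multiplication, unit, and involution --- defined diagrammatically in~\eqref{eq:endoalgebra} with their counterparts in $B(H)$ (composition, identity operator, Hermitian adjoint). It is standard that $\Phi$ is a linear bijection (its inverse sends an operator $T$ to $\sum_i T\ket{v_i}\otimes \bra{v_i}$ for an orthonormal basis $\{\ket{v_i}\}$), so the content lies entirely in the three compatibility checks.

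First I would translate the diagrammatic multiplication into symbols. The cap $H^* \otimes H \to \mathbb{C}$ in the middle of the multiplication diagram is, by~\eqref{eq:cupscapsHilb}, the pairing $\bra{w}\otimes\ket{x}\mapsto \braket{w|x}$, so the diagram computes
\[
m\bigl((\ket{v}\otimes\bra{w}) \otimes (\ket{x}\otimes\bra{y})\bigr) = \braket{w|x}\,\ket{v}\otimes\bra{y},
\]
which matches $\Phi^{-1}$ applied to the composite $\ket{v}\bra{w}\circ\ket{x}\bra{y} = \braket{w|x}\,\ket{v}\bra{y}$. Second, the diagrammatic unit is the (flipped) cup, which by~\eqref{eq:cupscapsHilb} sends $1 \mapsto \sum_i \ket{v_i}\otimes\bra{v_i}$ for any orthonormal basis $\{\ket{v_i}\}$; applying $\Phi$ gives $\sum_i \ket{v_i}\bra{v_i} = \id_H$, as required.

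Third, I would check that the diagrammatic $*$-operation corresponds to the Hermitian adjoint. The diagram in~\eqref{eq:endoalgebra} swaps the $H$ and $H^*$ factors by a cup-and-cap yank combined with a dagger on the morphism box. Evaluating on a rank-one tensor, the yank part sends $\ket{v}\otimes\bra{w}\mapsto \bra{v}\otimes\ket{w}$ (viewed in $H\otimes H^*$ after using the pivotal identification $H^{**}\cong H$ and writing $\bra{v}$ as an element of $H^*$), so the total operation produces $\ket{w}\otimes\bra{v}$, which is $\Phi^{-1}\bigl((\ket{v}\bra{w})^\dagger\bigr)$. The verification is just the statement that the conjugate $f^*$ of~\eqref{eq:conjugate}, when $f=\ket{v}\bra{w}$ regarded as a morphism $\mathbb{C}\to H\otimes H^*$, returns the dual rank-one element; this is a routine application of Proposition~\ref{prop:sliding} together with the fact that the basis chosen to define the cups and caps is orthonormal.

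The main obstacle is bookkeeping rather than conceptual: one must keep track of the pivotal identification $H\cong H^{**}$ (which is the standard isomorphism) and of the orthonormality of the basis used in~\eqref{eq:cupscapsHilb}, since this is precisely what makes the diagrammatic $*$ agree with the Hermitian adjoint. Once these conventions are fixed, all three checks are single-line computations in the diagrammatic calculus justified by Theorem~\ref{thm:graphcalcpiv}.
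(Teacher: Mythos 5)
Your proposal is correct and follows essentially the same route as the paper: both exhibit the outer-product bijection $\ket{v}\otimes\bra{w}\mapsto\ket{v}\bra{w}$ and verify multiplicativity (via the middle cap evaluating to the inner product), unitality (the cup giving $\sum_i\ket{v_i}\bra{v_i}=\id_H$), and preservation of the involution (the yank-plus-dagger sending $\ket{v}\otimes\bra{w}$ to $\ket{w}\otimes\bra{v}$). The paper simply performs the checks on orthonormal basis elements, where your $\braket{w|x}$ becomes $\delta_{jk}$, which is equivalent by linearity.
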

\begin{proof}
Consider the linear bijection $H \otimes H^* \to B(H)$ defined on orthonormal basis elements by $\ket{i} \otimes \bra{j} \mapsto \ket{i}\bra{j}$. It is multiplicative:
\begin{calign}
\includegraphics[scale=1]{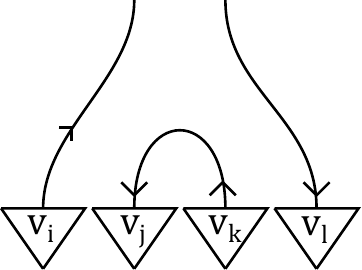}
~~=~~
\includegraphics[scale=1]{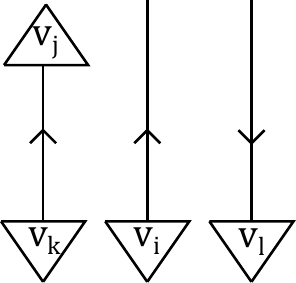}
~~=~~
\delta_{jk}~\includegraphics[scale=1]{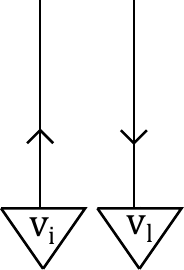}
\end{calign}
It is also unital, since $\sum_i \ket{v_i} \bra{v_i} = \mathbbm{1}$.
Finally, the involution is preserved:
\begin{calign}
\includegraphics[scale=1]{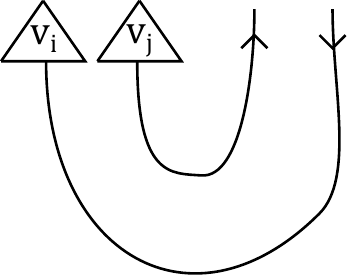}
~~=~~
\includegraphics[scale=1]{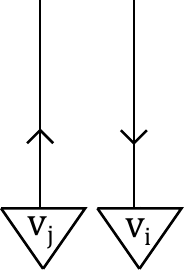}
\end{calign}
\end{proof}
\ignore{
\begin{remark}
In what follows we will occasionally draw diagrams involving infinite-dimensional Hilbert spaces. In this case we will draw arrows only on finite-dimensional Hilbert spaces and their duals. Formally, these diagrams represent morphisms in the category of general (i.e. not just finite-dimensional) Hilbert spaces and linear maps.
\end{remark}
}

\subsection{Monoidal functors}
\subsubsection{Diagrammatic calculus for monoidal functors}
While our monoidal categories are strict, allowing us to use the diagrammatic calculus, we will consider functors between them which are not strict. For this, we use a graphical calculus of \emph{functorial boxes}~\cite{Mellies2006}.
\begin{definition}
Let $\mathcal{C}, \mathcal{D}$, be monoidal categories. A \emph{monoidal functor} $F: \mathcal{C} \to \mathcal{D}$ consists of the following data.
\begin{itemize}
\item A functor $F: \mathcal{C} \to \mathcal{D}$.

In the graphical calculus, we represent the effect of the functor $F$ by drawing a shaded box around objects and morphisms in $\mathcal{C}$. For example, let $X,Y$ be objects and  $f: X \to Y$ a morphism in $\mathcal{C}$. Then the morphism $F(f): F(X) \to F(Y)$ in $\mathcal{D}$ is represented as:
\begin{calign}\nonumber
\includegraphics[scale=0.8]{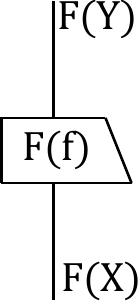}
~~=~~
\includegraphics[scale=0.8]{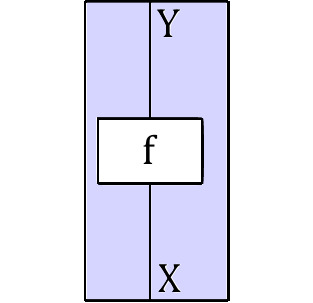}
\end{calign}
\ignore{
From this point forward, we will not write in the labels of the 0-cell regions unless absolutely necessary. This is for two reasons: firstly, to leave the notation uncluttered; and secondly, so that the reader uncomfortable with 2-categories can simply assume that the diagrams are monoidal categories.}

\item For every pair of objects $X,Y$ of $\mathcal{C}$, an invertible \emph{multiplicator} morphism $m_{X,Y}:F(X) \otimes_D F(Y) \to F(X \otimes_C Y)$. In the graphical calculus, these morphisms and their inverses are represented as follows:
\begin{calign}\nonumber
\includegraphics[scale=.8]{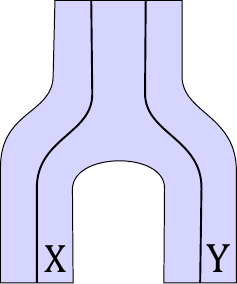}
&
\includegraphics[scale=.7]{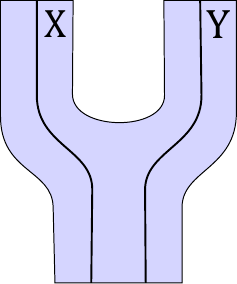}
\\\label{eq:multiplicator}
m_{X,Y}: F(X) \otimes_{\mathcal{D}}  F(Y) \to F(X \otimes_{\mathcal{C}} Y) & m_{X,Y}^{-1}: F(X \otimes_{\mathcal{C}} Y) \to F(X) \otimes_{\mathcal{D}}  F(Y)
\end{calign}
\item An invertible `unitor' morphism $u: \mathbbm{1}_D \to F(\mathbbm{1}_C)$. In the diagrammatic calculus, this morphism and its inverse are represented as follows (recall that the monoidal unit is invisible in the diagrammatic calculus):
\begin{calign}\nonumber
\includegraphics[scale=.8]{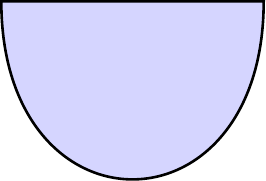}
&
\includegraphics[scale=.8]{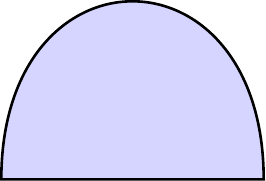} \\\label{eq:unitor}
u: \mathbbm{1}_D \to F(\mathbbm{1}_C) & u^{-1}: F(\mathbbm{1}_C)  \to \mathbbm{1}_D
\end{calign}
\end{itemize}
The multiplicators and unitor obey the following coherence equations:
\begin{itemize}
\item \emph{Naturality}. For any objects $X,X',Y,Y'$ and morphisms $f:X \to X', g: Y \to Y'$ in $\mathcal{C}$:
\begin{calign}\label{eq:psfctnat}
\includegraphics[scale=0.8]{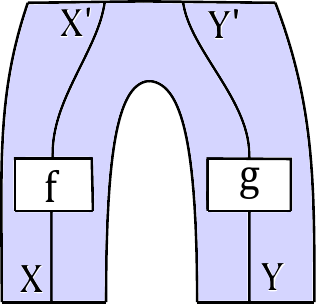}
~~=~~
\includegraphics[scale=0.8]{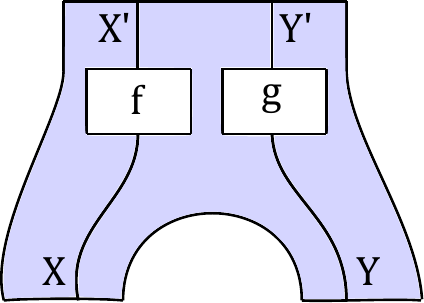}
\end{calign}
\item \emph{Associativity}. For any objects $X,Y,Z$ of $\mathcal{C}$:
\begin{calign}\label{eq:psfctassoc}
\includegraphics[scale=0.8]{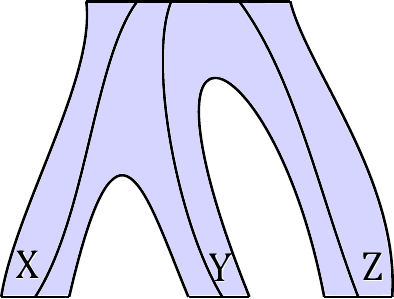}
~~=~~
\includegraphics[scale=0.8]{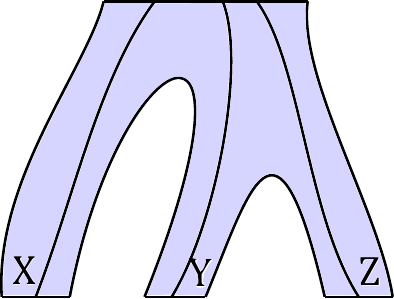}
\end{calign}
\item \emph{Unitality}. For any object $X$ of $\mathcal{C}$:
\begin{calign}\label{eq:psfctunital}
\includegraphics[scale=0.8]{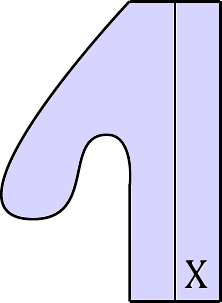}
~~=~~
\includegraphics[scale=0.8]{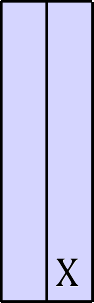}
~~=~~
\includegraphics[scale=0.8]{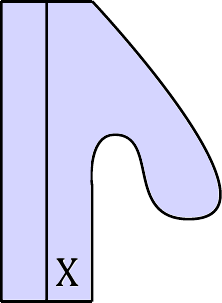}
\end{calign}
\end{itemize}
For monoidal dagger categories $\mathcal{C},\mathcal{D}$, we say that a monoidal functor $F: \mathcal{C} \to \mathcal{D}$ is \emph{unitary} if $F(f^{\dagger}) = F(f)^{\dagger}$ and the multiplicators and unitor are unitary morphisms. 
\ignore{
We say that a monoidal functor $F: \mathcal{C} \to \mathcal{D}$ is an \emph{equivalence} if every object in $\mathcal{D}$ is equivalent to an object in the image of $F$ and the functors $F_{r,s}: \mathcal{C}(r,s) \to \mathcal{D}(r,s)$ are equivalences.
}
\end{definition}
\noindent
We observe that the analogous \emph{conaturality}, \emph{coassociativity} and \emph{counitality} equations for the inverses $\{m_{X,Y}^{-1}\},u^{-1}$, obtained by reflecting~(\ref{eq:psfctnat}-\ref{eq:psfctunital}) in a horizontal axis, are already implied by~(\ref{eq:psfctnat}-\ref{eq:psfctunital}). To give some idea of the calculus of functorial boxes, we explicitly prove the following lemma and proposition. From now on we will unclutter the diagrams by omitting object labels, unless adding the labels seems to significantly aid comprehension.
\begin{lemma}\label{lem:pushpast}
For any objects $X,Y,Z$ of $\mathcal{C}$, the following equations are satisfied:
\begin{calign}\nonumber
\includegraphics[scale=1]{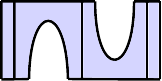}
~~=~~
\includegraphics[scale=1]{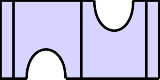}
&
\includegraphics[scale=1]{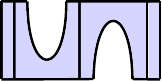}
~~=~~
\includegraphics[scale=1]{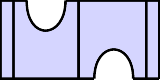}
\end{calign}
\end{lemma}
\begin{proof}
We prove the left equation; the right equation is proved similarly. 
\begin{calign}\nonumber
\includegraphics[scale=1]{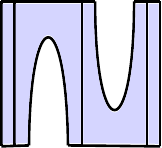}
~~=~~
\includegraphics[scale=1]{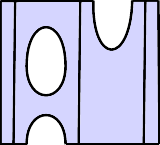}
~~=~~
\includegraphics[scale=1]{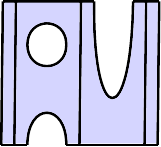}
~~=~~
\includegraphics[scale=1]{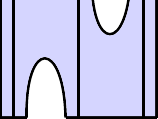}
\end{calign}
Here the first and third equalities are by invertibility of $m_{X,Y}$, and the second is by coassociativity.
\end{proof}
\noindent
With Lemma~\ref{lem:pushpast}, the equations~(\ref{eq:psfctnat}-\ref{eq:psfctunital}) are sufficient to deform functorial boxes topologically as required.  From now on we will do this mostly without comment.

\subsubsection{Induced duals}

We first observe that the duals in $\mathcal{C}$ induce duals in $\mathcal{D}$ under a monoidal functor $F: \mathcal{C} \to \mathcal{D}$. 
\begin{proposition}[Induced duals]\label{prop:indduals}
Let $X$ be an object in $\mathcal{C}$ and $[X^*,\eta,\epsilon]$ a right dual. Then $F(X^*)$ is a right dual of $F(X)$ in $\mathcal{D}$ with the following cup and cap:
\begin{calign}\nonumber
\includegraphics[scale=.6]{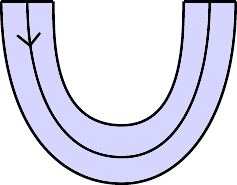}
&
\includegraphics[scale=.6]{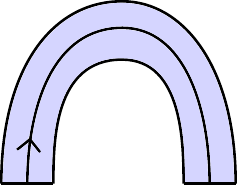}
\\\nonumber
F(\eta) & F(\epsilon)
\end{calign}
The analogous statement holds for left duals.
\end{proposition}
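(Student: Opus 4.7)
The plan is to verify that the induced cup and cap satisfy the right snake equations~\eqref{eq:rightsnakes} in $\mathcal{D}$. Unpacking the functorial box notation, the proposed data are
\[
\eta_{F(X)} := m_{X^*,X}^{-1} \circ F(\eta) \circ u, \qquad \epsilon_{F(X)} := u^{-1} \circ F(\epsilon) \circ m_{X,X^*}.
\]
Thus the snake equation I need to check is that $(\epsilon_{F(X)} \otimes \id_{F(X)}) \circ (\id_{F(X)} \otimes \eta_{F(X)})$ equals $\id_{F(X)}$, and similarly for the snake on $F(X^*)$.

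First I would form this composite explicitly, producing two separate functorial boxes — one containing $F(\eta)$ and one containing $F(\epsilon)$ — connected by an identity wire on $F(X)$ and padded by the appropriate copies of $u, u^{-1}, m, m^{-1}$. Next I would appeal to Lemma~\ref{lem:pushpast} together with the associativity and unitality axioms~(\ref{eq:psfctassoc}--\ref{eq:psfctunital}) to fuse the two boxes into a single functorial box. At the seam between the boxes the inserted pair $m_{X^*,X} \circ m_{X^*,X}^{-1}$ cancels, and the unitors $u, u^{-1}$ are absorbed into the fused box by unitality. Inside the resulting single box one is left with precisely the morphism $(\epsilon \otimes \id_X) \circ (\id_X \otimes \eta)$ of $\mathcal{C}$, which equals $\id_X$ by the right snake equation for $[X^*, \eta, \epsilon]$. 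Functoriality then gives $F(\id_X) = \id_{F(X)}$, and the external structure of the now-trivial box collapses, via unitality and $u \circ u^{-1} = \id$, to $\id_{F(X)}$ as required. The second snake equation on $F(X^*)$ is verified symmetrically by the same fusion argument, and the statement for left duals follows by a mirror-image version.

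The main obstacle will be the bookkeeping of multiplicators and unitors during the fusion step, since each must be tracked through the coherence axioms to confirm that every inserted $m$ or $u$ pairs with an inverse and vanishes. However, this is essentially automatic in the graphical calculus of functorial boxes: Lemma~\ref{lem:pushpast} is designed precisely to let one deform functorial boxes as though they were transparent, and once the two boxes are merged the algebraic coherence is guaranteed. The nontrivial content of the verification then collapses to the original snake equation in $\mathcal{C}$ together with the functoriality identity $F(\id_X) = \id_{F(X)}$.
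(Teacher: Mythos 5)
Your proposal is correct and follows essentially the same route as the paper: the paper likewise verifies one snake equation by using Lemma~\ref{lem:pushpast} to merge the two functorial boxes, then naturality~\eqref{eq:psfctnat} and unitality~\eqref{eq:psfctunital} to reduce the interior to the snake equation in $\mathcal{C}$, with the remaining cases proved symmetrically. Your explicit unpacking of the cup and cap as $m_{X^*,X}^{-1}\circ F(\eta)\circ u$ and $u^{-1}\circ F(\epsilon)\circ m_{X,X^*}$ is exactly what the paper's diagrams encode, so the two arguments coincide.
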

\begin{proof}
We show one of the snake equations~\eqref{eq:rightsnakes} in the case of right duals; the others are all proved similarly.
\begin{calign}\nonumber
\includegraphics[scale=1]{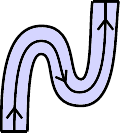}
~~=~~
\includegraphics[scale=1]{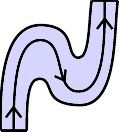}
~~=~~
\includegraphics[scale=1]{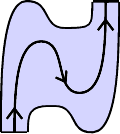}
~~=~~
\includegraphics[scale=1]{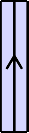}
\end{calign}
Here the first equality is by Lemma~\ref{lem:pushpast}, the second by~\eqref{eq:psfctnat} and the third by~\eqref{eq:psfctunital} and the right snake equation in $\mathcal{C}$.
\end{proof}
\noindent
For any 1-morphism $X$ of $\mathcal{C}$, then, we have two sets of left and right duals on $F(X)$; the first from the pivotal structure in $\mathcal{C}$ by Proposition~\ref{prop:indduals}, and the second from the pivotal structure in $\mathcal{D}$.

In the diagrammatic calculus we distinguish between these two duals by drawing a large downwards arrowhead on the dual in $\mathcal{D}$, like so:
\begin{calign}\nonumber
\includegraphics[scale=1]{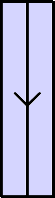}
&
\includegraphics[scale=1]{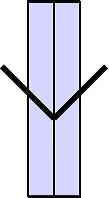}
\\\label{eq:dualscompare}
\textrm{Induced duals $F(X^*)$ from $\mathcal{C}$} & \textrm{Duals $F(X)^*$ in $\mathcal{D}$}
\end{calign}

\subsection{Compact quantum groups }

In this work we will restrict our attention to the specific case of UPTs between unitary $\mathbb{C}$-linear monoidal functors from $C^*$-tensor categories with conjugates $\mathcal{C}$ into the category $\Hilb$ of finite-dimensional Hilbert spaces and linear maps. Provided that such a functor exists, there is a duality theory which identifies $\mathcal{C}$ as the category of corepresentations of a certain algebraic object. 

\subsubsection{$C^*$-tensor categories}
We first recall the definition of a $C^*$-tensor category with conjugates.
\begin{definition}
A dagger category is \emph{$\mathbb{C}$-linear} if: \begin{itemize}
\item For every pair of objects $X,Y$, $\Hom(X,Y)$ is a complex vector space.
\item For every triple of objects $X,Y,Z$, composition $\circ: \Hom(X,Y) \times \Hom(Y,Z) \to \Hom(X,Z)$ is a bilinear map.
\item For every pair of objects $X,Y$, the dagger $\dagger: \Hom(X,Y) \to \Hom(Y,X)$ is an antilinear and positive map, i.e. $\alpha \circ \alpha^{\dagger} = 0$ iff $\alpha=0$.
\end{itemize}
A monoidal dagger category is $\mathbb{C}$-linear if, additionally:
\begin{itemize}
\item For every quadruple of objects $X,X',Y,Y'$, the tensor product $\otimes: \Hom(X,X') \times \Hom(Y,Y') \to \Hom(X \otimes X',Y \otimes Y')$ is a bilinear map.
\end{itemize}
A functor $F: \mathcal{C} \to \mathcal{D}$ between $\mathbb{C}$-linear categories is called \emph{$\mathbb{C}$-linear} if the induced maps on Hom-spaces $F_{X,Y}: \Hom_{\mathcal{C}}(X,Y) \to \Hom_{\mathcal{D}}(F(X),F(Y))$ are $\mathbb{C}$-linear.
\end{definition}
\begin{definition}\label{def:c*tensorcats}
A $\mathbb{C}$-linear monoidal dagger category is called a \emph{$C^*$-tensor category} if 
\begin{itemize}
\item $\Hom(X,Y)$ is a Banach space for all objects $X,Y$, and $\left\Vert fg \right\Vert \leq \left\Vert f \right\Vert \left\Vert g \right\Vert$.
\item The dagger satisfies the following properties for any morphism $f: X \to Y$:
\begin{itemize}
\item $\left\Vert f^{\dagger} \circ f \right\Vert = \left\Vert f \right\Vert^2$; in particular, $\End(X)$ is a $C^*$-algebra for any object $X$.
\item $f^{\dagger} \circ f$ is positive as an element of the $C^*$-algebra $\End(X)$.
\end{itemize}
\end{itemize}
Following~\cite{Neshveyev2013} we also assume that our $C^*$-tensor categories have the following completeness properties:
\begin{itemize}
\item There is an object $\bf{0}$ such that $\dim(\Hom({\bf 0},X))=0$ for every object $X$.
\item There are binary direct sums: for every pair of objects $X_1,X_2$, there is an object $X_1 \oplus X_2$ and morphisms $s_i: X_i \to X_1 \oplus X_2$ (for $i\in \{1,2\}$) such that
\begin{calign}
s_i^{\dagger} s_i = \id_{X_i}
&
s_1 s_1^{\dagger} + s_2 s_2^{\dagger} = \id_{X_1 \oplus X_2} 
\end{calign}
\item Dagger idempotents split: for any morphism $f: X \to X$ such that $f^{\dagger} = f \circ f = f$, there exists an object $Y$ and an isometry $\iota: Y \to X$ such that $\iota \circ \iota^{\dagger} = f$. 
\item The unit object $\mathbbm{1}$ is \emph{irreducible}, i.e. $\End(\mathbbm{1}) = \mathbb{C} \id_{\mathbbm{1}}$.
\end{itemize}
\end{definition}
\noindent
In the setting of $C^*$-tensor categories, one normally speaks of \emph{conjugates}\footnote{Note that these are not the conjugates of~\eqref{eq:conjugate}; they are rather dual objects.} rather than duals.
\begin{definition}\label{def:conjeqns}
Let $X$ be an object of a $C^*$-tensor category $\mathcal{C}$. We say that morphisms $R: \mathbbm{1} \to X^* \otimes X$ and $\bar{R}: \mathbbm{1} \to X \otimes X^{*}$ are \emph{solutions to the conjugate equations} if $[X^*,R,\bar{R}^{\dagger}]$ is right dual to $X$.
\end{definition}
\noindent
We observe that since $[X^*,R,\bar{R}^{\dagger}]$ is right dual to $X$, then $[X^*, \bar{R}, R^{\dagger}]$ is left dual to $X$. We therefore have left and right duals for $X$ satisfying the equations~\eqref{eq:daggerdual}. Suppose that solutions to the conjugate equations are chosen for every object in $\mathcal{C}$. For $\mathcal{C}$ to be a pivotal dagger category with right and left duals $[X^*,R_X,\bar{R}_X^{\dagger}]$ and $[X^*,\bar{R}_X,R_X^{\dagger}]$, by~\eqref{eq:pivldualdef} we require a monoidal natural isomorphism $\iota$ from the  double duals functor to the identity functor relating the left and right duals, i.e.: 
\begin{align}\label{eq:conjugatepivdag}
\bar{R}_X = (\iota_X \otimes \id_X^*) R_{X^*} && R_X^{\dagger} = \bar{R}_{X^*}^{\dagger} (\id_{X^*} \otimes \iota_{X}^{-1}) 
\end{align}
In any $C^*$-tensor category with conjugates each object possesses a distinguished conjugate $[X^*,R,\bar{R}]$, unique up to unitary isomorphism, called a \emph{standard solution}~\cite[Def 2.2.14]{Neshveyev2013}. 
\begin{theorem} \label{thm:c*tenspivdag}
Let $\mathcal{C}$ be a $C^*$-tensor category with conjugates, and let $\{[X^*,R_X,\overline{R}_X]\}$ be a choice of standard solutions for all objects $X$ in $\mathcal{C}$. Then there exists a unitary monoidal natural isomorphism $\iota$ from the double duals functor to the identity functor, such that the equations~\eqref{eq:conjugatepivdag} are obeyed. 
\end{theorem}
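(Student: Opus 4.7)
The plan is to construct $\iota_X$ pointwise via the uniqueness of right duals (Proposition~\ref{prop:relateduals}), and then separately verify unitarity, naturality, and monoidality. The key preliminary observation is that, given standard solutions $R_X$ and $\overline{R}_X$, the triple $[X, \overline{R}_X, R_X^{\dagger}]$ itself equips $X$ with the structure of a right dual of $X^*$: the required snake identities for this triple are precisely the daggers of the snake identities for the chosen right dual $[X^*, R_X, \overline{R}_X^{\dagger}]$ of $X$. Since $[X^{**}, R_{X^*}, \overline{R}_{X^*}^{\dagger}]$ is also a right dual of $X^*$, Proposition~\ref{prop:relateduals} yields a unique isomorphism $\iota_X : X^{**} \to X$ relating these two right dual structures, and an unpacking shows that its characterizing equations are precisely~\eqref{eq:conjugatepivdag}.

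For unitarity, I would show that $\iota_X^{\dagger}$ satisfies the defining equations of $\iota_X^{-1}$, so that $\iota_X^{\dagger} = \iota_X^{-1}$ by uniqueness. Daggering both halves of~\eqref{eq:conjugatepivdag} and rearranging, this reduces to the characterizing property of standard solutions: the left and right categorical traces built from $(R_X, \overline{R}_X)$ agree on $\End(X)$, which is exactly the condition that $(R_X, \overline{R}_X)$ are standard up to unitary. Naturality of $\iota$ is more elementary: for $f : X \to Y$, both $f \circ \iota_X$ and $\iota_Y \circ f^{TT}$ can be rewritten using~\eqref{eq:conjugatepivdag} together with the definition of the double transpose, after which the snake equations identify the two composites.

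Monoidality will be the main obstacle. Write $\alpha_{X,Y} : (X \otimes Y)^{**} \to X^{**} \otimes Y^{**}$ for the canonical isomorphism built from Proposition~\ref{prop:nestedduals}. The goal is to verify $\iota_{X \otimes Y} = (\iota_X \otimes \iota_Y) \circ \alpha_{X,Y}$, which by the uniqueness in Proposition~\ref{prop:relateduals} reduces to checking that the right-hand side satisfies the characterizing equations~\eqref{eq:conjugatepivdag} for $X \otimes Y$. The substantive input is the tensor-multiplicativity of standard solutions: the standard solution for $X \otimes Y$ is unitarily equivalent to the one obtained by nesting the standard solutions for $X$ and $Y$ in the manner dictated by Proposition~\ref{prop:nestedduals} (see~\cite[Sec.~2.2]{Neshveyev2013}). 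This is a genuinely structural fact about standard conjugates rather than a formal consequence of the duality axioms, and it is the conceptual heart of the argument; once granted, chasing it through the defining equations yields both halves of~\eqref{eq:conjugatepivdag} for $(\iota_X \otimes \iota_Y) \circ \alpha_{X,Y}$, completing the proof.
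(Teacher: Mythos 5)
Your decomposition (pointwise construction of $\iota_X$ from uniqueness of duals, then unitarity, naturality, monoidality) and, in particular, your identification of monoidality as the substantive point and of the tensor--multiplicativity of standard solutions as its key input, match the paper's proof exactly: the paper obtains the unitaries $\iota_X$ and naturality by citing \cite[Thm.~2.2.21]{Neshveyev2013}, and the Appendix proves monoidality precisely by showing that the nested solution built from $R_X,R_Y$ as in Proposition~\ref{prop:nestedduals} is again a \emph{standard} solution for $X\otimes Y$ (Lemma~\ref{lem:tensorstandsols}), so that the comparison maps $Y^*\otimes X^*\to (X\otimes Y)^*$ are unitary and~\eqref{eq:conjugatepivdag} can be chased through them.

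The one step where your sketch would not go through as written is unitarity. Two issues. First, you only record that $[X,\overline{R}_X,R_X^{\dagger}]$ is a right dual of $X^*$; for any trace-based argument you need the stronger statement that the flipped pair $(\overline{R}_X,R_X)$ is a \emph{standard} solution for $X^*$. This is not a formal consequence of the duality axioms --- the standardness condition for $X$ quantifies over $\End(X)$ while that for $X^*$ quantifies over $\End(X^*)$ --- and has to be imported from~\cite{Neshveyev2013}. Second, ``daggering and rearranging'' the defining equations and feeding them into the two standardness conditions yields, with $p:=\iota_X\iota_X^{\dagger}$, the family of identities $\overline{R}_X^{\dagger}\,(p^{-1}\otimes g)\,\overline{R}_X=R_X^{\dagger}\,(g\otimes p)\,R_X$ for all $g\in\End(X^*)$, to be compared with the same identities at $p=\id_X$. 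Concluding $p=\id_X$ from this is exactly the content of~\cite[Prop.~2.2.15]{Neshveyev2013} (Proposition~\ref{prop:standsolsunitary} above), and its proof requires a positivity/faithfulness argument, not mere rearrangement. The economical fix is the paper's: apply Proposition~\ref{prop:standsolsunitary} directly to the two standard solutions $(R_{X^*},\overline{R}_{X^*})$ and $(\overline{R}_X,R_X)$ for $X^*$, which hands you a \emph{unitary} $\iota_X$ satisfying~\eqref{eq:conjugatepivdag} in one step and makes your separate unitarity argument unnecessary.
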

\begin{proof}
It is shown in~\cite[Thm. 2.2.21]{Neshveyev2013} that such a unitary natural isomorphism $\iota$ exists. What remains is to demonstrate that $\iota$ is monoidal. We provide a proof of this fact in the Appendix (Section~\ref{sec:app}).
\end{proof}
\noindent
A $C^*$-tensor category equipped with standard solutions to the conjugate equations is therefore a pivotal dagger category (in fact, a spherical dagger category~\cite[Thm 2.2.16]{Neshveyev2013}) and can be treated using the graphical calculus just discussed.
\begin{definition}
Let $\mathcal{C}$ be a $C^*$-tensor category with conjugates. We call a unitary $\mathbb{C}$-linear monoidal functor $F: \mathcal{C} \to \Hilb$ a \emph{fibre functor}.
\end{definition}
\ignore{
\begin{remark}
Semisimple $\mathbb{C}$-linear pivotal dagger categories $\mathcal{C}$ are in particular \emph{$W^*$-tensor categories with conjugates}. Indeed, the semisimplicity assumptions of Definition~\eqref{} imply that $\mathcal{C}$ is a $W^*$-tensor category~\cite[P.523]{Mueger2004}, and the equations~\eqref{} further imply that $\mathcal{C}$ \emph{has conjugates} in the sense of~\cite{Neshveyev2013}. In the other direction, the standard duality in any $W^*$-tensor category with conjugates has an associated spherical pivotal structure relating the left and right duals as in~\eqref{}; this is shown in~\cite{}. 
\end{remark}
}

\subsubsection{Compact quantum groups}

We now introduce the algebraic objects dual to $C^*$-tensor categories with conjugates and a chosen fibre functor. All algebras are taken over $\mathbb{C}$.
\begin{definition}[{\cite[Definition 1.6.1]{Neshveyev2013}}]
A unital $*$-algebra $A$ equipped with a unital $*$-homomorphism $\Delta: A \to A \otimes A$ (the \emph{comultiplication}) is called a \emph{Hopf-$*$-algebra} if $(\Delta \otimes \id_A) \circ \Delta = (\id_A \otimes \Delta) \circ \Delta$ and there exist linear maps $\epsilon: A \to \mathbb{C}$ (the \emph{counit}) and $S: A \to A$ (the \emph{antipode}) such that 
\begin{calign}\nonumber
\includegraphics[scale=1]{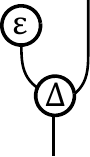}
~~=~~
\includegraphics[scale=1]{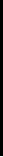}
~~=~~
\includegraphics[scale=1]{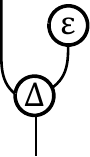}
&&
\includegraphics[scale=1]{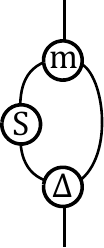}
~~=~~
\includegraphics[scale=1]{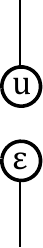}
~~=~~
\includegraphics[scale=1]{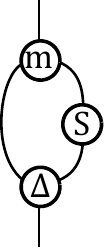}
\\
(\epsilon \otimes \id_A) \circ \Delta = \id_A = (\id_A \otimes \epsilon) \circ \Delta 
&&
m \circ (S \otimes \id_A) \circ \Delta = u \circ \epsilon = m \circ (\id_A \otimes S) \circ \Delta
\end{calign}
where $m: A \otimes A \to A$ is the multiplication and $u: \mathbb{C} \to A$ the unit of the algebra $A$.
\end{definition}
\ignore{
\begin{example}
Let $G$ be a compact group, and let $C(G)$ be the $\mathbb{C}^*$-algebra of continuous complex functions on $G$. For any finite-dimensional $*$-representation $\pi: G \to B(H)$, each matrix entry $\pi_{ij}: G \to \mathbbm{C}$ is an element of $C(G)$. Let $\mathbb{C}[G] \subset C(G)$ be the linear span of matrix coefficients of finite-dimensional representations of $G$. This naturally possesses the structure of a Hopf $*$-algebra (see~\cite{}).
\end{example}
}
\begin{definition}[{\cite[Definition 1.6.5]{Neshveyev2013}}]
A \emph{corepresentation} of a Hopf $*$-algebra $A$ on a vector space $H$ is a linear map $\delta: H \to H \otimes A$ such that 
\begin{calign}
(\delta \otimes \id_A) \circ \delta = (\id_H \otimes \Delta) \circ \delta 
&&
(\id_H \otimes \epsilon) \circ  \delta = \id_H
\end{calign}
The corepresentation is called \emph{unitary} if 
$H$ is a Hilbert space and 
$$
\left\langle \delta(\xi), \delta(\zeta) \right\rangle = (\xi,\zeta) 1_A ~~~~~\textrm{for all } \xi,\zeta \in H
$$
where the $A$-valued inner product $\left\langle \cdot, \cdot \right\rangle$ on $H \otimes A$ is defined by linear extension of $\left\langle \xi \otimes a, \zeta \otimes b \right\rangle = (\xi,\zeta) a^*b$.

For $(H_1,\delta_1), (H_2,\delta_2)$ corepresentations of a Hopf-$*$-algebra $A$, we say that a linear map $f: H_1 \to H_2$ is an \emph{intertwiner} $f:(H_1,\delta_1) \to (H_2,\delta_2)$ if $\delta_2 \circ f = (f \otimes \id_A) \circ \delta_1$.
\end{definition}
\begin{definition}
Let $(H,\delta)$ be a finite-dimensional unitary  corepresentation of $A$, and let $\{\ket{v_i}\}$ be an orthonormal basis of $H$. Then 
$(\bra{v_j} \otimes \id_A) \circ \delta (\ket{v_i})$ defines an $A$-valued matrix $U_{ij}$; we say that the entries of this matrix are the \emph{matrix coefficients} of the representation in the basis $\{\ket{v_i}\}$.
\end{definition}
\noindent
\begin{definition}[{c.f. \cite[Theorem 1.6.7]{Neshveyev2013}}]
We say that a Hopf-$*$-algebra is a \emph{compact quantum group algebra (CQG algebra)} if it is generated as an algebra by matrix coefficients of its finite-dimensional unitary corepresentations. 
\end{definition}
\noindent
For a compact quantum group algebra, a monoidal product of corepresentations can be defined~\cite[Definition 1.3.2]{Neshveyev2013}, as can a notion of conjugate corepresentation~\cite[Def. 1.4.5]{Neshveyev2013}. This yields a $C^*$-tensor category with conjugates $\Corep(A)$ whose objects are finite-dimensional unitary corepresentations of the algebra $(A,\Delta)$ and whose morphisms are intertwiners, with an obvious canonical fibre functor $F: \Corep(A) \to \Hilb$ which forgets the representation. Taking standard solutions to the conjugate equations, $\Corep(A)$ has the structure of a pivotal dagger category. 

Such an algebra is considered as the algebra of matrix coefficents of representations of some `compact quantum group' $G$, such that $\Rep(G) = \Corep(A)$. We will refer to compact quantum groups $G$, and write $\Rep(G)$, in order to emphasise the similarity with representation theory of compact groups. However, the algebra $A_G$ is the concrete object in general.

We now recall the theorem relating $C^*$-tensor categories with conjugates to compact quantum groups. 
\begin{theorem}[{\cite[Theorem 2.3.2]{Neshveyev2013}}]\label{thm:tannakaduality}
Let $\mathcal{C}$ be a $C^*$-tensor category with conjugates, and let $U: \mathcal{C} \to \Hilb$ be a  fibre functor. Then there exists a compact quantum group algebra $A$ (uniquely determined up to isomorphism) and a unitary monoidal equivalence $E_U: \mathcal{C} \to \Rep(G_A)$, such that $U$ is unitarily monoidally naturally isomorphic to $F \circ E_U$.
\end{theorem}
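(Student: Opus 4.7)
The plan is a reconstruction argument in the spirit of Woronowicz's Tannaka--Krein duality. I would first define the putative CQG algebra $A$ as a coend: concretely, the vector space spanned by symbols $[v,\phi]_X$ for $X \in \mathcal{C}$, $v \in U(X)$, $\phi \in U(X)^*$, modulo the dinaturality relation $[U(f)(v),\phi]_Y = [v,\phi \circ U(f)]_X$ for every morphism $f:X \to Y$ of $\mathcal{C}$. This carries a canonical coalgebra structure (comultiplication from inserting the identity of each $U(X)$, counit from the evaluation $\phi(v)$) and, using the multiplicator and unitor of $U$, an algebra structure given on generators by $[v,\phi]_X \cdot [w,\psi]_Y = [m_{X,Y}(v \otimes w),\,(\phi \otimes \psi) \circ m_{X,Y}^{-1}]_{X \otimes Y}$ with unit $[u(1),u^{-1}]_{\mathbbm{1}}$. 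Naturality, associativity and unitality of $m$ and $u$ yield the bialgebra axioms.

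Next I would endow $A$ with an antipode and a $*$-structure. The conjugates in $\mathcal{C}$, transported along $U$ via Proposition~\ref{prop:indduals}, equip each $U(X)$ with a second duality besides the intrinsic one in $\Hilb$. The antipode is defined by taking transposes with respect to these induced duals, and the $*$-structure is the composite of the antipode with complex conjugation (using the dagger structure on $\mathcal{C}$ and the unitarity of $U$). The snake equations in $\mathcal{C}$ together with naturality of $m$ imply the antipode axiom, while the daggered snake equations combined with the standardness of the chosen conjugates imply compatibility of $*$ with $\Delta$. Positivity of $*$, which is necessary to upgrade from a Hopf $*$-algebra to a genuine CQG algebra, is precisely where the monoidality of the pivotal isomorphism $\iota$ of Theorem~\ref{thm:c*tenspivdag} becomes essential: it forces the pointwise duals on each $U(X)$ to assemble into a globally consistent structure.

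For each object $X$ of $\mathcal{C}$ with orthonormal basis $\{e_i\}$ of $U(X)$, the elements $U_{ij} := [e_i,e_j^*]_X$ form a finite-dimensional unitary corepresentation of $A$ on $U(X)$, and $A$ is generated as a $*$-algebra by these matrix coefficients by construction, so $A$ is a CQG algebra. The functor $E_U:\mathcal{C} \to \Rep(G_A)$ sends each $X$ to $U(X)$ equipped with this corepresentation and each $f:X\to Y$ to $U(f)$, which is automatically an intertwiner by naturality. Faithfulness follows from $\End(\mathbbm{1}) = \mathbb{C}$; fullness reduces, via existence of direct sums and splittings of dagger idempotents, to the irreducible case, where matrix coefficients of non-isomorphic irreducibles are linearly independent by Woronowicz orthogonality. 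Essential surjectivity uses that any finite-dimensional unitary corepresentation of $A$ arises as the image of a dagger idempotent on some $U(X)$, which splits in $\mathcal{C}$ by assumption. Uniqueness of $A$ up to isomorphism is then immediate from the universal property of the coend: any other CQG algebra performing the same role receives a canonical $*$-bialgebra map from $A$ and vice versa.

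The delicate step will be verifying the compatibility of the $*$-structure with comultiplication and the positivity needed for $A$ to be a CQG algebra rather than merely a Hopf $*$-algebra. The monoidal pivotal isomorphism furnished by Theorem~\ref{thm:c*tenspivdag} is exactly what makes the locally defined antipodes on matrix coefficients assemble coherently, and the standardness of the solutions $R_X,\bar{R}_X$ is what ensures the resulting trace-like functional on $A$ is positive. Once these coherences are in hand, the rest of the argument is a careful bookkeeping exercise in the graphical calculus of pivotal dagger categories and functorial boxes.
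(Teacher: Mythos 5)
The paper does not prove this theorem; it is imported verbatim as a citation of Neshveyev--Tuset, Theorem 2.3.2 (Woronowicz's Tannaka--Krein duality), so there is no in-paper argument to compare against. Your outline is essentially the standard reconstruction proof given in that reference --- the coend $\bigoplus_X U(X)\otimes U(X)^*$ modulo dinaturality, bialgebra structure from the monoidal structure of $U$, antipode and $*$ from the induced conjugates, and the equivalence via the tautological corepresentations --- so it matches the source's approach and identifies the genuinely delicate points (well-definedness of $*$ and the CQG property, which rest on standardness of the conjugate solutions) correctly.
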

\ignore{
\noindent
The compact quantum group has as its underlying  vector space the predual of the space of natural endomorphisms of the functor $U$. Its algebraic structure naturally arises from the categorical structure of $\mathcal{C}$, $\Hilb$ and $U$ (see e.g.~\cite{}). 
}

\subsection{Unitary pseudonatural transformations}
A pseudonatural transformation between pseudofunctors is a 2-categorical generalisation of a natural isomorphism between functors~\cite{Leinster1998}. Monoidal functors are pseudofunctors, where we consider a monoidal category as a 2-category with a single object. Pseudonatural transformations between monoidal functors can be seen as a generalisation of monoidal natural isomorphisms.

\subsubsection{Definition}
\ignore{
In what follows we will deal mostly with UPTs between monoidal functors. We therefore review the definition of a UPT in this special case.
\begin{remark}
A one-object 2-category is usually referred to as a \emph{monoidal category}. The 1-morphisms are referred to as \emph{objects} and the 2-morphisms as \emph{morphisms}, while the composition of 1-morphisms and horizontal composition of 2-morphisms are referred to as the \emph{monoidal product} and denoted by $\otimes$. The identity 1-morphism is referred to as the \emph{unit object}. Pseudofunctors between such categories are referred to as \emph{monoidal functors}. We will adopt this language when discussing monoidal categories.

In what follows, when we speak of e.g. a pivotal dagger \emph{category}, we mean a one-object pivotal dagger 2-category.  
\end{remark}
\noindent
}
If the categories $\mathcal{C}$ and $\mathcal{D}$ are pivotal dagger categories, there is a notion of unitarity for pseudonatural transformations between monoidal functors $\mathcal{C} \to \mathcal{D}$ which generalises unitary monoidal natural isomorphism~\cite{Verdon2020a}. We here deal only with unitary pseudonatural transformations.
\begin{definition}\label{def:pntmon}
Let $\mathcal{C},\mathcal{D}$ be pivotal dagger  categories, and let $F_1,F_2: \mathcal{C} \to \mathcal{D}$ be unitary monoidal functors. (We colour the functorial boxes blue and red respectively.) A \emph{unitary pseudonatural transformation} $(\alpha,H): F_1 \to F_2$ is defined by the following data:
\begin{itemize}
\item An object $H$ of $\mathcal{D}$ (drawn as a green wire).
\item For every object $X$ of $\mathcal{C}$, a unitary morphism $\alpha_X: F_1(X) \otimes H \to H \otimes F_2(X)$ (drawn as a white vertex):
\begin{calign}
\includegraphics[scale=1]{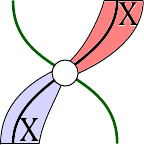}
\end{calign}
\end{itemize}
The unitary morphisms $\alpha_X$ must satisfy the following conditions:
\begin{itemize}
\item \emph{Naturality.} For every morphism $f:X \to Y$ in $\mathcal{C}$:
\begin{calign}\label{eq:pntmonnat}
\includegraphics[scale=0.7]{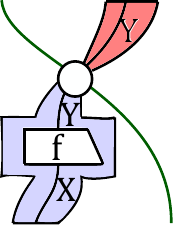}
~~=~~
\includegraphics[scale=0.7]{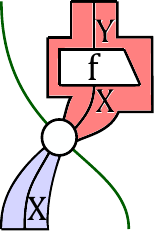}
\end{calign}
\item \emph{Monoidality.} 
\begin{itemize}
\item For every pair of objects $X,Y$ of $\mathcal{C}$:
\begin{calign}\label{eq:pntmonmon}
\includegraphics[scale=0.7]{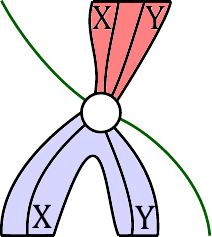}
~~=~~
\includegraphics[scale=0.7]{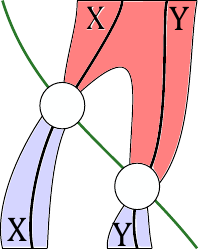}
\end{calign}
\item $\alpha_{\mathbbm{1}}$ is defined as follows:
\begin{calign}\label{eq:pntmonmonunit}
\includegraphics[scale=0.6]{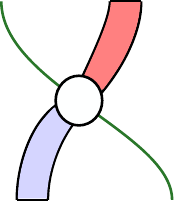}
~~=~~
\includegraphics[scale=0.7]{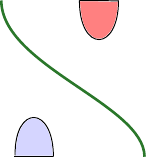}
\end{calign}
\end{itemize}
\end{itemize}
\end{definition}
\begin{remark}
Unitary pseudonatural transformations generalise the notion of \emph{unitary monoidal natural isomorphism}, which we recover when $H \cong \mathbbm{1}$.
\end{remark}
\begin{remark}
The diagrammatic calculus shows that pseudonatural transformation is a planar notion. The $H$-wire forms a boundary between two regions of the $\mathcal{D}$-plane, one in the image of $F$ and the other in the image of $G$. By pulling through the $H$-wire, morphisms from $\mathcal{C}$ can move between the two regions~\eqref{eq:pntmonnat}. 
\end{remark}
\noindent
UPTs $(\alpha,H): F_1 \to F_2$ and $(\beta,H'): F_2 \to F_3$ can be composed associatively to obtain a UPT $(\alpha \otimes \beta,H \otimes H'): F_1 \to F_3$ whose components $(\alpha \otimes \beta)_X$ are as follows (we colour the $H'$-wire orange, and the $F_3$-box brown):
\begin{calign}
\includegraphics[scale=1]{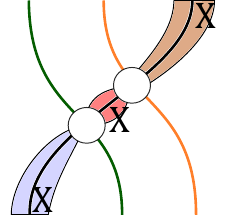}
\end{calign}
There are also morphisms between pseudonatural transformations, known as \emph{modifications}~\cite{Leinster1998}.
\begin{definition}
Let $(\alpha,H), (\beta,H'): F_1 \Rightarrow F_2$ be UPTs. (We colour the $H$-wire green and the $H'$-wire orange.) A \emph{modification} $f: \alpha \to \beta$  is a morphism $f: H \to H'$ satisfying the following equation for all components $\{\alpha_X,\beta_X\}$:
\begin{calign}\label{eq:uptmod}
\includegraphics[scale=1]{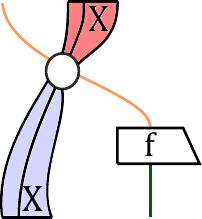}
~~=~~
\includegraphics[scale=1]{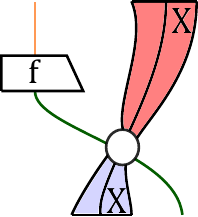}
\end{calign}
\end{definition}
\noindent
Modifications can themselves be composed horizontally and vertically --- vertical composition is composition of morphisms in $\mathcal{D}$, while horizontal composition is monoidal product of morphisms in $\mathcal{D}$. The dagger of a modification is also a modification. Altogether, the compositional structure is that of a dagger 2-category, i.e. a 2-category with a dagger on 2-cells which is compatible with horizontal composition. (For an explicit definition, see~\cite[Def. 2.5]{Heunen2016} or~\cite[Def. 2.13]{Verdon2020a}.)
\begin{definition}
Let $\mathcal{C},\mathcal{D}$ be pivotal dagger categories. The dagger 2-category $\Fun(\mathcal{C},\mathcal{D})$ is defined as follows:
\begin{itemize}
\item Objects: unitary monoidal functors $F_1,F_2,\dots,\cdot: \mathcal{C} \to \mathcal{D}$.
\item 1-morphisms: unitary pseudonatural transformations $\alpha,\beta,\dots: F_1 \to F_2$.
\item 2-morphisms: modifications $f, g,\dots: \alpha \to \beta$.
\end{itemize}
\end{definition}
Because we are able to assume that $\mathcal{C}$ and $\mathcal{D}$ are strict, $\Fun(\mathcal{C},\mathcal{D})$ is a strict 2-category.
\begin{remark}
Because $\Fun(\mathcal{C},\mathcal{D})$ is a dagger 2-category, the endomorphism categories $\End(F)$ of UPTs $F \to F$ and modifications for a given functor $F$ are monoidal dagger categories. 
\end{remark}

\subsubsection{Duals}
\begin{definition}
Let $(\alpha,H): F_1 \to F_2$ be a UPT. Then the \emph{dual} of $\alpha$ is a UPT $(\alpha^*,H^*): F_2 \to F_1$ whose components $\alpha^*_X$ are defined as follows:
\begin{calign}\label{eq:dualpnt}
\includegraphics{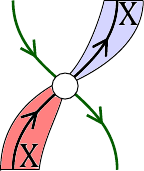}
:=
\includegraphics{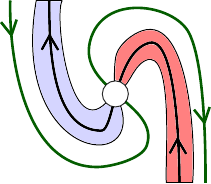}
=
\includegraphics[scale=0.7]{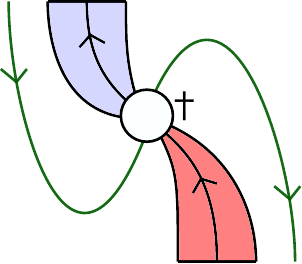}
\end{calign}
Here the second equality is by unitarity of $\alpha$~\cite[Prop. 5.2]{Verdon2020a}. We sometimes put a $*$ next to the vertex for the dual UPT $\alpha^*$ to distinguish it from $\alpha$, although the orientation of the $H$-wire is sufficient for this. 
\end{definition}
\noindent
For the composition of a UPT with its dual, the cups and caps of the dagger duality in $\mathcal{D}$ are modifications~\cite[Thm. 4.4, Cor. 5.6]{Verdon2020a} :
\begin{calign}\label{eq:cupcapmodsdualpnt}
\includegraphics{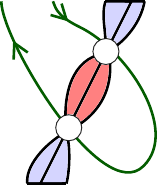}
~~=~~
\includegraphics{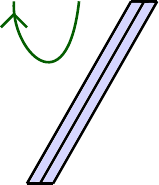}
&
\includegraphics{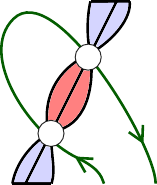}
~~=~~
\includegraphics{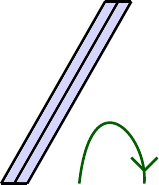}
\\
\includegraphics{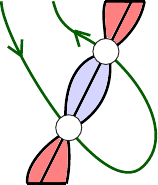}
~~=~~
\includegraphics{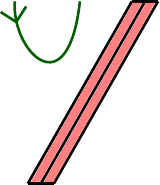}
&
\includegraphics{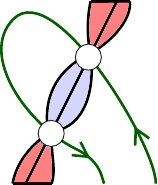}
~~=~~
\includegraphics{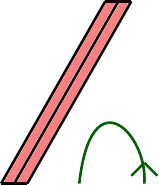}
\end{calign}

\section{UPTs and Hopf-Galois theory}\label{sec:hopfgal}
The Tannaka duality relating $C^*$-tensor categories to compact quantum group algebras extends to the characterisation of their fibre functors. For a compact quantum group $G$, the algebraic objects corresponding to fibre functors on $\Rep(G)$ are \emph{Hopf-Galois objects} for the CQG algebra $A_G$, also known as \emph{noncommutative torsors}. In this section we will show that UPTs between fibre functors can be classified in terms of the finite-dimensional representation theory of these Hopf-Galois objects.

\subsection{Background on Hopf-Galois theory}\label{sec:hopfgalbackground}
We now review the basics of Hopf-Galois theory for compact quantum groups, based on~\cite{Bichon1999}.
\begin{definition}
Let $A$ be a CQG algebra, and $Z$ a $*$-algebra. We say that a left corepresentation $\delta: Z \to A \otimes Z$ is a \emph{left coaction} if $\delta$ is a $*$-homomorphism. \ignore{We call $Z$ a \emph{right $A$-comodule algebra}.} \ignore{Left coactions \ignore{and comodule algebras} are defined similarly.
}
\end{definition}
\ignore{
The following definition starts from the observation that a space $X$ is a torsor for a compact group if and only if the following map is a bijection:
\begin{calign}
G \times X \to X \times X \\
(g,x) \mapsto (g \cdot x,x)
\end{calign}}
\begin{definition}{\cite[Def. 4.1.1]{Bichon1999}}
Let $A$ be a CQG algebra, and let $Z$ be an $*$-algebra with a left $A$-coaction. We say that $Z$ is a \emph{left $A$-Hopf-Galois object} if the following linear map is a bijection: 
$$
( \id_A \otimes m_Z ) \circ ( \delta \otimes \id_Z) : Z \otimes Z \to A \otimes Z
$$ 
Right $A$-Hopf-Galois objects may be defined similarly. For two CQG algebras $A_1,A_2$, if $Z$ is a left $A_1$- and a right $A_2$-Hopf Galois object, we say that it is an \emph{$A_1$-$A_2$-bi-Hopf-Galois object}.
\end{definition}
\noindent
From any left Hopf-Galois object for $A_G$ it is possible to construct a unitary fibre functor on $\Rep(G)$.  
\begin{construction}[{\cite[Prop. 4.3.1]{Bichon1999}}]\label{constr:galtofib}
Let $G$ be a compact quantum group, and $(Z,\delta_Z)$ a left Hopf-Galois object for $A_G$. Then a unitary fibre functor $F_Z: \Rep(G) \to \Hilb$ may be constructed, part of whose definition is as follows:
\begin{itemize}
\item \emph{On objects.} For any corepresentation $(V,\delta_V)$ of $A_G$, as a vector space $F(V)= V \wedge Z$, where $V \wedge Z$ is the equaliser of the double arrow:
$$
\delta_V \otimes \id_Z, \id_V \otimes \delta_Z: V \otimes Z \rightrightarrows V \otimes A_G \otimes Z
$$ 
\item \emph{On morphisms.} For any intertwiner $f: V \to W$, $F(f) = f \wedge \id_Z: V \wedge Z \to W \wedge Z$, where $f \wedge \id_Z$ is the map on equalisers induced by $f \otimes \id_Z$.
\end{itemize}
\end{construction}
\begin{remark}
The Hopf-Galois object corresponding to the canonical fibre functor is $A_G$ itself.
\end{remark}
\noindent
Likewise, from any unitary fibre functor on $\Rep(G)$ one can construct an left $A_G$-Hopf-Galois object.
\begin{construction}[{See~\cite[Prop. 4.3.3]{Bichon1999} for part of the proof}]\label{constr:fibtogal}
Let $\mathcal{C}$ be a rigid $C^*$-tensor category with conjugates, let $F_1,F_2: \mathcal{C} \to \Hilb$ be two unitary fibre functors, and let $G_1,G_2$ be the two compact quantum groups obtained by Tannaka reconstruction (Theorem~\ref{thm:tannakaduality}). The predual $\Hom^{\vee}(F_1,F_2)$ of the vector space $\Hom(F_1,F_2)$ of natural transformations $F_1 \to F_2$ has the structure of an $A_{G_2}$-$A_{G_1}$-bi-Hopf-Galois object. 
\end{construction}
\noindent
These constructions lead to a classification of unitary fibre functors on $\Rep(G)$. Let $\Gal(A)$ be the category whose objects are left $A$-Hopf-Galois objects and whose morphisms are $*$-homomorphisms intertwining the $A$-coactions. Let $\Fib(G)$ be the category whose objects are unitary fibre functors on $\Rep(G)$ and whose morphisms are unitary monoidal natural isomorphisms.
\begin{theorem}[{\cite[Thm 4.3.4]{Bichon1999}}]
Constructions~\ref{constr:galtofib}  and~\ref{constr:fibtogal} yield an equivalence of categories $\Gal(A) \simeq \Fib(G)$.
\end{theorem}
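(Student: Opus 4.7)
The plan is to upgrade Constructions~\ref{constr:galtofib} and~\ref{constr:fibtogal} to functors $\Phi: \Gal(A) \to \Fib(G)$ and $\Psi: \Fib(G) \to \Gal(A)$, and then show they are mutually quasi-inverse. For functoriality of $\Phi$, given a morphism $\phi: Z_1 \to Z_2$ of left $A$-Hopf-Galois objects, for each corepresentation $(V,\delta_V)$ the linear map $\id_V \otimes \phi : V \otimes Z_1 \to V \otimes Z_2$ intertwines the parallel pairs of arrows whose equalisers define $F_{Z_1}(V)$ and $F_{Z_2}(V)$, inducing a map $F_{Z_1}(V) \to F_{Z_2}(V)$ natural in $V$. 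Monoidality of the resulting transformation follows from $\phi$ being an algebra map, and unitarity from $\phi$ being a $*$-map together with the inner product on the equalisers. Functoriality of $\Psi$ is dual: a unitary monoidal natural isomorphism $\eta: U_1 \to U_2$ dualises to a $*$-homomorphism of preduals intertwining the $A$-coactions.

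The core of the proof is verifying that $\Phi$ and $\Psi$ are quasi-inverse. Starting from $Z \in \Gal(A)$ and setting $Z' := \Psi\Phi(Z) = \Hom^{\vee}(F_{\mathrm{can}}, F_Z)$, I would use the cosemisimplicity of $A$ (Peter--Weyl for compact quantum groups) to write
\[
Z \;\cong\; \bigoplus_{\alpha} V_\alpha^* \otimes F_Z(V_\alpha),
\]
where $\{V_\alpha\}$ ranges over a set of representatives of isomorphism classes of irreducible corepresentations of $A$, and similarly for $Z'$. The map $Z \to Z'$ assembled from the canonical inclusions $F_Z(V_\alpha) \hookrightarrow V_\alpha \otimes Z$ is then seen to be a $*$-isomorphism intertwining the $A$-coactions. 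In the other direction, given a fibre functor $U$ with $Z_U := \Psi(U)$, a natural isomorphism $F_{Z_U}(V) \cong U(V)$ is obtained from the same decomposition read in reverse, and its monoidality and unitarity follow from those of the Hopf-Galois structure on $Z_U$ produced by Construction~\ref{constr:fibtogal}.

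The main obstacle is showing that the bijectivity condition in the definition of a Hopf-Galois object corresponds precisely to the monoidality of $F_Z$, and dually that monoidality of $U$ translates into the Galois bijection for $Z_U$. Concretely, the multiplication on $Z$ must match, under the decomposition above, the multiplicator of $F_Z$ assembled from the universal property of the equalisers $V \wedge Z$; verifying this requires a careful diagrammatic manipulation using the comultiplication $\Delta$ of $A$ and the cosemisimple decomposition, and leans on the fact that the canonical pairing between $\Rep(G)$ and $A$ is nondegenerate. Once this is established, the remaining checks — that the isomorphisms $Z \cong \Psi\Phi(Z)$ and $U \cong \Phi\Psi(U)$ are natural in $Z$ and $U$ respectively, and that $\Phi$ and $\Psi$ are compatible with the $*$-homomorphism and unitary monoidal natural isomorphism structures on morphisms — are essentially formal.
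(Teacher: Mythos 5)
The paper gives no proof of this statement: it is imported verbatim from Bichon \cite[Thm 4.3.4]{Bichon1999}, so there is no internal argument to compare yours against. Your outline is essentially the standard proof from that reference (and the Ulbrich--Schauenburg tradition): cotensor-product fibre functors $V \mapsto V \wedge Z$, the Peter--Weyl/cosemisimple decomposition of $Z$ into isotypic components indexed by irreducibles, and the identification of the Galois bijection with strong monoidality of the induced functor. You correctly isolate the one genuinely nontrivial step --- that bijectivity of $(\id_A \otimes m_Z)\circ(\delta\otimes\id_Z)$ is equivalent to invertibility of the multiplicators of $F_Z$, and conversely --- though you defer it rather than prove it. Two small bookkeeping points to watch if you write this out: with the paper's convention $\Hom^{\vee}(F_1,F_2) = \bigoplus_V \Hom(F_2(V),F_1(V))/\mathcal{N}$ the predual is an $A_{G_2}$-$A_{G_1}$-bi-Galois object, so to land in $\Gal(A)$ (left $A$-Galois objects) you must put the canonical fibre functor in the correct slot, which also fixes whether your decomposition reads $\bigoplus_\alpha V_\alpha^* \otimes F_Z(V_\alpha)$ or $\bigoplus_\alpha V_\alpha \otimes F_Z(V_\alpha)^*$; and the unitarity of the induced natural transformation requires saying where the inner product on the equaliser $V \wedge Z \subset V \otimes Z$ comes from (the canonical positive faithful functional on $Z$), which you currently assume silently.
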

\noindent
We also note the following fact characterising the spectrum of a bi-Hopf-Galois object, which Theorem~\ref{thm:higherhopfgal} will generalise to all finite-dimensional $*$-representations.
\begin{proposition}[{For part of the proof see \cite[Thm 4.4.1]{Bichon1999}}]
Let $\mathcal{C}$ be a $C^*$-tensor category with conjugates, let $F_1,F_2: \mathcal{C} \to \Hilb$ be fibre functors, and let $Z$ be the corresponding $A_{G_2}$-$A_{G_1}$-bi-Hopf-Galois object. Then there is a bijection between the set of unitary monoidal natural isomorphisms $F_1 \to F_2$ and the set of 1-dimensional $*$-representations of $Z$.
\end{proposition}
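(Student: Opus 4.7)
The approach is to exhibit the bijection from the defining property of $Z$ as (a space whose linear dual is) the space of natural transformations $F_1 \to F_2$, and then verify that the $*$-algebra structure on $Z$ captures exactly the conditions of monoidality and unitarity. The portion of the statement identifying algebra homomorphisms $Z \to \mathbb{C}$ with monoidal natural transformations $F_1 \Rightarrow F_2$ is already contained in \cite[Thm 4.4.1]{Bichon1999}; what remains is to match the $*$-involution on $Z$ with the unitarity condition on the corresponding natural transformation.

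First, by Construction~\ref{constr:fibtogal}, $Z = \Hom^{\vee}(F_1,F_2)$ is defined so that linear functionals $\phi \colon Z \to \mathbb{C}$ are in canonical bijection with natural transformations $\eta_\phi \colon F_1 \Rightarrow F_2$. Under this correspondence, the multiplication $m_Z$ and unit $u_Z$ of $Z$ are constructed by dualizing the monoidal structure on $\Hom(F_1,F_2)$ inherited from the multiplicators and unitors of $F_1, F_2$, so that $\phi$ is a unital algebra homomorphism if and only if $\eta_\phi$ satisfies the two monoidal natural transformation equations, namely~\eqref{eq:pntmonmon} and~\eqref{eq:pntmonmonunit} in the restricted case $H \cong \mathbbm{1}$. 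Since monoidality forces every component $\eta_{\phi,X}$ to be invertible, the resulting natural transformation is automatically an isomorphism.

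Second, one must identify the $*$-involution on $Z$ with the antilinear involution on $\Hom(F_1,F_2)$ given by $\eta \mapsto \eta^{\sharp}$, where $\eta^{\sharp}_X := (\eta_{X})^{\dagger}$ after transport across the isomorphism $F_i(X)^* \cong F_i(X^*)$ induced by the conjugates of $\mathcal{C}$ and the dagger duality of $\Hilb$ (Proposition~\ref{prop:indduals} together with~\eqref{eq:dualscompare}). The $*$-involution on $Z$ is the transpose of $\eta \mapsto \eta^{\sharp}$. A direct calculation then shows that a unital algebra homomorphism $\phi$ satisfies $\phi(z^*) = \overline{\phi(z)}$ for every $z\in Z$ if and only if $\eta_{\phi,X}^{\dagger} = \eta_{\phi,X}^{-1}$ for every object $X$ of $\mathcal{C}$, that is, $\eta_\phi$ is a unitary natural transformation.

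The main obstacle is making the $*$-algebra structure on $Z$ explicit enough that the identification with unitarity of $\eta$ can be verified cleanly; this requires threading the standard solutions to the conjugate equations (Theorem~\ref{thm:c*tenspivdag}) through the predual construction of $Z$ and using their compatibility with the induced duals in $\Hilb$. As a sanity check, in the case $F_1 = F_2 = F$ with $Z = A_G$ for $G$ the reconstructed compact quantum group, the statement reduces to the well-known fact that $*$-characters of $A_G$ correspond to unitary monoidal natural automorphisms of $F$, i.e.~to the classical points of $G$.
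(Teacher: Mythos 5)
Your proposal is correct and follows essentially the same route as the paper: the paper defers this statement to Bichon and to the one\-dimensional case of Theorem~\ref{thm:higherhopfgal}, whose proof pairs a generator $[V,v]$ with the component $\alpha_V$ exactly as your functional $\phi_\eta([V,v]) = \Tr(\eta_V \circ v)$ does, and matches multiplicativity, unitality and involution\-preservation of the character against monoidality, the unit axiom~\eqref{eq:pntmonmonunit} and unitarity of the natural transformation in the same way (the involution check being the $H=\mathbb{C}$ case of the argument using Proposition~\ref{prop:relateduals} to compare $F_2(V^*)^*$ with $F_2(V)$). Your remark that monoidality already forces invertibility of the components is true on a rigid category but is not needed, since unitarity supplies the inverse directly.
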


\paragraph{A generators-and-relations description of $Z$.}
We will need the following generators-and-relations description of the bi-Hopf-Galois object $Z=\Hom^{\vee}(F_1,F_2)$ linking two fibre functors $F_1,F_2: \mathcal{C} \to \Hilb$, taken from~\cite{Bichon1999,Joyal1991}. We assume that $\mathcal{C}$ is a small category.

Consider the vector space $\bigoplus_{V \in \textrm{Obj}(\mathcal{C})} \Hom(F_2(V),F_1(V))$, where the sum is taken over all objects of $\mathcal{C}$. Let $\mathcal{N}$ be the following subspace:
\begin{equation}\label{eq:Zquotientsubspace}
\left\langle F_1(f) \circ v - v \circ F_2(f)~~~|~~~ \forall~W \in\textrm{Obj}(\mathcal{C}), ~\forall~f \in \Hom_{\mathcal{C}}(V,W), ~\forall~v \in \Hom(F_2(W),F_1(V)) \right\rangle
\end{equation}
Then, as a vector space:
\begin{equation}
\Hom^{\vee}(F_1,F_2) := \bigoplus_{V \in \Obj(\mathcal{C})} \Hom(F_2(V),F_1(V)) / \mathcal{N}
\end{equation}
We denote $v \in \Hom(F_2(V),F_1(V))$ by $[V,v]$  as an element of $\Hom^{\vee}(F_1,F_2)$, which is clearly generated as a vector space by all the $[V,v]$ up to the relations~\eqref{eq:Zquotientsubspace}. The algebra structure is defined as follows on the generators, where we draw $F_1$ with a blue box and $F_2$ with a red box:
\begin{calign}\label{eq:Zmultdef}
\includegraphics{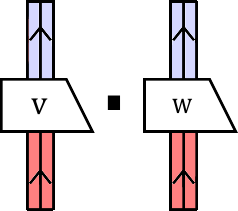}
~~=~~
\includegraphics{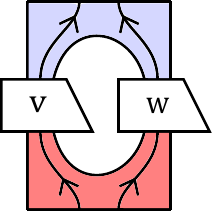}
\\
[V,v] \cdot [W,w] = [V \otimes W, m_{F_1} \circ (v \otimes w) \circ m_{F_2}^{\dagger}]
\end{calign}
\ignore{
Here $m_{F_1}$ is the multiplicator of the monoidal functor $F_1$.} The unit of the algebra is $[\mathbbm{1},u_1 \circ u_2^{\dagger}]$, where $u_i$ is the unitor for $F_i$.

The involution is defined as follows on the generators:
\begin{calign}\label{eq:Zinvoldef}
\includegraphics{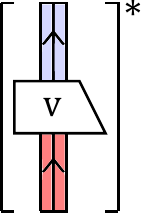}
~~=~~
\includegraphics{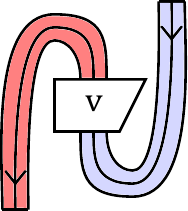}
\\
[V,v]^* = [V^*,(v^{\dagger})^{T}]
\end{calign}

\ignore{
$\Hom^{\vee}(F_1,F_2)$ is the predual of $\Hom(F_1,F_2)$; for any natural transformation $u: F_1 \to F_2$ we obtain a linear form $f_u: \Hom^{\vee}(F_1,F_2) \to \mathbb{C}$, defined on the generators as~\cite{}:
\begin{calign}
\includegraphics{Figures/svg/hopfgalois/linformdef.png}
\\
f_u([V,v]) = \Tr(u_V \circ v)
\end{calign}
For unitary monoidal natural transformations $u$, the map $u \mapsto f_u$
lands in $\Hom_*(\Hom^{\vee}(F_1,F_2),\mathbb{C})$, whence Proposition~\ref{}. The form~\eqref{} will be generalised by~\eqref{}.
}

\subsection{UPTs as $*$-representations of Hopf-Galois objects}
\label{sec:higherhopfgal}
We have just recalled that, for a compact quantum group $G$, the category $\Fib(G)$ of fibre functors on the pivotal dagger category $\Rep(G)$ and unitary monoidal natural isomorphisms is characterised by Hopf-Galois theory. In~\cite{Verdon2020a} we showed that $\Fib(G)$ can be generalised to a dagger 2-category $\Fun(\Rep(G),\Hilb)$ of fibre functors, unitary pseudonatural transformations and modifications. We will now see that this dagger 2-category is also characterised by Hopf-Galois theory: UPTs correspond to finite-dimensional $*$-representations of bi-Hopf-Galois objects, just as unitary monoidal natural transformations correspond to their one-dimensional $*$-representations.

Let $\mathcal{C}$ be a $C^*$-tensor category with conjugates, let $F_1,F_2: \mathcal{C} \to \Hilb$ be fibre functors, and let $Z$ be the corresponding $A_{G_2}$-$A_{G_1}$-bi-Hopf-Galois object. We will first show that for every unitary pseudonatural transformation $(\alpha,H): F_1 \to F_2$, one can construct a $*$-representation $\pi_{\alpha}:Z \to B(H)$. 

\begin{construction}\label{constr:alphatopi}
Recall the generators-and-relations description of $Z$ from Section~\ref{sec:hopfgalbackground}. We define the following map $\pi_{\alpha}: \oplus_{V \in \Obj(\mathcal{C})} \Hom(F_2(V),F_1(V)) \to H \otimes H^* \cong B(H)$ by its action on generators $[V,v]$:
\begin{calign}\nonumber
\includegraphics[scale=1]{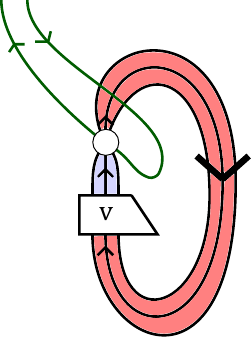}
\\
\pi_{\alpha}([V,v])
\end{calign}
Note that here we are taking the trace with respect to the dual $F_2(V)^*$ in $\Hilb$ (see~\eqref{eq:dualscompare}).
\end{construction}
\begin{proposition}
Construction~\ref{constr:alphatopi}  defines a $*$-representation $\pi_{\alpha}: Z \to B(H)$. Moreover, any modification $f: (\alpha,H) \to (\beta,H')$ induces an intertwiner $H \to H'$.
\end{proposition}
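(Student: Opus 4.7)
The plan is to verify in turn that $\pi_\alpha$ descends to the quotient defining $Z$, is multiplicative, preserves the unit, and respects the involution, and then to check the intertwiner claim for modifications. Each of these should follow from the corresponding defining diagrammatic equation for $\alpha$ (naturality, monoidality, unitality, unitarity) or for $f$ (the modification equation~\eqref{eq:uptmod}), combined with the pivotal dagger graphical calculus on $\Hilb$ and the explicit formula~\eqref{eq:endoalgebra} for the endomorphism $*$-algebra on $H \otimes H^*$.

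First I would show that $\pi_\alpha$ annihilates the subspace $\mathcal{N}$ of~\eqref{eq:Zquotientsubspace}. For $f : V \to W$ in $\mathcal{C}$ and $v \in \Hom(F_2(W), F_1(V))$, the diagram for $\pi_\alpha([W, F_1(f) \circ v])$ has $F_1(f)$ sitting on the traced $F_1$-wire adjacent to $\alpha_W$; sliding it through the vertex by the naturality equation~\eqref{eq:pntmonnat} turns it into $F_2(f)$ on the $F_2$-wire and replaces $\alpha_W$ by $\alpha_V$, producing the diagram for $\pi_\alpha([V, v \circ F_2(f)])$. Multiplicativity is then the assertion that composing two such diagrams along the $H$-wire via the multiplication of~\eqref{eq:endoalgebra} produces two stacked $\alpha$-vertices; fusing them with~\eqref{eq:pntmonmon} yields a single $\alpha_{V \otimes W}$ together with the multiplicators $m_{F_1}$ and $m_{F_2}^\dagger$, which is exactly $\pi_\alpha([V \otimes W, m_{F_1} \circ (v \otimes w) \circ m_{F_2}^\dagger])$. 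Unitality is immediate from~\eqref{eq:pntmonmonunit}: the diagram for $\pi_\alpha([\mathbbm{1}, u_1 \circ u_2^\dagger])$ collapses, via that equation, to the cup-cap unit of the endomorphism algebra.

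For the involution, I would use that $\pi_\alpha([V,v])^\ast$ in $H \otimes H^\ast$ is obtained (by~\eqref{eq:endoalgebra}) by reflecting the defining diagram in a horizontal axis and taking the dagger of each label. Applying the dagger to $\alpha_V$ and rewriting using unitarity, as in the definition of the dual UPT $\alpha^*_V$ in~\eqref{eq:dualpnt}, turns the $F_i(V)$-wires into $F_i(V^\ast)$-wires up to the pivotal sliding of Proposition~\ref{prop:sliding}, while $v$ is simultaneously transformed into $(v^\dagger)^T$ by the standard manipulations with cups and caps in $\Hilb$. Topological rearrangement using Theorem~\ref{thm:graphcalcpiv} then identifies the resulting picture with the diagram for $\pi_\alpha([V^*, (v^\dagger)^T])$, which is $\pi_\alpha([V,v]^\ast)$ by~\eqref{eq:Zinvoldef}.

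Finally, for a modification $f : (\alpha, H) \to (\beta, H')$, inserting $f$ on the $H$-wire below (resp.\ above) the $\alpha_V$-vertex in the diagram for $\pi_\alpha([V,v])$ and sliding through using~\eqref{eq:uptmod} replaces $\alpha_V$ by $\beta_V$ and moves $f$ to the other side, giving the intertwiner equation $f \circ \pi_\alpha([V,v]) = \pi_\beta([V,v]) \circ f$ once we view elements of $H \otimes H^\ast$ as operators on $H$ in the usual way. The most delicate step is the $\ast$-homomorphism check, where one must line up the $\ast$-structure on the endomorphism algebra with the transpose $(v^\dagger)^T$; the rest are essentially routine applications of the UPT axioms inside the $\Hilb$-diagram.
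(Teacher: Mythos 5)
Your proposal is correct and follows essentially the same route as the paper's proof: descent to the quotient via naturality of $\alpha$ and trace-sliding, multiplicativity via monoidality~\eqref{eq:pntmonmon}, unitality via~\eqref{eq:pntmonmonunit}, the involution via unitarity/the dual UPT~\eqref{eq:dualpnt} together with the comparison of the induced dual $F_2(V^*)$ with the $\Hilb$-dual (Proposition~\ref{prop:relateduals}), and the intertwiner property by sliding $f$ through the vertex using~\eqref{eq:uptmod}. You correctly flag the involution step as the delicate one; the paper resolves it exactly as you indicate, by inserting the canonical isomorphism relating the two caps and cancelling it around the loop.
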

\begin{proof}
We first show that $\pi_{\alpha}$ induces a well-defined map on $Z= \oplus_{V \in \Obj(\mathcal{C})} \Hom(F_2(V),F_1(V)) / \mathcal{N}$, where $\mathcal{N}$ is the subspace defined in~\eqref{eq:Zquotientsubspace}. For this it is sufficient to show that, for any $f: V \to W$ in $\mathcal{C}$ and $x: F_2(W) \to F_1(V)$ in $\mathcal{D}$, we have $ \pi_{\alpha}([V,x \circ F_2(f)]) = \pi_{\alpha}([W,F_1(f) \circ x])$:
\begin{calign}
\includegraphics[scale=.8]{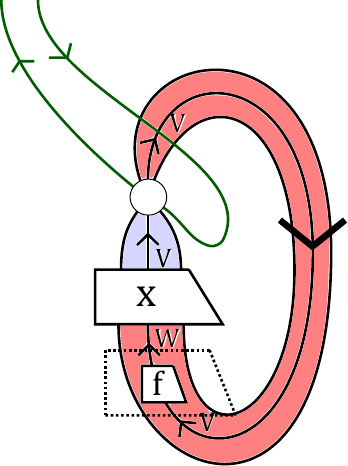}
~~=~~\ignore{
\includegraphics[scale=.8]{Figures/svg/higherhopfgal/maptobhwd2.pdf}
~~=~~}
\includegraphics[scale=.8]{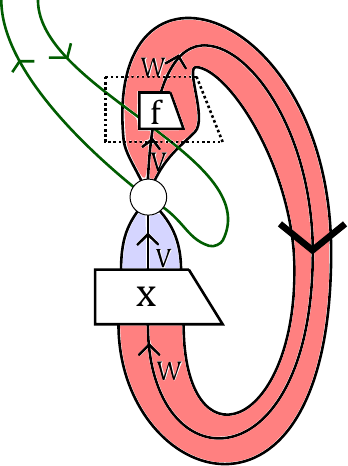}
~~=~~
\includegraphics[scale=.8]{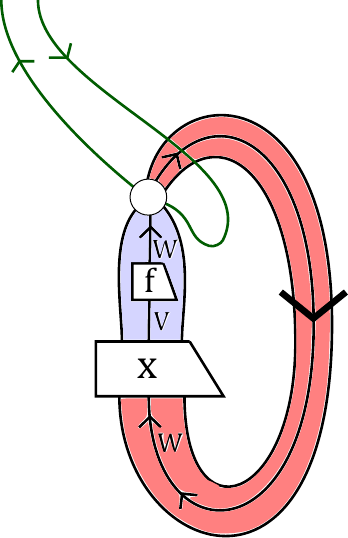}
\end{calign}
Here for the first equality we slide the morphism $F_2(f): F_2(V) \to F_2(W)$ in the dotted box around the loop using the graphical calculus of $\Hilb$ (Theorem~\ref{thm:graphcalcpiv}); the second equality is by naturality of $\alpha$. We therefore indeed have a map $Z \to B(H)$, which we now show is a $*$-homomorphism. 
\begin{itemize}
\item \emph{Multiplicative.} Recalling the definition of the multiplication~\eqref{eq:Zmultdef} of $Z$, we show $\pi_{\alpha}([V,v] \cdot [W,w]) = \pi_{\alpha}(v) \pi_{\alpha}(w)$:
\begin{calign}
\includegraphics[scale=.8]{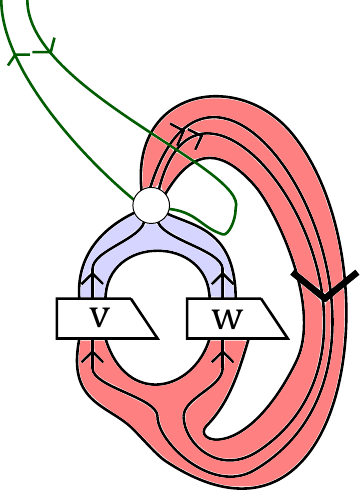}
~~=~~
\includegraphics[scale=.8]{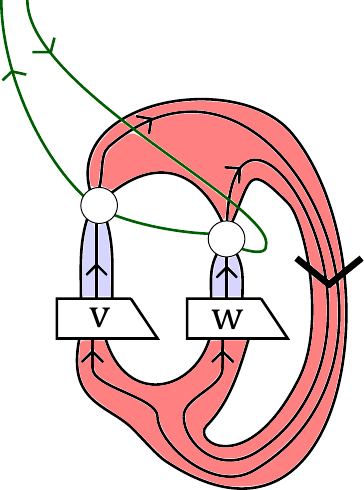}
~~=~~
\includegraphics[scale=.8]{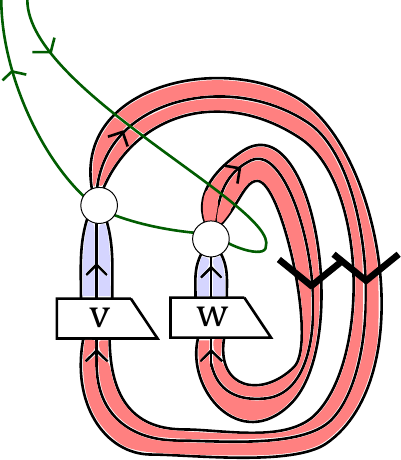}
~~=~~
\includegraphics[scale=.8]{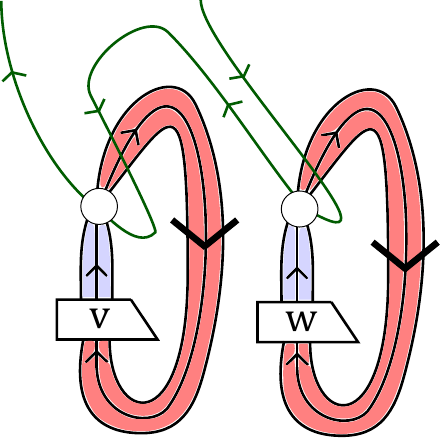}
\end{calign}
Here the first equality is by monoidality of $\alpha$, and for the second we slide the multiplicator $F_2(V \otimes W) \to F_2(V) \otimes F_2(W)$ around the loop and use unitarity of $F_2$ to cancel it and its inverse. \ignore{Note that here we have assumed the dual of $F_2(V) \otimes F_2(W)$ in $\Hilb$ is $F_2(W)^* \otimes F_2(V)^*$; to be pedantic we could have inserted a unitary isomorphism and its inverse.} The third equality is by the graphical calculus of the symmetric monoidal category $\Hilb$. In the last diagram we recognise the multiplication of $B(H) \cong H \otimes H^*$~\eqref{eq:endoalgebra}.
\item \emph{Unital.} Recalling that the unit of $Z$ is $[\mathbbm{1},u_{1} \circ u^{\dagger}_{2}]$, we observe:
\begin{calign}
\includegraphics[scale=.8]{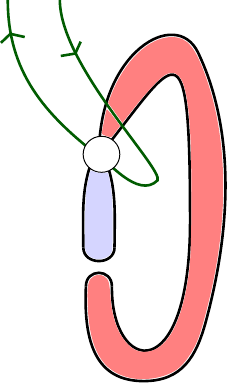}
~~=~~
\includegraphics[scale=.8]{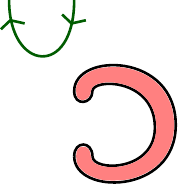}
~~=~~
\includegraphics[scale=.8]{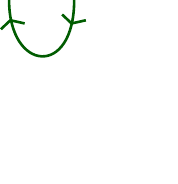}
\end{calign}
Here the first equality is by monoidality of $\alpha$; for the second we slide the unitor around the loop and use unitarity of $F_2$ to cancel it with its inverse. In the final diagram we recognise the unit of $B(H) \cong H \otimes H^*$~\eqref{eq:endoalgebra}.
\item \emph{Involution-preserving.} Recalling the definition of the involution~\eqref{eq:Zinvoldef} on generators $[V,v]$, we have the following equations. We use a large double upwards arrow for the dual $F_2(V^*)^*$ in $\Hilb$, and a large downwards arrow for the dual $F_2(V)^*$ in $\Hilb$:
\begin{calign}\label{eq:maptobhstareq1}
\includegraphics[scale=.8]{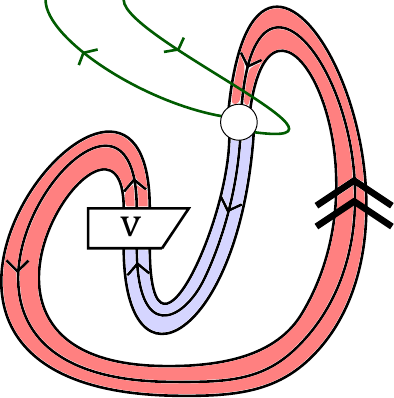}
~~=~~
\includegraphics[scale=.8]{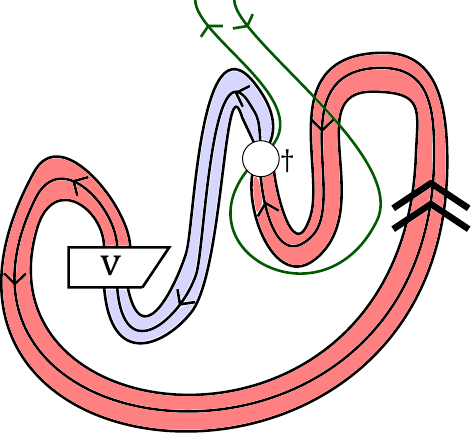}
~~=~~
\includegraphics[scale=.8]{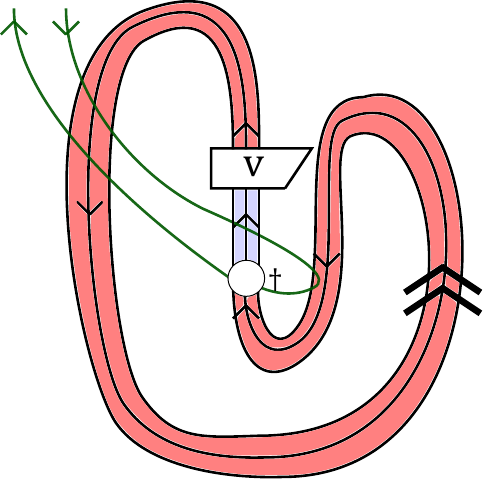}
\end{calign}
Here the first equality is by unitarity of $\alpha$~\eqref{eq:dualpnt}, and the second equality is by a snake equation for the induced duality on $F_1(V)$. We now observe that, since $F_2(V^*)$ is dual to $F_2(V)$ in $\Hilb$ by the induced cup and cap, by Proposition~\ref{prop:relateduals} there is an invertible morphism $f: F_2(V^*)^* \to F_2(V)$ relating the two caps:
\begin{calign}\label{eq:maptobhstarcap}
\includegraphics[scale=.8]{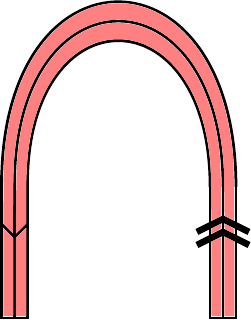}
~~=~~
\includegraphics[scale=.8]{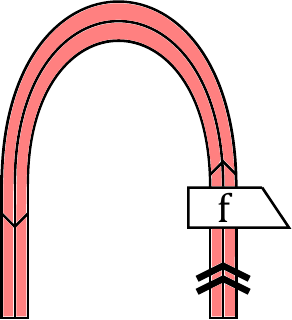}
\end{calign} 
We now continue~\eqref{eq:maptobhstareq1}:
\begin{calign}
\includegraphics[scale=.8]{Figures/svg/higherhopfgal/maptobhstar3.pdf}
~~=~~
\includegraphics[scale=.8]{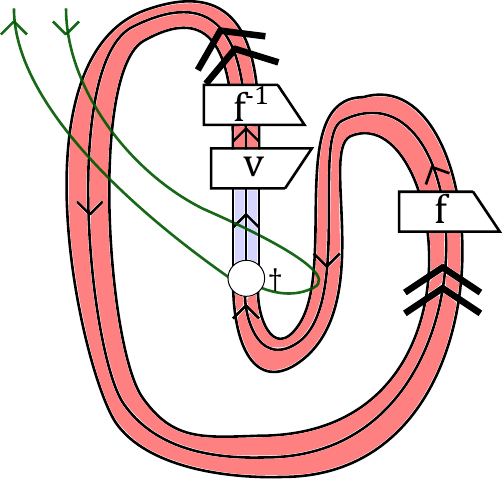}
~~=~~
\includegraphics[scale=.8]{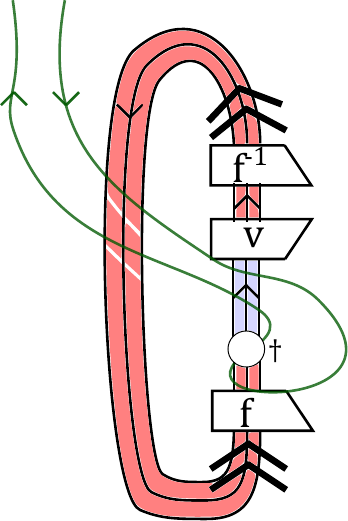}
\\
=~~
\includegraphics[scale=.8]{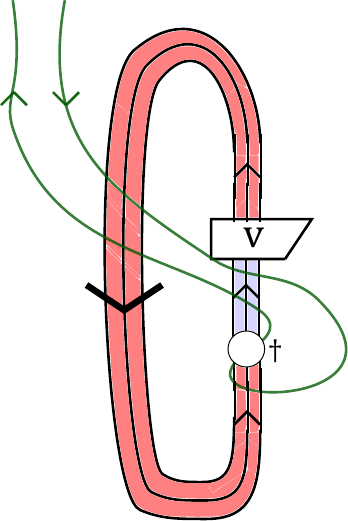}
~~=~~
\includegraphics[scale=.8]{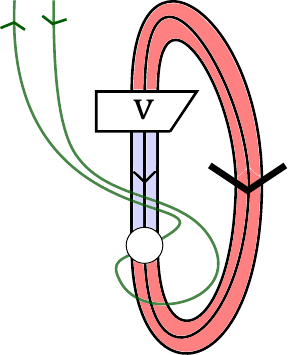}
~~=~~
\includegraphics[scale=.8]{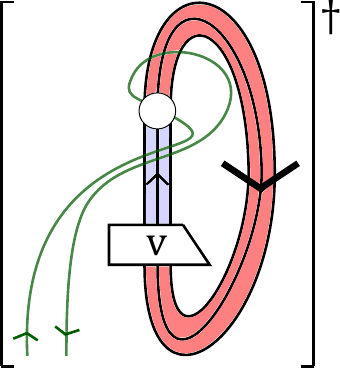}
~~=~~
\includegraphics[scale=.8]{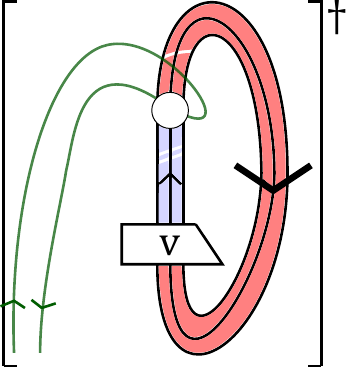}
\end{calign}
For the first equality we used~\eqref{eq:maptobhstarcap}; for the second we used a snake equation for the induced dual on $F_2(V)$; for the third we slid $f^{-1}$ around the loop and cancelled it with $f$; for the fourth we used the graphical calculus of $\Hilb$ to pull the loop around; the fifth equality is by the `horizontal reflection' calculus of the dagger in $\Hilb$; and for the sixth equality we used the graphical calculus of $\Hilb$ to untangle the $H$-wires. In the final diagram we observe the involution of $B(H)$~\eqref{eq:endoalgebra}.
\end{itemize}
Finally, every modification clearly induces an intertwiner:
\begin{calign}
\includegraphics[scale=1]{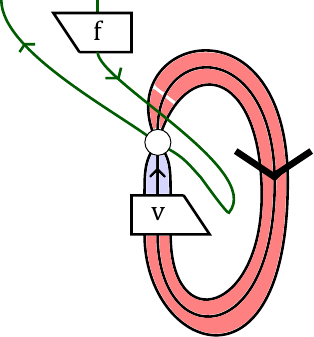}
~~=~~
\includegraphics[scale=1]{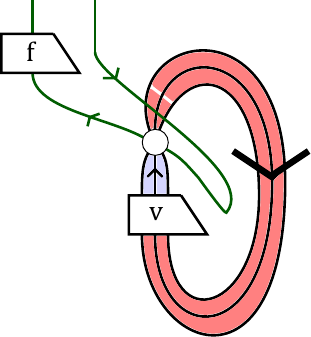}
\end{calign}
\end{proof}
\noindent
We will now produce a construction in the other direction, which gives a UPT $F_1 \to F_2$ for any $*$-representation of $Z$.
\begin{construction}\label{constr:pitoalpha}
Let $\pi: Z \to B(H)$ be a $*$-representation. Recall that $Z \cong \oplus_{V \in \Obj(\mathcal{C})} F_1(V) \otimes F_2(V)^*/\mathcal{N}$. For every object $V$ of $\mathcal{C}$ we define a map $U_V: F_1(V) \to F_2(V) \otimes Z$ as follows (c.f.~\cite[Thm 2.3.11]{Neshveyev2013}):
\begin{calign}\label{eq:zcorep}
\includegraphics[scale=.8]{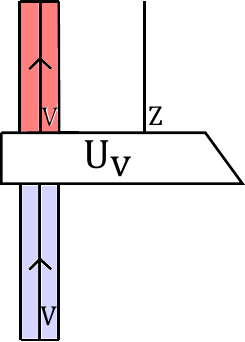}
~~:=~~
\includegraphics[scale=.8]{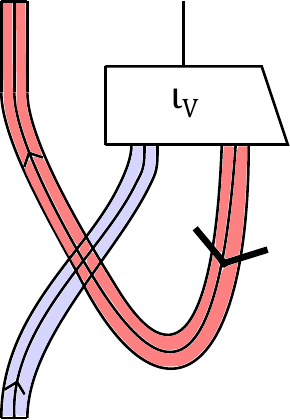}
\end{calign}
Here $\iota_V: F_1(V) \otimes F_2(V)^* \to Z$ is the canonical map induced by the direct sum.

We then define a map $(\alpha_{\pi})_V: F_1(V) \otimes H \to H \otimes F_2(V) $ (here $\pi: Z \to B(H) \cong H \otimes H^*$ is represented by a white vertex):
\begin{calign}
\includegraphics[scale=.8]{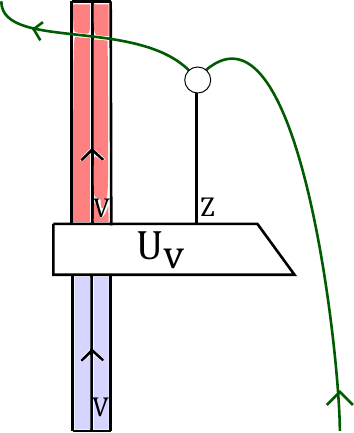}
\end{calign}
\end{construction}
\begin{remark}\label{rem:zwire}
The diagrams with a $Z$-wire are to be interpreted in the symmetric monoidal category $\Vec$ of possibly infinite-dimensional complex vector spaces, since $Z$ is not generally a finite-dimensional Hilbert space. We use more structure (wire bending, dagger, etc.) only on the parts of the diagram which come from $\Hilb$. 
\end{remark}
\begin{proposition}
Construction~\ref{constr:pitoalpha} defines a UPT $(\alpha_{\pi},H): F_1 \to F_2$. Moreover, any intertwiner $f: (\pi,H) \to (\pi',H')$ induces a modification $f: (\alpha_{\pi},H) \to (\alpha_{\pi'},H')$.
\end{proposition}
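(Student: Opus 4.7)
The plan is to reduce everything to structural properties of the maps $U_V: F_1(V) \to F_2(V) \otimes Z$ defined by~\eqref{eq:zcorep}. Concretely, I want to show that the assignment $V \mapsto U_V$ behaves like a unitary ``corepresentation'' of the bi-Hopf-Galois object $Z$, and then argue that applying the $*$-representation $\pi: Z \to B(H)$ to this datum preserves every required axiom. Since $\pi$ is a $*$-homomorphism into $B(H) \cong H \otimes H^*$, each axiom for $\alpha_\pi$ should factor through a ``universal'' axiom for $U$ living in $F_2(V) \otimes Z$, which can then be checked directly from the generators-and-relations description of $Z$ given in~\eqref{eq:Zmultdef} and~\eqref{eq:Zinvoldef}.

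First I would verify that $U_V$ is well-defined (using that $\iota_V$ factors through the quotient by $\mathcal{N}$) and then establish three properties: \emph{naturality} --- for $f: V \to W$, the equation $(F_2(f) \otimes \id_Z) \circ U_V = U_W \circ F_1(f)$ holds; \emph{multiplicativity} --- the composite $(\id_{F_2(V)} \otimes \id_{F_2(W)} \otimes m_Z) \circ (U_V \otimes U_W)$, after identifying $F_2(V) \otimes F_2(W) \cong F_2(V \otimes W)$ via $m_{F_2}$, agrees with $U_{V \otimes W}$; and \emph{involutivity} --- the map $U_{V^*}$ is obtained from $U_V$ by the graphical manipulation~\eqref{eq:dualpnt} followed by $*$ on the $Z$-output. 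Each of these statements is an equation between elements of $F_2(V) \otimes Z$ (or $F_2(V^*) \otimes Z$), and each reduces directly, after sliding things around loops in $\Hilb$ as in the proof of the previous proposition, to the defining relations of $Z$: the quotient~\eqref{eq:Zquotientsubspace} for naturality, the multiplication~\eqref{eq:Zmultdef} for multiplicativity, and the involution~\eqref{eq:Zinvoldef} for involutivity. The unit in $Z$, being $[\mathbbm{1}, u_1 u_2^\dagger]$, immediately gives $U_{\mathbbm{1}} = u_1 \otimes 1_Z$ (after suitable identification), which will take care of the unit condition~\eqref{eq:pntmonmonunit}.

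Next I would assemble $\alpha_\pi = (\id_H \otimes U_V) \circ (\pi \otimes \id_{F_1(V)})$-style composites (with the swap moves implicit in the diagram), and translate each property of $U_V$ into the corresponding axiom for $\alpha_\pi$: naturality of $U_V$ gives~\eqref{eq:pntmonnat}; multiplicativity of $U_V$, combined with $\pi$ being a homomorphism (so $\pi$ applied to $m_Z$ is multiplication in $B(H)$), gives~\eqref{eq:pntmonmon}; and the unit formula gives~\eqref{eq:pntmonmonunit} via $\pi(1_Z) = \id_H$. For the modification claim, if $f: H \to H'$ is an intertwiner, then the equation $f \circ \pi(z) = \pi'(z) \circ f$ for all $z \in Z$, applied pointwise to $z = U_V$, is literally equation~\eqref{eq:uptmod} after unpacking the definition of $\alpha_\pi, \alpha_{\pi'}$; this is essentially the final diagram of the previous proof read in reverse.

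The main obstacle is \emph{unitarity} of each $(\alpha_\pi)_V$, which is the one axiom without an obvious counterpart for $U_V$ (since $Z$ is typically infinite-dimensional, $U_V$ itself is not a morphism in a dagger category). Here I would use the involutivity of $U_V$ together with $\pi$ being $*$-preserving as follows: compute $(\alpha_\pi)_V^\dagger \circ (\alpha_\pi)_V$ graphically, use~\eqref{eq:dualpnt}-style bending of the $H$-wire to convert the $\dagger$ on $\pi$ into a $*$ on $Z$, apply involutivity of $U$ to recognise the resulting composite as $(\id \otimes \pi(\mu_Z(U_V^* , U_V)))$ for a suitable multiplication, and then reduce this to the identity by exhibiting the underlying $Z$-valued expression as $1_Z \otimes \id_{F_1(V)}$. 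Concretely, this last reduction says that $U_V$ is ``unitary as a $Z$-valued corepresentation'', an equation in $F_1(V)^* \otimes F_1(V) \otimes Z$ which unfolds, via the generators-and-relations description, to an instance of the snake equation for the induced duals on $F_1(V)$ after pushing through~\eqref{eq:Zmultdef} and~\eqref{eq:Zinvoldef}; the other unitarity identity is symmetric. Once this is in place, all the other axioms follow mechanically, and the construction is complete.
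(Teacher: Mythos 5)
Your proposal is correct and follows essentially the same route as the paper: establish naturality, monoidality (with unit), and $Z$-valued unitarity of the maps $U_V$ from the generators-and-relations presentation of $Z$, and then push these properties through the $*$-representation $\pi$ to obtain the UPT axioms for $\alpha_\pi$, with the intertwiner-to-modification statement following by applying the intertwining relation to the $Z$-leg of $U_V$. In particular your key observation --- that unitarity of $(\alpha_\pi)_V$ reduces, via $*$-preservation and multiplicativity of $\pi$, to the identity $\sum_k [U_V]^*_{ki}[U_V]_{kj} = \delta_{ij}1_Z$ in $Z$, proved from the quotient relations, the involution formula, and the snake equations for the induced duals --- is exactly the argument given in the paper.
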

\begin{proof}
We show that the map $U_V$ satisfies certain properties which will imply that $\alpha_{\pi}$ is a UPT.
\begin{itemize}
\item \emph{Naturality}. For any $f:V \to W$ in $\mathcal{C}$: 
\begin{calign}
\includegraphics[scale=.8]{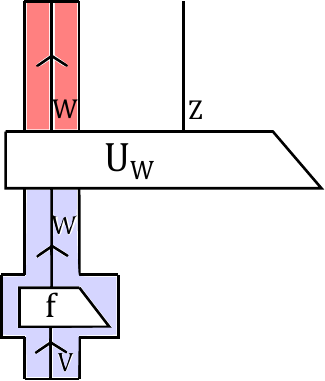}
~~=~~
\includegraphics[scale=.8]{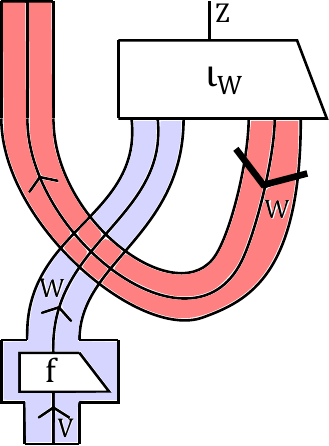}
~~=~~
\includegraphics[scale=.8]{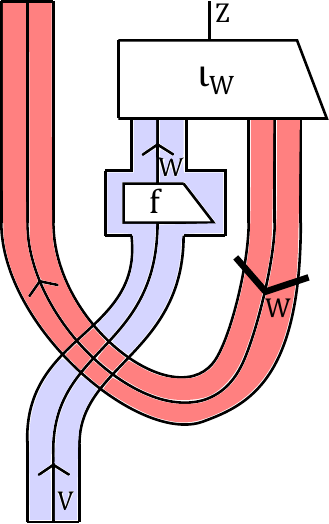}
~~=~~
\includegraphics[scale=.8]{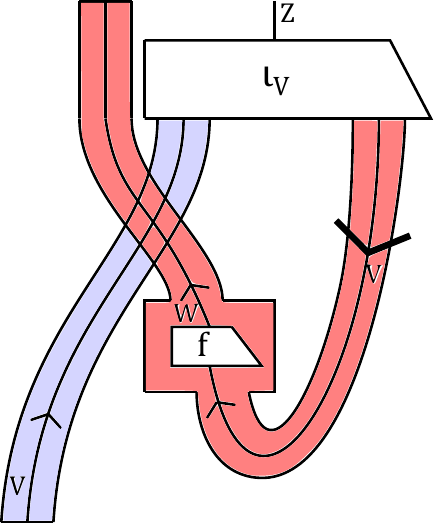}\\
=~~
\includegraphics[scale=.8]{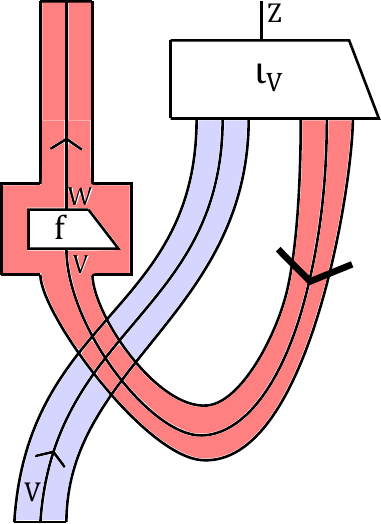}
~~=~~
\includegraphics[scale=.8]{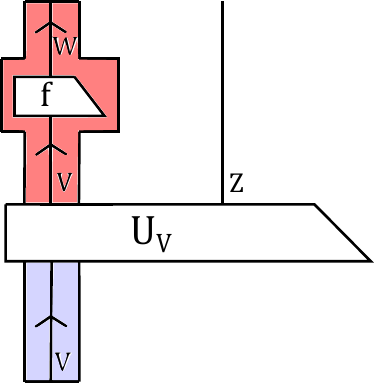}
\end{calign}
Here the first and fifth equalities are by definition, and the second and fourth equalities are by naturality of the symmetry in $\Hilb$. The third equality follows from the definition of $Z$ as the quotient space $\oplus_{V \in \Obj(\mathcal{C})} F_1(V) \otimes F_2(V)^*/\mathcal{N}$.
\ignore{can be seen by picking orthonormal bases $\{\ket{i}\}$ for $F(V)$ and $\{\ket{j}\}$ for $G(W)$ --- for every pair $\ket{i},\ket{j}$ we then have:
\begin{calign}
\includegraphics[scale=.8]{Figures/svg/higherhopfgal/uptcoactionnatjump1.png}
~~=~~
\includegraphics[scale=.8]{Figures/svg/higherhopfgal/uptcoactionnatjump2.png}
~~=~~
\includegraphics[scale=.8]{Figures/svg/higherhopfgal/uptcoactionnatjump3.png}
~~=~~
\includegraphics[scale=.8]{Figures/svg/higherhopfgal/uptcoactionnatjump4.png}
\end{calign}
Here the second equality is by~\eqref{eq:Zquotientsubspace} and the others by the graphical calculus of $\Hilb$.}
\item \emph{Monoidality.} 
\begin{itemize}
\item For any object $V$ of $\mathcal{C}$ let $\{\ket{i}\}$ and $\{\ket{j}\}$ be orthonormal bases of $F_1(V)$ and $F_2(V)$ respectively. Then inserting resolutions of the identity in~\eqref{eq:zcorep} we obtain the following expression for the map $U_V$:
\begin{equation}\label{eq:ukets}
U_V = \sum_{i,j} \ket{j}\bra{i} \otimes [V, \ket{i}\bra{j}]
\end{equation}
We observe that the map $U_V$ does not depend on the choice of orthonormal basis, since it was defined in a basis-independent way to begin with. 
We want to show the following equation:
\begin{calign}\label{eq:umodmonoidal1}
\includegraphics[scale=.8]{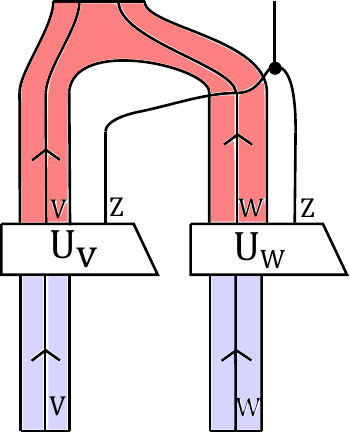}
~~=~~\ignore{
\includegraphics[scale=.8]{Figures/svg/higherhopfgal/uptcoactionmon2.png}
~~=~~
\includegraphics[scale=.8]{Figures/svg/higherhopfgal/uptcoactionmon3.png}
\\
~~=~~}
\includegraphics[scale=.8]{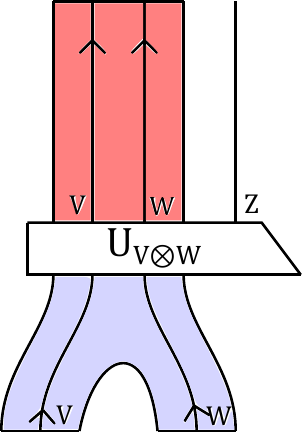}
\end{calign}
\ignore{Here the first and third equalities are by definition (note we have written the multiplication of $Z$ as a black vertex). For the second equality we use the graphical calculus of $\Hilb$ to untangle the wires and then slide the transpose of the comultiplicator of $F_2$ around the loop to the left, cancelling it with the multiplicator.}
For this we have the following equation, where $\{\ket{i}\}$, $\{\ket{j}\}$, $\{\ket{k}\}$, $\{\ket{l}\}$ are orthonormal bases of $F_1(V)$, $F_2(V)$, $F_1(W)$ and $F_2(W)$ respectively:
\begin{align*}
\sum_{i,j,k,l} (m_{V,W} \circ (\ket{j} \bra{i} \otimes \ket{l}\bra{k})) \otimes [V \otimes W, m_{V,W} \circ (\ket{i}\bra{j} \otimes \ket{k}\bra{l}) \circ m_{V,W}^{\dagger}]
\\
=\sum_{i,j,k,l} (m_{V,W} \circ (\ket{j} \bra{i} \otimes \ket{l}\bra{k}) \circ m_{V,W}^{\dagger} \circ m_{V,W}) \otimes [V \otimes W, m_{V,W} \circ (\ket{i}\bra{j} \otimes \ket{k}\bra{l}) \circ m_{V,W}^{\dagger}]
\end{align*}
Looking at the last expression, we observe that it has the form~\eqref{eq:ukets} for the orthonormal bases $m_{V,W} \circ (\ket{i} \otimes \ket{k})$ and $m_{V,W} \circ (\ket{j} \otimes \ket{l})$ of $F_1(V \otimes W)$ and $F_2(V \otimes W)$. The equation~\eqref{eq:umodmonoidal1} follows.
\item \begin{calign}
\includegraphics[scale=.8]{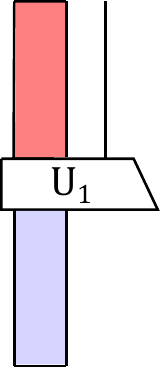}
~~=~~
\includegraphics[scale=.8]{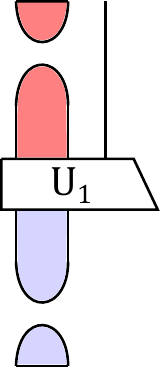}
~~=~~
\includegraphics[scale=.8]{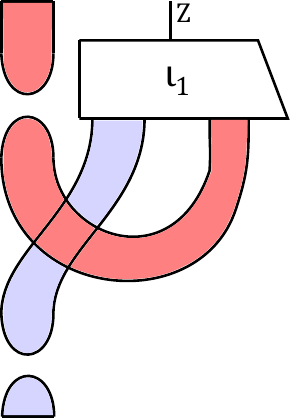}
~~=~~\includegraphics[scale=.8]{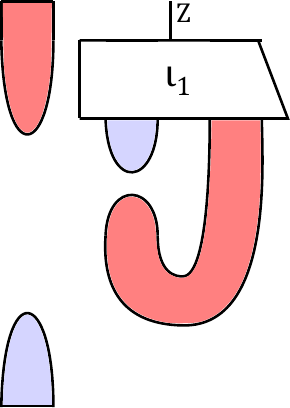}
~~=~~\includegraphics[scale=.8]{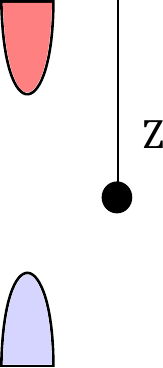}
\end{calign}
\end{itemize}
Here the first equality is by unitarity of the unitors of $F,F'$; the second equality is by definition of $U_{\mathbbm{1}}$; the third equality is by the graphical calculus of $\Hilb$; and the fourth equality is by definition of the unit of $Z$ (which we have drawn as a black vertex).
\item \emph{Unitarity}. 
Choose orthonormal bases of $F(V),F'(V)$. We show that with respect to such a basis $U_V$ is a unitary matrix. We prove the first unitarity equation $\sum_k [U_V]_{ki}^* [U_V]_{kj} = \delta_{ij} 1_Z$. Let $\{\ket{i}\},\{\ket{j}\}$ be elements of the orthonormal basis of $F(V)$ and let $\{\ket{k}\}$ be the orthonormal basis of $F'(V)$. Then:
\begin{calign}
{\Huge \sum_k}~\includegraphics[scale=0.8]{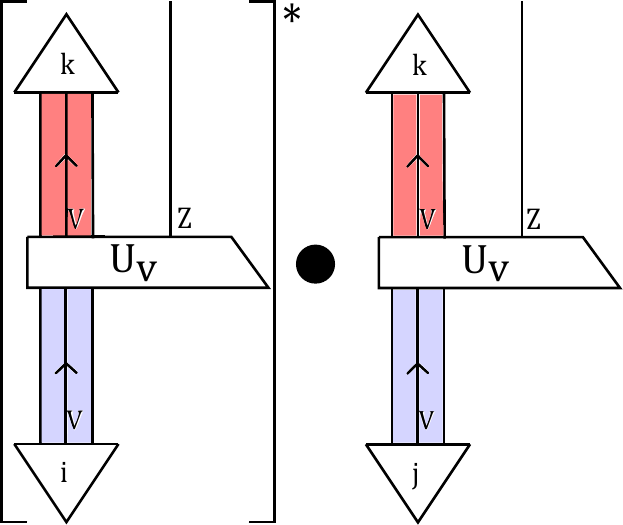}
~~=~~
{\Huge \sum_k}~\includegraphics[scale=0.8]{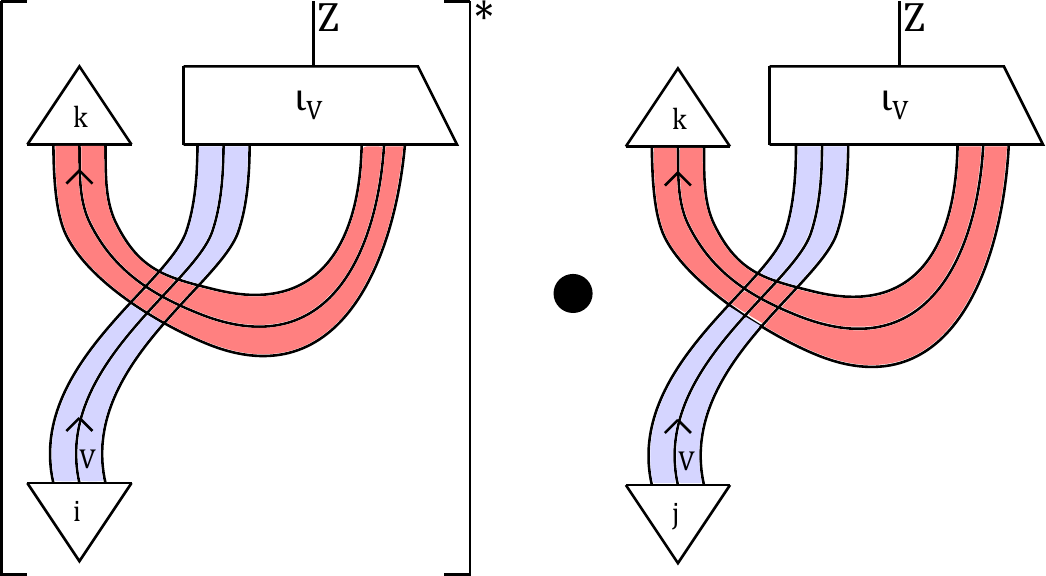}\\
=~~{\Huge \sum_k}~~\includegraphics[scale=0.8]{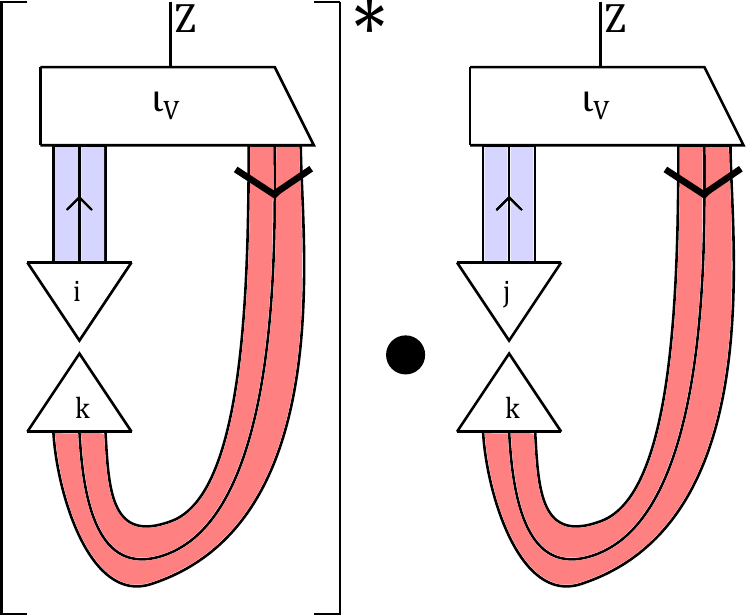}
~~=~~
{\Huge \sum_k}~~\includegraphics[scale=0.8]{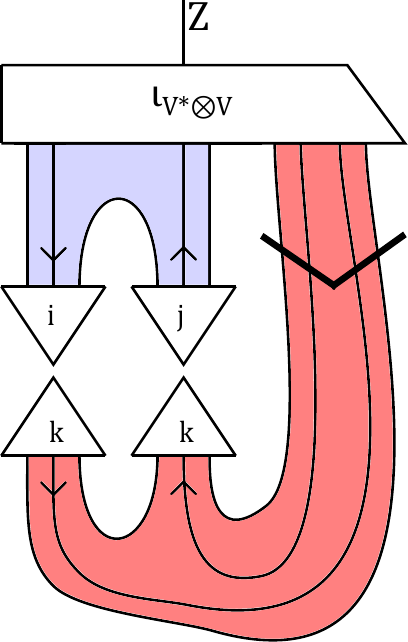}\\
=~~
{\Huge \sum_k}~~\includegraphics[scale=0.8]{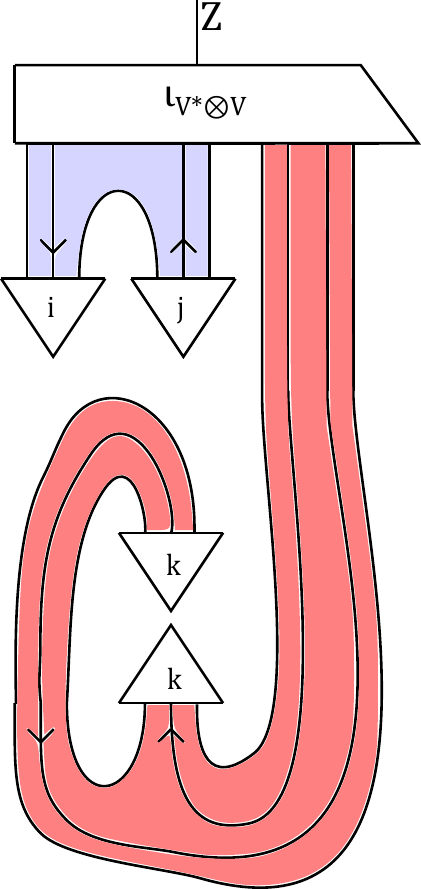}
~~=~~
\includegraphics[scale=0.8]{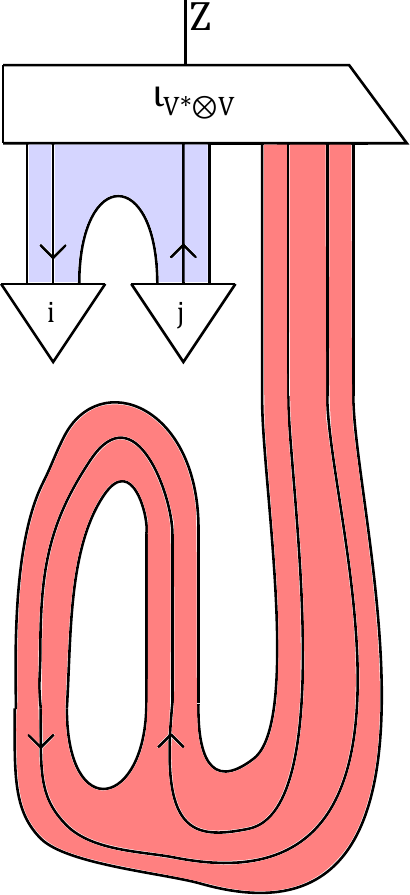}
~~=~~
\includegraphics[scale=0.8]{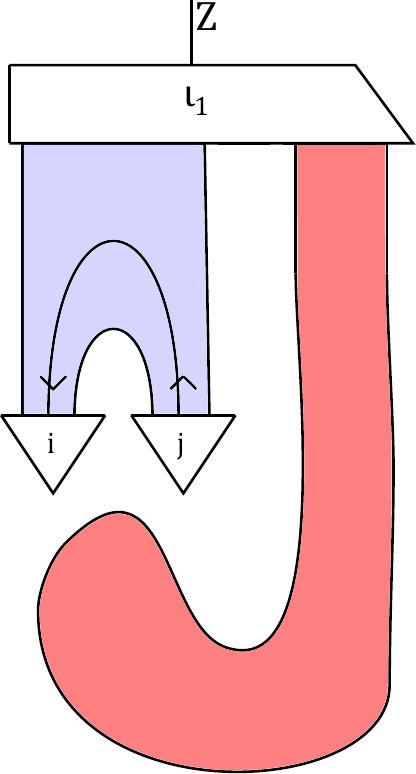}\\
=~~\includegraphics[scale=0.8]{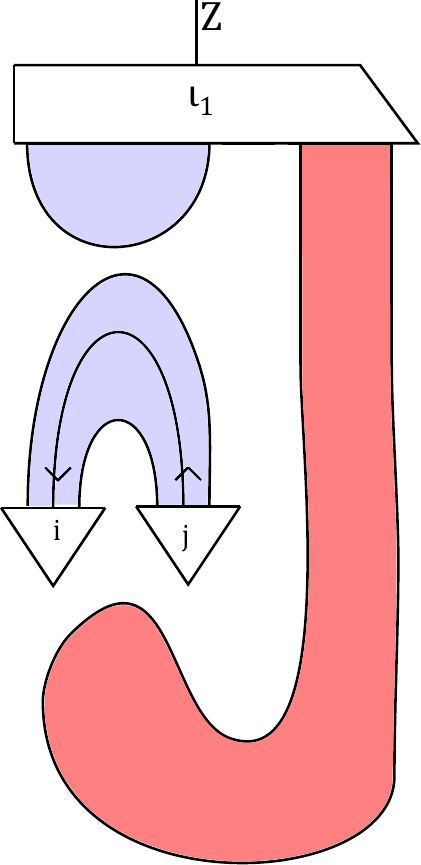}
~~=~~ 
{\Huge \delta_{ij}}~~
\includegraphics[scale=0.8]{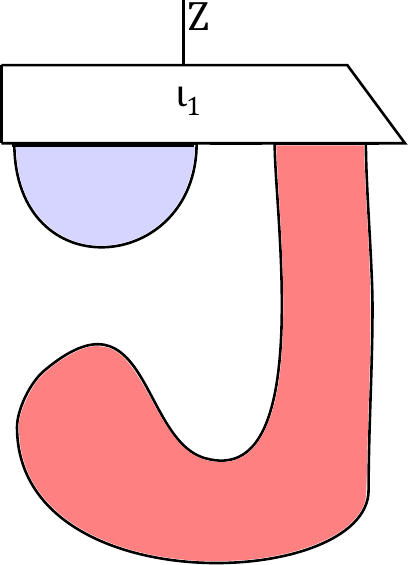}
\end{calign}
Here for the first equality we used the definition of $U_V$; for the second equality we used the graphical calculus of $\Hilb$; for the third equality we used the definition of multiplication and involution in $Z$; for the fourth equality we used the definition of the involution in $Z$; for the fifth equality we removed the resolution of the identity and used unitarity of the functor $F'$; for the sixth equality we used the definition of the quotient space defining $Z$~\eqref{eq:Zquotientsubspace}; for the seventh equality we used unitarity of the functor $F$; and for the eighth equality we used the induced duality from $\mathcal{C}$ and orthogonality of the basis. In the final diagram we recognise the unit of $Z$.

The second unitarity equation, $\sum_k [U_V]_{ik} [U_V]_{jk}^* = \delta_{ij} 1_Z$ is shown similarly.
\end{itemize}
We now show that $\alpha_{\pi}$ is a UPT.
\begin{itemize}
\item \emph{Naturality}. Follows immediately from naturality of $\{U_V\}$ and of the swap. 
\item \emph{Monoidality}. 
\begin{calign}
\includegraphics[scale=.8]{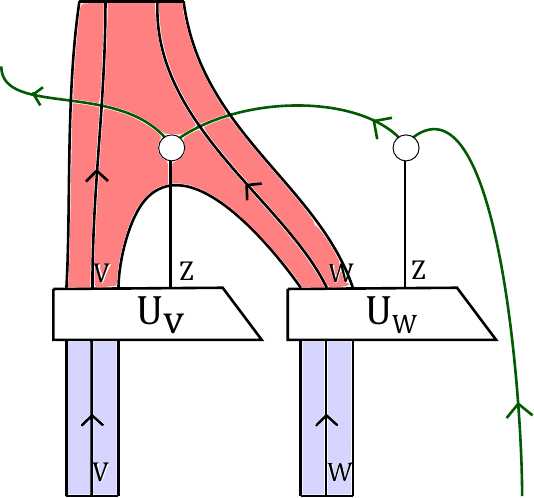}
~~=~~
\includegraphics[scale=.8]{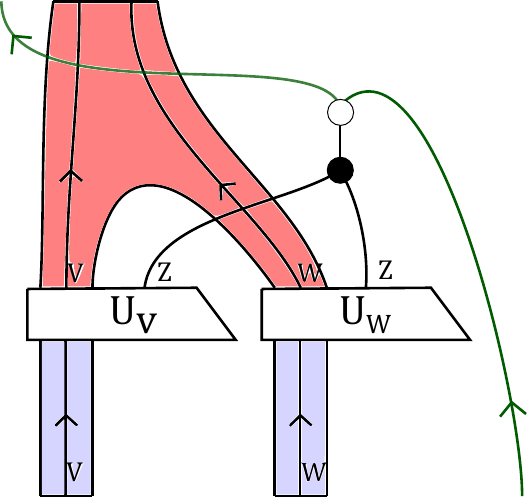}
~~=~~
\includegraphics[scale=.8]{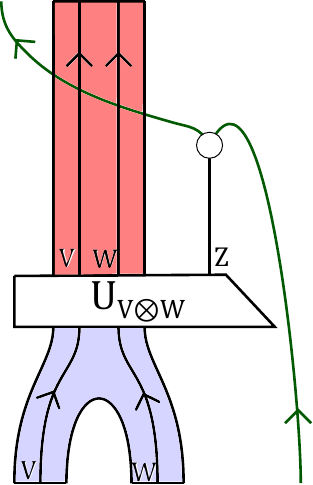}
\end{calign}
In the first equation we recognised the multiplication of $B(H) \cong H \otimes H^*$~\eqref{eq:endoalgebra} and used the fact that $\pi$ is a homomorphism. In the second equation we used monoidality of $U$.
\item \emph{Unitarity}. \ignore{The isometry $\iota_V: \Hom(F_2(V),F_1(V)) \hookrightarrow Z$ has been assumed thus far in our diagrams; for clarity we will now explicitly depict it.} We must show that $\alpha_V^{\dagger}$ is a 2-sided inverse for $\alpha_V$. Consider the first of the 2  unitarity equations:
\begin{calign}\label{eq:reptoalphaunitarystatement}
\includegraphics[scale=.8]{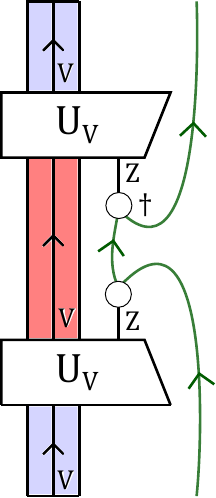}
~~=~~
\includegraphics[scale=.8]{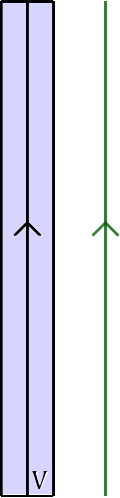}
\end{calign}
\begin{remark}
As was already stated in Remark~\ref{rem:zwire}, there is not necessarily a Hilbert space structure on $Z$; however, in~\eqref{eq:reptoalphaunitarystatement} the dagger of $\pi:Z \to B(H)$ appears. This is just a convenient way of representing the dagger of the morphism $(\id_{F_2(V)} \otimes \pi) \circ U_V$ and should only be interpreted in that context.
\end{remark}
To prove~\eqref{eq:reptoalphaunitarystatement}, we choose orthonormal bases for $F_1(V)$ and $F_2(V)$. Let $\ket{i},\ket{j}$ be any two elements of the orthonormal basis for $F_1(V)$ and let $\{\ket{k}\}$ be the orthonormal basis elements of $F_2(V)$. Then we have the following equation:
\begin{calign}\label{eq:reptoalphaeq1}
\includegraphics[scale=.8]{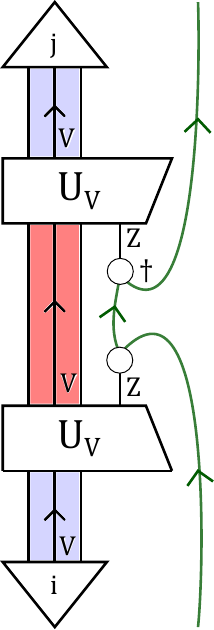}
~~=~~
{\Huge \sum_k}~~
\includegraphics[scale=.8]{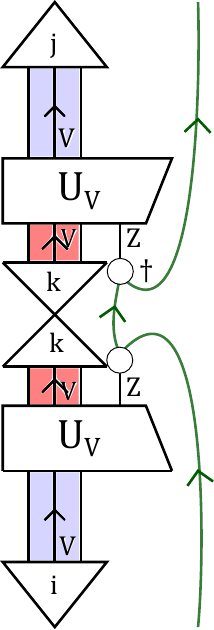}
\end{calign}
Here we simply inserted a resolution of the identity in $F_2(V)$. Now we define the following elements of $B(H)$:
\begin{calign}
\includegraphics[scale=.8]{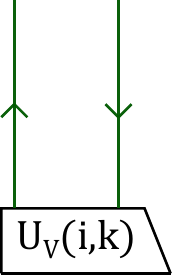}
~~:=~~
\includegraphics[scale=.8]{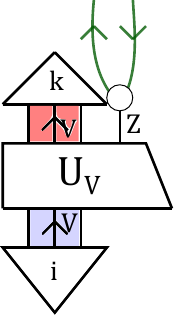}
\end{calign}
With these elements~\eqref{eq:reptoalphaeq1} becomes:
\begin{calign}\label{eq:reptoalphaeq2}
{\Huge \sum_k}~~
\includegraphics[scale=.8]{Figures/svg/higherhopfgal/alphapiunitpf12.pdf}
~~=
{\Huge \sum_k}~~\includegraphics[scale=.8]{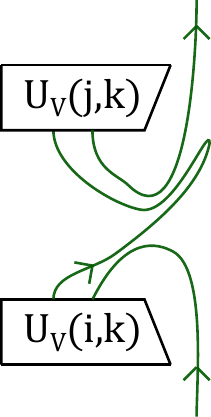}
~~=
{\Huge \sum_k}~~\includegraphics[scale=.8]{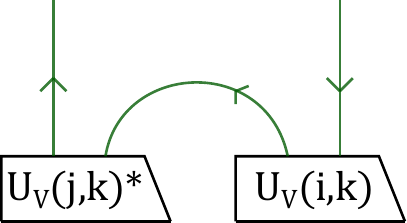}
~~=~~
{\huge \delta_{ij}}~\includegraphics[scale=.8]{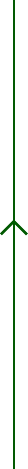}
\end{calign}
Here in the second equality we noticed the involution and multiplication of $B(H) \cong H \otimes H^*$~\eqref{eq:endoalgebra}. For the third equality we used involutivity and multiplicativity of $\pi$, and then unitarity of $U_V$. Therefore~\eqref{eq:reptoalphaunitarystatement} 
is proved. The proof of the other unitarity equation is similar.  
\ignore{
At this point we could use unitarity of $U_V$, but here is an alternative proof. We calculate $U_V(i,k)^*$:
\begin{calign}
\includegraphics[scale=.8]{Figures/svg/higherhopfgal/alphapiunitpfuikdef1.png}
~~=~~
\includegraphics[scale=.8]{Figures/svg/higherhopfgal/alphapiunitpfuikeq2.png}
~~=~~
\includegraphics[scale=.8]{Figures/svg/higherhopfgal/alphapiunitpfuikeq3.png}
~~=~~
\includegraphics[scale=.8]{Figures/svg/higherhopfgal/alphapiunitpfuikeq4.png}
\end{calign}
Here for the first equality we used the definition of $U_V$; for the second equality we used the graphical calculus and the fact that $\pi$ is involution-preserving; and for the third equality we used the definition of the involution in $Z$~\eqref{eq:Zinvoldef}. 
Using this calculation we continue~\eqref{eq:reptoalphaeq2}:
\begin{calign}
{\Huge \sum_k}~~\includegraphics[scale=.8]{Figures/svg/higherhopfgal/alphapiunitpf23.png}
~~={\Huge \sum_k}~~\includegraphics[scale=.8]{Figures/svg/higherhopfgal/alphapiunitpf32.png}
~~=~~\includegraphics[scale=.8]{Figures/svg/higherhopfgal/alphapiunitpf33.png}\\
~~=~~\includegraphics[scale=.8]{Figures/svg/higherhopfgal/alphapiunitpf34.png}
~~=~~{\huge \delta_{ij}}~\includegraphics[scale=.8]{Figures/svg/higherhopfgal/alphapiunitpf35.png}
\end{calign}
Here for the second equality we used the graphical calculus of $\Hilb$ to remove the resolution of the identity,  for the third equality we used naturality and monoidality of $U$.
Therefore~\eqref{eq:reptoalphaunitarystatement} is proved. The proof of the other unitarity equation is similar.}
\end{itemize}
Finally, an intertwiner clearly induces a modification:
\begin{calign}
\includegraphics[scale=.8]{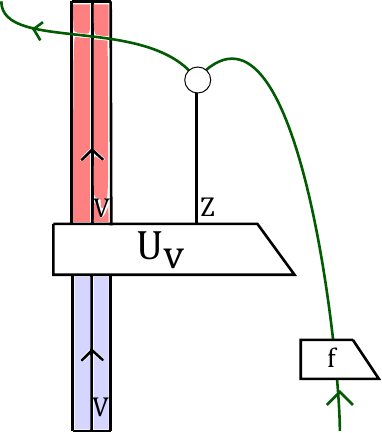}
~~=~~
\includegraphics[scale=.8]{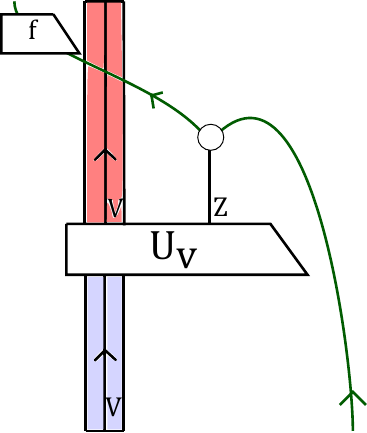}
\end{calign}
\end{proof}
\begin{theorem}\label{thm:higherhopfgal}
Let $\mathcal{C}$ be a $C^*$-tensor category with conjugates, and let $F_1,F_2:\mathcal{C} \to \Hilb$ be fibre functors. Let $Z = Hom^{\vee}(F_1,F_2)$ be the corresponding $A_{G_2}$-$A_{G_1}$-bi-Hopf-Galois-object. There is an isomorphism of categories between:
\begin{itemize}
\item The category $\Rep(Z)$ of f.d. $*$-representations of $Z$ and intertwiners.
\item The category $\Fun(F_1,F_2)$ of UPTs $F_1 \to F_2$ and modifications.  
\end{itemize}
\end{theorem}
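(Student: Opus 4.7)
The plan is to show that Constructions~\ref{constr:alphatopi} and~\ref{constr:pitoalpha} assemble into a pair of mutually inverse functors. The two preceding propositions already give the object- and morphism-level assignments in each direction: a UPT $(\alpha,H):F_1\to F_2$ gives a $*$-representation $\pi_\alpha:Z\to B(H)$, a modification $f:(\alpha,H)\to(\beta,H')$ gives an intertwiner $f:H\to H'$, and vice versa. Observe that at the level of underlying Hilbert spaces both constructions act as the identity: $\pi_\alpha$ lives on $H\otimes H^*\cong B(H)$, while $\alpha_\pi$ has green wire $H$, and both assignments on 2-cells send the morphism $f:H\to H'$ to itself. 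Consequently, functoriality (preservation of identity modifications/intertwiners and of vertical composition) is immediate, as is the claim that the two constructions are mutually inverse on morphisms once they are proved to be mutually inverse on objects.

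The content of the theorem thus reduces to the two object-level identities: (a) $\pi_{\alpha_\pi}=\pi$ for every $*$-representation $\pi:Z\to B(H)$, and (b) $\alpha_{\pi_\alpha}=\alpha$ for every UPT $(\alpha,H):F_1\to F_2$.

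For (a) the plan is a direct diagrammatic computation on generators $[V,v]\in Z$. Substituting the definition of $(\alpha_\pi)_V$ from Construction~\ref{constr:pitoalpha} into the expression for $\pi_{\alpha_\pi}([V,v])$ given in Construction~\ref{constr:alphatopi} produces a diagram in which the map $U_V:F_1(V)\to F_2(V)\otimes Z$ appears sandwiched between $v:F_2(V)\to F_1(V)$ and a trace over $F_2(V)^*$ in $\Hilb$. Using the snake equation to close the trace loop and recognising the resulting contraction as $\iota_V(\id_{F_1(V)}\otimes v^T)$ composed with $\pi$, together with the quotient relation~\eqref{eq:Zquotientsubspace}, will yield $\pi([V,v])$. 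This step is essentially a bookkeeping calculation and should be routine.

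For (b), which I expect to be the main obstacle, I would proceed as follows. Fix an object $V$ and pick orthonormal bases $\{\ket{i}\}$ of $F_1(V)$ and $\{\ket{j}\}$ of $F_2(V)$. Using the explicit formula~\eqref{eq:ukets}, $U_V=\sum_{i,j}\ket{j}\bra{i}\otimes[V,\ket{i}\bra{j}]$, the component $(\alpha_{\pi_\alpha})_V$ becomes $\sum_{i,j}\ket{j}\bra{i}\otimes\pi_\alpha([V,\ket{i}\bra{j}])$. The task is to show this equals $\alpha_V$. Substituting the diagrammatic definition of $\pi_\alpha([V,\ket{i}\bra{j}])$, one sees a single copy of $\alpha_V$ with caps/cups of the duality in $\Hilb$ tracing over $F_2(V)^*$ against the rank-one maps $\ket{j}\bra{i}$; the basic claim is that this partial trace reconstruction recovers $\alpha_V$. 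The key manipulation is to slide $\ket{j}\bra{i}$ through the induced duality on $F_2(V)$, use the snake equation of Proposition~\ref{prop:indduals}, and collapse the resolution of the identity. The subtle point is handling the two possible dualities on $F_2(V)$ — the induced one from $\mathcal{C}$ and the one intrinsic to $\Hilb$ — along exactly the same lines as in~\eqref{eq:maptobhstarcap}, making sure the invertible comparison map $f:F_2(V^*)^*\to F_2(V)$ cancels. Once this reconstruction identity is established for a single $V$, naturality of both $\alpha$ and $\alpha_{\pi_\alpha}$ implies the equality of the components for all $V$, completing the proof of (b) and hence of the isomorphism of categories.
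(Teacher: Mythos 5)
Your proposal is correct and takes essentially the same route as the paper: the content is concentrated in checking that Constructions~\ref{constr:alphatopi} and~\ref{constr:pitoalpha} are strictly inverse on objects via snake-equation manipulations in $\Hilb$, with functoriality and the morphism-level inverse being immediate because both constructions act as the identity on the underlying linear maps. Two small remarks: in step (b) the trace in Construction~\ref{constr:alphatopi} and the cap defining $U_V$ both use the \emph{same} duality $F_2(V)^*$ in $\Hilb$, so they cancel by a snake equation directly and no comparison map $F_2(V^*)^*\to F_2(V)$ is needed there (that issue only arises in proving $\pi_\alpha$ is involution-preserving, which is already part of the preceding proposition); and since your computation is carried out for an arbitrary object $V$, the closing appeal to naturality is unnecessary --- and would not on its own transfer the identity from one object to all others.
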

\begin{proof}
We show that Constructions~\ref{constr:alphatopi} and~\ref{constr:pitoalpha} are strictly inverse. 
First we observe that, for a UPT $\alpha:F_1 \to F_2$, the $*$-homomorphism $\pi_{\alpha}: Z \to B(H)$ is defined on the subspace in the image of $\iota_V: F_1(V) \otimes F_2(V)^* \to Z$ as follows:
\begin{calign}
\includegraphics[scale=.8]{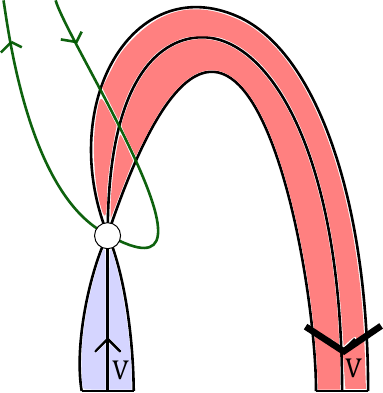}
\end{calign}
We also observe that, for a $*$-homomorphism $\pi: Z \to B(H)$, the UPT $\alpha_{\pi}: F_1 \to F_2$ is defined on $V$ as follows:
\begin{calign}
\includegraphics[scale=.8]{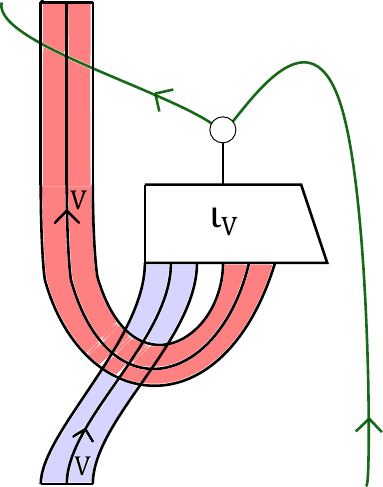}
\end{calign}
For one direction, let $\alpha:F_1 \to F_2$ be a UPT and use Construction~\eqref{constr:alphatopi} to obtain a $*$-representation $\pi_{\alpha}: Z \to B(H)$. Now use Construction~\ref{constr:pitoalpha} to obtain a UPT $F_1 \to F_2$. That the resulting UPT is $\alpha$ follows immediately from the graphical calculus of $\Hilb$:
\begin{calign}
\includegraphics[scale=.8]{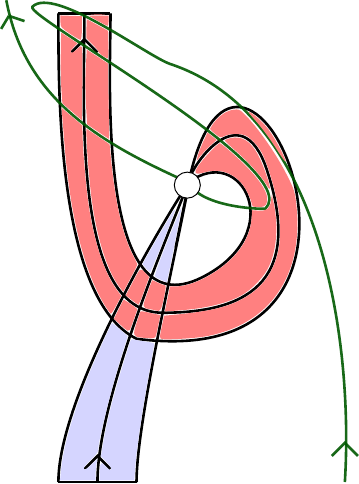}
~~=~~
\includegraphics[scale=.8]{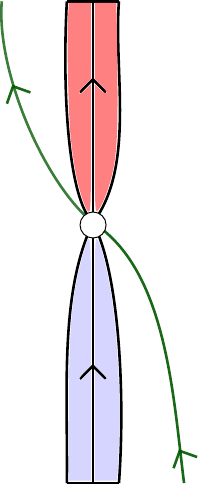}
\end{calign}
In the other direction, let $\pi$ be a $*$-representation $\pi_{\alpha}: Z\to B(H)$ and use Construction~\eqref{constr:pitoalpha} to obtain a UPT  $\alpha_{\pi}:F_1 \to F_2$ . Now use Construction~\ref{constr:alphatopi} to obtain a $*$-representation $Z \to B(H)$. That the resulting UPT is $\alpha$ again follows immediately from the graphical calculus of $\Hilb$:
\begin{calign}
\includegraphics[scale=.8]{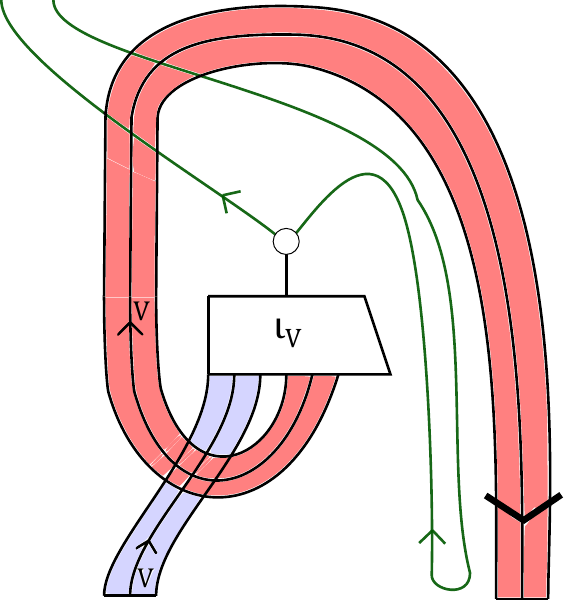}
~~=~~
\includegraphics[scale=.8]{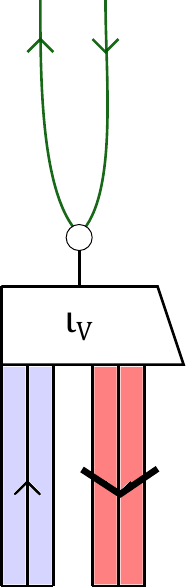}
\end{calign}
That the maps on modifications and intertwiners are inverse is clear. 
\end{proof}
\begin{corollary}
Two fibre functors $F_1,F_2$ on $\Rep(G)$ are related by a UPT $F_1 \to F_2$ precisely when the corresponding $A_{G_2}$-$A_{G_1}$-bi-Hopf-Galois object $\Hom^{\vee}(F_1,F_2)$ has a finite-dimensional $*$-representation.
\end{corollary}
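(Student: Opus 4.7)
The corollary is a direct consequence of Theorem~\ref{thm:higherhopfgal}, so the proposal is essentially just to unpack the theorem statement on objects. The plan is to note that an isomorphism of categories between $\Rep(Z)$ and $\Fun(F_1,F_2)$ induces in particular a bijection on object sets, and the existence claims on either side of the biconditional are precisely the statements that the respective object sets are nonempty.

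Concretely, I would argue: by Theorem~\ref{thm:higherhopfgal}, Constructions~\ref{constr:alphatopi} and~\ref{constr:pitoalpha} provide mutually inverse assignments between UPTs $F_1 \to F_2$ and finite-dimensional $*$-representations of $Z = \Hom^{\vee}(F_1,F_2)$. Thus if $\pi : Z \to B(H)$ is any finite-dimensional $*$-representation, Construction~\ref{constr:pitoalpha} yields a UPT $(\alpha_\pi, H) : F_1 \to F_2$; conversely, if $(\alpha, H) : F_1 \to F_2$ is any UPT, Construction~\ref{constr:alphatopi} produces a finite-dimensional $*$-representation $\pi_\alpha : Z \to B(H)$. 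This establishes the ``iff''.

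There is no real obstacle here, since all the work has been done in Theorem~\ref{thm:higherhopfgal}; the corollary just records what happens at the level of isomorphism classes (or merely nonemptiness) of objects. The only minor point worth mentioning is that by definition the object $H$ in a UPT is an object of $\Hilb$, hence finite-dimensional, so the correspondence really is with \emph{finite-dimensional} $*$-representations of $Z$, matching the hypothesis in the statement.
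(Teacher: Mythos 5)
Your proposal is correct and matches the paper's (implicit) reasoning: the corollary is stated without proof precisely because it is the immediate consequence of Theorem~\ref{thm:higherhopfgal} that you describe, namely that the isomorphism of categories restricts to a bijection on objects, so one side is nonempty iff the other is. Your remark that $H$ is by definition an object of $\Hilb$ and hence finite-dimensional is a sensible check and consistent with the paper's conventions.
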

\begin{definition}
Let $\mathcal{C}$ be a semisimple pivotal dagger category and let $F: \mathcal{C} \to \Hilb$ be a fibre functor. We say that a fibre functor $F': \mathcal{C} \to \Hilb$ is \emph{accessible from $F$} if there exists a UPT $\alpha: F \to F'$.
\end{definition}
\noindent
Let $G$ be a compact quantum group  and $F: \Rep(G) \to \Hilb$ the canonical fibre functor. We observe that a UPT --- since its components are unitary --- must preserve dimensions of Hilbert spaces in the sense that $\dim(F(V)) = \dim(F'(V))$, so an $A_G$-Hopf-Galois object corresponding to a fibre functor accessible from $F$ must be \emph{cleft}~\cite[Thm 1.17]{Bichon2014}. It is unknown to the author whether all cleft Hopf-Galois objects admit a finite-dimensional $*$-representation.

At least in one case the situation is clear, since cleft Hopf-Galois objects for a compact quantum group algebra $A$ are all obtained as cocycle twists of $A$~\cite[Thm 1.8]{Bichon2014}, so for finite-dimensional $A$ they are also finite-dimensional with a faithful state~\cite[Prop. 4.2.5, Cor. 4.3.5]{Bichon2014}.
\begin{corollary}\label{cor:finiteacc}
When $G$ is a finite CQG (i.e the algebra $A_G$ is finite-dimensional)  all fibre functors on $\Rep(G)$ are accessible from the canonical fibre functor by a UPT. 
\end{corollary}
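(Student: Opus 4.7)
The plan is to reduce the assertion to Theorem~\ref{thm:higherhopfgal} and then invoke the structural result on cleft Hopf-Galois objects cited in the paragraph preceding the corollary. Given any fibre functor $F': \Rep(G) \to \Hilb$, I form the corresponding $A_G$-$A_G$-bi-Hopf-Galois object $Z := \Hom^{\vee}(F,F')$ via Construction~\ref{constr:fibtogal}. By Theorem~\ref{thm:higherhopfgal}, the existence of a UPT $F \to F'$ is equivalent to the existence of a finite-dimensional $*$-representation of $Z$, so it suffices to exhibit one such representation.

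The next step is to argue that $Z$ is cleft. By the observation made just above the corollary, cleftness of $Z$ is equivalent to the condition $\dim F'(V) = \dim F(V)$ for every object $V$. Since $A_G$ is finite-dimensional, $\Rep(G)$ is a unitary fusion category; the categorical (equivalently, Frobenius--Perron) dimension of every simple object is determined intrinsically by the fusion data, and any unitary $\mathbb{C}$-linear fibre functor must assign Hilbert spaces of exactly this dimension to simple objects. Hence $F'$ preserves dimensions of simples, and then of all objects by additivity over direct sums, so $Z$ is cleft.

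Once cleftness is in hand, the cited theorem~\cite[Thm.~1.8]{Bichon2014} realises $Z$ as a cocycle twist of $A_G$. Because $A_G$ is finite-dimensional, $Z$ is a finite-dimensional $*$-algebra, and it inherits a faithful state $\phi: Z \to \mathbb{C}$ from the Haar state on $A_G$ by~\cite[Prop.~4.2.5, Cor.~4.3.5]{Bichon2014}. Applying the GNS construction to the pair $(Z,\phi)$ yields a faithful $*$-representation $\pi_\phi: Z \to B(L^2(Z,\phi))$ on the finite-dimensional GNS Hilbert space $L^2(Z,\phi)$. By Theorem~\ref{thm:higherhopfgal}, $\pi_\phi$ corresponds to a UPT $F \to F'$, so $F'$ is accessible.

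The main obstacle is the intermediate claim that every fibre functor on $\Rep(G)$ is dimension-preserving, i.e.\ that $Z$ is cleft; this uses the fusion-categorical structure of $\Rep(G)$ (available precisely because $A_G$ is finite-dimensional) and the rigidity of Frobenius--Perron dimensions. The remaining steps are then a direct assembly of the cited results with Theorem~\ref{thm:higherhopfgal}.
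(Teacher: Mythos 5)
Your proof is correct and follows essentially the same route as the paper: reduce via Theorem~\ref{thm:higherhopfgal} to producing a finite-dimensional $*$-representation of the bi-Hopf-Galois object $Z$, observe that $Z$ is cleft, and invoke the cited results of Bichon to realise $Z$ as a finite-dimensional cocycle twist of $A_G$ carrying a faithful state, whose GNS representation gives the required representation. The only point where you go beyond the paper is in justifying cleftness --- the paper leaves implicit why every fibre functor on $\Rep(G)$ is dimension-preserving when $A_G$ is finite-dimensional, whereas you supply the standard Frobenius--Perron argument (any monoidal functor to $\Vec$ induces a positive character of the fusion ring, which must coincide with $\mathrm{FPdim}$); this addition is sound and fills a genuine, if minor, gap in the paper's exposition.
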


\section{Morita theory of accessible fibre functors and UPTs}
\label{sec:morita}
In Section~\ref{sec:higherhopfgal} we showed that a fibre functor on the category $\Rep(G)$ is accessible from the canonical fibre functor $F$ by a UPT precisely when the corresponding $A_G$-Hopf-Galois object admits a finite-dimensional $*$-representation. In this Section we will use Morita theory to show that accesible fibre functors $F'$ and UPTs $F \to F'$ can be  classified in terms of the finite-dimensional representation theory of the compact quantum group algebra $A_G$.
 
\subsection{Background on Frobenius monoids and Morita equivalence}

Morita theory relates 1-morphisms $X: r \to s$ out of an object $r$ of a 2-category $\mathcal{C}$ to Frobenius monoids in its category of endomorphisms $\mathcal{C}(r,r)$. In our case, the 2-category in question is $\Fun(\Rep(G),\Hilb)$, and we consider the category $\End(F)$ of endomorphisms of the canonical fibre functor $F: \Rep(G) \to \Hilb$, which we have just shown (Theorem~\ref{thm:higherhopfgal}) is isomorphic to $\Rep(A_G)$. We now recall the definition of a Frobenius monoid, and two notions of equivalence which will be important in our classification.
\ignore{
For this theory to work fully, we need a technical condition on $\mathcal{C}$.
\begin{definition}
Let $\mathcal{C}$ be a pivotal dagger 2-category. We say that $\mathcal{C}$ has \emph{split dagger idempotents} when dagger idempotents split (Definition~\ref{}) in all of its Hom-categories.
\end{definition}
}
\ignore{Let $X:r \to s$ be a 1-morphism in $\mathcal{C}$. Then $X \circ X^*: r \to r$ is a object in the pivotal dagger category $\mathcal{C}(r,r)$. We show that it possesses a natural structure of a monoid.}
\paragraph{Frobenius monoids.}
\begin{definition}\label{def:Frobenius}
Let $\mathcal{C}$ be a monoidal dagger category. A \emph{monoid} in $\mathcal{C}$ is an object $A$ with multiplication and a unit morphisms, depicted as follows:%
\begin{calign}
\begin{tz}[zx,master]
\coordinate (A) at (0,0);
\draw (0.75,1) to (0.75,2);
\mult{A}{1.5}{1}
\end{tz}
&
\begin{tz}[zx,slave]
\coordinate (A) at (0.75,2);
\unit{A}{1}
\end{tz}
\\[0pt]\nonumber
m:A\otimes A \to A& u: \mathbbm{1} \to A 
\end{calign}\hspace{-0.2cm}
These morphisms satisfy the following associativity and unitality equations:
\begin{calign}\label{eq:assocandunitality}
\begin{tz}[zx]
\coordinate(A) at (0.25,0);
\draw (1,1) to [out=up, in=-135] (1.75,2);
\draw (1.75,2) to [out=-45, in=up] (3.25,0);
\draw (1.75,2) to (1.75,3);
\mult{A}{1.5}{1}
\node[zxvertex=\zxwhite,zxdown] at (1.75,2){};
\end{tz}
\quad = \quad
\begin{tz}[zx,xscale=-1]
\coordinate(A) at (0.25,0);
\draw (1,1) to [out=up, in=-135] (1.75,2);
\draw (1.75,2) to [out=-45, in=up] (3.25,0);
\draw (1.75,2) to (1.75,3);
\mult{A}{1.5}{1}
\node[zxvertex=\zxwhite,zxdown] at (1.75,2){};
\end{tz}
&
\begin{tz}[zx]
\coordinate (A) at (0,0);
\draw (0,-0.25) to (0,0);
\draw (0.75,1) to (0.75,2);
\mult{A}{1.5}{1}
\node[zxvertex=\zxwhite,zxdown] at (1.5,0){};
\end{tz}
\quad =\quad
\begin{tz}[zx]
\draw (0,0) to (0,2);
\end{tz}
\quad= \quad
\begin{tz}[zx,xscale=-1]
\coordinate (A) at (0,0);
\draw (0,-0.25) to (0,0);
\draw (0.75,1) to (0.75,2);
\mult{A}{1.5}{1}
\node[zxvertex=\zxwhite,zxdown] at (1.5,0){};
\end{tz}
\end{calign}
Analogously, a \textit{comonoid} is an object $A$ with a coassociative comultiplication $\delta: A \to A\otimes A$ and a counit $\epsilon:A\to \mathbb{C}$. The dagger of an monoid $(A,m,u)$ is a comonoid $(A,m^{\dagger},u^{\dagger})$.  A monoid $(A,m,u)$ in $\mathcal{C}$ is called \textit{Frobenius} if the monoid and adjoint comonoid structures are related by the following \emph{Frobenius equation}:
\begin{calign}\label{eq:Frobenius}
\begin{tz}[zx]
\draw (0,0) to [out=up, in=-135] (0.75,2) to (0.75,3);
\draw (0.75,2) to [out=-45, in=135] (2.25,1);
\draw (2.25,0) to (2.25,1) to [out=45, in=down] (3,3);
\node[zxvertex=\zxwhite,zxup] at (2.25,1){};
\node[zxvertex=\zxwhite,zxdown] at (0.75,2){};
\end{tz}
\quad = \quad
\begin{tz}[zx]
\coordinate (A) at (0,0);
\coordinate (B) at (0,3);
\draw (0.75,1) to (0.75,2);
\mult{A}{1.5}{1}
\comult{B}{1.5}{1}
\end{tz}
\quad = \quad 
\begin{tz}[zx]
\draw (0,0) to [out=up, in=-135] (0.75,2) to (0.75,3);
\draw (0.75,2) to [out=-45, in=135] (2.25,1);
\draw (2.25,0) to (2.25,1) to [out=45, in=down] (3,3);
\node[zxvertex=\zxwhite,zxup] at (2.25,1){};
\node[zxvertex=\zxwhite,zxdown] at (0.75,2){};
\end{tz}
\end{calign}
\end{definition}
\noindent
Frobenius monoids are canonically self-dual. Indeed, it is easy to see that for any Frobenius monoid the following cup and cap fulfil the snake equations~\eqref{eq:rightsnakes}:
\begin{calign}\label{eq:cupcapfrob}
\begin{tz}[zx]
\clip (-0.1,0) rectangle (2.1,2.);
\draw (0,0) to [out=up, in=up, looseness=2] node[zxvertex=\zxwhite, pos=0.5]{} (2,0);
\end{tz}
~:=~
\begin{tz}[zx]
\clip (-0.1,0) rectangle (2.1,2.2);
\draw (0,0) to [out=up, in=up, looseness=2] node[front,zxvertex=\zxwhite, pos=0.5](A){} (2,0);
\draw[string] (A.center) to (1,1.8);
\node[zxvertex=\zxwhite] at (1,1.8) {};
\end{tz}
&
\begin{tz}[zx,yscale=-1]
\clip (-0.1,0) rectangle (2.1,2.);
\draw (0,0) to [out=up, in=up, looseness=2] node[zxvertex=\zxwhite, pos=0.5]{} (2,0);
\end{tz}
~:=~
\begin{tz}[zx,yscale=-1]
\clip (-0.1,0) rectangle (2.1,2.2);
\draw (0,0) to [out=up, in=up, looseness=2] node[front,zxvertex=\zxwhite, pos=0.5](A){} (2,0);
\draw[string] (A.center) to (1,1.8);
\node[zxvertex=\zxwhite] at (1,1.8) {};
\end{tz}
\end{calign}
A Frobenius monoid is \emph{special} if the following additional equation is satisfied:
\begin{calign}
\begin{tz}[zx,every to/.style={out=up, in=down}]\draw (0,0) to (0,1) to [out=135] (-0.75,2) to [in=-135] (0,3) to (0,4);
\draw (0,1) to [out=45] (0.75,2) to [in=-45] (0,3);
\node[zxvertex=\zxwhite, zxup] at (0,1){};
\node[zxvertex=\zxwhite,zxdown] at (0,3){};\end{tz}
\quad = \quad 
\begin{tz}[zx]
\draw (0,0) to +(0,4);
\end{tz}
\end{calign}
\begin{example}\label{ex:normalisedendoalg}
The following normalisation of the endomorphism algebra of Definition~\ref{def:endoalgebra} is a special Frobenius monoid on $H \otimes H^*$, where $d$ is the dimension of $H$: 
\begin{calign}
\frac{1}{\sqrt{d}}~
\begin{tz}[zx,master,every to/.style={out=up, in=down}]
\draw[arrow data={0.5}{>}] (0,0) to [looseness=1.6] (1,3.5);
\draw[arrow data={0.5}{<}] (3,0) to [looseness=1.6] (2, 3.5);
\draw[arrow data={0.5}{<}] (1,0) to [out=up, in=up, looseness=3] (2,0);
\end{tz}
&
\sqrt{d}\!\!\!\!\!
\begin{tz}[zx,slave,every to/.style={out=up, in=down}]
\draw[arrow data={0.52}{<}] (1,3.5) to [out=down, in=down, looseness=3] (2,3.5);
\end{tz}
&
\frac{1}{\sqrt{d}}~
\begin{tz}[zx,master,every to/.style={out=up, in=down},yscale=-1]
\draw[arrow data={0.5}{<}] (0,0) to [looseness=1.6] (1,3.5);
\draw[arrow data={0.5}{>}] (3,0) to [looseness=1.6] (2, 3.5);
\draw[arrow data={0.5}{>}] (1,0) to [out=up, in=up, looseness=3] (2,0);
\end{tz}
&
\sqrt{d}\!\!\!\!\!
\begin{tz}[zx,slave,every to/.style={out=up, in=down},yscale=-1]
\draw[arrow data={0.52}{>}] (1,3.5) to [out=down, in=down, looseness=3] (2,3.5);
\end{tz}
\end{calign}
\end{example}
\noindent
We now consider two equivalence relations on special Frobenius monoids in a dagger category with split idempotents. The stricter of the relations is \emph{unitary $*$-isomorphism}.
\paragraph{Unitary $*$-isomorphism.}
\begin{definition}
Let $A,B$ be Frobenius monoids in a monoidal dagger category. We say that a morphism $f: A \to B$ is a \emph{$*$-homomorphism} precisely when it satisfies the following equations (we use black vertices for the Frobenius monoid $B$):
\begin{calign}\label{eq:homo}
\begin{tz}[zx, master, every to/.style={out=up, in=down},yscale=-1]
\draw (0,0) to (0,2) to [out=135] (-0.75,3);
\draw (0,2) to [out=45] (0.75, 3);
\node[zxnode=\zxwhite] at (0,1) {$f$};
\node[zxvertex=\zxwhite, zxdown] at (0,2) {};
\end{tz}
=
\begin{tz}[zx, every to/.style={out=up, in=down},yscale=-1]
\draw (0,0) to (0,0.75) to [out=135] (-0.75,1.75) to (-0.75,3);
\draw (0,0.75) to [out=45] (0.75, 1.75) to +(0,1.25);
\node[zxnode=\zxwhite] at (-0.75,2) {$f$};
\node[zxnode=\zxwhite] at (0.75,2) {$f$};
\node[zxvertex=\zxblack, zxdown] at (0,0.75) {};
\end{tz}
&
\begin{tz}[zx,slave, every to/.style={out=up, in=down},yscale=-1]
\draw (0,0) to (0,2) ;
\node[zxnode=\zxwhite] at (0,1) {$f$};
\node[zxvertex=\zxwhite, zxup] at (0,2) {};
\end{tz}
=
\begin{tz}[zx,slave, every to/.style={out=up, in=down},yscale=-1]
\draw (0,0) to (0,0.75) ;
\node[zxvertex=\zxblack, zxup] at (0,0.75) {};
\end{tz}
&
\begin{tz}[zx,slave, every to/.style={out=up, in=down},scale=-1]
\draw (0,0) to (0,3);
\node[zxnode=\zxwhite] at (0,1.5) {$f^\dagger$};
\end{tz}
=~~
\begin{tz}[zx,slave,every to/.style={out=up, in=down},scale=-1,xscale=-1]
\draw (0,1.5) to (0,2) to [in=left] node[pos=1] (r){} (0.5,2.5) to [out=right, in=up] (1,2)  to [out=down, in=up] (1,0);
\draw (-1,3) to [out=down,in=up] (-1,1) to [out=down, in=left] node[pos=1] (l){} (-0.5,0.5) to [out=right, in=down] (0,1) to (0,1.5);
\node[zxnode=\zxwhite] at (0,1.5) {$f$};
\node[zxvertex=\zxblack] at (l.center){};
\node[zxvertex=\zxwhite] at (r.center){};
\end{tz}
\end{calign}
We call a unitary $*$-homomorphism a \emph{unitary $*$-isomorphism}. It is easy to see that a unitary $*$-isomorphism satisfies the following additional equations:
\begin{calign}\label{eq:cohomo}
\begin{tz}[zx, master, every to/.style={out=up, in=down}]
\draw (0,0) to (0,2) to [out=135] (-0.75,3);
\draw (0,2) to [out=45] (0.75, 3);
\node[zxnode=\zxwhite] at (0,1) {$f$};
\node[zxvertex=\zxblack, zxup] at (0,2) {};
\end{tz}
=
\begin{tz}[zx, every to/.style={out=up, in=down}]
\draw (0,0) to (0,0.75) to [out=135] (-0.75,1.75) to (-0.75,3);
\draw (0,0.75) to [out=45] (0.75, 1.75) to +(0,1.25);
\node[zxnode=\zxwhite] at (-0.75,2) {$f$};
\node[zxnode=\zxwhite] at (0.75,2) {$f$};
\node[zxvertex=\zxwhite, zxup] at (0,0.75) {};
\end{tz}
&
\begin{tz}[zx,slave, every to/.style={out=up, in=down}]
\draw (0,0) to (0,2) ;
\node[zxnode=\zxwhite] at (0,1) {$f$};
\node[zxvertex=\zxblack, zxup] at (0,2) {};
\end{tz}
=
\begin{tz}[zx,slave, every to/.style={out=up, in=down}]
\draw (0,0) to (0,0.75) ;
\node[zxvertex=\zxwhite, zxup] at (0,0.75) {};
\end{tz}
&
\begin{tz}[zx,slave, every to/.style={out=up, in=down}]
\draw (0,0) to (0,3);
\node[zxnode=\zxwhite] at (0,1.5) {$f^\dagger$};
\end{tz}
=~~
\begin{tz}[zx,slave,every to/.style={out=up, in=down},xscale=1]
\draw (0,1.5) to (0,2) to [in=left] node[pos=1] (r){} (0.5,2.5) to [out=right, in=up] (1,2)  to [out=down, in=up] (1,0);
\draw (-1,3) to [out=down,in=up] (-1,1) to [out=down, in=left] node[pos=1] (l){} (-0.5,0.5) to [out=right, in=down] (0,1) to (0,1.5);
\node[zxnode=\zxwhite] at (0,1.5) {$f$};
\node[zxvertex=\zxwhite] at (l.center){};
\node[zxvertex=\zxblack] at (r.center){};
\end{tz}
\end{calign}
\end{definition}
\paragraph{Morita equivalence.}
To define the weaker equivalence relation, we introduce the notion of a dagger bimodule. 
\def\d{0.5}
\def\h{2.25}
\def\inang{-45}
\begin{definition} \label{def:daggerbimodule}Let $A$ and $B$ be special Frobenius monoids in a monoidal dagger category. An $A{-}B$-\textit{dagger bimodule} is an object $M$ together with an morphism $\rho:A\otimes M\otimes B \to M$ fulfilling the following equations:
\begin{calign}\label{eq:bimodule}
\begin{tz}[zx,every to/.style={out=up, in=down}]
\draw (0,0) to (0,3);
\draw (-\d-1.5,0) to [in=-135] (-\d-0.75,1) to[in=180-\inang] (0,\h);
\draw (-\d,0) to [in=-45] (-\d-0.75,1) ;
\draw (\d,0) to [in=-135] (\d+0.75,1) to [in=\inang] (0,\h);
\draw (\d+1.5,0) to [in=-45] (\d+0.75,1) ;
\node[zxvertex=\zxwhite, zxdown] at (-\d-0.75,1){};
\node[zxvertex=\zxwhite, zxdown] at (\d+0.75,1){};
\node[box,zxdown] at (0,\h) {$\rho$};
\end{tz}
~~=~~
\def\htop{2.25}
\def\hbot{1.25}
\begin{tz}[zx,every to/.style={out=up, in=down}]
\draw (0,0) to (0,3);
\draw (-\d-1.5,0) to [in=-135] (0,\htop);
\draw (-\d,0) to [in=-135] (0,\hbot);
\draw (\d,0) to [in=-45] (0,\hbot);
\draw (\d+1.5,0) to [in=-45] (0,\htop);
\node[box,zxdown] at (0,\hbot) {$\rho$};
\node[box,zxdown] at (0,\htop) {$\rho$};
\end{tz}
&
\begin{tz}[zx, every to/.style={out=up, in=down}]
\draw (0,0) to (0,3);
\draw (-\d,1.2) to [in=-135] (0,\h);
\draw (\d,1.2) to [in=-45] (0,\h);
\node[box,zxdown] at (0,\h) {$\rho$};
\node[zxvertex=\zxwhite] at (-\d,1.2){};
\node[zxvertex=\zxwhite] at (\d,1.2){};
\end{tz}
~~=~~~
\begin{tz}[zx, every to/.style={out=up, in=down}]
\draw (0,0) to (0,3);
\end{tz}
&
\def\x{0.2}
\begin{tz}[zx, every to/.style={out=up, in=down},xscale=0.8]
\draw (0,0) to (0,3);
\draw (-\x,1.5) to [out=up, in=right] (-0.75-\x, 2.25) to [out=left, in=up] (-1.5-\x, 1.5) to (-1.5-\x,0);
\draw (\x,1.5) to [out=up, in=left] (0.75+\x, 2.25) to [out=right, in=up] (1.5+\x, 1.5) to (1.5+\x,0);
\node[zxvertex=\zxwhite] at (-0.75-\x, 2.25){};
\node[zxvertex=\zxwhite] at (0.75+\x, 2.25){};
\node[box] at (0,1.5) {$\rho^\dagger$};
\end{tz}
~~=~~
\begin{tz}[zx, every to/.style={out=up, in=down},xscale=0.8]
\draw (0,0) to (0,3);
\draw (-1.25, 0) to [in=-135] (0,1.95) ;
\draw (1.25,0) to [in=-45] (0,1.95);
\node[box] at (0,1.95) {$\rho$};
\end{tz}
\end{calign}
\end{definition}
\noindent
We usually denote an $A{-}B$-dagger bimodule $M$ by $_AM_B$.
\ignore{
For a dagger bimodule ${}_AM_B$, we introduce the following shorthand notation:
\begin{calign}\label{eq:commute}
\begin{tz}[zx, every to/.style={out=up, in=down}]
\draw (0,0) to (0,3);
\draw (1,0) to [in=-45] (0,\h);
\node[boxvertex,zxdown] at (0,\h) {};
\end{tz}
~:=~\begin{tz}[zx, every to/.style={out=up, in=down}]
\draw (0,0) to (0,3);
\draw (-\d,1.2) to [in=-135] (0,\h);
\draw (1,0) to [in=-45] (0,\h);
\node[box,zxdown] at (0,\h) {$\rho$};
\node[zxvertex=\zxwhite] at (-\d,1.2){};
\end{tz}
&
\begin{tz}[zx, every to/.style={out=up, in=down},xscale=-1]
\draw (0,0) to (0,3);
\draw (1,0) to [in=-45] (0,\h);
\node[boxvertex,zxdown] at (0,\h) {};
\end{tz}
~:=~\begin{tz}[zx, every to/.style={out=up, in=down},xscale=-1]
\draw (0,0) to (0,3);
\draw (-\d,1.2) to [in=-135] (0,\h);
\draw (1,0) to [in=-45] (0,\h);
\node[box,zxdown] at (0,\h) {$\rho$};
\node[zxvertex=\zxwhite] at (-\d,1.2){};
\end{tz}
&
\begin{tz}[zx, every to/.style={out=up, in=down}]
\draw (0,0) to (0,3);
\draw (1,0) to [in=-45] (0,\h);
\draw (-1,0) to [in=-135] (0,\h);
\node[boxvertex,zxdown] at (0,\h) {};
\end{tz}
~:=~\begin{tz}[zx, every to/.style={out=up, in=down}]
\draw (0,0) to (0,3);
\draw (-1,0) to [in=-135] (0,\h);
\draw (1,0) to [in=-45] (0,\h);
\node[box,zxdown] at (0,\h) {$\rho$};
\end{tz}
~=~
\begin{tz}[zx, every to/.style={out=up, in=down}]
\draw (0,0) to (0,3);
\draw (1,0) to [in=-45] (0,\h);
\draw (-1,0) to [in=-135] (0, 1.5);
\node[boxvertex,zxdown] at (0,1.5){};
\node[boxvertex,zxdown] at (0,\h) {};
\end{tz}
~=~
\begin{tz}[zx, every to/.style={out=up, in=down},xscale=-1]
\draw (0,0) to (0,3);
\draw (1,0) to [in=-45] (0,\h);
\draw (-1,0) to [in=-135] (0, 1.5);
\node[boxvertex,zxdown] at (0,1.5){};
\node[boxvertex,zxdown] at (0,\h) {};
\end{tz}
\end{calign}
\ignore{It is easy to see that by the `only-left' and `only-right' actions the $A-B$-bimodule structure induces left $A$-module and right $B$-module structures on $M$.}
\noindent
Every special dagger Frobenius monoid $A$ gives rise to a trivial dagger bimodule ${}_AA_A$:
\begin{calign}
\begin{tz}[zx, every to/.style={out=up, in=down}]
\draw (0,0) to (0,3);
\draw (-1,0) to [in=-135] (0,2.);
\draw (1,0) to [in=-45] (0,2.);
\node[boxvertex,zxdown] at (0,2.){};
\end{tz}
~~:= ~~
\begin{tz}[zx]
\coordinate(A) at (0.25,0);
\draw (1,1) to [out=up, in=-135] (1.75,2);
\draw (1.75,2) to [out=-45, in=up] (3.25,0);
\draw (1.75,2) to (1.75,3);
\mult{A}{1.5}{1}
\node[zxvertex=\zxwhite,zxdown] at (1.75,2){};
\end{tz}
~~= ~~
\begin{tz}[zx,xscale=-1]
\coordinate(A) at (0.25,0);
\draw (1,1) to [out=up, in=-135] (1.75,2);
\draw (1.75,2) to [out=-45, in=up] (3.25,0);
\draw (1.75,2) to (1.75,3);
\mult{A}{1.5}{1}
\node[zxvertex=\zxwhite,zxdown] at (1.75,2){};
\end{tz}\end{calign}%
}
\begin{definition} A \textit{morphism of dagger bimodules} $_AM_B\to {}_AN_B$ is a morphism $f:M\to N$ that commutes with the action of the Frobenius monoids:
\begin{calign}
\begin{tz}[zx]
\draw (0,0) to (0,3);
\draw (-1,0) to [out=up, in=-135] (0,2.15);
\draw (1,0) to [out=up, in=-45] (0,2.15) ;
\node[zxnode=\zxwhite] at (0,0.85) {$f$};
\node[boxvertex,zxdown] at (0,2.15){};
\end{tz}
=
\begin{tz}[zx]
\draw (0,0) to (0,3);
\draw (-1,0) to [out=up, in=-135] (0,0.85);
\draw (1,0) to [out=up, in=-45] (0,0.85) ;
\node[zxnode=\zxwhite] at (0,2.15) {$f$};
\node[boxvertex,zxdown] at (0,0.85){};
\end{tz}
\end{calign}
Two dagger bimodules are \textit{isomorphic}, here written ${}_AM_B\cong{}_AN_B$, if there is a unitary morphism of dagger bimodules ${}_AM_B\to{}_AN_B$.
\end{definition}
\noindent
In a monoidal dagger category in which dagger idempotents split (Definition~\ref{def:c*tensorcats}), we can compose dagger bimodules ${}_AM_B$ and ${}_BN_C$ to obtain an $A{-}C$-dagger bimodule ${}_AM{\otimes_B}N_C$ as follows. First we observe that the following endomorphism is a dagger idempotent:
\begin{calign}\label{eq:idempotentforrelprod}
\begin{tz}[zx,every to/.style={out=up, in=down}]
\draw (0,0) to (0,3);
\draw (2,0) to (2,3);
\draw (0,2.25) to [out=-45, in=left] (1, 1.2) to[out=right, in=-135] (2,2.25);
\draw (-1,1.2) to[out=90,in=-135] (0,2.25);
\draw (3,1.2) to[out=90,in=-45] (2,2.25);
\node[zxvertex=\zxwhite] at (1,1.2){};
\node[zxvertex=\zxwhite] at (-1,1.2){};
\node[zxvertex=\zxwhite] at (3,1.2){};
\node[boxvertex,zxdown] at (0,2.25){};
\node[boxvertex,zxdown] at (2,2.25){};
\node[dimension, left] at (0,0) {$M$};
\node[dimension, right] at (2,0) {$N$};
\end{tz}
\end{calign}
This may be seen as follows. For idempotency:
\begin{calign}
\includegraphics{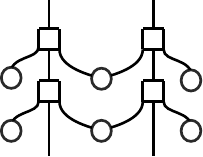}
~~=~~
\includegraphics{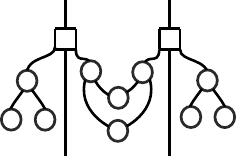}
~~=~~
\includegraphics{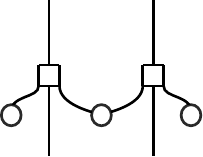}
\end{calign}
Here for the first equality we used~\eqref{eq:bimodule} and for the second we used speciality of $B$. For self-adjointness:
\begin{calign}
\includegraphics{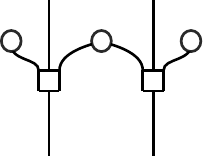}
~~=~~
\includegraphics{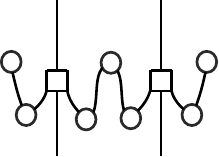}
~~=~~
\includegraphics{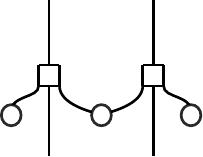}
\end{calign}
Here for the first equality we used the third equation of~\eqref{eq:bimodule}, and for the second equality we used the axioms of a Frobenius algebra.

Now the \emph{relative tensor product} ${}_AM{\otimes_B}N_C$ is defined as the splitting of the dagger idempotent~\eqref{eq:idempotentforrelprod}. We depict the isometry $i: M\otimes_B N\to M\otimes N$ as a downwards pointing triangle:
\begin{calign}\label{eq:moritaidempotentsplit}
\begin{tz}[zx,every to/.style={out=up, in=down}]
\draw (0,0) to (0,3);
\draw (2,0) to (2,3);
\draw (0,2.25) to [out=-45, in=left] (1, 1.2) to[out=right, in=-135] (2,2.25);
\draw (-1,1.2) to[out=90,in=-135] (0,2.25);
\draw (3,1.2) to[out=90,in=-45] (2,2.25);
\node[zxvertex=\zxwhite] at (1,1.2){};
\node[zxvertex=\zxwhite] at (-1,1.2) {};
\node[zxvertex=\zxwhite] at (3,1.2){};
\node[boxvertex,zxdown] at (0,2.25){};
\node[boxvertex,zxdown] at (2,2.25){};
\end{tz}
~~=~~
\begin{tz}[zx,every to/.style={out=up, in=down}]
\draw (0,0) to (0,0.5);
\draw (0,2.5) to (0,3);
\draw (2,0) to (2,0.5);
\draw (2,2.5) to (2,3);
\draw (1,0.5) to (1,2.5);
\node[dimension, right] at (1,1.5) {$M{\otimes_B}N$};
\node[triangleup=2] at (1,0.5){};
\node[triangledown=2] at (1,2.5){};
\end{tz}
&
\begin{tz}[zx]
\clip (-1.2, -0.3) rectangle (1.9,3.3);
\draw (0,0) to (0,1);
\draw (-1,1) to (-1,2);
\draw (1,1) to (1,2);
\draw (0, 2) to (0,3);
\node[triangleup=2] at (0,2){};
\node[triangledown=2] at (0,1){};
\node[dimension, right] at (0,0) {$M{\otimes_B}N$};
\end{tz}
=~~
\begin{tz}[zx]
\clip (-0.2, -0.3) rectangle (1.9,3.3);
\draw (0,0) to (0,3);
\node[dimension, right] at (0,0) {$M{\otimes_B}N$};
\end{tz}
\end{calign}
\ignore{One can convince onself that $f:M\otimes N\to M\otimes_B N$ is a coequalizer for the }
For dagger bimodules ${}_AM_B$ and ${}_BN_C$, the relative tensor product $M\otimes_B N$ is itself an $A{-}C$-dagger bimodule with the following action $A\otimes(M{\otimes_B}N) \otimes C\to M{\otimes_B}N$:
\begin{equation}
\begin{tz}[zx,every to/.style={out=up, in=down}]
\draw (0,-0.) to (0,1) ;
\draw (0,3) to (0,3.5);
\draw (-1,1) to (-1,2.5);
\draw (1,1) to (1,2.5);
\draw (-2,-0.) to [in=-135] (-1,1.75);
\draw (2,-0.) to [in=-45] (1,1.75);
\node[triangledown=2] at (0,1){};
\node[triangleup=2] at (0,2.5){};
\node[boxvertex] at (-1,1.75){};
\node[boxvertex] at (1,1.75){};
\end{tz}
\end{equation}
\noindent
\begin{definition} \label{def:daggermoritaequiv}Two Frobenius monoids $A$ and $B$ are \textit{Morita equivalent} if there are dagger bimodules $_AM_B$ and $_BN_A$ such that $_AM{\otimes_B}N_A\cong {}_AA_A$ and ${_BN{\otimes_A}M_B \cong {}_BB_B}$.
\end{definition}
\noindent
It may straightforwardly be verified that unitarily $*$-isomorphic Frobenius monoids are Morita equivalent.
\ignore{
\paragraph{Morita theory.}
We now see how these equivalence relations on Frobenius monoids $f \circ f^*$ relate to the corresponding morphisms $f: r \to s$. For the proof of the following theorems we refer to~\cite[Appendix]{Musto2019}.
\begin{definition}\label{def:splitdags}
We say that a 2-category has \emph{split dagger idempotents} if all of its Hom-categories have split dagger idempotents (Definition~\ref{def:semisimple}).
\end{definition}
\begin{theorem}\label{thm:morclassobjs}
Let $\mathcal{C}$ be a pivotal dagger 2-category with split dagger idempotents, and let $f:r \to s$ and $g: r \to t$ be 1-morphisms. The Frobenius monoids $f \circ f^*$ and $g \circ g^*$ are Morita equivalent iff there exists a dagger equivalence $X: s \to t$ in $\mathcal{C}$.
\end{theorem}
\begin{theorem}\label{thm:morclass1morphs}
Let $\mathcal{C}$ be a pivotal dagger 2-category with split dagger idempotents, and let $f:r \to s$ and $g: r \to t$ be 1-morphisms. 
The Frobenius monoids $f \circ f^*$ and $g \circ g^*$ are $*$-isomorphic iff there exists a dagger equivalence $X: t \to s$ and a unitary 2-morphism $\alpha: f \to g \circ X$. 
\end{theorem}
\begin{figure}
\end{figure}
}
\subsection{Morita classification of accessible fibre functors and UPTs}

We first observe that every UPT out of the canonical fibre functor $F$ gives rise to a special Frobenius monoid in $\End(F)$. 
\begin{proposition}\label{prop:upttofrobmon}
Let $G$ be a compact quantum group, let $F: \Rep(G) \to \Hilb$ be the canonical fibre functor, and let $F'$ be another fibre functor and $(\alpha,H): F \to F'$ a UPT. Then the object $\alpha^* \circ \alpha$ of $\End(F)$ has the structure of a special Frobenius monoid in $\End(F)$ with the following multiplication and unit modifications, where $d=\dim(H)$:
\begin{calign}
\frac{1}{\sqrt{d}}~
\includegraphics{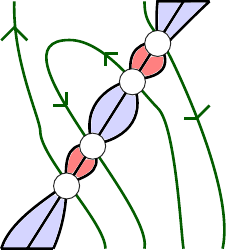}
~~=~~\frac{1}{\sqrt{d}}~
\includegraphics{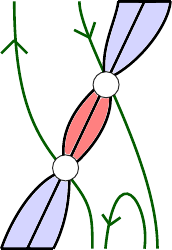}
\qquad&& \qquad
\sqrt{d}~\includegraphics{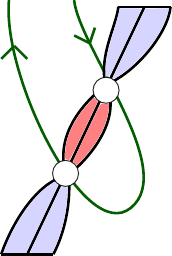}
~~=~~
\sqrt{d}~
\includegraphics{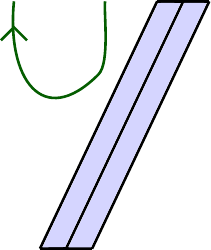}
\\
m: (\alpha \otimes \alpha^*) \otimes (\alpha \otimes \alpha^*) \to (\alpha \otimes \alpha^*) 
&&
u: \id_F \to (\alpha \otimes \alpha^*)
\end{calign}
\end{proposition}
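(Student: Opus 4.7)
The plan is to verify directly that the proposed pair $(m,u)$ equips the UPT $\alpha^*\circ \alpha$, viewed as a 1-endomorphism of $F$ in the 2-category $\Fun(\Rep(G),\Hilb)$, with a special Frobenius monoid structure in $\End(F)$. This is in essence the standard construction of a Frobenius monoid from a 1-morphism in a pivotal dagger 2-category, specialised to UPTs.

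First I would check that $m$ and $u$ genuinely define \emph{modifications}, not just morphisms in $\Hilb$ of the underlying Hilbert space $H\otimes H^*$. Up to the scalar normalisations $1/\sqrt{d}$ and $\sqrt{d}$, the diagrams for $m$ and $u$ are obtained by inserting a cap and a cup on the $H$-wire between strands locally coloured by $\alpha$ and $\alpha^*$; but precisely these cups and caps are modifications by the four identities listed in~\eqref{eq:cupcapmodsdualpnt}. Since modifications are closed under scalar multiplication and under the diagrammatic composition operations, $m$ and $u$ are modifications of the required types.

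Next I would establish the monoid axioms. Associativity, unitality and the Frobenius law all reduce to planar-isotopy statements about the $H$- and $H^*$-wires in $\Hilb$: associativity is the standard equivalence between the two ways of contracting three adjacent caps, the Frobenius identity~\eqref{eq:Frobenius} expresses three topologically equivalent arrangements of one cup and one cap on the $H$-wires, and unitality is just the snake equation on $H$. All of these are visible directly from the graphical calculus guaranteed by Theorem~\ref{thm:graphcalcpiv}, and since both sides of each identity are modifications agreeing as morphisms in $\Hilb$, they agree as modifications in $\End(F)$.

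Speciality, $m\circ m^\dagger =\id_{\alpha^*\circ\alpha}$, is the one place where the normalisation factors enter essentially. Composing $m$ with $m^\dagger$ produces a closed loop on the $H$-wire, which evaluates to $\dim(H) = d$ in $\Hilb$; the prefactors combine to $(1/\sqrt{d})^2 = 1/d$, exactly cancelling. The choice of $\sqrt{d}$ for $u$ is then forced by unitality, since the pairing of a $u$-cup against an $m$-cap must also produce no residual scalar. The only thing demanding care is keeping this normalisation bookkeeping consistent across the four axioms simultaneously; once this is in hand, I expect no essential obstacle in the proof.
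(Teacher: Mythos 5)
Your proposal is correct and follows essentially the same route as the paper: the paper likewise deduces that $m$ and $u$ are modifications from the pull-through identities~\eqref{eq:cupcapmodsdualpnt}, and then reduces all the Frobenius and speciality axioms to the underlying normalised endomorphism algebra on $H\otimes H^*$ in $\Hilb$ (Example~\ref{ex:normalisedendoalg}), exactly as you do via faithfulness of the forgetful passage to $\Hilb$. Your write-up merely spells out the normalisation bookkeeping that the paper leaves implicit.
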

\begin{proof}
That these are modifications as claimed follows from the pull-through equations for the cup and cap for the dual UPT~\eqref{eq:cupcapmodsdualpnt}. It is a special Frobenius monoid because the underlying algebra of the modifications is the normalised endomorphism algebra (Example~\ref{ex:normalisedendoalg}).
\end{proof}
\begin{definition}
The monoidal functor $\For: \End(F) \to \Hilb$ is defined as follows:
\begin{itemize}
\item \emph{On objects}: Every UPT $(\alpha,H): F \to F$ is taken to its underlying Hilbert space $H$.
\item \emph{On morphisms}: Every modification $f: (\alpha_1,H_1) \to (\alpha_2,H_2)$ is taken to its underlying linear map $f: H_1 \to H_2$.
\end{itemize}
\end{definition}
\begin{remark}
By the isomorphism $\End(F) \cong \Rep(A_G)$ of Theorem~\ref{thm:higherhopfgal}, $\For$ is precisely the canonical fibre functor on the category $\Rep(A_G)$.
\end{remark}
\ignore{
We now observe the following fact, recalling the definition of the endomorphism algebra in $\Hilb$ (Definition~\ref{def:endoalgebra}).
\begin{lemma}
The Frobenius monoid $\alpha^* \circ \alpha$ of Proposition~\eqref{} is mapped to an endomorphism algebra in $\Hilb$ by the functor $\For$.
\end{lemma}}
\begin{definition}\label{def:simplefrobmon}
A Frobenius monoid $A=((\alpha,H),m,u)$ in $\End(F)$ is \emph{simple} if $\For(A)$ is unitarily $*$-isomorphic to a normalised endomorphism algebra in $\Hilb$.
\end{definition}
\noindent
Every simple Frobenius monoid is in particular special, since it is $*$-isomorphic to a special Frobenius monoid.

\ignore{
We will now characterise those Frobenius monoids $((\alpha,H),m,u)$ in $\End(F)$ which arise from UPTs out of $F$ by the construction of Proposition~\ref{prop:upttofrobmon}. We will show how any Frobenius monoid of this type may be `split' to construct a fibre functor $F_{\alpha}$ and a UPT $\sqrt{\alpha}: F \to F_{\alpha}$. We will then use Theorems~\ref{thm:morclassobjs} and~\ref{thm:morclass1morphs} to give a constructive classification accessible fibre functors $F'$ and UPTs $F \to F'$.

\paragraph{Split Frobenius monoids.} In order to recognise those Frobenius monoids which arise from UPTs, we first observe the existence of the following pseudofunctor. 
\begin{definition}
The pseudofunctor $\For: \Fun(\Rep(G),\Hilb) \to \Hilb$ is defined as follows:
\begin{itemize}
\item \emph{On objects}: Every fibre functor is mapped to the unique object of $\Hilb$ (considered as a 1-object 2-category).
\item \emph{On 1-morphisms}: Every UPT $(\alpha,H): F_1 \to F_2$ is taken to its underlying Hilbert space $H$.
\item \emph{On 2-morphisms}: Every modification $f: (\alpha_1,H_1) \to (\alpha_2,H_2)$ is taken to its underlying linear map $f: H_1 \to H_2$.
\end{itemize}
The multiplicator and unitor are trivial.
\end{definition}
\noindent
It is easy to see that that this defines a strict pseudofunctor. In particular, this pseudofunctor restricts to a monoidal functor $\For: \End(F) \to \Hilb$.

By Proposition~\ref{prop:1morphtofrobmon}, a UPT $F \to F'$ induces a symmetric Frobenius monoid in the pivotal dagger category $\End(F)$. Recalling Definition~\ref{def:endoalgebra} we observe that this Frobenius monoid possesses the following property with respect to the monoidal functor $\For: \End(F) \to \Hilb$.
\begin{definition}\label{def:simplefrobmon}
A Frobenius monoid $A=((\alpha,H),m,u)$ in $\End(F)$ is \emph{simple} if $\For(A)$ is $*$-isomorphic to an endomorphism algebra in $\Hilb$.
\end{definition}
\ignore{
\begin{proposition}
By~\eqref{}, every UPT $\alpha: F' \to F$  induces a simple symmetric Frobenius monoid $(\alpha^* \circ \alpha,m,u)$ in $\End(F)$.
\end{proposition}
\begin{proof}
Simplicity of the Frobenius algebra is obvious, as we recognise $F(\alpha^*\circ\alpha)= H^* \otimes H$, where the algebra structure maps to the pair of pants multiplication~\eqref{}; $F(A)$ is therefore isomorphic to $\End(H)$. 

\ignore{For self-duality, we recall that in any pivotal dagger 2-category $f \circ g$ is dual to $g^* \circ f^*$ by the nested cup and cap~\eqref{}. It follows that $(\alpha^* \circ \alpha)$ is a right dual for itself. There is therefore a unitary modification $(\alpha^* \circ \alpha) \to (\alpha^* \circ \alpha)^*$, the chosen right dual, as in~\eqref{}.}
\end{proof}
}
}
\noindent
Every fibre functor $F'$ and UPT $F \to F'$ gives rise to a simple Frobenius monoid in $\End(F)$ by the construction of Proposition~\ref{prop:upttofrobmon}.
We now give a construction in the other direction --- any simple Frobenius monoid in $\End(F)\cong \Rep(A_G)$ can be `split' to obtain a fibre functor $F'$ and a UPT $F \to F'$. First observe that we may conjugate any simple Frobenius monoid $((\tilde{\alpha},\tilde{H}),\tilde{m},\tilde{u})$ in $\End(F)$ by the $*$-isomorphism of Definition~\ref{def:simplefrobmon} to obtain a simple Frobenius monoid $((\alpha,H\otimes H^*),m,u)$ where the modifications $m,u$ have the standard form:
\begin{calign}\frac{1}{\sqrt{d}}
\includegraphics[scale=.8]{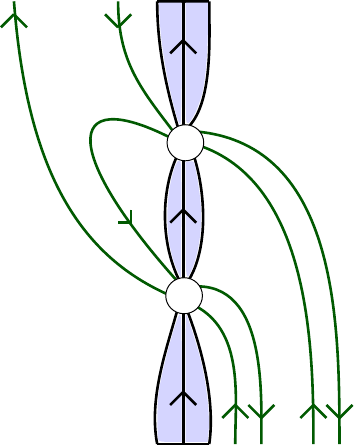}
~~=~~
\frac{1}{\sqrt{d}}
\includegraphics[scale=.8]{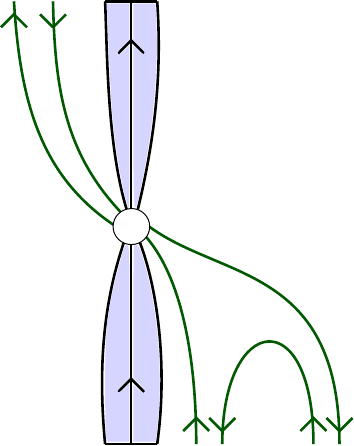}
&~~
\sqrt{d}
\includegraphics[scale=.8]{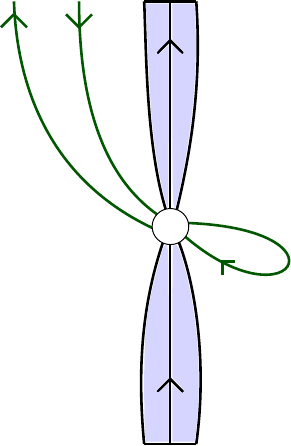}
~~=~~
\sqrt{d}
\includegraphics[scale=.8]{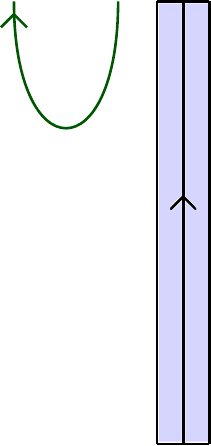}
\\
m: \alpha \circ \alpha \to \alpha
&
u: \id_F \to \alpha
\end{calign}
\begin{lemma}
For a simple Frobenius monoid $((\alpha,H \otimes H^*),m,u)$ in $\End(F)$ and any object $V$ of $\Rep(G)$, the following defines a dagger idempotent on $H^* \otimes F(V) \otimes H$:
\begin{calign}
{\large \frac{1}{d}}~
\includegraphics[scale=.8]{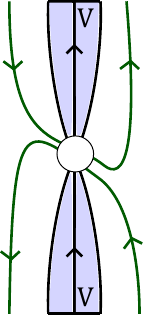}
\end{calign}
\end{lemma}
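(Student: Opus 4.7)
The plan is to verify the two defining properties of a dagger idempotent, namely self-adjointness $e_V^\dagger = e_V$ and idempotency $e_V \circ e_V = e_V$. Both will follow by pushing the monoid structure $(m,u)$ of the Frobenius monoid $\alpha$ through $\alpha_V$ using the intertwining property of modifications~\eqref{eq:uptmod} combined with unitarity and the explicit pair-of-pants form of $m,u$ on $H \otimes H^*$.

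First I would verify self-adjointness. Taking the dagger of the displayed diagram reflects it in a horizontal axis; the factor $1/d$ is unchanged, while the components $\alpha_V$ are replaced by $\alpha_V^\dagger$. Using unitarity of $\alpha_V$ in the form given by~\eqref{eq:dualpnt} and the pivotal dagger structure of $\Hilb$ (Theorem~\ref{thm:graphcalcpiv}), the daggered diagram can be redrawn with $\alpha_V$ restored and with $m^\dagger$ in the position formerly occupied by $m$. Since the underlying monoid is the normalized endomorphism algebra on $H \otimes H^*$ (Example~\ref{ex:normalisedendoalg}), $m^\dagger$ equals $m$ up to the obvious rotation of the pair-of-pants, which is absorbed by the cup and cap of the pivotal structure on $H$ used to form the idempotent on $H^* \otimes F(V) \otimes H$. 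This recovers the original diagram, giving $e_V^\dagger = e_V$.

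Next I would verify idempotency by stacking two copies of $e_V$. The two central occurrences of $\alpha_V$ coalesce, via the monoidality equation~\eqref{eq:pntmonmon}, into the component $(\alpha \circ \alpha)_V$. The intertwining equation~\eqref{eq:uptmod} for the modification $m : \alpha \circ \alpha \to \alpha$ lets me pull $m$ past $\alpha_V$ and stack it with the $m$ from the outer copy of $e_V$. Associativity~\eqref{eq:assocandunitality} then fuses the two multiplications into a single one. At this stage the diagram contains one extra closed $(H \otimes H^*)$-loop created by the merging of the two pair-of-pants multiplications; this loop contributes a scalar factor of $d$ by Example~\ref{ex:normalisedendoalg}, which cancels exactly one of the two $1/d$ prefactors, leaving the expression for $e_V$.

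The main obstacle will be bookkeeping: one has to match the normalization factor $1/d$ against the scalars produced by closed $H$-loops arising from the standard form of $m$ and $u$, and check that the pivotal cup/cap on $H$ used to form $e_V$ on $H^* \otimes F(V) \otimes H$ interact correctly with $\alpha_V$ via~\eqref{eq:pntmonnat}. Once the normalization is set up consistently, the argument is purely diagrammatic, relying only on the unitarity and monoidality of $\alpha$, the special Frobenius axioms, and the spherical calculus of $\Hilb$.
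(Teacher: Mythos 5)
Your idempotency argument is essentially the paper's: the two stacked components of $\alpha_V$ form the component at $V$ of the composite UPT $\alpha\circ\alpha$ (this is just the definition of composition of UPTs, not the monoidality equation~\eqref{eq:pntmonmon}, which concerns $\alpha_{V\otimes W}$), one pulls the multiplication through using the modification equation~\eqref{eq:uptmod}, and the resulting closed loop contributes the factor $d$ that cancels one of the two $1/d$'s. The references to ``the $m$ from the outer copy of $e_V$'' and to associativity are spurious --- the idempotent contains no explicit multiplication vertex, and only one application of $m$ ever occurs --- but these are presentational slips rather than errors.

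The self-adjointness half, however, has a genuine gap. After you reflect the diagram and use unitarity of $\alpha_V$ in the form~\eqref{eq:dualpnt}, what you obtain is the component of the \emph{dual} UPT $\alpha^*$: that is, $\alpha_V$ conjugated by the dagger-duality cups and caps of $H\otimes H^*$ in $\Hilb$, with the outer $H$-legs rerouted. To recover the original diagram you must slide these cups and caps past the $\alpha_V$ vertex, and neither the pivotal calculus of $\Hilb$ nor naturality~\eqref{eq:pntmonnat} licenses this: naturality only lets morphisms of the form $F(f)$, for $f$ a morphism of $\Rep(G)$, pull through the vertex, and a cup or cap on $H$ is not of that form. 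The missing ingredient is the Frobenius structure itself. Since $m,u$ and hence $m^\dagger,u^\dagger$ are modifications, the Frobenius cup and cap~\eqref{eq:cupcapfrob} are modifications $\alpha\circ\alpha\to\id_F$ and $\id_F\to\alpha\circ\alpha$, so by Proposition~\ref{prop:relateduals} applied in $\End(F)$ there is an \emph{invertible modification} $P:\alpha\to\alpha^*$ relating the Frobenius self-duality to the chosen dagger duality. It is $P$, pulled through the vertex by the modification equation and then cancelled against $P^{-1}$, that converts $\alpha^*_V$ back into $\alpha_V$; this is exactly how the paper closes the argument. Your appeal to ``$m^\dagger$ equals $m$ up to the obvious rotation of the pair-of-pants, absorbed by the cup and cap on $H$'' identifies $P$ as a linear map but never establishes that it is a modification, and without that fact the $H$-wires cannot be moved through $\alpha_V$ and the computation does not close.
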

\begin{proof}
Idempotency follows from the fact that the algebra multiplication is a modification:
\begin{calign}
{\large \frac{1}{d^2}}~
\includegraphics[scale=.8]{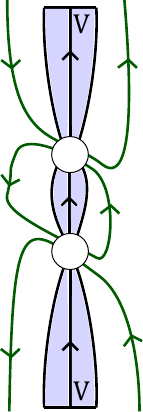}
~~=~~
{\large \frac{1}{d^2}}~
\includegraphics[scale=.8]{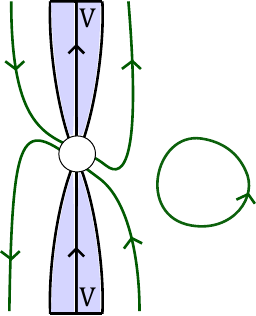}
~~=~~
{\large \frac{1}{d}}~
\includegraphics[scale=.8]{Figures/svg/morita/idempotdef.pdf}
\end{calign}
We must now show that the idempotent is self-adjoint. Recall that a Frobenius algebra has a self-duality with the cup and cap~\eqref{eq:cupcapfrob}. By Proposition~\ref{prop:relateduals} applied in $\End(F)$, there is therefore an invertible modification $P: (\alpha,H \otimes H^*) \to (\alpha^*,(H \otimes H^*)^*)$ satisfying the following equations in $\Hilb$:
\begin{calign}\label{eq:Pmod}
\includegraphics[scale=.8]{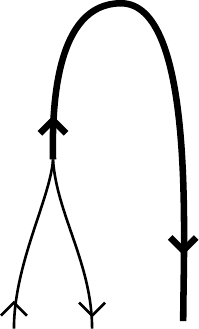}
~~=~~
\includegraphics[scale=.8]{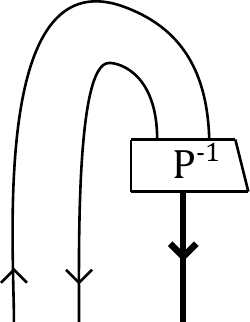}
&
\includegraphics[scale=.8]{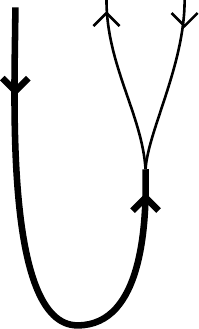}
~~=~~
\includegraphics[scale=.8]{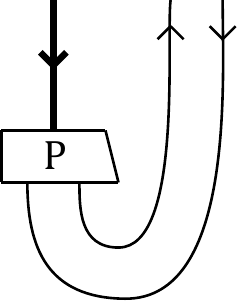}
\end{calign}
Here we drew the chosen right dual of $(H \otimes H^*)$ using a thick wire and a thick downward arrow in the spirit of~\eqref{eq:dualscompare}. Now we show that the idempotent is self-adjoint:
\begin{calign}
{\large \frac{1}{d}}~
\includegraphics[scale=.8]{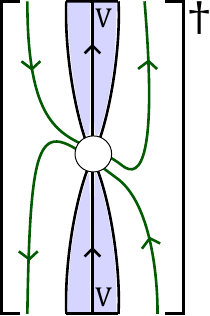}
~~=~~
{\large \frac{1}{d}}~
\includegraphics[scale=.8]{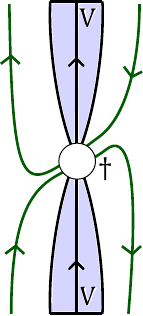}
~~=~~
{\large \frac{1}{d}}~
\includegraphics[scale=.8]{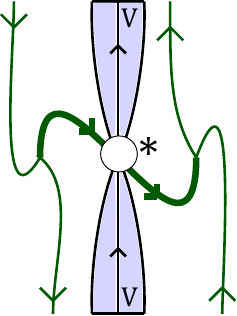}
~~=~~
{\large \frac{1}{d}}~
\includegraphics[scale=.8]{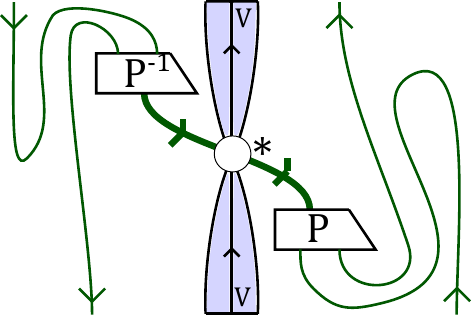}
~~=~~
{\large \frac{1}{d}}~
\includegraphics[scale=.8]{Figures/svg/morita/idempotdef.pdf}
\end{calign}
Here for the first equality we used the graphical calculus of the dagger; for the second equality we used unitarity of $\alpha$~\eqref{eq:dualpnt}; for the third equality we used~\eqref{eq:Pmod}; and for the fourth equality we used the fact that $P$ is a modification $\alpha \to \alpha^*$ to cancel $P$ with its inverse.  
\end{proof}
\noindent
This dagger idempotent splits to give a new Hilbert space, which, foreshadowing Theorem~\ref{thm:splitting}, we call $F_{\alpha}(V)$ and draw as $V$ surrounded by a red box, and an isometry $\iota_V: F_{\alpha}(V) \to H^* \otimes F(V) \otimes H$ satisfying the following equations:
\begin{calign}\label{eq:splitisom}
\includegraphics[scale=.8]{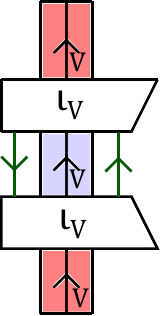}
~~=~~
\includegraphics[scale=.8]{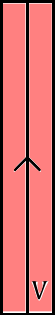}
&
\includegraphics[scale=.8]{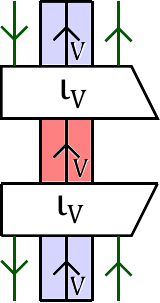}
~~=~~
{\large \frac{1}{d}}~\includegraphics[scale=.8]{Figures/svg/morita/idempotdef.pdf}
\end{calign}
\begin{theorem}\label{thm:splitting}
Let $((\alpha,H\otimes H^*),m,u)$ be a simple Frobenius monoid in $\End(F) \cong \Rep(A_G)$, and let $d = \dim(H)$. For every representation $V$ of $G$, let $F_{\alpha}(V)$ and $\iota_V: F_{\alpha}(V) \to H^* \otimes V \otimes H$ be the Hilbert space and isometry defined in the foregoing discussion. 

Then the following defines a fibre functor $F_{\alpha}: \Rep(G) \to \Hilb$:
\begin{itemize}
\item On objects: $V \mapsto F_{\alpha}(V)$.
\item On morphisms: 
\begin{calign}\label{eq:splitmorphs}
\includegraphics[scale=.8]{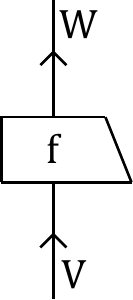}
~~\mapsto~~
\includegraphics[scale=.8]{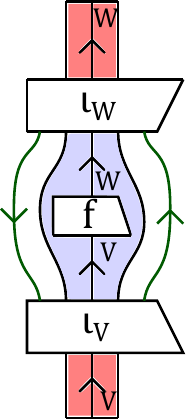}
\end{calign} 
\item Monoidal structure:
\begin{calign}
\sqrt{d}~
\includegraphics[scale=.8]{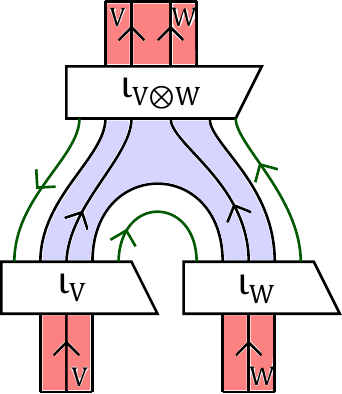}&
\frac{1}{\sqrt{d}}~
\includegraphics[scale=.8]{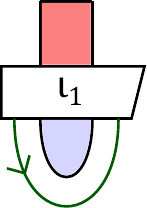}\\
m_{V,W}: F_{\alpha}(V) \otimes F_{\alpha}(W) \to F_{\alpha}(V \otimes W)
&
u: \mathbb{C} \to F_{\alpha}(\mathbbm{1})
\end{calign}
\end{itemize}
Moreover, there is a UPT $(\sqrt{\alpha},H): F \to F_{\alpha}$ with the following components $\sqrt{\alpha}_V$:
\begin{calign}
\sqrt{d}~\includegraphics[scale=.8]{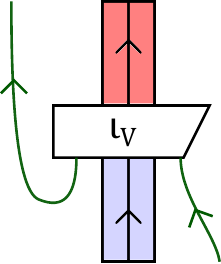}
\end{calign}
This UPT `splits' $A$ in the sense that $\sqrt{\alpha}^* \circ \sqrt{\alpha} = \alpha$.
\end{theorem}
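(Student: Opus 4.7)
The proof will proceed by verifying each of the claimed structures in turn, using the splitting equations~\eqref{eq:splitisom} throughout to absorb or insert the idempotent $\iota_V \circ \iota_V^\dagger$, together with the monoid axioms for $(m,u)$, the Frobenius equation~\eqref{eq:Frobenius}, and unitarity of $\alpha$.

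First I would check that $F_\alpha$ is a $\mathbb{C}$-linear unitary functor. Identity preservation is immediate from the first equation of~\eqref{eq:splitisom}. For functoriality on composition, one inserts $\iota_W \circ \iota_W^\dagger$ between $F_\alpha(g)$ and $F_\alpha(f)$; this equals the dagger idempotent, which can then be absorbed into $F(g) \otimes \id$ by sliding the intertwiner past the $\alpha$-vertices via naturality of $\alpha$~\eqref{eq:pntmonnat}, and a second application of~\eqref{eq:splitisom} recovers $F_\alpha(g \circ f)$. Preservation of the dagger follows from self-adjointness of the idempotent together with the definition~\eqref{eq:splitmorphs}.

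Next I would treat the monoidal structure. The multiplicator $m_{V,W}$ is well-defined as a map into $F_\alpha(V \otimes W)$ provided its composition with the idempotent leaves it invariant; this calculation uses the Frobenius equation for $(\alpha, m, u)$ together with the multiplicators of $F$ sliding past the $\alpha$-vertices. Unitarity of $m_{V,W}$ and of the unitor follows from the $\sqrt{d}$ normalisation combined with speciality of $(\alpha, m, u)$, which ensures $m \circ m^\dagger = \id$. Associativity, unitality and naturality of $m_{V,W}$ then reduce to topological manipulations using the monoid axioms~\eqref{eq:assocandunitality} and the absorption property of the idempotent.

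Third I would verify that $\sqrt{\alpha}$ is a UPT. One unitarity equation for the components $\sqrt{\alpha}_V$ follows directly from $\iota_V^\dagger \iota_V = \id_{F_\alpha(V)}$ and unitarity of $\alpha_V$. The other requires recognising $\iota_V \iota_V^\dagger$ as the idempotent and then collapsing the resulting double-$\alpha$ loop using multiplicativity of $m$ and unitarity of $\alpha$. Naturality of $\sqrt{\alpha}_V$ in $V$ is immediate from naturality of $\alpha$ together with the definition~\eqref{eq:splitmorphs} of $F_\alpha$ on morphisms. Monoidality is the crucial step: matching $m_{V,W} \circ (\sqrt{\alpha}_V \otimes \sqrt{\alpha}_W)$ with $\sqrt{\alpha}_{V \otimes W}$ amounts to the modification equation for $m$, and the unit axiom is handled analogously by $u$. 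Finally, the splitting identity $\sqrt{\alpha}^* \circ \sqrt{\alpha} = \alpha$ is a direct diagrammatic calculation: the topmost pair $\iota_V^\dagger \circ \iota_V$ cancels to the identity on $F_\alpha(V)$, the remaining diagram contains the idempotent by definition, and after a further application of speciality the normalisation factors reduce the whole expression to a single $\alpha_V$.

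The main obstacle I expect is the pair of monoidality checks: well-definedness and unitarity of $m_{V,W}$ for $F_\alpha$, and the corresponding monoidality axiom~\eqref{eq:pntmonmon} for $\sqrt{\alpha}$. Both hinge on a careful interplay between the multiplicator of $F$, the $\alpha$-vertices, and the Frobenius and speciality conditions for $(\alpha, m, u)$; once these coherences are established, the remaining verifications (functoriality, unitarity of the components, and the splitting identity) are routine applications of~\eqref{eq:splitisom} and the UPT axioms already satisfied by $\alpha$.
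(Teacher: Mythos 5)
Your proposal is correct and follows essentially the same route as the paper: verify the functor axioms and monoidal coherences for $F_{\alpha}$ by inserting the split idempotent via~\eqref{eq:splitisom} and pushing it through the $\alpha$-vertices, then check naturality, monoidality and unitarity of $\sqrt{\alpha}$ using the modification property of $m$, $u$ and their daggers, with the splitting identity reducing to the second equation of~\eqref{eq:splitisom}. The only slight imprecision is attributing unitarity of $m_{V,W}$ directly to speciality: the paper instead uses the modification property of the Frobenius multiplication together with loop evaluation to absorb the normalisation factors, but this is the same mechanism in substance.
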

\begin{proof}
We first show that $F_{\alpha}$ is a fibre functor. Compositionality is seen as follows:
\begin{calign}
\includegraphics[scale=.8]{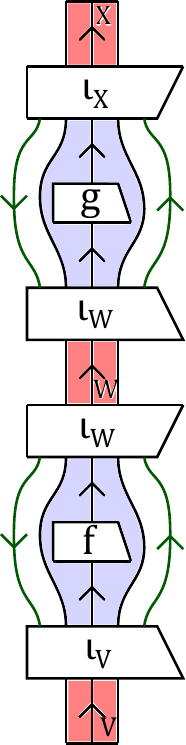}
~~=~~
\frac{1}{d}
\includegraphics[scale=.8]{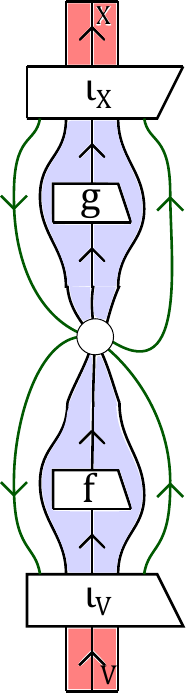}
~~=~~
\frac{1}{d}
\includegraphics[scale=.8]{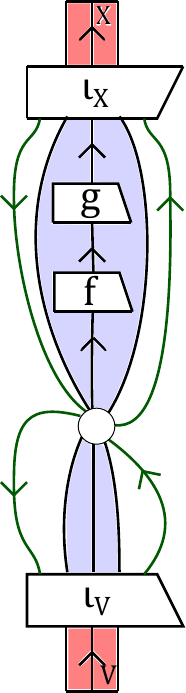}
~~=~~
\includegraphics[scale=.8]{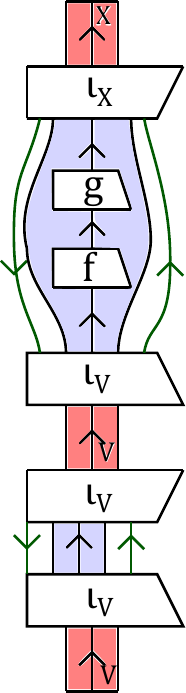}
~~=~~
\includegraphics[scale=.8]{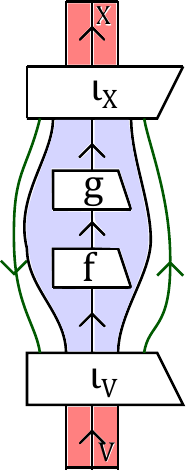}
\end{calign}
Here the first equality is by the second equation of~\eqref{eq:splitisom}; the second equality is naturality of the UPT $(\alpha,H \otimes H^*):F \to F$; the third equality is by the second equation of~\eqref{eq:splitisom}; and the fourth equality is by the first equation of~\eqref{eq:splitisom}.

$G$ clearly takes identity morphisms to identity morphisms since $\iota$ is an isometry, and the functor preserves the dagger by symmetry of~\eqref{eq:splitmorphs} in a horizontal axis. We therefore already have a unitary functor. For monoidality, we must check that $\{m_{V,W}\}$ and $u$ are unitary and that they obey the associativity and unitality equations~\eqref{eq:psfctassoc} and~\eqref{eq:psfctunital}. 
\begin{itemize}
\item \emph{Unitarity of $\{m_{V,W}\}$.} We prove the first equation of unitarity:
\begin{calign}
d~
\includegraphics[scale=.8]{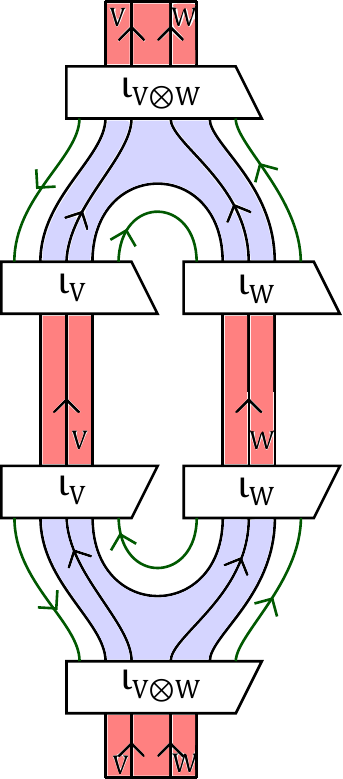}
~~=~~
\frac{1}{d}~
\includegraphics[scale=.8]{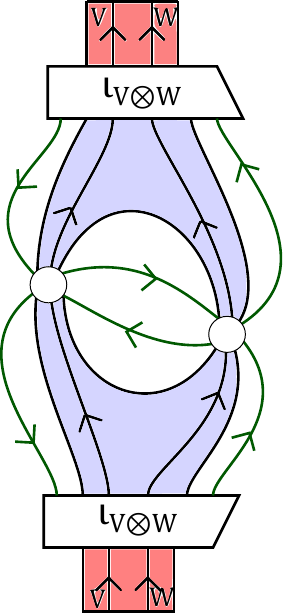}
~~=~~
\frac{1}{d}
\includegraphics[scale=.8]{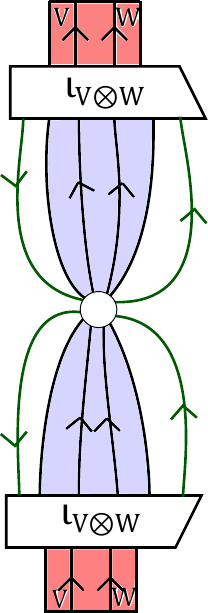}
~~=~~
\includegraphics[scale=.8]{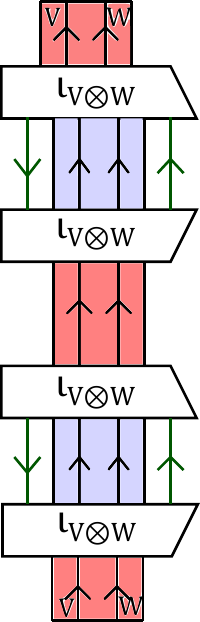}
~~=~~
\includegraphics[scale=.8]{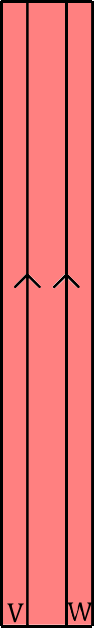}
\end{calign}
Here for the first equality we used the second equation of~\eqref{eq:splitisom}; for the second equality we used monoidality of $\alpha$; for the third equation we used the second equation of~\eqref{eq:splitisom}; and for the fourth equality we used the first equation of~\eqref{eq:splitisom}.

For the other unitarity equation:
\begin{calign}
d~
\includegraphics[scale=.8]{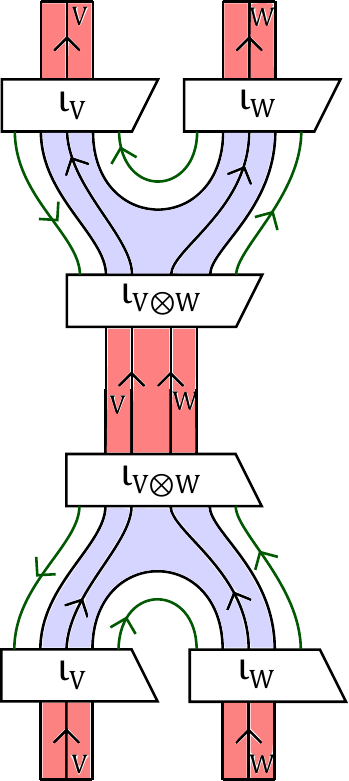}
~~=~~
d~
\includegraphics[scale=.8]{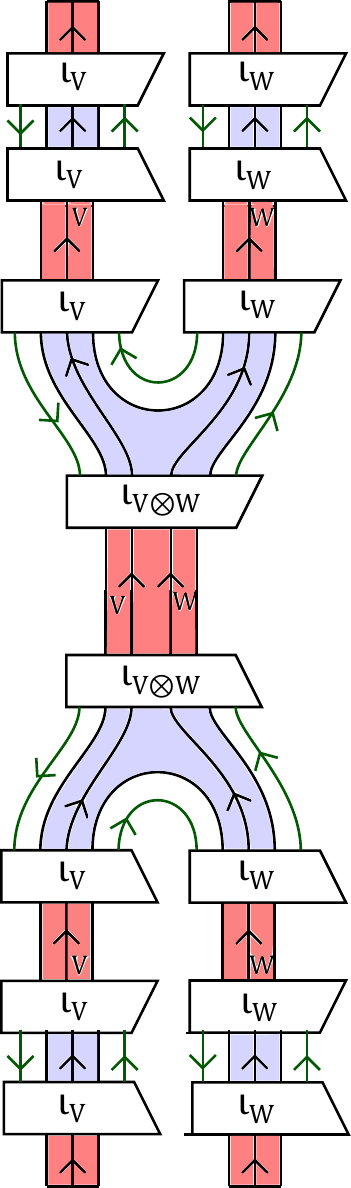}
~~=~~
\frac{1}{d^4}~
\includegraphics[scale=.8]{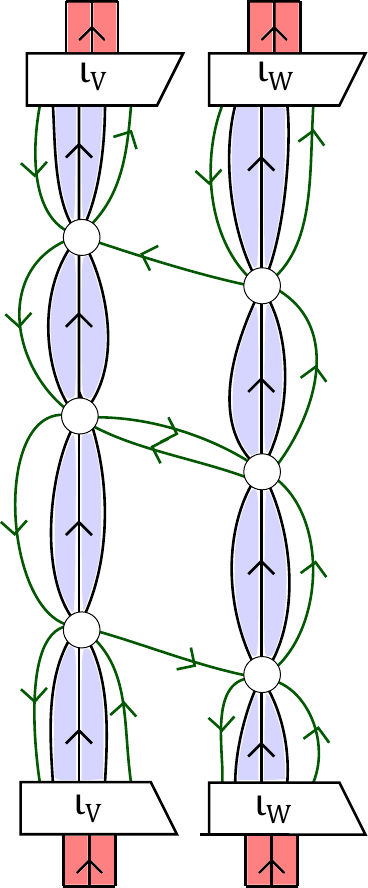}\\
~~=~~
\frac{1}{d^4}~
\includegraphics[scale=.8]{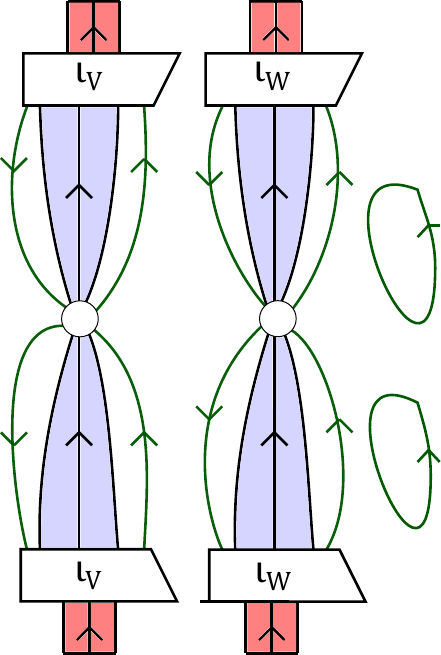}
~~=~~
\includegraphics[scale=.8]{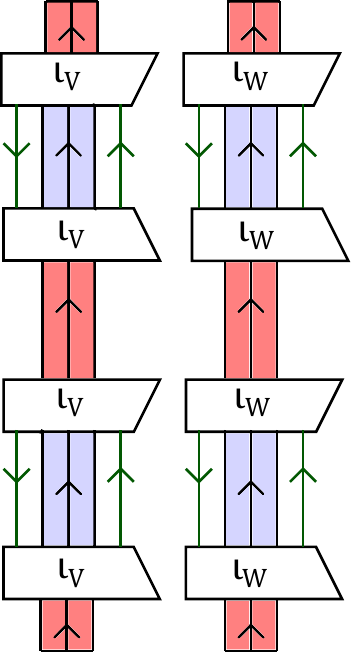}
~~=~~
\includegraphics[scale=.8]{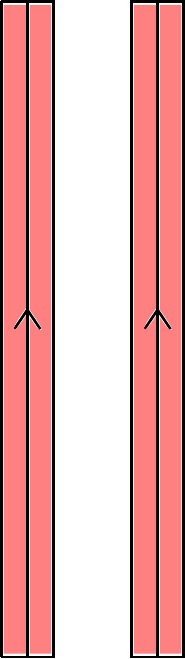}
\end{calign}
Here for the first equality we used the first equation of~\eqref{eq:splitisom}; for the second equality we used the second equation of~\eqref{eq:splitisom} and monoidality of $\alpha$; for the third equality we used the fact that the multiplication of the Frobenius algebra is a modification; for the fourth equality we evaluated the loops and used the second equation of~\eqref{eq:splitisom}; for the fifth equality we used the first equation of~\eqref{eq:splitisom}.
\item \emph{Unitarity of $u$}. For the first equation:
\begin{calign}
\frac{1}{d}
\includegraphics[scale=.8]{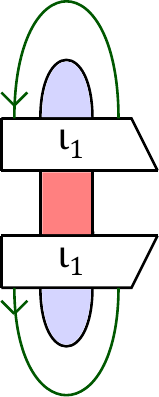}
~~=~~
\frac{1}{d^2}
\includegraphics[scale=.8]{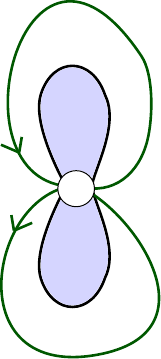}
~~=~~
\frac{1}{d^2}
\includegraphics[scale=.8]{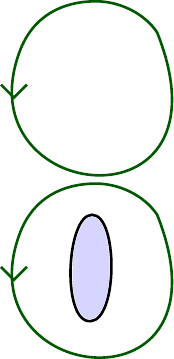}
~~=~~
\end{calign}
Here for the first equality we used the second equation of~\eqref{eq:splitisom}; for the second equality we used monoidality of $\alpha$~\eqref{eq:pntmonmonunit}; and for the third equality we evaluated the loops and used unitarity of $F$. 

For the second equation:
\begin{calign}
\frac{1}{d}
\includegraphics[scale=.8]{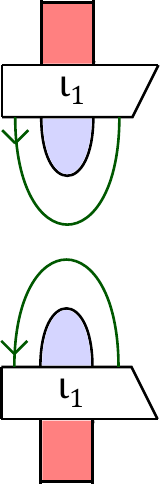}
~~=~~
\frac{1}{d}
\includegraphics[scale=.8]{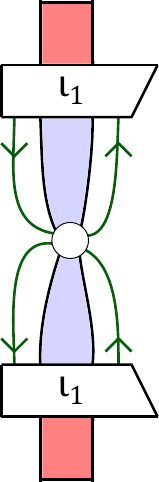}
~~=~~
\includegraphics[scale=.8]{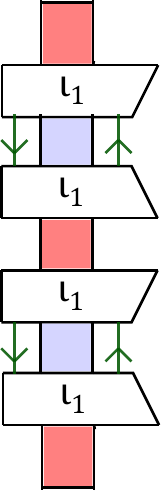}
~~=~~
\includegraphics[scale=.8]{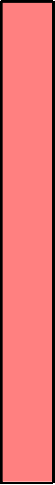}
\end{calign}
For the first equality we used monoidality of $\alpha$~\eqref{eq:pntmonmonunit}; for the second equality we used the second equation of~\eqref{eq:splitisom}; and for the third equality we used the first equation of~\eqref{eq:splitisom}.
\item \emph{Associativity~\eqref{eq:psfctassoc}}. We have the following sequence of equations:
\begin{calign}
d~
\includegraphics[scale=.8]{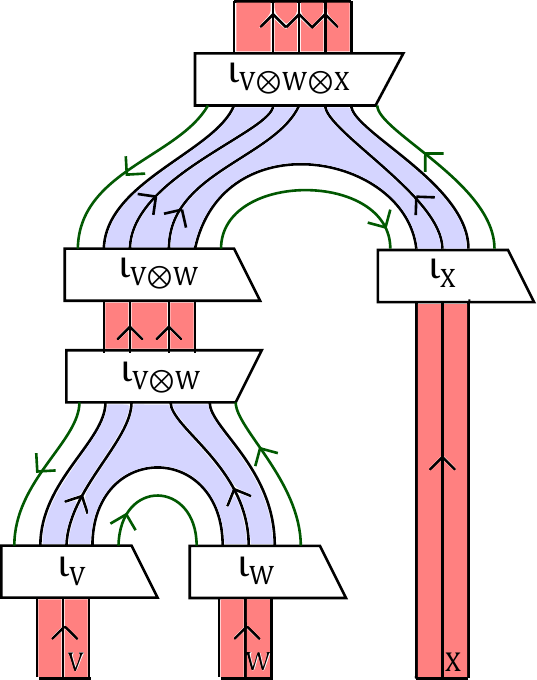}
~~=~~
\includegraphics[scale=.8]{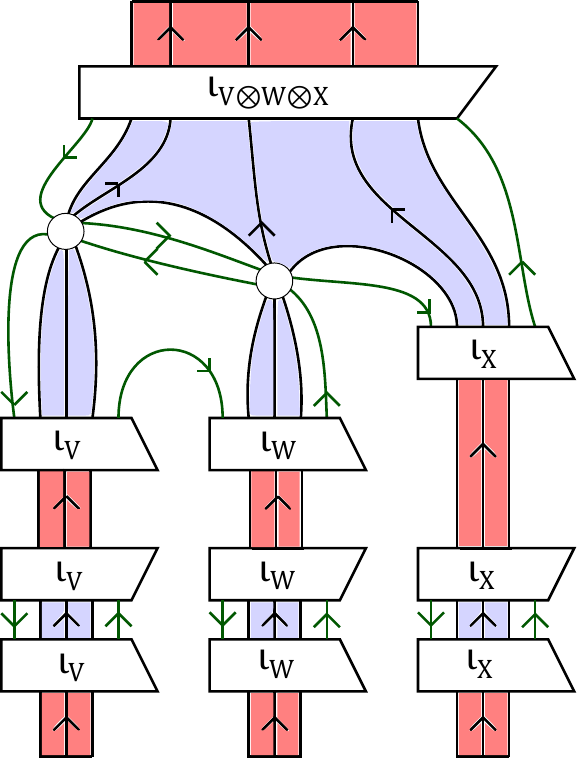}\\
=~~
\frac{1}{d^3}~
\includegraphics[scale=.8]{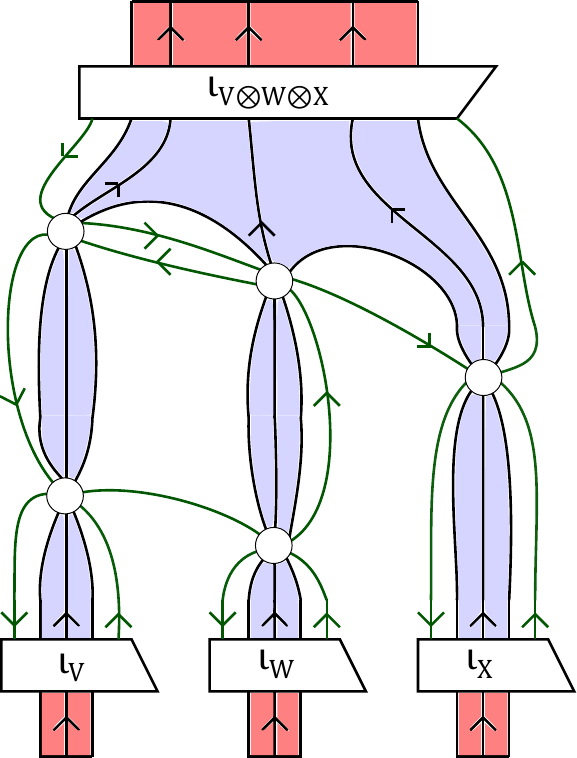}
~~=~~
\frac{1}{d^2}
\includegraphics[scale=.8]{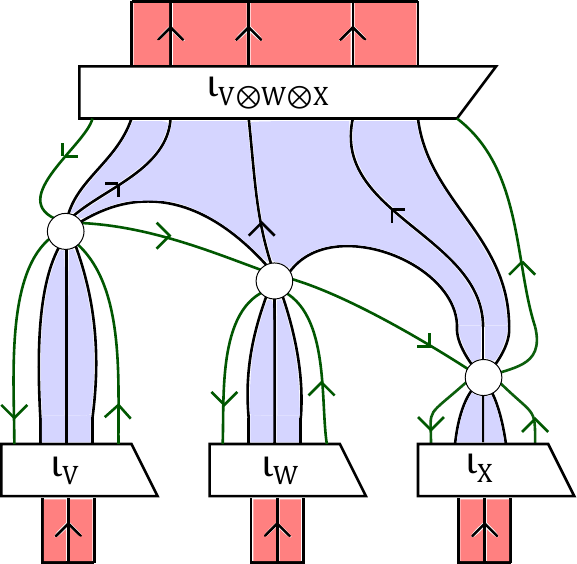}
\end{calign}
Here for the first equality we used the first equation of~\eqref{eq:splitisom} to insert isometries and their inverses on all three legs, and the second equation of~\eqref{eq:splitisom} and monoidality of $\alpha$. For the second equality we used the second equation of~\eqref{eq:splitisom}. For the third equality we used the fact that the multiplication of the Frobenius algebra is a modification and evaluated the resulting loop. 

We leave the rest of the proof to the reader: use monoidality of $\alpha$ on the two rightmost legs, use the second equation of~\eqref{eq:splitisom} to eliminate all occurences of $\alpha$, then cancel isometries using the first equation of~\eqref{eq:splitisom}.
\item \emph{Unitality~\eqref{eq:psfctunital}}. The left unitality equation is shown as follows:
\begin{calign}
\includegraphics[scale=.8]{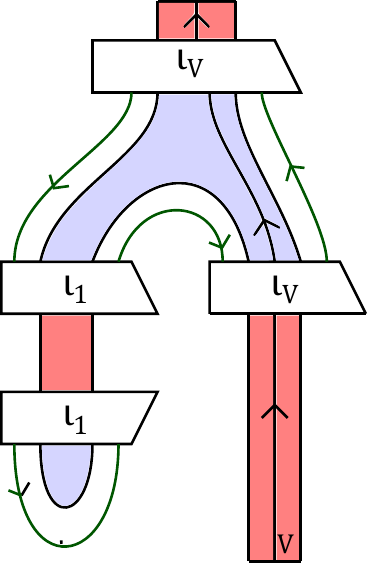}
~~=~~
\frac{1}{d}
\includegraphics[scale=.8]{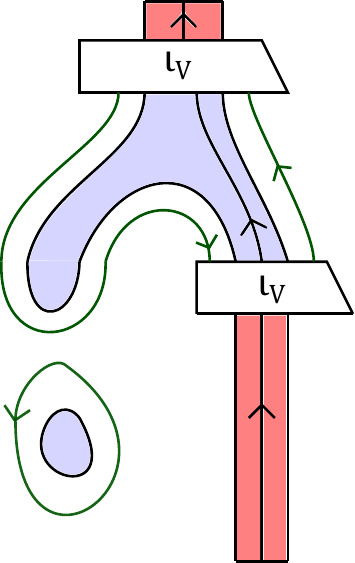}
~~=~~
\includegraphics[scale=.8]{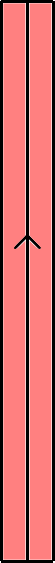}
\end{calign}
For the first equation we used the second equation of~\eqref{eq:splitisom} and monoidality of $\alpha$; for the second equation we evaluated the loop and used unitarity of $F_{\alpha}$.
The right unitality equation is shown similarly.
\end{itemize}
We have therefore shown that $F_{\alpha}$ is a fibre functor on $\Rep(G)$. We must now show that $\sqrt{\alpha}$ is a UPT $F \to F_{\alpha}$. For this we must show naturality and monoidality (\ref{eq:pntmonnat}-\ref{eq:pntmonmonunit}).
\begin{itemize}
\item \emph{Naturality}. For any $f: V \to W$:
\begin{calign}
\includegraphics[scale=.8]{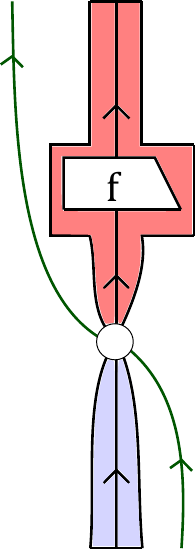}
~~=~~
\sqrt{d}~
\includegraphics[scale=.8]{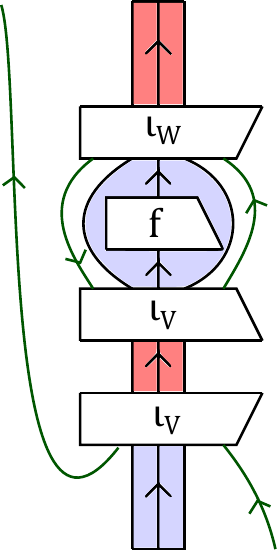}
~~=~~
\sqrt{d}~
\includegraphics[scale=.8]{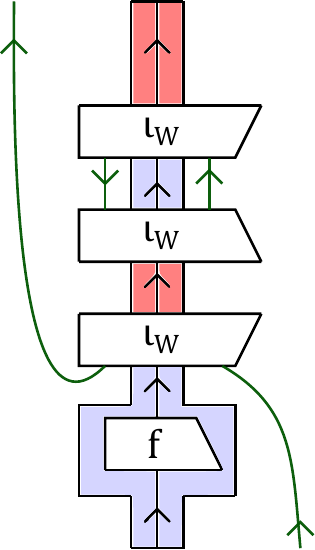}
~~=~~
\sqrt{d}~
\includegraphics[scale=.8]{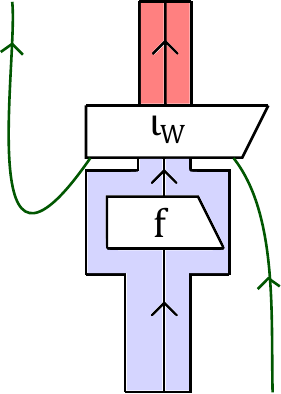}
~~=~~
\includegraphics[scale=.8]{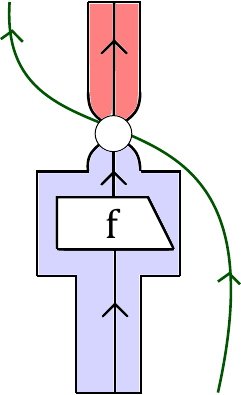}
\end{calign}
Here the first and fourth equalities are by definition; the second equality is by the second equation of~\eqref{eq:splitisom} and naturality of $\alpha$; and the third equality is by the first equation of~\eqref{eq:splitisom}.
\item \emph{Monoidality}. 
\begin{itemize}
\item We show~\eqref{eq:pntmonmon}:
\begin{calign}
\includegraphics[scale=.8]{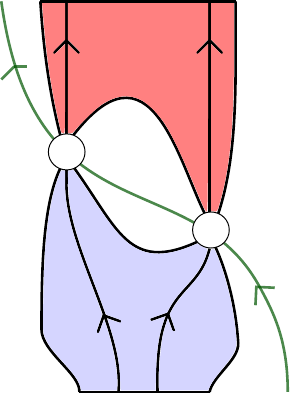}
~~=~~
d^{3/2}~
\includegraphics[scale=.8]{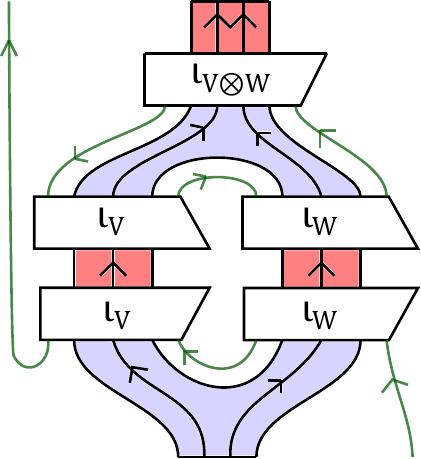}
~~=~~
\frac{1}{\sqrt{d}}~
\includegraphics[scale=.8]{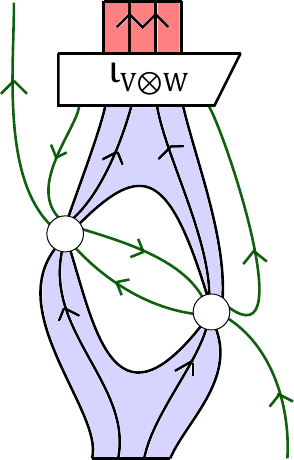}
~~=~~
\frac{1}{\sqrt{d}}~
\includegraphics[scale=.8]{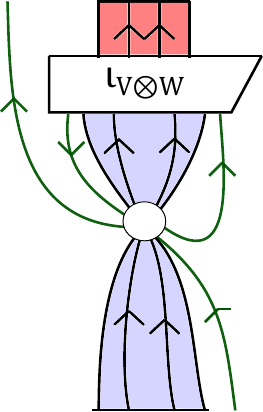}\\
~~=~~
\sqrt{d}~
\includegraphics[scale=.8]{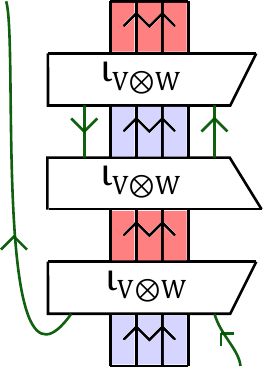}
~~=~~
\includegraphics[scale=.8]{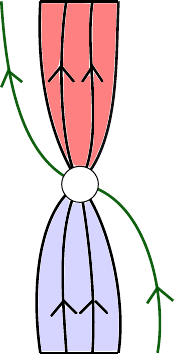}
\end{calign}
Here the first equality is by definition of $\sqrt{\alpha}$ and the multiplicator of $F_{\alpha}$; the second equality is by the second equation of~\eqref{eq:splitisom}; the third equality is by monoidality of $\alpha$; the fourth equality is by the second equation of~\eqref{eq:splitisom}; and the fifth equality is by the first equation of~\eqref{eq:splitisom} and the definition of $\sqrt{\alpha}$.
\item We show~\eqref{eq:pntmonmonunit}:
\begin{calign}
\includegraphics[scale=.8]{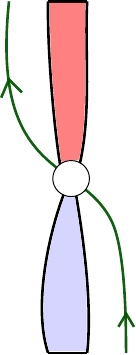}
~~=~~
\sqrt{d}
\includegraphics[scale=.8]{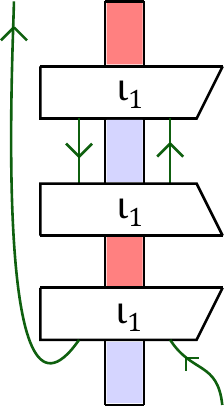}
~~=~~
\frac{1}{\sqrt{d}}
\includegraphics[scale=.8]{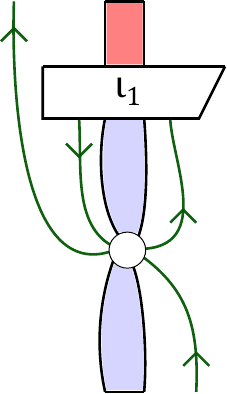}
~~=~~
\frac{1}{\sqrt{d}}
\includegraphics[scale=.8]{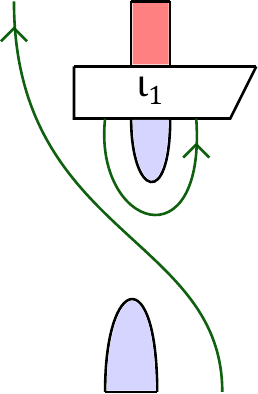}
~~=~~
\includegraphics[scale=.8]{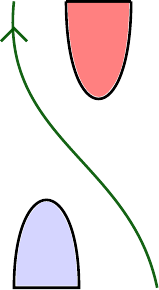}
\end{calign}
Here the first equality is by definition of $\sqrt{\alpha}$ and by the first equation of~\eqref{eq:splitisom}; the second equality is by the second equation of~\eqref{eq:splitisom}; the third equality is by monoidality of $\alpha$; and the fourth equality is by definition of the unitor of $F_{\alpha}$.
\end{itemize}
\item \emph{Unitarity}. We show the first equation of unitarity:
\begin{calign}
d~
\includegraphics[scale=.8]{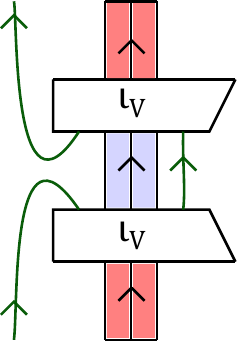}
~~=~~
d~
\includegraphics[scale=.8]{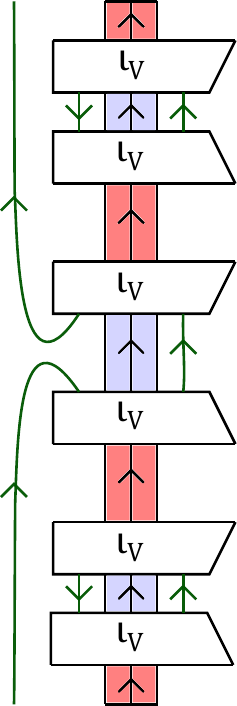}
~~=~~\frac{1}{d}~
\includegraphics[scale=.8]{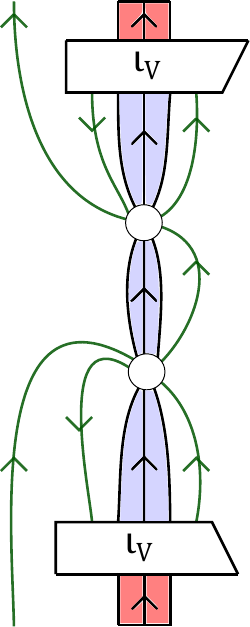}\\
=~~
\frac{1}{d}~
\includegraphics[scale=.8]{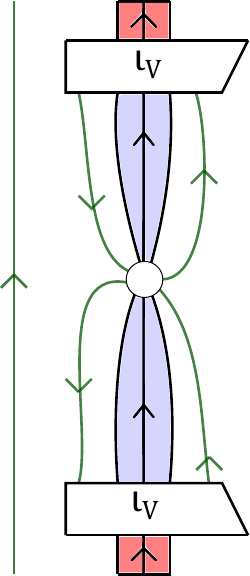}
~~=~~
\includegraphics[scale=.8]{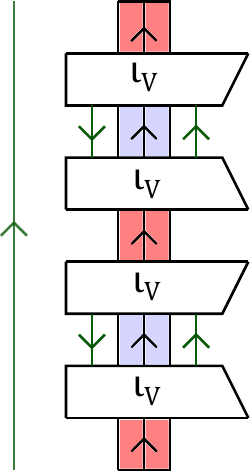}
~~=~~
\includegraphics[scale=.8]{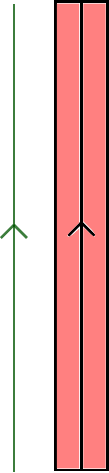}
\end{calign}
Here for the first equality we used the first equation of~\eqref{eq:splitisom}; for the second equality we used the second equation of~\eqref{eq:splitisom}; for the third equality we used the fact that the comultiplication of $((\alpha,H\otimes H^*),m,u)$ is a modification; for the fourth equality we used the second equation of~\eqref{eq:splitisom}; and for the final equality we used the first equation of~\eqref{eq:splitisom}.

We show the second equation of unitarity:
\begin{calign}
d~
\includegraphics[scale=.8]{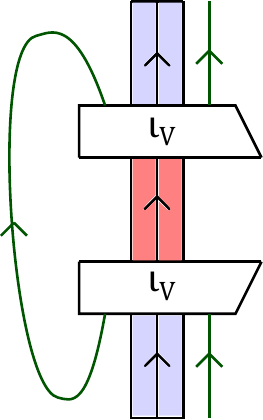}
~~=~~
\includegraphics[scale=.8]{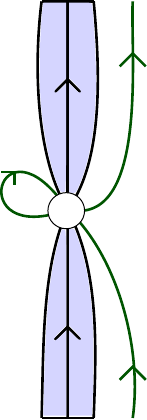}
~~=~~
\includegraphics[scale=.8]{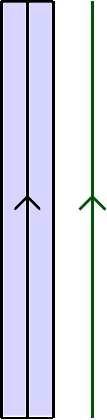}
\end{calign}
For the first equality we used the second equation of~\eqref{eq:splitisom}, and for the second equality we used that the counit of $((\alpha,H\otimes H^*),m,u)$ is a modification.
\end{itemize}
We have therefore shown that $\alpha:G\to F$ is a UPT. Finally, we show that $\sqrt{\alpha}$ splits $\alpha$, i.e. $\sqrt{\alpha} \circ \sqrt{\alpha}^* = \alpha$. By unitarity of $\sqrt{\alpha}$ it is equivalent to show that $\sqrt{\alpha} \circ \sqrt{\alpha}^{\dagger} = A$, which follows immediately from the second equation of~\eqref{eq:splitisom}:
\begin{calign}d~
\includegraphics[scale=.8]{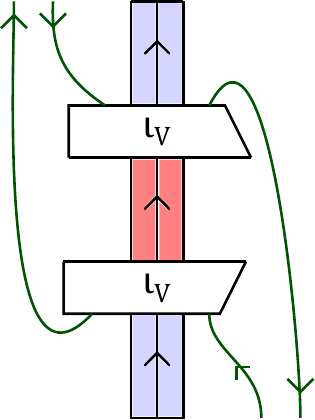}
~~=~~
\includegraphics[scale=.8]{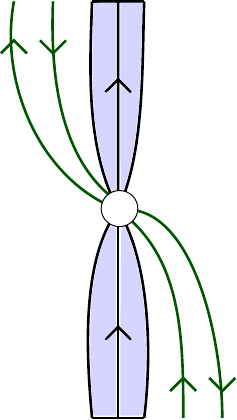}
\end{calign}
\end{proof}

\paragraph{Classification of UPTs and accessible fibre functors.}
We have seen that every UPT $\alpha: F \to F'$, where $F'$ is some fibre functor accessible from $F$, yields a simple Frobenius algebra in $\End(F)$, and that every unitary $*$-isomorphism class of simple Frobenius algebras is obtained in this way. By means of this correspondence we can classify UPTs and accessible fibre functors. 

\begin{definition}\label{def:equivupts}
Let $\alpha_1: F \to F_1$ and $\alpha_2: F \to F_2$ be UPTs. We say that $\alpha_1$ and $\alpha_2$ are \emph{equivalent} if there exists a a unitary monoidal natural isomorphism $E: F_2 \to F_1$ and a unitary modification $\tau: \alpha_1 \to \alpha_2 \otimes E$.
\end{definition}
\begin{theorem}\label{thm:moritaclass}
Let $F$ be the canonical fibre functor $\Rep(G) \to \Hilb$. Then the constructions of Proposition~\ref{prop:upttofrobmon} and Theorem~\ref{thm:splitting} give an explicit bijective correspondence between:
\begin{itemize}
\item Unitary monoidal isomorphism classes of unitary fibre functors accessible from $F$ by a UPT; and Morita equivalence classes of simple Frobenius monoids in $\Rep(A_G)$.
\item Equivalence classes of UPTs $\alpha: F \to F'$ for some accessible fibre functor $F'$; and unitary $*$-isomorphism classes of simple Frobenius monoids in $\Rep(A_G)$.
\end{itemize}
\end{theorem}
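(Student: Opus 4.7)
The strategy is to apply general Morita theory in the pivotal dagger $2$-category $\Fun(\Rep(G),\Hilb)$ at the object $F$, using Proposition~\ref{prop:upttofrobmon} and Theorem~\ref{thm:splitting} as the two directions of the correspondence. The former assigns to any UPT $\alpha: F \to F'$ the simple Frobenius monoid $\alpha^* \circ \alpha$ in $\End(F) \cong \Rep(A_G)$; the latter recovers, from any simple Frobenius monoid $A$, an accessible fibre functor $F_A$ together with a UPT $\sqrt{A}: F \to F_A$ satisfying $\sqrt{A}^* \circ \sqrt{A} = A$ on the nose.

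First I would verify well-definedness on equivalence classes. For the second bullet, if $(E,\tau)$ realises an equivalence between $\alpha_1$ and $\alpha_2$ in the sense of Definition~\ref{def:equivupts}, then $\tau \otimes \tau^*$, together with unitarity of $E$ (so that $E \otimes E^*$ carries a canonical unitary modification to $\id_{F_1}$), assembles into a unitary $*$-isomorphism $\alpha_1^* \circ \alpha_1 \to \alpha_2^* \circ \alpha_2$; the verification of the $*$-algebra axioms~\eqref{eq:homo} is a direct diagrammatic check using the modification property of $\tau$ and the pull-through equations~\eqref{eq:cupcapmodsdualpnt}. For the first bullet, any unitary monoidal natural isomorphism $E: F_1 \to F_2$ turns $\alpha_2^* \otimes E \otimes \alpha_1$ and its dagger into a pair of invertible dagger bimodules between $\alpha_1^* \circ \alpha_1$ and $\alpha_2^* \circ \alpha_2$, witnessing Morita equivalence in the sense of Definition~\ref{def:daggermoritaequiv}.

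Next I would prove that splitting inverts the square construction. In one direction, Theorem~\ref{thm:splitting} already states $\sqrt{A}^* \circ \sqrt{A} = A$. In the other, given a UPT $\alpha: F \to F'$, I would produce a unitary monoidal natural isomorphism $E: F_{\alpha^* \circ \alpha} \to F'$ whose component $E_V$ is obtained by capping off the two $H$-wires in the inclusion $F_{\alpha^* \circ \alpha}(V) \hookrightarrow H^* \otimes F(V) \otimes H$ using $\alpha_V$ and its adjoint, together with a unitary modification $\tau: \alpha \to \sqrt{\alpha^* \circ \alpha} \otimes E$. The splitting equations~\eqref{eq:splitisom} guarantee that $E_V$ lands in the image of the idempotent and is isometric; naturality and monoidality of $E$ reduce diagrammatically to naturality and monoidality of $\alpha$; and the modification axiom for $\tau$ follows from unitarity of $\alpha$.

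The main obstacle is precisely the construction and verification of $E$ in this last step: one must check that the splitting idempotent really is the ``defect'' measuring the difference between $\alpha$ and $\sqrt{\alpha^* \circ \alpha}$, and that the resulting $E_V$ is genuinely a unitary monoidal natural isomorphism rather than merely an invertible linear map. This is a diagrammatic computation closely mirroring the unitarity and monoidality verifications already carried out in the proof of Theorem~\ref{thm:splitting}, using the same ingredients: unitarity and monoidality of $\alpha$, the pull-through equations~\eqref{eq:cupcapmodsdualpnt}, and the splitting equations~\eqref{eq:splitisom}. Once $E$ is in hand, both bullets follow: surjectivity holds because every simple Frobenius monoid arises as $\sqrt{A}^* \circ \sqrt{A}$; injectivity on $*$-isomorphism classes follows because a unitary $*$-isomorphism between the Frobenius monoids transports through splitting to the required pair $(E,\tau)$ of Definition~\ref{def:equivupts}; and injectivity on Morita classes follows because a dagger bimodule equivalence transports through splitting to a unitary monoidal natural isomorphism of the fibre functors, by the standard $2$-categorical Morita argument of~\cite[Appendix]{Musto2019}.
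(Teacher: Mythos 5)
Your plan is correct and follows essentially the same route as the paper: the result is obtained by applying dagger Morita theory in the 2-category $\Fun(\Rep(G),\Hilb)$ at the object $F$, with Proposition~\ref{prop:upttofrobmon} and Theorem~\ref{thm:splitting} showing that the simple Frobenius monoids are exactly the split (pair-of-pants) ones. The only difference is one of presentation: the paper delegates the general Morita machinery (well-definedness on equivalence classes, the bimodule witnesses, and the recovery of $F'$ and $\alpha$ from the splitting via a unitary monoidal natural isomorphism $E$ and unitary modification $\tau$) to \cite[Thms.~6.11, 6.19]{Verdon2020a} and \cite[Appendix]{Musto2019}, whereas you sketch those verifications inline; the steps you identify, including the key construction of $E$ by capping the $H$-wires with $\alpha_V$, are the correct ones.
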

\begin{proof}
A full proof is given in~\cite[\S{}6]{Verdon2020a} (the relevant theorems are~\cite[Thm. 6.11, Thm 6.19]{Verdon2020a}). The result is an application of Morita theory in the 2-category $\Fun(\Rep(G),\Hilb)$. We have just shown that the special Frobenius monoids in $\End(F)$ which split --- i.e. which are isomorphic to `pair of pants' algebras $\alpha \otimes \alpha^*$ for 1-morphisms $\alpha: F \to F'$ --- are precisely the simple dagger Frobenius monoids. Morita theory implies a bijective correspondence between Morita equivalence classes of split special Frobenius monoids in $\End(F)$ and equivalence classes of objects $F'$ in $\Fun(\Rep(G),\Hilb)$ such that there exists a 1-morphism $\alpha: F \to F'$; equivalence of objects in $\Fun(\Rep(G),\Hilb)$ corresponds to unitary monoidal natural isomorphism of fibre functors, which gives the first bullet point. Finally, Morita theory implies a bijective correspondence between unitary $*$-isomorphism classes of simple Frobenius monoids in $\Rep(A_G)$, and equivalence classes of 1-morphisms $\alpha: F \to F'$ in $\Fun(\Rep(G),\Hilb)$. Equivalence of 1-morphisms in this setting comes down to Definition~\ref{def:equivupts}, whence the second bullet point.
\end{proof}
\ignore{
\begin{remark}
In the case where $A_G$ is finite-dimensional, and all fibre functors are therefore accessible by a UPT (Corollary~\ref{}) we recover a unitary version of the classification of module categories in~\cite{}, in the special case of rank one module categories. 
\end{remark}
}

\begin{example}
One application of this result is to construct fibre functors. For example, it is straightforward to use Theorem~\ref{thm:moritaclass} to obtain a concrete construction of all fibre functors on the category $\Rep(G)$ for a finite group $G$, since by Corollary~\ref{cor:finiteacc} they are all accessible by a UPT. It is well-known that they correspond to conjugacy classes of pairs $(H,\psi)$, where $H<G$ is a subgroup of \emph{central type} and $\psi: H \times H \to U(1)$ is a \emph{nondegenerate 2-cocycle}. However, to our knowledge the only known way to construct them was to apply Construction~\ref{constr:galtofib}, which is not quite concrete enough for some applications. Our construction, which uses only very basic representation theory, is stated in~\cite[\S{}6.1]{Verdon2020b}.
\end{example}

\ignore{
\subsection{Fibre functors and UPTs for finite groups}
We now consider a special case where the simple Frobenius monoids in $\Rep(A_G)$, and therefore fibre functors and unitary pseudonatural transformations, can be easily classified.

Let $G$ be a finite group and $F: \Rep(G) \to \Hilb$ be the canonical fibre functor. It is not hard to show that the algebra $A_G$ obtained by Tannaka reconstruction from the pair $(\Rep(G),F)$ is isomorphic to the commutative Hopf $*$-algebra $C(G)$ of complex functions on the group $G$ (this isomorphism is called the \emph{Fourier cotransform} in~\cite{}). It is also straightforward to show that $\Rep(C(G))$ is isomorphic to the category $\Hilb_G$ of $G$-graded Hilbert spaces. A simple Frobenius monoid in this category is precisely a $G$-graded matrix algebra. 

The classification of fibre functors on $\Rep(G)$ and UPTs between them therefore reduces to the classification of matrix algebras graded by a finite group $G$, which was given in~\cite{}; we recall this classification now. Any grading on a matrix algebra is induced from two basic gradings, called \emph{elementary} and \emph{fine}. In what follows we write $A_g$ to signify the homogeneous subspace of an algebra $A$ with grading $g \in G$. For a homogeneous element $v \in A$ we write $\wt(v)$ for the grading of this element. We first consider the fine graded matrix algebras. 
\begin{definition}
Let $L$ be a finite group. A function $\psi: L \times L \to U(1)$ is a \emph{2-cocycle} precisely when, for all $a,b,c \in L$,
\begin{equation}\label{eq:2cocyccond}
\psi(a,b)\psi(ab,c) = \psi(a,bc)\psi(b,c).
\end{equation}
All the 2-cocycles we consider in this work take values in $U(1)$.\end{definition}

\begin{definition}
For a finite group $L$ and a 2-cocycle $\psi$, the \emph{twisted group algebra} $\mathbb{C}L^\psi$ is an associative unital $*$-algebra with generators $\{\ket{a}~|~ a \in L\}$, and the following multiplication and involution: 
\begin{align*}
\ket{a_1}\cdot \ket{a_2} = \psi(a_1,a_2) \ket{a_1 a_2} &&
\ket{a}^{*} = \ket{a^{-1}}
\end{align*}
\end{definition}\ignore{
\begin{remark}Up to $*$-isomorphism of twisted group algebras, we can assume without loss of generality that $\psi(e,h) = 1 = \psi(h,e)$ and therefore $\bar{e}= \mathbbm{1}_H$, and that $\psi(h,h^{-1}) = 1$ and therefore $\bar{h}^{\dagger} = \overline{h^{-1}}$.
\end{remark}}
\begin{definition}[{\cite[Definition 7.12.21]{Etingof2015}}]\label{def:ctgroup}
A group $L$ is \emph{of central type} if it possesses a 2-cocycle $\psi: L \times L \to U(1)$ such that the twisted group algebra $\mathbb{C}L^\psi$ is simple (i.e. $*$-isomorphic to a matrix algebra). In this case we say that $\psi$ is \emph{nondegenerate}. 
\end{definition}
\noindent
The fine gradings are determined by a subgroup of central type.
\begin{definition}
Let $L < G$ be a subgroup of central type, and let $d = \sqrt{|L|}$. Then the $*$-isomorphism $\mathbb{C}L^{\psi} \cong M_d(\mathbb{C})$ determines a \emph{fine} grading on $M_d(\mathbb{C})$ by the rule: 
$$
(M_d(\mathbb{C}))_{g}= \textrm{span}(\ket{g})
$$ 
\end{definition}
\noindent
We note the following useful characterisation of fine gradings. 
\begin{proposition}[{\cite{}}]
The fine graded matrix algebras are precisely those whose homogeneous subspaces are one-dimensional.
\end{proposition}
\noindent 
We now consider the elementary gradings. 
\begin{definition}
Let $V$ be a $G$-graded Hilbert space of dimension $d$, and let $\{v_i| i \in 1, \dots, d\}$ be a homogeneous basis, where $v_i \in V_{g_i}$. The tuple ${\bf g} = (g_1,\dots,g_d)$ defines an \emph{elementary grading} on the matrix algebra $M_d(\mathbb{C})$, by:
$$
\wt(E_{ij}) = g_i^{-1} g_j
$$
\end{definition}
\noindent
The fine and elementary gradings can be mixed in the following way. 
\begin{definition}
Let $A$ be a fine $G$-graded matrix algebra, and let $B$ be a $G$-graded matrix algebra with elementary grading determined by the tuple $(g_1,\dots,g_d)$. Then the \emph{induced grading} on $A \otimes B$ is defined by:
$$
\wt (\ket{h} \otimes E_{ij}) = g_i^{-1} h g_j
$$
\end{definition}
\noindent
In fact, every $G$-graded matrix algebra is obtained in this way.
\begin{theorem}
Let $A \cong M_n(\mathbb{C})$ be a $G$-graded matrix algebra. There exists a decomposition $n=pq$, a central type subgroup $L<G$ of order $p^2$, and a tuple $(g_1,\dots,g_q) \in (G)^q$ such that, as a graded algebra, $A \cong A_f \otimes A_e$, where $A_f$ is the fine graded matrix algebra associated to $L$ and $A_e$ is the elementary graded matrix algebra defined by the tuple. 
\end{theorem} 
\noindent
Theorem~\ref{} gives a classification of simple dagger Frobenius monoids in $\Rep(A_G) \cong \End(F)$. The next step is to construct these monoids as concrete objects of $\End(F)$, that is, as UPTs and modifications. The following Proposition~\ref{} will convert Theorem~\ref{} into a result about UPTs.
\begin{definition}
Let $(L,\psi)$ be a group of central type. The set of unitary matrices $\{f(\ket{g})\}_{g \in G}$ in the image of a $*$-isomorphism $f: \mathbb{C}L^{\psi} \to M_d(\mathbb{C})$ is called a \emph{nice unitary error basis} for $L$ of dimension $d$. We  write $U_g:= f(\ket{g})$. 
\end{definition}
\begin{proposition}
Let $G$ be a finite group and $F: \Rep(G) \to \Hilb$ be the canonical fibre functor. Let $(\alpha: F \to F,m: \alpha \circ \alpha \to \alpha,u: \id_F \to \alpha)$ be a simple dagger Frobenius monoid in $\End(F)$, such that $\alpha$ has dimension $n^2$. Then there exists a decomposition $n = pq$, a central type subgroup $L<G$ of order $p^2$, and a tuple $(g_1,\dots,g_q) \in (G)^q$ such that the UPT $\alpha$ is isomorphic to one of the following form:
\begin{calign}
\end{calign}
Here $U_g$ is a nice unitary error basis for the central type group $(L,\psi)$ and $E_{ij}$ are the basis elements $\ket{i} \bra{j}$ for the matrix algebra $M_q(\mathbb{C})$. The multiplication and unit modifications $m$ and $u$ are as in~\eqref{}. 
\end{proposition}
\noindent
Having classified and characterised the simple dagger Frobenius monoids in $\End(F)$, we can now use Theorem~\ref{} to construct the corresponding fibre functors and UPTs.
Firstly, the dagger idempotent~\eqref{} takes the following form:
}
\section{Quantum graphs and their isomorphisms}\label{sec:qgraphiso}
In this Section we give an example of UPTs arising in the study of finite quantum graph theory~\cite{Musto2018}. Specifically, we will show that finite-dimensional quantum graph isomorphisms from a quantum graph $X$ are UPTs from the canonical fibre functor on the category of representations of its quantum automorphism group. \ignore{In Section~\ref{sec:cmqgupts} we will give a  simplified definition of UPTs and their modifications in the case where $G$ is a compact matrix quantum group. In Section~\ref{sec:qgraphisossubsec} we will use this result to obtain the desired characterisation.}

\subsection{UPTs for compact matrix quantum groups}
\label{sec:cmqgupts}
\begin{definition}
We say that a $C^*$-tensor category $\mathcal{C}$ is \emph{generated} by a family of objects $Q$ if, for any object $V$ of $\mathcal{C}$, there exists a family $\{b_k\}$ of \emph{reduction morphisms} $b_k \in \Hom(r_k,V)$, where each $r_k$ is a monoidal product of objects in $Q$, such that $\sum_k b_k b_k^{\dagger} = \id_V$.
\end{definition}
\begin{definition}
We say that a pair $(G,u)$ of a compact quantum group $G$ and some representation $u$ is a \emph{compact matrix quantum group} when $\Rep(G)$ is generated by the objects $\{u,u^*\}$.
\end{definition}
\noindent
For $(G,u)$ a compact matrix quantum group, we will now show that a UPT between fibre functors on $\Rep(G)$ is completely determined by its component on the fundamental representation $u$. 

First we define some notation: for a vector $\vec{x} \in \{\pm 1\}^n$, $n \in \mathbb{N}$, we write $u^{\vec{x}}$ for the object $u^{x_1} \otimes \dots \otimes u^{x_n}$, where we take $u^{-1}:= u^*$. We additionally define $u^0:= \mathbbm{1}$.
\begin{definition}
Let $(G,u)$ be a compact matrix quantum group, and let $F,F':\Rep(G) \to \Hilb$ be fibre functors. We define a \emph{reduced unitary pseudonatural transformation} $(\tilde{\alpha},H):F \to F'$ to be:
\begin{itemize}
\item A Hilbert space $H$ (drawn as a green wire).
\item A unitary morphism $\tilde{\alpha}: F(u) \otimes H \to H \otimes F'(u)$ (drawn as a white vertex) which is: 
\begin{itemize}
\item \emph{Natural}. For any 2-morphism $f: u^{\vec{x}} \to u^{\vec{y}}$ in $\mathcal{C}$:
\begin{calign}\label{eq:reduceduptnat}
\includegraphics[scale=1]{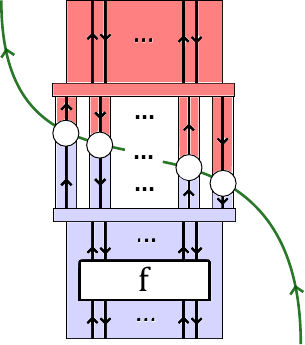}
~~=~~
\includegraphics[scale=1]{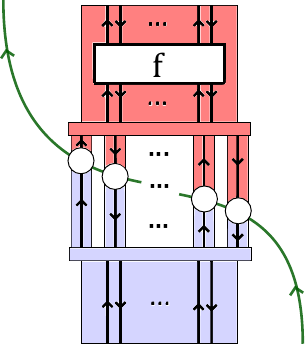}
\end{calign}
Here the empty horizontal blue and red rectangles represent some sequence of multiplicators, comultiplicators, unitors and counitors. For the purpose of drawing the diagram~\eqref{eq:reduceduptnat} we have supposed that $\vec{x},\vec{y}$ are both of the form $(1,-1,\dots,1,-1)$; it should be clear how to generalise to other $\vec{x},\vec{y}$ or to $u^{0}$ (e.g. if $f: u^{0} \to u^{\vec{y}}$, on the RHS of~\eqref{eq:reduceduptnat} the blue rectangle will be the counitor of $F$, the red will be the unitor of $F'$ and there will be no white vertices). We also used the following definition:
\begin{calign}\label{eq:reduceduptdualdef}
\includegraphics[scale=1]{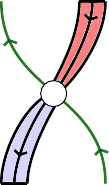}
~~=~~
\includegraphics[scale=1]{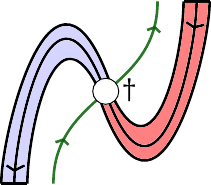}
\end{calign}
\end{itemize}
\end{itemize}
\end{definition}
\noindent
It is immediate from Definition~\ref{def:pntmon} and unitarity of $\alpha$~\eqref{eq:dualpnt} that a UPT $\alpha: F \to F'$  restricts to a reduced UPT $\alpha_u$. We now show that this correspondence is bijective: every reduced UPT induces a unique UPT.
\begin{lemma}\label{lem:reduceduptdual}
If $\tilde{\alpha}$ is a reduced UPT, then the morphism defined in~\eqref{eq:reduceduptdualdef} is unitary.
\end{lemma}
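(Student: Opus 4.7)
The plan is to exhibit $(\tilde{\alpha}^*)^\dagger$ as a two-sided inverse of $\tilde{\alpha}^*$, where $\tilde{\alpha}^*:F(u^*)\otimes H\to H\otimes F'(u^*)$ denotes the morphism defined by~\eqref{eq:reduceduptdualdef}. By that definition, $\tilde{\alpha}^*$ is obtained from $\tilde{\alpha}^{-1}=\tilde{\alpha}^\dagger$ by bending the two $u$-wires with the induced duality $[F(u^*),F(\eta_u),F(\epsilon_u)]$ on the $F$-side and its $F'$-analogue on the $F'$-side, as in Proposition~\ref{prop:indduals}. Its dagger $(\tilde{\alpha}^*)^\dagger$ is the horizontal reflection of that diagram.

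The first step is to form the composite $\tilde{\alpha}^*\circ(\tilde{\alpha}^*)^\dagger$ and expand both factors according to the definition; this produces a diagram containing a copy of $\tilde{\alpha}^\dagger$ below a copy of $\tilde{\alpha}$, connected by induced cups and caps on both the $F$- and $F'$-sides. To collapse this, I would apply naturality~\eqref{eq:reduceduptnat} to the cap $\epsilon_u:u\otimes u^*\to\mathbbm{1}$ of $\mathcal{C}$, viewed as a morphism $u^{(1,-1)}\to u^0$. This lets the induced cap slide past the $H$-wire, straightening one pair of $u^*$-wires so that $\tilde{\alpha}$ and $\tilde{\alpha}^\dagger$ become adjacent on matching Hilbert spaces.

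Unitarity of $\tilde{\alpha}$ then collapses $\tilde{\alpha}\circ\tilde{\alpha}^\dagger$ to $\id_{H\otimes F'(u)}$, and a single snake equation for the induced duality on the $F'$-side removes the remaining cup-cap pair, leaving $\id_{H\otimes F'(u^*)}$. The reverse composite $(\tilde{\alpha}^*)^\dagger\circ\tilde{\alpha}^*$ is handled by the symmetric argument: apply naturality~\eqref{eq:reduceduptnat} to the cup $\eta_u:\mathbbm{1}\to u^*\otimes u$, viewed as a morphism $u^0\to u^{(-1,1)}$, and use the complementary snake equation to finish.

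The main subtlety is bookkeeping: as~\eqref{eq:dualscompare} emphasises, $F(u)$ carries two dualities in $\Hilb$ (the induced one from $\mathcal{C}$ and the intrinsic one of $\Hilb$), and throughout the argument one must consistently use the induced duality, since this is the one for which~\eqref{eq:reduceduptnat} supplies the required slide of the cup and cap past the $H$-wire. Once this is pinned down, the argument is a short diagrammatic calculation, parallel to the proof that duals of full UPTs are unitary in~\cite[Prop.~5.2]{Verdon2020a}.
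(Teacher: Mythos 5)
Your proof is correct and follows essentially the same route as the paper: expand the morphism via~\eqref{eq:reduceduptdualdef}, apply the naturality axiom~\eqref{eq:reduceduptnat} of the reduced UPT to a cup/cap of $u$ (the paper uses $\epsilon\colon u^{(-1,1)}\to u^0$; your choice of $u^{(1,-1)}\to u^0$ and $u^0\to u^{(-1,1)}$ for the two composites is an equally valid bookkeeping variant), and close with snake equations for the induced duality and unitarity of the monoidal functors. The only cosmetic difference is that you invoke unitarity of $\tilde{\alpha}$ as an explicit intermediate step, whereas the paper folds the whole collapse into the single naturality application followed by unitarity of $F'$.
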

\begin{proof}
We show one of the two unitarity equations; the other is proved similarly.
\begin{calign}
\includegraphics[scale=1]{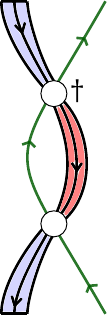}
~~=~~
\includegraphics[scale=1]{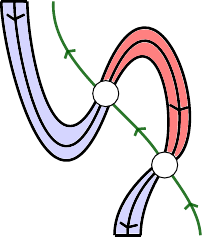}
~~=~~
\includegraphics[scale=1]{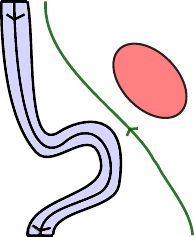}
~~=~~
\includegraphics[scale=1]{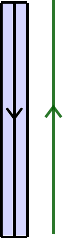}
\end{calign}
Here the first equality is by definition~\eqref{eq:reduceduptdualdef}, the second is by naturality~\eqref{eq:reduceduptnat} of the reduced UPT $\tilde{\alpha}$ for the morphism $\epsilon: u^{(-1,1)} \to u^0$, and the third is by unitarity of the monoidal functor $F'$.
\end{proof}
\begin{proposition}
Let $(G,u)$ be a compact matrix quantum group, let $F,F'$ be fibre functors, and let $(\tilde{\alpha},H): F \to F'$ be a reduced UPT. There is a unique UPT $(\alpha,H): F \to F'$ which restricts to $\tilde{\alpha}$ on $\alpha_u$, whose components $\alpha_V$ are defined as follows for any representation $V$ of $G$:
\begin{calign}\label{eq:induceduptdef}
\includegraphics[scale=1]{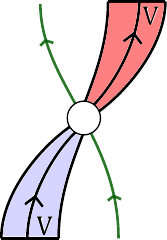}
~~:=~~
\Huge \sum_k
\includegraphics[scale=1]{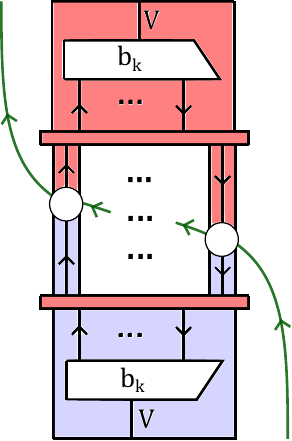}
\end{calign}
Here $\{b_k: u^{\vec{x_k}} \to V\}$ is any family of reduction morphisms.
\end{proposition}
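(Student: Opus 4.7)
The plan is to verify (a) that the formula~\eqref{eq:induceduptdef} is independent of the chosen family $\{b_k\}$, (b) that the resulting family $\{\alpha_V\}$ is a UPT, (c) that $\alpha_u = \tilde{\alpha}$, and (d) uniqueness.

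First I would establish an auxiliary fact: the reduced UPT $\tilde\alpha$ extends canonically to a unitary $\tilde\alpha_{u^{\vec{x}}}:F(u^{\vec{x}})\otimes H \to H\otimes F'(u^{\vec{x}})$ for every sign vector $\vec x$, by tensoring copies of $\tilde\alpha$ and of the dual morphism~\eqref{eq:reduceduptdualdef} (whose unitarity is Lemma~\ref{lem:reduceduptdual}) along with the multiplicators/unitors of $F,F'$, exactly as on the right-hand side of~\eqref{eq:reduceduptnat}. The naturality hypothesis then says precisely that for every $f:u^{\vec{x}} \to u^{\vec{y}}$ in $\Rep(G)$, $\tilde\alpha_{u^{\vec{y}}} \circ (F(f)\otimes \id_H)=(\id_H\otimes F'(f))\circ \tilde\alpha_{u^{\vec{x}}}$, and monoidality/unitality hold tautologically for these extended components by construction. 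This reduces the problem to that of showing a \emph{partially-defined} UPT (defined on a generating family together with all morphisms between its monoidal products) extends uniquely.

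Next I would prove well-definedness of~\eqref{eq:induceduptdef}. Given two families of reduction morphisms $\{b_k:u^{\vec{x_k}}\to V\}$ and $\{b'_l:u^{\vec{y_l}}\to V\}$, I insert $\id_{F(V)}=\sum_l F(b'_l)F((b'_l)^\dagger)$ between $\tilde\alpha_{u^{\vec{x_k}}}$ and $F(b_k^\dagger)$ in the formula built from $\{b_k\}$. The intermediate morphism $b_k^\dagger\circ b'_l:u^{\vec{y_l}}\to u^{\vec{x_k}}$ now lies entirely between objects generated by $u,u^*$, so I can apply naturality of $\tilde\alpha$ to push it through. Collapsing the $k$-sum via $\sum_k F'(b_k)F'(b_k^\dagger)=\id_{F'(V)}$ yields the corresponding expression for $\{b'_l\}$, as required.

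With well-definedness in hand, the UPT axioms all fall out by the same "resolve-and-push-through" trick, and I would carry them out in the following order. For naturality of $\alpha$ with respect to a general morphism $f:V\to W$, use reductions $\{b_k\}$ of $V$ and $\{c_l\}$ of $W$; the morphism $c_l^\dagger\circ f\circ b_k$ goes between generating objects, so naturality of $\tilde\alpha$ handles it, and then one sums $k,l$ back out. For monoidality~\eqref{eq:pntmonmon}, observe that if $\{b_k\}$ reduces $V$ and $\{c_l\}$ reduces $W$, then $\{m_{V,W}\circ (b_k\otimes c_l)\}$ reduces $V\otimes W$; inserting this family into~\eqref{eq:induceduptdef} and using monoidality of $\tilde\alpha$ (which holds on products of generators by construction of the extension) gives the equation. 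Unitality~\eqref{eq:pntmonmonunit} is the case $V=\mathbbm{1}=u^0$ with the trivial reduction. Unitarity of each $\alpha_V$ follows by computing $\alpha_V^\dagger\circ\alpha_V$ with the formula, using unitarity of $\tilde\alpha_{u^{\vec{x_k}}}$ and collapsing the sum via $\sum_k b_kb_k^\dagger=\id_V$. The equation $\alpha_u=\tilde\alpha$ is the trivial reduction $\{\id_u\}$. Finally, uniqueness: any UPT $\beta$ extending $\tilde\alpha$ satisfies $\beta_V=\sum_k (\id_H\otimes F'(b_k))\circ \beta_{u^{\vec{x_k}}}\circ (F(b_k^\dagger)\otimes\id_H)$ by naturality applied to each $b_k$ and $\sum_k b_kb_k^\dagger=\id_V$, and $\beta_{u^{\vec{x_k}}}$ is forced by monoidality and $\beta_u=\tilde\alpha$, so $\beta=\alpha$.

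The one nontrivial step is well-definedness: the argument is essentially the same as for extending a $C^*$-functor along a generating family, but here it is crucial that naturality of $\tilde\alpha$ be formulated for arbitrary morphisms between products of generators (not just for reductions), which is why the statement of the reduced UPT is set up exactly as in~\eqref{eq:reduceduptnat}. All other verifications are routine diagrammatic manipulations once well-definedness is established.
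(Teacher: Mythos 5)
Your proposal is correct and follows essentially the same route as the paper: well-definedness via inserting a second resolution of the identity and pushing the cross-term through by naturality of $\tilde{\alpha}$, then naturality, monoidality, unitality, and unitarity of $\alpha$ by the same resolve-and-push-through manipulations, and uniqueness from the fact that naturality forces~\eqref{eq:induceduptdef}. The only point worth making explicit in your uniqueness step is that the component $\beta_{u^*}$ of any extending UPT is itself forced to equal~\eqref{eq:reduceduptdualdef} (by naturality with respect to the duality morphisms of $u$ together with unitarity), not by monoidality alone; with that noted, your argument matches the paper's.
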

\begin{proof}
First we show that $\alpha$ is well-defined, i.e. it does not depend on the choice of reduction morphisms. Let $V$ be some representation of $G$ and let $\{b_k: u^{\vec{x_k}} \to V\}$ and $\{c_l: u^{\vec{y_l}} \to V\}$ be two families of reduction morphisms. Then:
\begin{calign}
{\Huge \sum_k}
\includegraphics[scale=1]{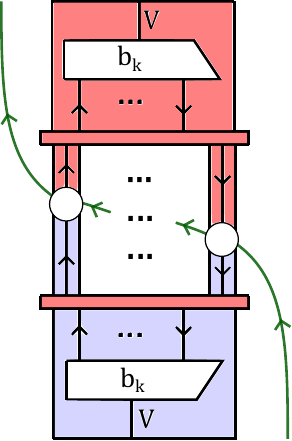}
~~=~~
{\Huge \sum_{k,l}}
\includegraphics[scale=1]{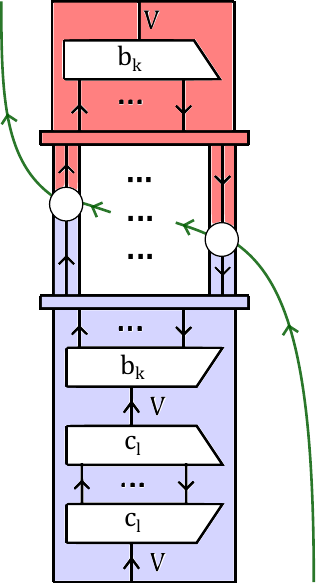}
~~=~~
{\Huge \sum_{k,l}}
\includegraphics[scale=1]{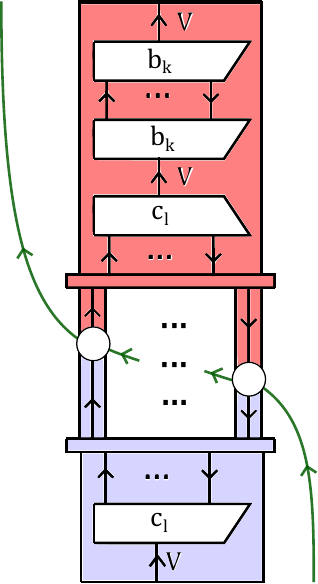}
~~\\=
{\Huge \sum_l}
\includegraphics[scale=1]{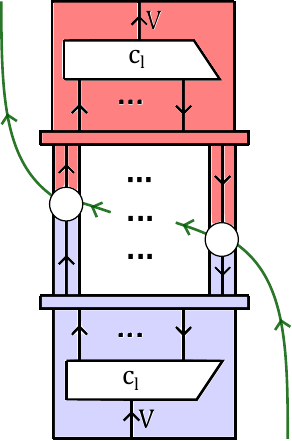}
\end{calign}
Here the first equality is by $\sum_l c_lc_l^{\dagger} = \id_V$, the second is by naturality of the reduced UPT $\tilde{\alpha}$, and the third is by $\sum_k b_k b_k^{\dagger} = \id_V$. 

We now show that $\alpha$ is indeed a UPT.
\begin{itemize}
\item \emph{Naturality.} Let $\{b_k: u^{\vec{x}_k} \to V\}$ and $\{c_l: u^{\vec{y}_l} \to W\}$ be reduction morphisms for representations $V,W$ of $\mathcal{C}$. We show~\eqref{eq:pntmonnat} for any morphism $f: V \to W$:
\begin{calign}
{\Huge \sum_{l}}
\includegraphics[scale=.9]{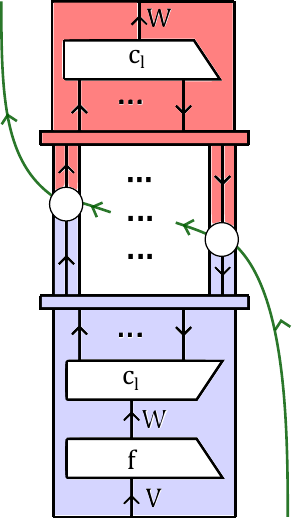}
~~={\Huge \sum_{k,l}}
\includegraphics[scale=.9]{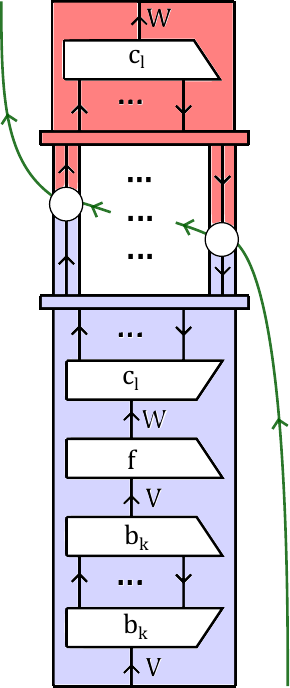}
~~={\Huge \sum_{k}}
\includegraphics[scale=.9]{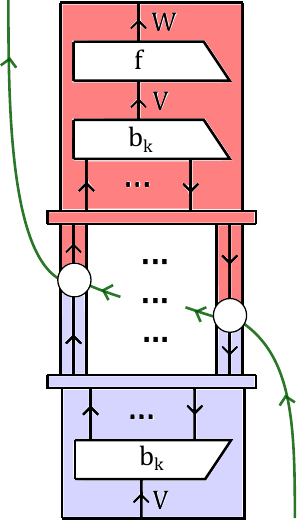}
\end{calign}
Here the first equality is by $\sum_k b_k b_k^{\dagger} = \id_V$, and the second is by naturality of the reduced UPT $\tilde{\alpha}$ and $\sum_l c_l c_l^{\dagger} = \id_W$. 
\item \emph{Monoidality.}
\begin{itemize}
\item Let $V,W$ be some representations of $G$, and pick some reduction morphisms $\{b_k: u^{\vec{x}_k} \to V\}, \{c_l: u^{\vec{y}_l} \to W\}$. It is clear that $\{b_k \otimes c_l: u^{\vec{x}_k} \otimes u^{\vec{y}_l} \to V \otimes W\}$ are reduction morphisms for $V \otimes W$. Now~\eqref{eq:pntmonmon} is immediate by manipulation of functorial boxes:
\begin{calign}{\Huge \sum_{k,l}}
\includegraphics[scale=1]{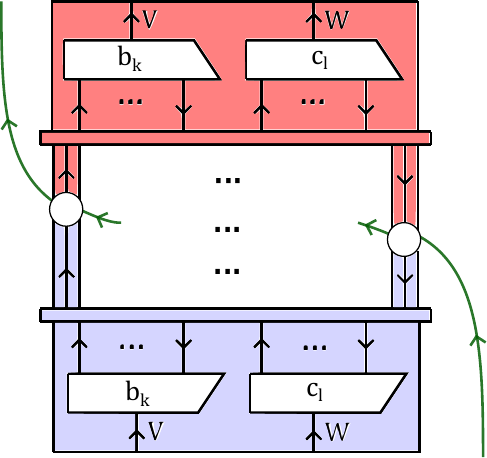}
~~=~~{\Huge \sum_{k,l}}
\includegraphics[scale=1]{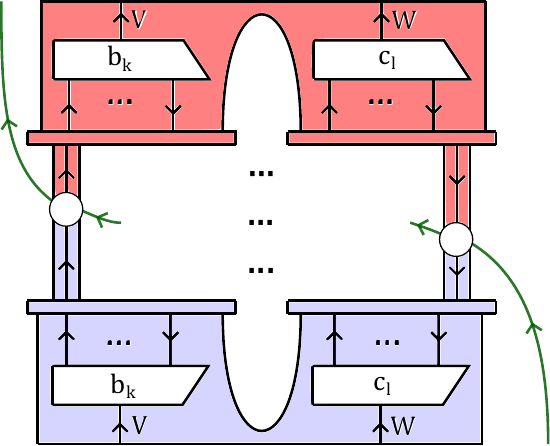}
\end{calign}
\item The equation~\eqref{eq:pntmonmonunit} is precisely~\eqref{eq:induceduptdef} for the object $\mathbbm{1}$ where the reduction morphism is $\id_{\mathbbm{1}}: \mathbbm{1} \to u^{0}$.
\end{itemize}
\item \emph{Unitarity.} We show one of the unitarity equations; the other is proved similarly. For any representation $V$ of $G$:
\begin{calign}
{\Huge \sum_{k,l}}
\includegraphics[scale=1]{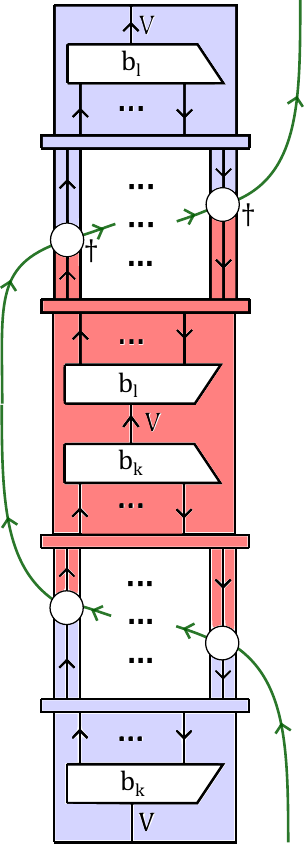}
~~=~~
{\Huge \sum_{k,l}}
\includegraphics[scale=1]{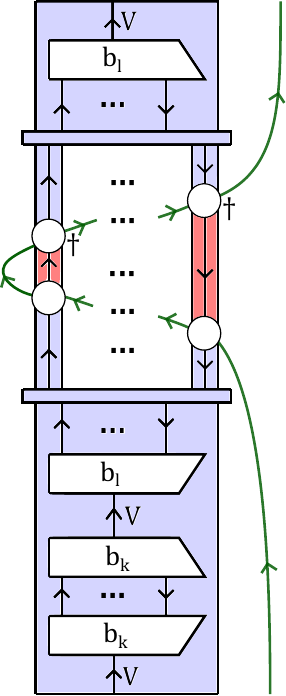}
~~=~~
{\Huge \sum_{l}}
\includegraphics[scale=1]{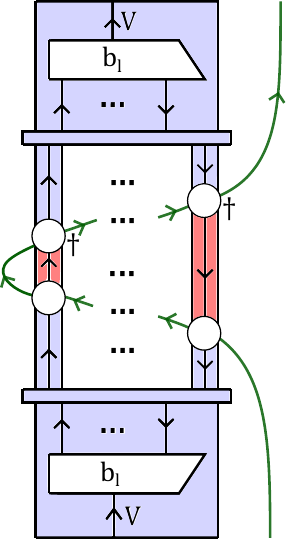}\\
~~=~~
{\Huge \sum_{l}}
\includegraphics[scale=1]{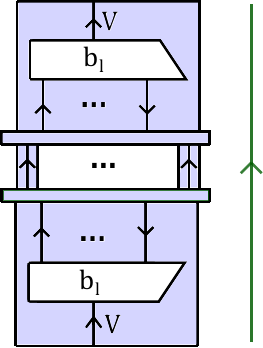}
~~=~~
\includegraphics[scale=1]{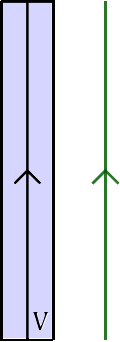}
\end{calign}
\end{itemize}
Here the first equation is by naturality for $\tilde{\alpha}$, the second is by $\sum_k b_k b_k^{\dagger} = \id_V$, the third is by unitarity of $\alpha_u$ and Lemma~\ref{lem:reduceduptdual}, and the fourth is by manipulation of functorial boxes and $\sum_l b_l b_l^{\dagger} = \id_V$.

Finally, uniqueness of the induced UPT $\alpha$ follows from the fact that, by naturality of the UPT $\alpha:F \to F'$, the component $\alpha_V$ for any $V$ is defined by $\alpha_U$ and $\alpha_{U^*}$ by~\eqref{eq:induceduptdef}.
\end{proof}
\noindent
We can also introduce a notion of modification for reduced UPTs.
\begin{definition}
Let $(G,u)$ be a compact matrix quantum group, let $F,F'$ be fibre functors, and let $(\tilde{\alpha},H_{\alpha})$, $(\tilde{\beta},H_{\beta})$ be reduced UPTs (the first drawn with a green wire, the second with an orange wire). Then a \emph{modification} $f:\tilde{\alpha} \to \tilde{\beta}$ is a linear map $f: H_{\alpha} \to H_{\beta}$ satisfying the following equations:
\begin{calign}\label{eq:uptredmods}
\includegraphics[scale=1]{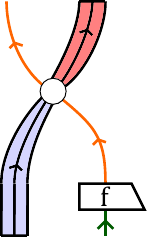}
~~=~~
\includegraphics[scale=1]{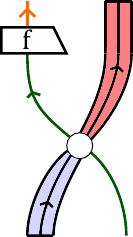}
&
\includegraphics[scale=1]{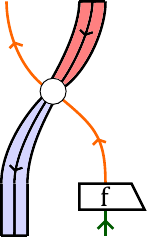}
~~=~~
\includegraphics[scale=1]{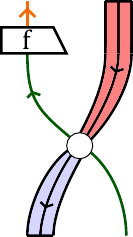}
\end{calign}
\end{definition}
\begin{proposition}
Let $(\tilde{\alpha},H_{\alpha}),(\tilde{\beta},H_{\beta}):F \to F'$ be reduced UPTs and let $(\alpha,H_{\alpha}),(\beta,H_{\beta}):F \to F'$ be the unique induced UPTs. Then a modification $f: \tilde{\alpha} \to \tilde{\beta}$ is precisely a modification $f: \alpha \to \beta$.
\end{proposition}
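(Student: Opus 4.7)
The argument splits into two directions, and rests on the observation that both $\alpha$ and $\beta$ are constructed from the same data (reduction morphisms for each representation) via~\eqref{eq:induceduptdef}.

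For the easier ``only if'' direction, suppose $f$ is a modification $\alpha \to \beta$ in the sense of~\eqref{eq:uptmod}. Specialising~\eqref{eq:uptmod} to $V = u$ recovers the first equation of~\eqref{eq:uptredmods} verbatim, since $\alpha_u = \tilde{\alpha}$ and $\beta_u = \tilde{\beta}$. For the second equation of~\eqref{eq:uptredmods}, specialise~\eqref{eq:uptmod} to $V = u^*$ and unfold $\alpha_{u^*}, \beta_{u^*}$ using~\eqref{eq:induceduptdef} with $\id_{u^*}$ as the single reduction morphism; the resulting equation is precisely the second equation of~\eqref{eq:uptredmods}, because the dual components are given by~\eqref{eq:reduceduptdualdef}.

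For the ``if'' direction, assume $f$ satisfies~\eqref{eq:uptredmods}. The plan is to first upgrade the modification property from $\tilde{\alpha}, \tilde{\beta}$ on $u, u^*$ to the induced UPTs on every tensor word $u^{\vec{x}}$, and then to bootstrap from words to arbitrary representations via reduction morphisms. For the word-length induction, write $\alpha_{u^{\vec{x}}}$ as the monoidal composite of one factor per entry of $\vec{x}$: each $+1$ contributes a copy of $\tilde{\alpha}$, each $-1$ contributes a copy of $\tilde{\alpha}$'s dual defined by~\eqref{eq:reduceduptdualdef}, and these are assembled using~\eqref{eq:pntmonmon} (together with the multiplicators of $F$ and $F'$, which carry no $H$-wire and so are inert under $f$). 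Pulling $f$ past the composite one vertex at a time, each application of~\eqref{eq:uptredmods} converts a green vertex into an orange one; after processing every vertex we recognise the composite defining $\beta_{u^{\vec{x}}}$. The base case $\vec{x} = 0$ follows from~\eqref{eq:pntmonmonunit}, which forces $\alpha_{\mathbbm{1}}$ and $\beta_{\mathbbm{1}}$ to agree up to the common unitor data, so any $f$ trivially satisfies the modification equation there.

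Finally, for a general representation $V$, pick a family of reduction morphisms $\{b_k: u^{\vec{x}_k} \to V\}$ and use it to define both $\alpha_V$ and $\beta_V$ via~\eqref{eq:induceduptdef}. Since the $F(b_k)^\dagger$ and $F'(b_k)$ appearing in~\eqref{eq:induceduptdef} live entirely in the functorial boxes and do not touch the $H$-wire, pulling $f$ past $\alpha_V$ reduces to pulling $f$ past each $\alpha_{u^{\vec{x}_k}}$ in turn; by the inductive step each such pull-through replaces $\alpha_{u^{\vec{x}_k}}$ with $\beta_{u^{\vec{x}_k}}$, and summing over $k$ reassembles $\beta_V$. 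This is exactly~\eqref{eq:uptmod} at $V$, so $f$ is a modification $\alpha \to \beta$.

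The only subtle point is the bookkeeping in the inductive step: one needs to verify that the dual components built from~\eqref{eq:reduceduptdualdef} interact correctly with $f$, which is where the second equation of~\eqref{eq:uptredmods} is essential. This is essentially the same topological manipulation used in the proof of Lemma~\ref{lem:reduceduptdual}, so no new obstacle arises.
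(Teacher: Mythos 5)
Your proposal is correct and follows essentially the same route as the paper's (much terser) proof: one direction by specialising the modification equation to $u$ and $u^*$, the other by unfolding the induced components via~\eqref{eq:induceduptdef} and pulling $f$ through the vertices one at a time using~\eqref{eq:uptredmods}. Your version just makes explicit the word-length induction and the bookkeeping for the dual components that the paper leaves implicit.
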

\begin{proof}
It is clear that every modification $\alpha \to \beta$ is in particular a modification $\tilde{\alpha} \to \tilde{\beta}$. That a modification $f:\tilde{\alpha} \to \tilde{\beta}$ is also a modification $f:\alpha \to \beta$ is clear from the definition~\eqref{eq:induceduptdef} of the induced UPTs; $f$ pulls through all the vertices, thereby satisfying the equations~\eqref{eq:uptmod}.
\end{proof}
\noindent
The results of this section are summarised by the following theorem.
\begin{theorem}\label{thm:redupt}
Let $(G,u)$ be a compact matrix quantum group and let $F,F'$ be fibre functors. Then restriction of UPTs $\alpha \mapsto \alpha_u$ defines an isomorphism of categories between:
\begin{itemize}
\item UPTs $F \to F'$ and modifications. 
\item Reduced UPTs $F \to F'$ and modifications.
\end{itemize}
\end{theorem}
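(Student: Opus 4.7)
The plan is to assemble the pieces already developed in this section into a pair of mutually inverse functors between the two categories. Let me denote by $\Fun(F,F')$ the category of UPTs $F \to F'$ and modifications and by $\widetilde{\Fun}(F,F')$ the category of reduced UPTs and their modifications. I will verify that restriction $R: \Fun(F,F') \to \widetilde{\Fun}(F,F')$, defined by $\alpha \mapsto \alpha_u$ on 1-morphisms and by the identity on underlying linear maps on 2-morphisms, and extension $E$ in the opposite direction (using formula~\eqref{eq:induceduptdef}) are strict inverses.

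First I would check that $R$ is well-defined: given a UPT $\alpha: F \to F'$, the component $\alpha_u$ is unitary and satisfies the naturality equation~\eqref{eq:reduceduptnat}, which is just a restriction of the usual naturality~\eqref{eq:pntmonnat} combined with monoidality~\eqref{eq:pntmonmon}-\eqref{eq:pntmonmonunit}, plus the unitarity formula~\eqref{eq:dualpnt} to handle the duals appearing in~\eqref{eq:reduceduptdualdef}. Functoriality on modifications is immediate since modifications are already required to satisfy~\eqref{eq:uptmod} on every component and hence a fortiori on $\alpha_u$, which gives the reduced-modification equations~\eqref{eq:uptredmods}. For $E$, both the preceding proposition and the proposition characterising modifications $\tilde\alpha \to \tilde\beta$ give me exactly what I need: $E$ sends a reduced UPT to its induced UPT and a reduced modification to the same underlying linear map, now interpreted as a modification of the induced UPTs.

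Next I would show $R \circ E = \id$. Given a reduced UPT $\tilde\alpha$, the induced UPT $\alpha = E(\tilde\alpha)$ has component $\alpha_V$ defined by~\eqref{eq:induceduptdef}. Applying this to $V = u$ with the trivial reduction family $\{\id_u: u \to u\}$ (so that $\sum_k b_kb_k^\dagger = \id_u$), the formula collapses to $\alpha_u = \tilde\alpha$, since all functorial boxes and reduction morphisms reduce to identities. On modifications the identification is tautological.

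Finally I would show $E \circ R = \id$. The non-trivial direction is to check that for any UPT $\alpha: F \to F'$, the induced UPT $E(\alpha_u)$ agrees with $\alpha$ on every representation $V$. This is precisely the uniqueness assertion of the preceding proposition: since $\alpha$ is a UPT whose restriction to $u$ is $\alpha_u$, and by that proposition there is a \emph{unique} UPT extending $\alpha_u$, we must have $E(\alpha_u) = \alpha$. Concretely, choosing any reduction family $\{b_k: u^{\vec{x}_k} \to V\}$ and applying naturality~\eqref{eq:pntmonnat} of $\alpha$ to each $b_k$, together with monoidality~\eqref{eq:pntmonmon}-\eqref{eq:pntmonmonunit} to decompose $\alpha_{u^{\vec{x}_k}}$ into factors of $\alpha_u$ and $\alpha_{u^*}$ (the latter expressed via~\eqref{eq:reduceduptdualdef} using unitarity~\eqref{eq:dualpnt}), and summing $\sum_k b_k b_k^\dagger = \id_V$, reproduces the right-hand side of~\eqref{eq:induceduptdef}. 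Again on modifications both functors act as the identity on underlying linear maps, so the identification is automatic. No step should present a genuine obstacle, since the real content has already been done in the preceding propositions; the only thing to be careful about is feeding the correct reduction family (namely $\id_u$) into the extension formula to obtain $R \circ E = \id$.
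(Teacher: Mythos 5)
Your proposal is correct and takes essentially the same route as the paper, which states this theorem as a direct summary of the preceding propositions: the restriction $\alpha \mapsto \alpha_u$ and the induction formula~\eqref{eq:induceduptdef} are mutually inverse by the well-definedness and uniqueness assertions already proved, and the two notions of modification coincide by the proposition relating them. Your explicit check that $R \circ E = \id$ via the trivial reduction family $\{\id_u\}$ is a clean way of making precise what the paper leaves implicit in its uniqueness remark.
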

\subsection{Quantum graphs and their isomorphisms}\label{sec:qgraphisossubsec}
We have seen that for a compact matrix quantum group $(G,u)$, a UPT $\alpha: F_1 \to F_2$ between fibre functors $F_1,F_2: \Rep(G) \to \Hilb$ is determined by a single unitary $(\alpha_U,H): F_1(u) \otimes H \to H \otimes F_2(u)$ obeying the naturality condition~\eqref{eq:reduceduptnat} for the intertwiner spaces $\Hom(u^{\otimes m}, u^{\otimes n})$.
 
\ignore{
We first recall the original Tannaka  reconstruction theorem for compact matrix quantum groups, which has the benefit of applying to non-semisimple categories. The theorem has been abridged and translated into the language of Section~\ref{}.
\begin{theorem}[{\cite{}}]
Let $(\mathcal{C},F)$ be a pair of a pivotal dagger category $\mathcal{C}$ (not necessarily semisimple) generated by $\{u,u^*\}$ for some object $u$, and a strict monoidal functor $U: \mathcal{C} \to \Hilb$. Then there exists a compact matrix quantum group $(G,u)$ and a fully faithful strict monoidal functor $I: \mathcal{C} \hookrightarrow \Rep(G)$ taking $u \mapsto u$, such that $U = F \circ I$. \ignore{, which is an monoidal equivalence if $\mathcal{C}$ is semisimple.} \ignore{Moreover, $F = \tilde{F} \circ I$, where $\tilde{F}$ is the canonical fibre functor on $\Rep(G)$.}
\end{theorem}
\ignore{
\noindent
The second is an immediate corollary of Theorem~\ref{}.
\begin{corollary}
Let $\mathcal{C}$ be a semisimple  pivotal dagger category generated by $\{u,u^*\}$ for some object $u$, and let $U: \mathcal{C} \to \Hilb$ be a unitary fibre functor. Then there exists a compact matrix quantum group $(G,u)$ and a unitary monoidal equivalence $E_U: \mathcal{C} \to \Rep(G)$ taking $u \mapsto u$, such that $U = F \circ E_U$.
\end{corollary}
}
\noindent
}
We now recall the notions of quantum confusability graph and finite-dimensional quantum graph isomorphism. For more information about these structures, their significance in quantum information theory, and how they generalise their classical counterparts, see~\cite{Musto2018, Brannan2019, Atserias2019,Duan2012}.
\begin{definition}[{\cite[Def. 5.1]{Musto2018}}]\label{def:qgraph}
A \emph{quantum graph} $X=(A,\Gamma)$ is a pair of:
\begin{itemize}
\item A special Frobenius monoid $(A,m,u)$ in the category $\Hilb$ (Definition~\ref{def:Frobenius}) satisfying the following \emph{symmetry} equation:
\begin{calign}\label{eq:frobsymmetry}
\begin{tz}[zx,every to/.style={out=up, in=down}]
\draw (0.25,-1.5) to (1.75,0) to [in=-45] (1,1) to (1,1.5);
\draw (1.75,-1.5) to (0.25,0) to [in=-135] (1,1);
\node[zxvertex=\zxwhite, zxdown] at (1,1){};
\node[zxvertex=\zxwhite] at (1,1.5){};
\end{tz}
\quad = \quad
\begin{tz}[zx,every to/.style={out=up, in=down}]
\draw (1.75,-1.5) to (1.75,0) to [in=-45] (1,1) to (1,1.5);
\draw (0.25,-1.5) to (0.25,0) to [in=-135] (1,1);
\node[zxvertex=\zxwhite, zxdown] at (1,1){};
\node[zxvertex=\zxwhite] at (1,1.5){};
\end{tz}
\end{calign}
\ignore{
\begin{calign}
\begin{tz}[zx,every to/.style={out=up, in=down}]\draw (0,0) to (0,1) to [out=135] (-0.75,2) to [in=-135] (0,3) to (0,4);
\draw (0,1) to [out=45] (0.75,2) to [in=-45] (0,3);
\node[zxvertex=\zxwhite, zxup] at (0,1){};
\node[zxvertex=\zxwhite,zxdown] at (0,3){};\end{tz}
\quad = \quad \delta^2~~
\begin{tz}[zx]
\draw (0,0) to +(0,4);
\end{tz}
\end{calign}
}
\item A self-adjoint linear map $\Gamma:A \to A$ satisfying the following equations:
\begin{calign}\label{eq:propadjacency}
\begin{tz}[zx]
\draw (0,0) to (0,0.5) to [out=135, in=-135,looseness=1.5] node[zxnode=\zxwhite, pos=0.5] {$\Gamma$} (0,2.5) to (0,3);
\draw[string] (0,0.5) to [out=45, in=-45,looseness=1.5] node[zxnode=\zxwhite, pos=0.5] {$\Gamma$} (0,2.5);
\node[zxvertex=\zxwhite,zxup] at (0,0.5){};
\node[zxvertex=\zxwhite,zxdown] at (0,2.5){};
\end{tz}\quad  = \quad 
\begin{tz}[zx]
\draw (0,0) to node[zxnode=\zxwhite, pos=0.5] {$\Gamma$} (0,3);
\end{tz}
&
\begin{tz}[zx]
\draw (1,0) to (1,1.5) to [out=up, in=up, looseness=2.5]node[zxvertex=\zxwhite, pos=0.5] {} (0,1.5) to [out=down, in=down, looseness=2.5]node[zxvertex=\zxwhite, pos=0.5] {} (-1,1.5) to (-1,3);
\node[zxnode=\zxwhite] at (0,1.5) {$\Gamma$};
\end{tz}
\quad = \quad 
\begin{tz}[zx]
\draw (0,0) to (0,3);
\node[zxnode=\zxwhite] at (0,1.5) {$\Gamma$};
\end{tz}
\ignore{
&
\begin{tz}[zx]
\draw (0,0) to (0,0.5) to [out=135, in=-135,looseness=1.5] node[zxnode=\zxwhite, pos=0.5] {$\Gamma$} (0,2.5) to (0,3);
\draw[string] (0,0.5) to [out=45, in=-45,looseness=1.5](0,2.5);
\node[zxvertex=\zxwhite,zxup] at (0,0.5){};
\node[zxvertex=\zxwhite,zxdown] at (0,2.5){};
\end{tz}\quad  = \quad 
\begin{tz}[zx]
\draw (0,0) to (0,3);
\end{tz}
}
\end{calign}
\end{itemize}
\end{definition}
\begin{example}
We briefly recall how quantum graphs generalise ordinary graphs. Let $\{\ket{i}\}$ be an orthonormal basis for a $d$-dimensional Hilbert space $A$. This Hilbert space has the structure of a special symmetric Frobenius algebra in $\Hilb$ with the following operations:
\begin{calign}
\ket{i} \otimes \ket{j} \mapsto \delta_{ij} \ket{i} 
&&
1 \mapsto \sum_{i} \ket{i}
&&
\ket{i} \mapsto \ket{i} \otimes \ket{i}
&&
\ket{i} \mapsto 1
\\
\textrm{multiplication}
&&
\textrm{unit}
&&
\textrm{comultiplication}
&&
\textrm{counit}
\end{calign}
We can express any linear map $\Gamma: A \to A$ in the basis $\{\ket{i}\}$ as a matrix $(\Gamma_{ij})_{i,j}$. It is then easy to check that~\eqref{eq:propadjacency} reduces to the following conditions for the matrix entries:
\begin{calign}
\Gamma_{ij}^2 = \Gamma_{ij}
&&
\Gamma_{ij} = \Gamma_{ji}
\end{calign}
These are clearly the equations specifying that $\Gamma$ is the adjacency matrix for a graph with $d$ vertices, i.e. a symmetric $d \times d$ matrix with entries 0 and 1. In order to restrict to e.g. simple graphs we could add another equation, but we do not do this here.
\end{example}
\ignore{
\begin{remark}
A symmetric Frobenius monoid in $\Hilb$ is precisely a f.d. $C^*$-algebra equipped with a faithful trace (the inner product is obtained as $\braket{a | b} = \Tr(a^*b)$, and the trace is the counit of the Frobenius monoid). There are various ways to normalise this trace. One approach is to require that the Frobenius monoid be special --- this approach was taken in the definition of quantum graphs given in~\cite{Musto2018}. Another normalisation was chosen in~\cite{Brannan2019}, where the trace was required to be a \emph{$\delta$-form} for some $\delta>0$:
\begin{calign}
\begin{tz}
\draw (0,1) to (0,2);
\node[zxvertex=\zxwhite] at (0,1){};
\node[zxvertex=\zxwhite] at (0,2){};
\end{tz}
~~=~~
1
&&
\begin{tz}[zx,every to/.style={out=up, in=down}]\draw (0,0) to (0,1) to [out=135] (-0.75,2) to [in=-135] (0,3) to (0,4);
\draw (0,1) to [out=45] (0.75,2) to [in=-45] (0,3);
\node[zxvertex=\zxwhite, zxup] at (0,1){};
\node[zxvertex=\zxwhite,zxdown] at (0,3){};\end{tz}
\quad = \quad \delta^2~~
\begin{tz}[zx]
\draw (0,0) to +(0,4);
\end{tz}
\end{calign}
 These normalisations produce slightly different definitions of quantum graph. Definition~\ref{def:qgraph} includes them both by not stipulating any normalisation for the trace. In view of Lemma~\ref{lem:fctrtoqgraph} we remark that, if either of these normalisations is chosen, it will be preserved under an accessible fibre functor.
\end{remark}}
\begin{definition}[{\cite[Def. 5.11]{Musto2018}}]\label{def:qgraphiso}
Let $X=(A,\Gamma)$ and $Y=(A',\Gamma')$ be quantum graphs. A \emph{finite-dimensional quantum graph isomorphism} $(P,H): X \to Y$ is a pair of a Hilbert space $H$ and a unitary linear map $P: A \otimes H \to H \otimes A'$ satisfying the following equations, where the monoids $A$ and $A'$ are depicted as white and grey nodes respectively:
\begin{calign}\nonumber
\begin{tz}[zx,xscale=1,every to/.style={out=up, in=down},scale=-1]
\draw (0,0) to (0,2) to [out=45] (0.75,3);
\draw (0,2) to [out=135] (-0.75,3);
\draw[arrow data={0.2}{<}, arrow data={0.8}{<}] (1.75,0) to [looseness=0.9] node[zxnode=\zxwhite, pos=0.5] {$P$} (-1.75,2.5) to (-1.75,3);
\node[zxvertex=\zxwhite, zxdown] at (0,2){};
\end{tz}
=
\begin{tz}[zx,xscale=1,every to/.style={out=up, in=down},scale=-1]
\draw (0,0) to (0,0.75) to [out=45] (0.75,1.75) to (0.75,3);
\draw (0,0.75) to [out=135] (-0.75,1.75) to (-0.75,3);
\draw[arrow data={0.2}{<}, arrow data={0.9}{<}] (1.75,0) to (1.75,0.75) to  [looseness=1.1, in looseness=0.9] node[zxnode=\zxwhite, pos=0.36] {$P$} node[zxnode=\zxwhite, pos=0.64] {$P$}(-1.75,3);
\node[zxvertex=\zxblack, zxdown] at (0,0.75){};
\end{tz}
&
\begin{tz}[zx,xscale=1,every to/.style={out=up, in=down},scale=-1]
\draw (0,0) to (0,2.25);
\draw[arrow data={0.2}{<}, arrow data={0.8}{<}] (1,0) to [looseness=0.9] node[zxnode=\zxwhite, pos=0.5] {$P$} (-1,2.5) to (-1,3);
\node[zxvertex=\zxwhite] at (0,2.25){};
\end{tz}
=
\begin{tz}[zx,xscale=1,every to/.style={out=up, in=down},scale=-1]
\draw (0,0) to (0,0.75);
\draw[arrow data={0.2}{<}, arrow data={0.9}{<}] (1.,0) to (1.,0.75) to   (-1,3);
\node[zxvertex=\zxblack] at (0,0.75){};
\end{tz}
&
\begin{tz}[zx,xscale=1,every to/.style={out=up, in=down},scale=1]
\draw (0,0) to (0,3);
\draw[arrow data={0.2}{>}, arrow data={0.9}{>}] (1.75,0) to (1.75,0.75) to  [looseness=1.1, in looseness=0.9]  node[zxnode=\zxwhite, pos=0.5] {$P$}(-1.75,3);
\node[zxnode=\zxwhite] at (0,0.9){$\Gamma$};
\end{tz}
=
\begin{tz}[zx,xscale=1,every to/.style={out=up, in=down},scale=1]
\draw (0,0) to (0,3);
\draw[arrow data={0.2}{>}, arrow data={0.8}{>}] (1.75,0) to [looseness=0.9] node[zxnode=\zxwhite, pos=0.5] {$P$} (-1.75,2.5) to (-1.75,3);
\node[zxnode=\zxwhite] at (0,2.35) {$\Gamma'$};
\end{tz}
\end{calign}
\begin{calign}
\label{eq:quantumfunction2}
\begin{tz}[zx,xscale=1,every to/.style={out=up, in=down}]
\draw (0,0) to (0,2) to [out=45] (0.75,3);
\draw (0,2) to [out=135] (-0.75,3);
\draw[arrow data={0.2}{>}, arrow data={0.8}{>}] (1.75,0) to [looseness=0.9] node[zxnode=\zxwhite, pos=0.5] {$P$} (-1.75,2.5) to (-1.75,3);
\node[zxvertex=\zxblack, zxup] at (0,2){};
\end{tz}
=
\begin{tz}[zx,xscale=1,every to/.style={out=up, in=down}]
\draw (0,0) to (0,0.75) to [out=45] (0.75,1.75) to (0.75,3);
\draw (0,0.75) to [out=135] (-0.75,1.75) to (-0.75,3);
\draw[arrow data={0.2}{>}, arrow data={0.9}{>}] (1.75,0) to (1.75,0.75) to  [looseness=1.1, in looseness=0.9] node[zxnode=\zxwhite, pos=0.36] {$P$} node[zxnode=\zxwhite, pos=0.64] {$P$}(-1.75,3);
\node[zxvertex=\zxwhite, zxup] at (0,0.75){};
\end{tz}
&
\begin{tz}[zx,xscale=1,every to/.style={out=up, in=down}]
\draw (0,0) to (0,2.25);
\draw[arrow data={0.2}{>}, arrow data={0.8}{>}] (1,0) to [looseness=0.9] node[zxnode=\zxwhite, pos=0.5] {$P$} (-1,2.5) to (-1,3);
\node[zxvertex=\zxblack] at (0,2.25){};
\end{tz}
=
\begin{tz}[zx,xscale=1,every to/.style={out=up, in=down}]
\draw (0,0) to (0,0.75);
\draw[arrow data={0.2}{>}, arrow data={0.9}{>}] (1.,0) to (1.,0.75) to   (-1,3);
\node[zxvertex=\zxwhite, zxup] at (0,0.75){};
\end{tz}
\end{calign}
\begin{calign}\label{eq:quantumfunction} \begin{tz}[zx,xscale=1,every to/.style={out=up, in=down},xscale=0.8]
\draw [arrow data={0.2}{>},arrow data={0.8}{>}]  (0,0) to (2.25,3);
\draw (2.25,0) to node[zxnode=\zxwhite, pos=0.5] {$P^\dagger$} (0,3);
\end{tz}
=~~
\begin{tz}[zx,xscale=1,xscale=0.6,yscale=-1]
\draw[arrow data={0.5}{<}] (0.25,-0.5) to (0.25,0) to [out=up, in=-135] (1,1);
\draw (1,1) to [out=135, in=right] node[zxvertex=\zxwhite, pos=1]{} (-0.3, 1.7) to [out=left, in=up] (-1.25,1) to (-1.25,-0.5);
\draw[arrow data={0.5}{>}] (1.75,2.5) to (1.75,2) to [out=down, in=45] (1,1);
\draw (1,1) to [out= -45, in= left] node[zxvertex=\zxblack, pos=1] {} (2.3,0.3) to [out=right, in=down] (3.25,1) to (3.25,2.5);
\node [zxnode=\zxwhite] at (1,1) {$P$};
\end{tz} 
\end{calign}
An \textit{intertwiner} of quantum isomorphisms $(H,P) \to (H', P')$ is a linear map $f:H\to H'$ such that the following holds:%
\begin{calign}\label{eq:intertwiner}
\begin{tz}[zx,xscale=1,every to/.style={out=up, in=down}]
\draw (0,0) to (0,1) to (2,3);
\draw[arrow data={0.35}{>}] (2,0) to node[zxnode=\zxwhite, pos=0.9] {$f$} (2,1);
\draw[string,arrow data={0.9}{>},arrow data={0.26}{>}](2,1) to node[zxnode=\zxwhite, pos=0.5]{$P'$}  (0,3);
\end{tz}
\quad = \quad
\begin{tz}[zx,xscale=1,every to/.style={out=up, in=down},scale=-1]
\draw (0,0) to (0,1) to (2,3);
\draw[arrow data={0.35}{<}] (2,0) to node[zxnode=\zxwhite, pos=0.9] {$f$} (2,1);
\draw[string,arrow data={0.9}{<},arrow data={0.26}{<}](2,1) to node[zxnode=\zxwhite, pos=0.5]{$P$}  (0,3);
\end{tz}
\end{calign}
\end{definition}
\begin{remark}
This definition uses the opposite convention for the direction of the Hilbert space wire to the paper~\cite{Musto2018}. The theory of quantum isomorphisms can be developed analogously whichever convention is used. 
\end{remark}
\noindent
We now recall the definition of the quantum automorphism group of a quantum graph. 
\begin{definition}[{\cite[Def. 3.7]{Brannan2019}}]
Let $X=(A,\Gamma)$ be a quantum graph with $\dim(A)=n$, and let $\{\ket{i}\}_{i=1}^n$ be an orthonormal basis for $A$. Then the \emph{quantum automorphism group algebra} $O(G_X)$ is the universal unital $*$-algebra generated by the coefficients of a unitary matrix $[u_{ij}]_{i,j=1}^n \in M_n(O(G_X))$ subject to the relations making the map 
\begin{align*}
\rho: A \to A \otimes O(G_X) && \rho(\ket{i}) = \sum_j \ket{j} \otimes u_{ji}
\end{align*}
a unital $*$-homomorphism satisfying $\rho \circ \Gamma = (\Gamma \otimes \id_{O(G_X)}) \circ \rho$. Its Hopf-$*$-algebra structure is defined in~\cite[Prop. 3.8]{Brannan2019}.
\end{definition}
\begin{remark}
$O(G_X)$ can also be defined as the $*$-algebra of matrix coefficients of corepresentations of the $C^*$-algebra obtained by Woronowicz's Tannaka-Krein construction~\cite{Woronowicz1988} for a suitable concrete $W^*$-category (c.f.~\cite[Prop. 1.1]{Banica1999}). In particular, we have the following facts:
\begin{enumerate}
\item $G_X$ is a compact matrix quantum group with fundamental representation $A$.
\item The intertwiner spaces $\Hom_{\Rep(G_X)}(A^{\otimes m},A^{\otimes n})$ are generated by three morphisms $m:A \otimes A \to A$, $u:\mathbb{C} \to A$ and $\Gamma:A \to A$, satisfying the equations of a Frobenius monoid and of a quantum graph, under composition, monoidal product, dagger and linear combination.
\item The fundamental representation $A$ is self-dual in $\Rep(G_X)$ with cup and cap~\eqref{eq:cupcapfrob}.
\item The image of $((A,m,u),\Gamma)$ under the canonical fibre functor $F: \Rep(G_X) \to \Hilb$ is the quantum graph $X$.
\end{enumerate}
\end{remark}
\begin{definition}[{\cite[Def. 4.1]{Brannan2019}}]
Let $X=(A_X,\Gamma_X),Y=(A_Y,\Gamma_Y)$, be two quantum graphs, where $\dim(A_X) = n, \dim(A_Y) =m$, and let $\{\ket{i}\}_{i=1}^n$ and $\{\ket{j}\}_{j=1}^m$ be orthonormal bases for $A_X$ and $A_Y$ respectively. Define $O(G_Y,G_X)$ to be the universal $*$-algebra generated by the coefficients of a unitary matrix $p=[p_{ij}]_{ij} \in O(G_Y,G_X) \otimes B(A_X,A_Y)$ with relations ensuring that 
\begin{align*}
\rho_{Y,X}: A_X \to A_Y \otimes O(G_Y,G_X)&& \rho_{Y,X}(\ket{i})= \sum_j \ket{j} \otimes p_{ji}
\end{align*}
is a unital $*$-homomorphism satisfying $\rho \circ \Gamma_X = (\Gamma_Y \otimes \id_{O(G_Y,G_X)}) \circ \rho$.
\ignore{
If $O(G_Y,G_X)$ is nonzero we say that $G_X$ is \emph{algebraically quantum isomorphic} to $G_Y$ and write $G_X \cong_A G_Y$.}
\end{definition}
\begin{lemma}\label{lem:fctrtoqgraph}
Let $G_X$ be the automorphism group of a quantum graph $F(X)=((F(A),F(m),F(u)),F(\Gamma))$, where $F: \Rep(G_X) \to \Hilb$ is the canonical fibre functor, $A$ is the generating object of $\Rep(G_X)$ and $m,u,\Gamma$ are the generating morphisms. Let $F':\Rep(G) \to \Hilb$ be any other fibre functor accessible from $F$ by a UPT. Then $F'(X):=((F'(A),F'(m),F'(u)),F'(\Gamma))$ is a quantum graph f.d. quantum isomorphic to $X$:
\begin{calign}\label{eq:fctrtoqgraph}
\includegraphics[scale=1]{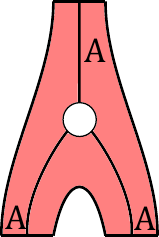}
&
\includegraphics[scale=1]{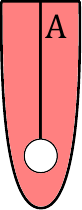}
&
\includegraphics[scale=1]{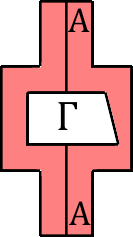}
\\
F'(m): F'(A) \otimes F'(A) \to F'(A)
&
F'(u): \mathbb{C} \to F'(A)
&
F'(\Gamma): F'(A) \to F'(A)
\end{calign}
All quantum graphs f.d. quantum isomorphic to $F(X)$ are obtained in this way.
\end{lemma}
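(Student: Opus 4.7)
The plan is to verify the three assertions of the lemma in turn, using Theorem~\ref{thm:redupt} as the main technical tool together with the observation that both the quantum graph axioms and the quantum isomorphism axioms are purely equational conditions on morphisms in a pivotal dagger category. For the first assertion, since $F'$ is a unitary monoidal functor, the defining equations of a special symmetric Frobenius monoid and the adjacency-matrix conditions~\eqref{eq:propadjacency} are preserved under $F'$ after inserting multiplicators and unitors and applying the coherence equations~\eqref{eq:psfctassoc}--\eqref{eq:psfctunital}. Hence $F'(X)$ is a quantum graph in $\Hilb$.

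For the second assertion, let $(\alpha, H): F \to F'$ be a UPT witnessing accessibility and set $P := \alpha_A$. Unitarity of $P$ is immediate from Definition~\ref{def:pntmon}. Each axiom of a f.d.\ quantum isomorphism in Definition~\ref{def:qgraphiso} will follow from a defining property of $\alpha$ restricted to the generators of the intertwiner spaces of $\Rep(G_X)$: the multiplication- and unit-intertwining axioms~\eqref{eq:quantumfunction2} from naturality~\eqref{eq:pntmonnat} applied to $m$ and $u$ combined with monoidality~\eqref{eq:pntmonmon}--\eqref{eq:pntmonmonunit}; the adjacency-intertwining axiom from naturality applied to $\Gamma$; and the dagger-transpose condition~\eqref{eq:quantumfunction} from the defining form~\eqref{eq:dualpnt} of the dual UPT combined with self-duality of $A$ in $\Rep(G_X)$ via the Frobenius cup and cap~\eqref{eq:cupcapfrob}.

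For the third assertion, given a quantum graph $Y = (B, \Lambda)$ and a f.d.\ quantum isomorphism $(P, H): F(X) \to Y$, the task is to exhibit an accessible fibre functor $F': \Rep(G_X) \to \Hilb$ with $F'(X) = Y$ in the sense of~\eqref{eq:fctrtoqgraph}, together with a UPT $\alpha: F \to F'$ having $\alpha_A = P$. Since $(G_X, A)$ is a compact matrix quantum group and the intertwiner spaces of $\Rep(G_X)$ are generated by $m, u, \Gamma$ (and the snakes for self-duality of $A$), one defines a candidate $F'$ on generators by $F'(A) := B$, $F'(m) := \Lambda_m$, $F'(u) := \Lambda_u$ and $F'(\Gamma) := \Lambda$. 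The fact that $Y$ is itself a quantum graph ensures that every relation satisfied by $m, u, \Gamma$ in $\Rep(G_X)$ is satisfied by their images in $\Hilb$, so the assignments extend consistently to a unitary monoidal functor. The quantum isomorphism axioms for $P$ then read precisely as the reduced naturality condition~\eqref{eq:reduceduptnat} on these generators, so by Theorem~\ref{thm:redupt} the unitary $P$ extends to a UPT $F \to F'$ as required.

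The main obstacle lies in the well-definedness of $F'$ on the whole of $\Rep(G_X)$ in the third step: defining $F'$ on tensor powers of $A$ and on morphisms built from the generators is straightforward, but one must verify that no additional relations hold in $\Rep(G_X)$ beyond those implied by the quantum graph axioms for $(A, m, u, \Gamma)$. The cleanest way to handle this is via a Tannaka--Krein argument: the concrete data $(B, \Lambda_m, \Lambda_u, \Lambda)$ subject to the quantum graph relations determines a compact matrix quantum group by Woronowicz reconstruction, and coincidence of its intertwiner spaces with those of $G_X$ (both are generated by the same abstract relations) ensures that these data define a fibre functor on the whole of $\Rep(G_X)$. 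Alternatively, semisimplicity of $\Rep(G_X)$ and splitting of dagger idempotents allows one to extend $F'$ to irreducibles appearing as summands of tensor powers of $A$ by picking isometries compatibly.
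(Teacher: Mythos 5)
There are two genuine gaps. First, in your opening step you claim that the symmetry axiom~\eqref{eq:frobsymmetry} is ``preserved under $F'$'' by coherence of the multiplicators and unitors. It is not: that equation involves the swap map of $\Hilb$, which is not an intertwiner in $\Rep(G_X)$ and is not part of the data a monoidal functor transports. The other quantum graph axioms are equations between composites of $m$, $u$, $\Gamma$ and their daggers, so they do push through $F'$ (with Lemma~\ref{lem:pushpast} for the Frobenius identity), but symmetry is an equation in $\Hilb$ about the image, and for an arbitrary fibre functor there is no reason it should hold. The paper's proof isolates exactly this point and proves symmetry of $F'(X)$ by invoking Theorem~\ref{thm:splitting} to write the accessible $F'$ explicitly as an idempotent splitting inside $H^* \otimes F(-) \otimes H$, and then performing a computation in the symmetric monoidal category $\Hilb$ (pulling one leg over the other and undoing twists in the $H$-wires) to reduce to symmetry of $F(X)$. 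Your argument as written would prove the symmetry of $F'(X)$ for \emph{any} fibre functor, accessible or not, which should already be a warning sign.

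Second, in your surjectivity step the well-definedness of the generators-and-relations functor $F'$ is not established. You correctly identify this as the main obstacle, but the proposed resolution --- that the intertwiner spaces of the Woronowicz reconstruction of $(B,\Lambda_m,\Lambda_u,\Lambda)$ coincide with those of $G_X$ ``because both are generated by the same abstract relations'' --- does not hold. The intertwiner spaces $\Hom_{\Rep(G_X)}(A^{\otimes m},A^{\otimes n})$ consist of all linear relations satisfied by the \emph{concrete} maps $F(m),F(u),F(\Gamma)$ in $\Hilb$, and these generally exceed the formal consequences of the quantum graph axioms; two quantum graphs satisfying the same abstract relations need not have monoidally equivalent quantum automorphism groups. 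The coincidence of the concrete intertwiner spaces of $G_X$ and $G_Y$ is a nontrivial consequence of the existence of the quantum isomorphism $(P,H)$, and this is precisely the content of the bi-Galois theorem~\cite[Thm.~4.5]{Brannan2019} that the paper cites: the f.d.\ quantum isomorphism makes $O(G_X,G_Y)$ a nonzero $O(G_X)$-Hopf-Galois object, whose associated fibre functor sends $(A,m,u,\Gamma)$ to $Y$ and is accessible by Theorem~\ref{thm:higherhopfgal} because $O(G_X,G_Y)$ admits a finite-dimensional $*$-representation. Your use of Theorem~\ref{thm:redupt} to extend $P$ to a UPT is fine once $F'$ exists, but the existence of $F'$ cannot be obtained without feeding the quantum isomorphism into the construction.
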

\begin{proof}
All axioms of a quantum graph except symmetry follow straightforwardly from unitarity of the functor $F'$ (we need Lemma~\ref{lem:pushpast} for the Frobenius axiom). For the symmetry condition~\eqref{eq:frobsymmetry}, we recall that any accessible fibre functor has the form given in Theorem~\eqref{thm:splitting}. We can then show symmetry of $F'(X)$ as follows:
\begin{calign}
\includegraphics[scale=.8]{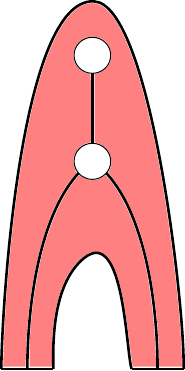}
~~=~~
\includegraphics[scale=.8]{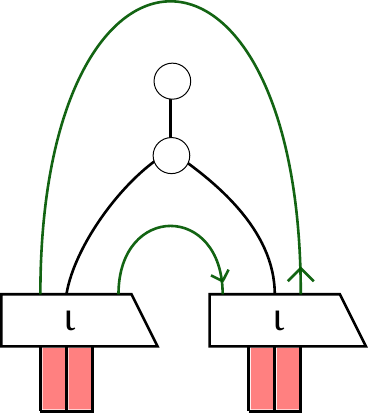}
~~=~~
\includegraphics[scale=.8]{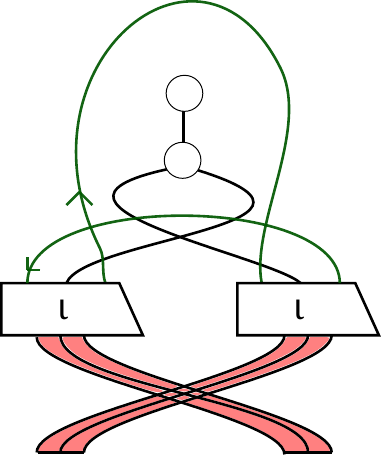}
~~=~~
\includegraphics[scale=.8]{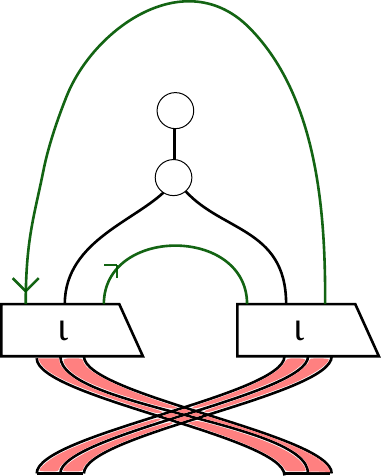}
~~=~~
\includegraphics[scale=.8]{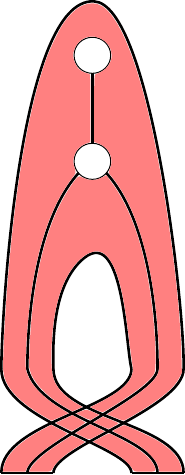}
\end{calign}
Here for the first equality we used the definition of the accessible fibre functor $F'$ (Theorem~\ref{thm:splitting}); for the second equality pulled one leg over the other and undid twists in the green wires; for the third equality we used symmetry of $F(X)$; and for the fourth equality we again used the definition of $F'$.

It follows that $F'(X)$ is a quantum graph. To see that $F'(X)$ is f.d. quantum isomorphic to $F(X)$, consider any UPT $\alpha:F \to F'$. Then its restriction $\alpha_A$ is a quantum isomorphism: the equations~\eqref{eq:quantumfunction2} follow from monoidality and naturality of $\alpha$, and the equation~\eqref{eq:quantumfunction} follows from unitarity of $\alpha$~\eqref{eq:dualpnt} and self-duality of $A$.
 
To see that all quantum graphs f.d. quantum isomorphic to $F(X)$ are thus obtained, we observe that a f.d. quantum isomorphism $P: Y \to X$ is an  f.d. $*$-representation of $O(G_X,G_Y)$, which is therefore nonzero. By~\cite[Thm. 4.5]{Brannan2019} $O(G_X,G_Y)$ is an $O(G_X)$-Hopf-Galois object and the associated fibre functor $\Rep(G_X)\to \Hilb$ takes $(A,m,u,\Gamma) \mapsto Y$. This fibre functor $F'$ is accessible from $F$ precisely because $O(G_X,G_Y)$ possesses a finite dimensional $*$-representation (Theorem~\ref{thm:higherhopfgal}).
\end{proof}
\ignore{
\begin{theorem}[{\cite{}}]
If $G_X$ and $G_Y$ are algebraically quantum isomorphic, the $*$-algebra $O(G_Y,G_X)$ is an $O(G_Y)$-Hopf-Galois object (Definition~\ref{}) with action $\alpha(p_{ij}) = \sum_k u_{ik} \otimes p_{kj}$.
\end{theorem}}
\ignore{
\begin{proposition}
Let $G_X$ be the automorphism group of a quantum graph $F(X)=(F(A),F(\Gamma))$, where $F: \Rep(G) \to \Hilb$ is the canonical fibre functor (drawn with a blue box). Let $F':\Rep(G) \to \Hilb$ be any other fibre functor (drawn with a red box). Then $F'(X):=(F'(A),F(m),F(u)),\Gamma)$ is a quantum graph:
\begin{calign}\label{eq:fctrtoqgraph}
\includegraphics[scale=1]{Figures/svg/qgraphs/fqgraph1.png}
&
\includegraphics[scale=1]{Figures/svg/qgraphs/fqgraph2.png}
&
\includegraphics[scale=1]{Figures/svg/qgraphs/fqgraph3.png}
\\
F'(m): F'(A) \otimes F'(A) \to F'(A)
&
F'(u): \mathbb{C} \to F'(A)
&
F'(\Gamma): F'(A) \to F'(A)
\end{calign}
The quantum graph $F'(X)$ is algebraically quantum isomorphic to $F(X)$. Moreover, all algebraically quantum isomorphic graphs are obtained in this way.
\end{proposition}
\begin{proof}
That the equations~\eqref{} define a quantum graph is clear by unitarity of $F'$ and manipulation of functorial boxes. (We require Lemma~\ref{lem:pushpast} for the Frobenius equation.)

We prove the second statement. Let $F': \Rep(G_X) \to \Hilb$ be a fibre functor and let $Y=(F'(A),F'(\Gamma))$ be the corresponding quantum graph~\eqref{}. We now show that $Y$ is algebraically quantum isomorphic to $X$. We will construct a nonzero unital $*$-algebra $Z$ and a $*$-homomorphism $F(A) \to F'(A) \otimes Z$ satisfying the conditions of Definition~\ref{}. We may then use universality of $O(G_Y,G_X)$ to obtain a surjective $*$-homomorphism $O(G_Y,G_X) \to Z$, implying that $O(G_Y,G_X)$ is also nonzero. 

By Construction~\ref{constr:fibtogal} there is an $O(G_X)$-Hopf-Galois object $Z$ relating $F'$ to the canonical fibre functor. Consider the map $U_A: F(A) \to F'(A) \otimes Z$ defined in~\eqref{}. That this map is a unital homomorphism satisfying $U_A \circ F(\Gamma) = (F'(\Gamma) \otimes \id_{Z}) \circ U_A$ follows immediately from naturality~\eqref{} and monoidality~\eqref{} of $U_A$. Unitarity of $U_A$ was also shown in Section~\ref{}. We will now show the remaining required feature of $U_A$, namely that it is involution-preserving. 
\item \emph{Involution-preserving.} Let $\ket{v}$ be some state of $F(A)$ and let $\{\ket{i}\}$ be an orthonormal basis of $F'(A)$. We show that $U_A \ket{v^*} = (U_A\ket{v})^*$:
\begin{calign}
\includegraphics[scale=.8]{Figures/svg/qgraphs/involpres1.png}
~~=~~
\includegraphics[scale=.8]{Figures/svg/qgraphs/involpres2.png}
~~=~~
\includegraphics[scale=.8]{Figures/svg/qgraphs/involpres3.png}\\
\includegraphics[scale=.8]{Figures/svg/qgraphs/involpres4.png}
~~=~~
\includegraphics[scale=.8]{Figures/svg/qgraphs/involpres5.png}\\
=~~
\includegraphics[scale=.8]{Figures/svg/qgraphs/involpres6.png}
~~=~~
\includegraphics[scale=.8]{Figures/svg/qgraphs/involpres7.png}
\end{calign}
It follows that $O(G_Y,G_X)$ is nonzero, and the quantum graphs $F(X),F'(X)$ are algebraically quantum isomorphic. 

Finally, that any algebraically quantum isomorphic graph is isomorphic to some $F'(X)$ follows from~\cite{}. Let $Y$ be a graph algebraically quantum isomorphic to $X$; then by~\cite{} there is a monoidal equivalence $\Rep(G_X) \to \Rep(G_Y)$ taking the object $A$ and generating morphisms $m,u,\Gamma$ in $\Rep(G_X)$ to those in $\Rep(G_Y)$. Composing this equivalence with the canonical fibre functor on $\Rep(G_Y)$ we obtain the desired fibre functor on $\Rep(G_X)$. 
\ignore{
Finally, we show that any algebraically quantum isomorphic graph can be thus obtained. Let $Y$ be a quantum graph algebraically quantum isomorphic to $X$ and let $O(G_X,G_Y)$ be the corresponding nonzero $O(G_X)$-Hopf-Galois object. This object defines a fibre functor $F':\Rep(G_X) \to \Hilb$ by Construction~\eqref{}, which defines a quantum graph $(F'(A),F'(\Gamma))$ by~\eqref{}. We show that this quantum graph is isomorphic to $Y$.
We consider the action of this fibre functor on $A$. $F'(A)$ is defined as the kernel of the following double arrow $A \otimes O(G_X,G_Y) \to A \otimes O(G_X) \otimes O(G_X,G_Y)$:
\begin{align*}
\rho \otimes \id_{O(G_X,G_Y)}: && \ket{i} \otimes p_{jk} \mapsto \sum_l \ket{l} \otimes u_{il} \otimes p_{jk}
\\
\id_A \otimes \alpha: && \ket{i} \otimes p_{jk} \mapsto \sum_l \ket{i} \otimes u_{jl} \otimes p_{lk}
\end{align*}
Clearly the equaliser for this double arrow is generated by the vectors $\sum_{i} \ket{i} \otimes p_{ij}$.
}
\end{proof}
}
\ignore{
\begin{definition}
Let $\textrm{QG}$ be the pivotal dagger category defined by generators and relations as follows.
\begin{itemize}
\item \emph{Objects}: $A^{\otimes n}$ for some generating object $A$, with $A^{0}:= \mathbbm{1}$. (I.e. the category is a PRO~\cite{}.) 
\item \emph{Morphisms}: Generated from  three morphisms $m:A \otimes A \to A$, $u: \mathbbm{1} \to A$, $\Gamma: A \to A$ by composition, monoidal product and dagger. These morphisms obey the following equations:
\begin{itemize}
\item $(A,m,u)$ obey the equations of a Frobenius algebra~\eqref{}. 
\item $(A,m,u,\Gamma)$ obey the equations of a quantum graph~\eqref{}. 
\end{itemize}
\item Pivotal structure: $A^{\otimes n}$ is self-dual with nested cup and cap~\eqref{}.
\end{itemize}
\end{definition}
\begin{lemma}
A quantum graph is precisely a strict monoidal functor $QG \to \Hilb$. 
\end{lemma}
\begin{definition}
The \emph{quantum automorphism group} $(G_{\Gamma},A)$ of a quantum graph $X=(A,\Gamma)$ is the compact matrix quantum group $(G,\Gamma)$, with fundamental representation $A$, obtained from the corresponding strict monoidal functor $U_{(A,\Gamma)}: QG \to \Hilb$ by Theorem~\ref{}. We write the CQG algebra corresponding to $G_{\Gamma}$ as $\mathcal{O}(G_{\Gamma})$.
\end{definition}
\begin{remark}
By Theorem~\ref{}, the intertwiner spaces $\Hom(A^{m},A^{n})$ in $\Rep(G)$ are precisely those in $\textrm{QG}$. Moreover, any fibre functor from $\Rep(G)$ defines a quantum graph. Moreover, any quantum graph is defined by a fibre functor from $\Rep(G)$ for some its quantum automorphism group $G$. 
\end{remark}
\noindent
The following definition appears to have the same extension as~\cite{}
\begin{definition}
\end{definition}
\begin{lemma}[{\cite{}}]
If two quantum graphs $X$ and $Y$ are algebraically quantum isomorphic then there exists a fibre functor $\Rep(G_{\Gamma}) \to \Hilb$ taking  to $Y$.
\end{lemma}
\noindent} 
\begin{lemma}\label{lem:qisosredupts}
Let $X=(A,\Gamma)$ be a quantum graph, let $G_X$ be its quantum automorphism group and let $F_1,F_2: \Rep(G_X) \to \Hilb$ be two fibre functors accessible from the canonical fibre functor. Then there is an isomorphism of categories between:
\begin{itemize}
\item UPTs $F_1 \to F_2$, and modifications.
\item Finite-dimensional quantum isomorphisms $F_1(X) \to F_2(X)$, and intertwiners. 
\end{itemize}
\end{lemma}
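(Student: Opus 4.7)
The plan is to apply Theorem~\ref{thm:redupt} to the compact matrix quantum group $(G_X, A)$ and then translate the resulting naturality conditions into the equations defining a finite-dimensional quantum isomorphism. By Theorem~\ref{thm:redupt}, restriction to the fundamental representation $A$ yields an isomorphism of categories between UPTs $F_1 \to F_2$ (with modifications) and reduced UPTs $F_1 \to F_2$ (with modifications); explicitly, a reduced UPT is a unitary $P = \tilde{\alpha}_A: F_1(A) \otimes H \to H \otimes F_2(A)$ natural with respect to every intertwiner $f: A^{\vec{x}} \to A^{\vec{y}}$ in $\Rep(G_X)$, and the modification condition~\eqref{eq:uptredmods} coincides verbatim with the intertwiner condition~\eqref{eq:intertwiner} for quantum isomorphisms.

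By the characterisation of $\Rep(G_X)$ recalled after the definition of the quantum automorphism group, the intertwiner spaces $\Hom_{\Rep(G_X)}(A^{\otimes m}, A^{\otimes n})$ are generated under composition, monoidal product, dagger, and linear combination by the three morphisms $m, u, \Gamma$, and $A$ is self-dual in $\Rep(G_X)$ with cup and cap of the Frobenius form~\eqref{eq:cupcapfrob}. Since naturality is preserved under each of these operations, naturality for every intertwiner reduces to naturality for the five generators $m, u, \Gamma, m^\dagger, u^\dagger$. Writing out~\eqref{eq:reduceduptnat} for each of these generators directly reproduces the five compatibility equations in Definition~\ref{def:qgraphiso}: naturality for $m^\dagger$ and $u^\dagger$ gives the first two displayed equations, naturality for $\Gamma$ gives the third, and naturality for $m, u$ gives the equations~\eqref{eq:quantumfunction2}.

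The remaining equation~\eqref{eq:quantumfunction} is the self-duality condition, relating $P^\dagger$ to $P$ via the Frobenius cups and caps of $A$ and $A'$. Its right-hand side is precisely the dual reduced UPT defined by~\eqref{eq:reduceduptdualdef} once $A$ and $A'$ are regarded as self-dual via~\eqref{eq:cupcapfrob}, so~\eqref{eq:quantumfunction} asserts that this dual coincides with $P^\dagger$, which is the UPT unitarity formula~\eqref{eq:dualpnt} specialised to the self-dual fundamental representation (compare Lemma~\ref{lem:reduceduptdual}). Putting the pieces together, the data of a reduced UPT on $A$ coincides with the data of a unitary $P$ satisfying the six equations of Definition~\ref{def:qgraphiso}, so restriction $\alpha \mapsto \alpha_A$ and its inverse via Theorem~\ref{thm:redupt} define the claimed isomorphism of categories on objects, while the coincidence of~\eqref{eq:uptredmods} and~\eqref{eq:intertwiner} yields the bijection on morphisms. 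The only real obstacle is bookkeeping: carefully matching each naturality square~\eqref{eq:reduceduptnat} for the generators against the correspondingly oriented diagram in Definition~\ref{def:qgraphiso}, and verifying that~\eqref{eq:quantumfunction} encodes precisely the Frobenius self-duality of $A$ rather than duplicating a condition already implied by the first five equations.
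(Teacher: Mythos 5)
Your proposal is correct and follows essentially the same route as the paper: reduce to Theorem~\ref{thm:redupt}, identify a finite-dimensional quantum isomorphism with a reduced UPT by checking naturality on the generators $m,u,\Gamma$ (and their daggers) of the intertwiner spaces of $\Rep(G_X)$, recognise~\eqref{eq:quantumfunction} as the self-duality of $A$ via the Frobenius cup and cap, and match modifications with intertwiners using that same self-duality. The paper's own proof is just a terser version of exactly this argument.
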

\begin{proof}
We observe that a finite-dimensional quantum isomorphism $F_1(X) \to F_2(X)$ is precisely a reduced UPT $F_1 \to F_2$. Indeed, $A$ is the generating object for $\Rep(G_X)$ and the quantum isomorphism is a unitary linear map of the right type. We must therefore show the naturality equations~\eqref{eq:reduceduptnat}. But these are given on generators precisely by~\eqref{eq:quantumfunction2}, which is sufficient. The equation~\eqref{eq:quantumfunction} follows from self-duality of $A$. 

We also observe that an intertwiner of finite-dimensional quantum graph isomorphisms is precisely a modification of reduced UPTs. This follows from self-duality of $A$, which reduces the equations~\eqref{eq:uptredmods} to~\eqref{eq:intertwiner}.

The result then follows immediately from Theorem~\ref{thm:redupt}.
\end{proof}
\noindent
We now state the main result of this section.
\begin{definition}
Let $X$ be a quantum graph. The 2-category $\QGraph_X$ is defined as follows:
\begin{itemize}
\item \emph{Objects}: Quantum graphs f.d. quantum isomorphic to $X$.
\item \emph{1-morphisms}: Finite-dimensional quantum isomorphisms. 
\item \emph{2-morphisms}: Intertwiners. 
\end{itemize}
\end{definition}
\begin{definition}
Let $X$ be a quantum graph. The 2-category $\Fun(\Rep(G_X,\Hilb))_F$ is defined as follows:
\begin{itemize}
\item \emph{Objects}: Fibre functors accessible from the canonical fibre functor $F:\Rep(G_X) \to \Hilb$.
\item \emph{1-morphisms}: UPTs. 
\item \emph{2-morphisms}: Modifications. 
\end{itemize}
\end{definition}
\begin{theorem}\label{thm:qgraphequiv}
Let $X$ be a quantum graph. Then there is a 2-equivalence $\Fun(\Rep(G_X),\Hilb)_F \simeq \QGraph_{X}$. Moreover, this 2-equivalence is an isomorphism on $\Hom$-categories. 
\end{theorem}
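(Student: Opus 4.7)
The plan is to package the preceding two lemmas (Lemma~\ref{lem:fctrtoqgraph} and Lemma~\ref{lem:qisosredupts}) into a 2-functor $\Phi: \Fun(\Rep(G_X),\Hilb)_F \to \QGraph_X$ and then verify the two claims: (i) $\Phi$ is an isomorphism on Hom-categories, and (ii) $\Phi$ is essentially surjective on objects. Together these yield a 2-equivalence, and (i) is precisely the ``isomorphism on Hom-categories'' clause.

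First I would \emph{define} $\Phi$ as follows. On objects, send an accessible fibre functor $F' : \Rep(G_X) \to \Hilb$ to the quantum graph $F'(X) = ((F'(A),F'(m),F'(u)), F'(\Gamma))$ defined by Lemma~\ref{lem:fctrtoqgraph}. On 1-morphisms, send a UPT $(\alpha,H) : F_1 \to F_2$ to its reduction $(\alpha_A, H)$ on the fundamental representation $A$; by the first paragraph of the proof of Lemma~\ref{lem:qisosredupts}, $\alpha_A$ is a finite-dimensional quantum isomorphism $F_1(X) \to F_2(X)$. On 2-morphisms, send a modification $f : \alpha \to \beta$ to its underlying linear map $f : H_\alpha \to H_\beta$; again by Lemma~\ref{lem:qisosredupts} this is an intertwiner of quantum isomorphisms. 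Functoriality with respect to vertical composition of modifications is immediate. Functoriality with respect to horizontal composition of UPTs and modifications follows from the fact that reduction to the fundamental representation commutes with $\otimes$: the components $(\alpha \otimes \beta)_A$ are defined by tensoring the components $\alpha_A, \beta_A$ precisely as in the definition of composition of f.d.\ quantum isomorphisms, and analogously for modifications.

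Next I would establish (i), the isomorphism on Hom-categories. Fix accessible fibre functors $F_1, F_2$. Lemma~\ref{lem:qisosredupts} is exactly the statement that the induced functor
\[
\Phi_{F_1,F_2} : \Hom_{\Fun(\Rep(G_X),\Hilb)_F}(F_1,F_2) \longrightarrow \Hom_{\QGraph_X}(F_1(X), F_2(X))
\]
sending $\alpha \mapsto \alpha_A$ and modifications to their underlying linear maps is an isomorphism of categories (i.e.\ bijective on objects and on morphisms, and strictly functorial). So (i) holds on the nose.

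Finally for (ii), essential surjectivity on objects: let $Y$ be any quantum graph f.d.\ quantum isomorphic to $X$. By the second part of Lemma~\ref{lem:fctrtoqgraph}, there exists an accessible fibre functor $F' : \Rep(G_X) \to \Hilb$ with $F'(X) = Y$, hence $\Phi(F') = Y$. So every object of $\QGraph_X$ lies in the strict image of $\Phi$; in particular, $\Phi$ is essentially surjective. Combining (i) and (ii) shows $\Phi$ is a 2-equivalence, and since (i) holds on Hom-categories the 2-equivalence is an isomorphism on Hom-categories, as claimed. I expect no real obstacle here: the hard analytic and algebraic content has been discharged in Lemmas~\ref{lem:fctrtoqgraph} and~\ref{lem:qisosredupts} and in Theorem~\ref{thm:redupt}; the present proof is a bookkeeping assembly of those results into a 2-categorical statement.
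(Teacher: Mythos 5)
Your proposal matches the paper's own proof essentially step for step: both define the same strict 2-functor (fibre functor $\mapsto F'(X)$, UPT $\mapsto \alpha_A$, modification $\mapsto$ its underlying map), deduce well-definedness and the Hom-category isomorphism from Lemma~\ref{lem:qisosredupts}, check compositionality by comparing the two notions of composition, and obtain essential surjectivity from the last statement of Lemma~\ref{lem:fctrtoqgraph}. The argument is correct and there is no substantive difference in approach.
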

\begin{proof}
We define a strict 2-functor $\Fun(\Rep(G_X,\Hilb)_F \to  \QGraph_X$ witnessing the equivalence as follows.
\begin{itemize}
\item \emph{On objects}: An accessible fibre functor $F': \Rep(G) \to \Hilb$ is taken to the quantum graph~\eqref{eq:fctrtoqgraph}. 
\item \emph{On 1-morphisms}: A UPT $\alpha: F' \to F''$ is taken to its component $\alpha_A$.
\item \emph{On 2-morphisms}: A modification $\alpha \to \beta$ is taken to an intertwiner $\alpha_A \to \beta_A$.
\end{itemize}
We first show that this is a well-defined strict 2-functor. That the quantum graph~\eqref{eq:fctrtoqgraph} is f.d. quantum isomorphic to $X$ was shown in Lemma~\ref{lem:fctrtoqgraph}, so the 2-functor is well-defined on objects. Well-definition on 1-morphisms and 2-morphisms follows from Lemma~\ref{lem:qisosredupts}. \ignore{We also saw in the proof of that proposition that the component $\alpha_A$ of a UPT obeys the equations (\ref{eq:quantumfunction2},~\ref{eq:quantumfunction}), so the pseudofunctor is well-defined on 1-morphisms. The intertwiner equation~\eqref{eq:intertwiner} is then just the equation for a modification~\eqref{eq:uptmod} on the component $\alpha_V$, so the pseudofunctor is well-defined on 2-morphisms.} Compositionality is clear by comparing the composition of quantum graph isomorphisms and intertwiners~\cite[Def. 3.18]{Musto2018} to that of UPTs and modifications. Essential surjectivity on objects follows immediately from the last statement of Lemma~\ref{lem:fctrtoqgraph}. 
\ignore{
For essential surjectivity on 1-morphisms we observe that a quantum graph isomorphism $(P,H):F'(X) \to F''(Y)$ is precisely a reduced UPT $F' \to F''$ in the sense of Definition~\ref{}. Indeed, the equation~\eqref{eq:quantumfunction} corresponds to self-duality of the object $A$, implying that the morphism $\eqref{}$ is precisely $(P,H)$ again and we only need to check the naturality condition~\eqref{} on morphisms $A^{\otimes m} \to A^{\otimes n}$; the equations~\eqref{eq:quantumfunction2} are precisely this naturality condition on the generators, which is sufficient.
By Theorem~\ref{} there is therefore a UPT $\alpha: F' \to F''$ which restricts to $(P,H)$ on $A$.
Finally, fullness and faithfulness on 2-morphisms follow from Proposition~\ref{}, since $A$ is self-dual and therefore only one of the modification conditions~\eqref{} is sufficient. 
}
That the equivalence is in fact an isomorphism on Hom-categories follows from Lemma~\ref{lem:qisosredupts}.
\end{proof}
\begin{remark}
By Theorem~\ref{thm:qgraphequiv}, we recover previous results about quantum graphs and finite-dimensional quantum isomorphisms. Indeed, one may immediately apply Theorem~\ref{thm:moritaclass} to obtain a classification of finite-dimensional quantum graph isomorphisms in terms of simple Frobenius monoids in the category of finite-dimensional $*$-representations of the quantum automorphism group algebra. This classification has already been shown in~\cite{Musto2019}. By Theorem~\ref{thm:higherhopfgal}, we also obtain a correspondence between quantum isomorphisms from a quantum graph and $*$-representations of Hopf-Galois objects for its quantum automorphism group, which has already been shown in~\cite{Brannan2019}.

Theorem~\ref{thm:qgraphequiv} additionally shows us that a finite-dimensional isomorphism from a quantum graph extends to a unitary pseudonatural transformation of fibre functors on the whole category of continuous unitary representations of the quantum automorphism group of the quantum graph. These transformations can be interpreted physically; this will be discussed in future work. This is a step towards finding an operational interpretation of quantum isomorphisms between quantum graphs, generalising the interpretation of quantum isomorphisms between classical graphs in terms of nonlocal games~\cite{Atserias2019}.
\end{remark}

\ignore{
\begin{definition}
Let $(G,u)$ be a compact quantum group. The \emph{category of intertwiner spaces} is the pivotal dagger category $\mathcal{I}_{(G,u)}$ defined as follows:
\begin{itemize}
\item Objects are tensor powers $u^{\vec{x}}$.
\item Morphisms $u^{\vec{x}} \to u^{\vec{y}}$ are those from $\Rep(G)$.
\end{itemize}
\end{definition}
The category of intertwiner spaces determines the compact quantum group~\cite{}.
\begin{definition}
Hi
\end{definition}
is determined by a so-called 
\begin{example}[Quantum permutation group]
A \emph{compact matrix quantum group} (CMQG) $(G,u)$ is a compact quantum group whose category of finite-dimensional unitary representations is generated by a single fundamental representation $u$ and its dual $u^*$. 
The \emph{quantum permutation group} $(S_n^{+},u)$ is a CMQG  whose category of representations is generated by four morphisms~\cite{liberation}:
\begin{calign}
\end{calign}
These morphisms obey the relations of a special symmetric dagger Frobenius algebra (Definition~\ref{}), as well as the following equation:
\begin{calign}
\end{calign}
The fundamental representation $u$ is self-dual with the Frobenius cup and cap~\eqref{}.

By Lemma~\ref{}, any fibre functor on $\Rep(S_n^{+},u)$ picks out an $n$-dimensional special symmetric Frobenius algebra (\F{}) in $\Hilb$:
\begin{calign}
\end{calign}
In the other direction, any $n$-dimensional $\F{}$ in $\Hilb$ defines a unitary fibre functor on $\Rep(S_n^{+})$ with trivial multiplicator and unitor. 

We consider UPTs between fibre functors on $\Rep(S_n^{+})$. By Theorem~\ref{}, these are unitary morphisms $\alpha_u: F(u) \otimes H \to H \otimes G(u)$ satisfying the naturality condition~\eqref{}. It is sufficient for naturality to be satisfied on the generators~\eqref{}:
\begin{calign}
\end{calign}
Unitarity of $\alpha_u$ and~\eqref{} imply that a UPT $F \to G$ is precisely a \emph{quantum bijection} between the associated $\F$s~\cite{}. In particular, if $H$ has dimension 1 then $\alpha_u$ is a $*$-isomorphism. 
\ignore{
In this case every fibre functor on $\Rep(S_n^{+})$ is dimension preserving; moreover, every fibre functor is related to every other by a UPT (since there is a quantum bijection $M_n(\mathbb{C})\cong [n^2]$ given by the generalised Pauli matrices as in Section~\ref{}). The pseudonatural transformations were classified Morita-theoretically in~\cite{}, which is a special case of Theorem~\ref{} here.}
\end{example}

\begin{example}[Quantum graph isomorphisms]
The \emph{quantum automorphism group} $(G_{\textrm{aut}}^{+}(\Gamma))$ of a graph $\Gamma$ is a CMQG whose category of representations is generated by five morphisms~\cite{chassaniol}:
\begin{calign}
\end{calign}
The first four morphisms obey the relations of a SSFA 
\end{example}
}

\ignore{First we define some notation: for a vector $\vec{x} \in \{\pm 1\}^n$, $n \in \mathbb{N}$, we write $u^{\vec{x}}$ for the object $u^{x_1} \otimes \dots \otimes u^{x_n}$, where we take $u^{-1}:= u^*$. We additionally define $u^0:= \mathbbm{1}$.
\begin{definition}
Let $\mathcal{C},\mathcal{D}$ be pivotal dagger 2-categories, and suppose that $\mathcal{C}$ is singly generated by $u: r \to s$. Let $F,G: \mathcal{C} \to \mathcal{D}$ be unitary monoidal functors. We define a \emph{reduced unitary pseudonatural transformation} $\tilde{\alpha}:F \to G$ to be:
\begin{itemize}
\item 1-morphisms $A_r: F(r) \to G(r)$ and $A_s: F(s) \to G(s)$ of $\mathcal{D}$ (drawn as a green wire).
\item A unitary 2-morphism $\tilde{\alpha}_u: F(u) \circ A_s \to A_r \circ G(u)$ (drawn as a white vertex) which is: 
\begin{itemize}
\item \emph{Natural}. For any 2-morphism $f: u^{\vec{x}} \to u^{\vec{y}}$ in $\mathcal{C}$:
\begin{calign}
\includegraphics[scale=1]{Figures/svg/fibonrepg/reducednaturality1.png}
~~=~~
\includegraphics[scale=1]{Figures/svg/fibonrepg/reducednaturality2.png}
\end{calign}
Here the empty blue and red rectangles represent manipulation of functorial boxes. For the purpose of drawing the diagram~\eqref{} we have supposed that $\vec{x},\vec{y}$ are both of the form $(1,-1,\dots,1,-1)$; it should be clear how to generalise to other $\vec{x},\vec{y}$ or to $u^{\pm}$ (e.g. if $f: u^{+} \to u^{\vec{y}}$, on the RHS of~\eqref{} the blue rectangle  will be a counitor, the red will be a unitor and there will be no white vertices). We also used the following definition:
\begin{calign}
\includegraphics[scale=1]{Figures/svg/fibonrepg/reduceddualdef1.png}
~~=~~
\includegraphics[scale=1]{Figures/svg/fibonrepg/reduceddualdef2.png}
\end{calign}
\end{itemize}
\end{itemize}
\end{definition}
\noindent
It is immediate that a UPT $\alpha: F_1 \to F_2$ in the sense of Definition~\ref{} restricts to a reduced UPT on $\alpha_u$. We now show that this correspondence is bijective: every reduced UPT induces a unique UPT.
\begin{lemma}
If $\tilde{\alpha}$ is a reduced UPT, then the morphism defined in~\eqref{} is unitary.
\end{lemma}
\begin{proof}
We show one of the two unitarity equations; the other is proved similarly.
\begin{calign}
\includegraphics[scale=1]{Figures/svg/fibonrepg/reduceddualunitary1.png}
~~=~~
\includegraphics[scale=1]{Figures/svg/fibonrepg/reduceddualunitary2.png}
~~=~~
\includegraphics[scale=1]{Figures/svg/fibonrepg/reduceddualunitary2pt5.png}
~~=~~
\includegraphics[scale=1]{Figures/svg/fibonrepg/reduceddualunitary3.png}
\end{calign}
Here the first equality is by~\eqref{}, the second is by naturality~\eqref{} of the reduced UPT for the morphism $\epsilon: u^{(-1,1)} \to u^{-}$, and the third is by unitarity of the unitor for $G$.
\end{proof}
\begin{theorem}
Let $\mathcal{C},\mathcal{D}$ be pivotal dagger 2-categories with $\mathcal{C}$ singly generated by $u: r \to s$, let $F,G: \mathcal{C} \to \mathcal{D}$ be unitary pseudofunctors, and let $\tilde{\alpha}: F \to G$ be a reduced UPT. There is a unique UPT $\alpha: F \to G$ which restricts to $\tilde{\alpha}$ on $\alpha_u$, where $\alpha_r = \tilde{\alpha}_r$, $\alpha_s = \tilde{\alpha}_s$, and the component $\alpha_X$ is defined as follows for any 1-morphism $X$ of $\mathcal{C}$:
\begin{calign}
\includegraphics[scale=1]{Figures/svg/fibonrepg/induceddef1.png}
~~:=~~
\includegraphics[scale=1]{Figures/svg/fibonrepg/induceddef2.png}
\end{calign}
Here $\{b_k: u^{\vec{x_k}} \to X\}$ is any family of reduction morphisms.
\end{theorem}
\begin{proof}
First we show that $\alpha$ is well-defined, i.e. it does not depend on the choice of reduction morphisms. Let $X$ be some 1-morphism of $\mathcal{C}$ and let $\{b_k: u^{\vec{x_k}} \to X\}$ and $\{c_l: u^{\vec{y_l}} \to X\}$ be two families of reduction morphisms. Then:
\begin{calign}
\includegraphics[scale=1]{Figures/svg/fibonrepg/inducedwelldef1.png}
~~=~~
\includegraphics[scale=1]{Figures/svg/fibonrepg/inducedwelldef2.png}
~~=~~
\includegraphics[scale=1]{Figures/svg/fibonrepg/inducedwelldef3.png}
~~=~~
\includegraphics[scale=1]{Figures/svg/fibonrepg/inducedwelldef4.png}
\end{calign}
Here the first equality is by $\sum_l c_lc_l^{\dagger} = \id_X$, the second is by naturality of the reduced UPT $\tilde{\alpha}$, and the third is by $\sum_k b_k b_k^{\dagger} = \id_X$. 

We now show that $\alpha$ is indeed a UPT.
\begin{itemize}
\item \emph{Naturality.} Let $\{b_k: u^{\vec{x}_k} \to X\}$ and $\{c_l: u^{\vec{y}_l} \to Y\}$ be reduction morphisms for 1-morphisms $X,Y$ of $\mathcal{C}$. Then we have~\eqref{} for any $f: X \to Y$:
\begin{calign}
\includegraphics[scale=1]{Figures/svg/fibonrepg/inducednat1.png}
~~=~~
\includegraphics[scale=1]{Figures/svg/fibonrepg/inducednat2.png}
~~=~~
\includegraphics[scale=1]{Figures/svg/fibonrepg/inducednat3.png}
~~=~~
\includegraphics[scale=1]{Figures/svg/fibonrepg/inducednat4.png}
\end{calign}
\item \emph{Monoidality.}
\begin{itemize}
\item Let $X,Y$ be any objects in $\mathcal{C}$, and pick some reduction morphisms $\{b_k: u^{\vec{x}_k} \to X\}, \{c_l: u^{\vec{y}_l} \to Y\}$. It is clear that $\{b_k \otimes c_l: u^{\vec{x}_k} \otimes u^{\vec{y}_l} \to X \otimes Y\}$ are reduction morphisms for $X \otimes Y$. Now~\eqref{} immediately follows from coherence for pseudofunctors and naturality of the multiplicator:
\begin{calign}
\includegraphics[scale=1]{Figures/svg/fibonrepg/inducedmon1.png}
~~=~~
\includegraphics[scale=1]{Figures/svg/fibonrepg/inducedmon2.png}
\end{calign}
\item The equations~\eqref{} are precisely the definition~\eqref{} for the identity 1-morphisms $\mathbbm{1}_{r/s}$ with reduction morphisms $\{\id_{\mathbbm{1}_{r/s}}: \mathbbm{1} \to u^{\pm}\}$.
\end{itemize}
\item \emph{Unitarity.} We show one of the unitarity equations; the other is proved similarly. For any 1-morphism $X$ of $\mathcal{C}$:
\begin{calign}
\includegraphics[scale=1]{Figures/svg/fibonrepg/inducedunitary1.png}
~~=~~
\includegraphics[scale=1]{Figures/svg/fibonrepg/inducedunitary2.png}
~~=~~
\includegraphics[scale=1]{Figures/svg/fibonrepg/inducedunitary3.png}\\
~~=~~
\includegraphics[scale=1]{Figures/svg/fibonrepg/inducedunitary4.png}
~~=~~
\includegraphics[scale=1]{Figures/svg/fibonrepg/inducedunitary5.png}
\end{calign}
\end{itemize}
Here the first equation is by naturality for $\tilde{\alpha}$, the second is by $\sum_k b_k b_k^{\dagger} = \id_X$, the third is by unitarity of $\alpha_u$ and Lemma~\ref{}, and the fourth is by coherence for pseudofunctors and $\sum_l b_l b_l^{\dagger} = \id_X$.

Uniqueness follows from the fact that restriction is a left inverse to induction, i.e. $\alpha_u = \tilde{\alpha}_u$.
\end{proof}

\subsubsection{Quantum permutations and graph isomorphisms}
\begin{example}[Quantum permutation group]
A \emph{compact matrix quantum group} (CMQG) $(G,u)$ is a compact quantum group whose category of finite-dimensional unitary representations is generated by a single fundamental representation $u$ and its dual $u^*$. 
The \emph{quantum permutation group} $(S_n^{+},u)$ is a CMQG  whose category of representations is generated by four morphisms~\cite{liberation}:
\begin{calign}
\end{calign}
These morphisms obey the relations of a special symmetric dagger Frobenius algebra (Definition~\ref{}), as well as the following equation:
\begin{calign}
\end{calign}
The fundamental representation $u$ is self-dual with the Frobenius cup and cap~\eqref{}.

By Lemma~\ref{}, any fibre functor on $\Rep(S_n^{+},u)$ picks out an $n$-dimensional special symmetric Frobenius algebra (\F{}) in $\Hilb$:
\begin{calign}
\end{calign}
In the other direction, any $n$-dimensional $\F{}$ in $\Hilb$ defines a unitary fibre functor on $\Rep(S_n^{+})$ with trivial multiplicator and unitor. 

We consider UPTs between fibre functors on $\Rep(S_n^{+})$. By Theorem~\ref{}, these are unitary morphisms $\alpha_u: F(u) \otimes H \to H \otimes G(u)$ satisfying the naturality condition~\eqref{}. It is sufficient for naturality to be satisfied on the generators~\eqref{}:
\begin{calign}
\end{calign}
Unitarity of $\alpha_u$ and~\eqref{} imply that a UPT $F \to G$ is precisely a \emph{quantum bijection} between the associated $\F$s~\cite{}. In particular, if $H$ has dimension 1 then $\alpha_u$ is a $*$-isomorphism. 
\ignore{
In this case every fibre functor on $\Rep(S_n^{+})$ is dimension preserving; moreover, every fibre functor is related to every other by a UPT (since there is a quantum bijection $M_n(\mathbb{C})\cong [n^2]$ given by the generalised Pauli matrices as in Section~\ref{}). The pseudonatural transformations were classified Morita-theoretically in~\cite{}, which is a special case of Theorem~\ref{} here.}
\end{example}

\begin{example}[Quantum graph isomorphisms]
The \emph{quantum automorphism group} $(G_{\textrm{aut}}^{+}(\Gamma))$ of a graph $\Gamma$ is a CMQG whose category of representations is generated by five morphisms~\cite{chassaniol}:
\begin{calign}
\end{calign}
The first four morphisms obey the relations of a SSFA 
\end{example}
}

\bibliographystyle{plainurl}


\section{Appendix 1: Rigid $C^*$-tensor categories are pivotal dagger categories}\label{sec:app}
Here we complete the proof of Theorem~\ref{thm:c*tenspivdag}, which states that a $C^*$-tensor category equipped with standard solutions to the conjugacy equations is a pivotal dagger category. We first recall the relevant results from~\cite{Neshveyev2013}.

\begin{definition}[{\cite[Theorem 2.2.16]{Neshveyev2013}}]
Let $\mathcal{C}$ be a $C^*$-tensor category with conjugates and let $X$ be some object. Then a \emph{standard solution of the conjugate equations} for $X$ is a choice of $[X^*,R,\bar{R}]$ as in Definition~\ref{def:conjeqns} such that 
\begin{align}
R^{\dagger} (\id_{X^*} \otimes f) R = \bar{R}^{\dagger} (f \otimes \id_{X^*}) \bar{R}
\end{align}
for every morphism $f: X \to X$.
\end{definition}
\ignore{
In a $C^*$-tensor category with conjugates a  standard solution of the conjugate equations may be picked for any object~\cite[Definition 2.2.14]{Neshveyev2013}.}
\begin{proposition}[{\cite[Proposition 2.2.15]{Neshveyev2013}}]\label{prop:standsolsunitary}
Let $[X^*,R,\bar{R}]$ and $[X^*{}',R',\bar{R}']$ be two standard solutions of the conjugate equations for an object $X$. Then there exists a unitary $T: X^* \to X^*{}'$ such that $R' = (T \otimes \id_R) R$ and $\bar{R}' = (\id_R \otimes T) \bar{R}$.
\end{proposition}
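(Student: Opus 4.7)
The plan is to invoke Proposition~\ref{prop:relateduals} to extract an invertible morphism $T: X^* \to X^*{}'$ satisfying the required intertwining identities, and then show that standardness of both solutions forces $T$ to be unitary automatically.

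Applying Proposition~\ref{prop:relateduals} to the right duals $[X^*, R, \bar{R}^\dagger]$ and $[X^*{}', R', \bar{R}'{}^\dagger]$ of $X$ yields a unique invertible $T: X^* \to X^*{}'$ with $R' = (T \otimes \id_X) R$ and $\bar{R}'{}^\dagger = \bar{R}^\dagger(\id_X \otimes T^{-1})$; taking daggers of the second relation gives $\bar{R}' = (\id_X \otimes (T^{-1})^\dagger)\bar{R}$. It therefore suffices to show $(T^{-1})^\dagger = T$, equivalently $P := T^\dagger T = \id_{X^*}$; this would also yield the stated formula $\bar{R}' = (\id_X \otimes T)\bar{R}$.

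The main step is to transport the standardness condition for the primed data along $T$. A standard solution $(R,\bar{R})$ for $X$ yields the companion standardness $\bar{R}^\dagger(\id_X \otimes h)\bar{R} = R^\dagger(h \otimes \id_X)R$ for all $h \in \End(X^*)$ by viewing $[X,\bar{R},R^\dagger]$ as a standard right dual for $X^*$, and analogously for the primed data. Substituting $g = ThT^\dagger$ for $h \in \End(X^*)$ into the primed companion condition
\[
\bar{R}'{}^\dagger(\id_X \otimes g)\bar{R}' = R'{}^\dagger(g \otimes \id_X)R'
\]
and unwinding the expressions for $R',\bar{R}'$ via $T$, the left-hand side collapses to $\bar{R}^\dagger(\id_X \otimes h)\bar{R}$ while the right-hand side becomes $R^\dagger(PhP \otimes \id_X)R$. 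Comparing with the unprimed companion condition $\bar{R}^\dagger(\id_X \otimes h)\bar{R} = R^\dagger(h \otimes \id_X)R$ gives $\tau(PhP) = \tau(h)$ for all $h \in \End(X^*)$, where $\tau(h) := R^\dagger(h \otimes \id_X)R$ is the standard trace on $\End(X^*)$.

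To conclude, cyclicity of the standard trace — a consequence of sphericity of $C^*$-tensor categories with standard solutions, \cite[Thm 2.2.16]{Neshveyev2013} — gives $\tau(PhP) = \tau(P^2 h)$, so $\tau((P^2 - \id_{X^*})h) = 0$ for every $h$. Taking $h = P^2 - \id_{X^*}$, self-adjointness and faithfulness of $\tau$ as a positive trace on the $C^*$-algebra $\End(X^*)$ forces $(P^2 - \id_{X^*})^2 = 0$, hence $P^2 = \id_{X^*}$; positivity and invertibility of $P$ then yield $P = \id_{X^*}$, so $T$ is unitary. The main obstacle is the bookkeeping in the transport step — correctly placing $T$ versus $T^{-1}$ and $T^\dagger$ versus $(T^{-1})^\dagger$, and identifying that exactly $P$ and not, say, $P^{-1}$ appears symmetrically on both sides of $h$ — but once this is set up, the conclusion reduces to a clean application of cyclicity and faithfulness of the standard trace.
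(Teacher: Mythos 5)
The paper offers no proof of this statement; it is imported verbatim from \cite[Prop.\ 2.2.15]{Neshveyev2013}, so there is nothing internal to compare against. Your argument is correct and is essentially the standard one: Proposition~\ref{prop:relateduals} gives the invertible $T$ with $R' = (T \otimes \id_X)R$ and $(\bar{R}')^{\dagger} = \bar{R}^{\dagger}(\id_X \otimes T^{-1})$, the transport computation correctly yields $\tau(h) = \tau(PhP)$ for $P = T^{\dagger}T$ and $\tau(h) = R^{\dagger}(h\otimes\id_X)R$, and the endgame (faithfulness and positivity of $\tau$, then positivity of $P$) is sound.

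Two of the facts you import deserve more care than you give them. First, the ``companion standardness'' on $\End(X^*)$ does not follow formally from the displayed tracial condition on $\End(X)$: that $[X,\bar{R},R^{\dagger}]$ is a right dual of $X^*$ is automatic, but that it is a \emph{standard} one is exactly the content to be justified, either via the concrete direct-sum description of standard solutions in \cite{Neshveyev2013} or via a transpose argument exchanging the left/right traces of $X$ and $X^*$. Second, deriving cyclicity of $\tau$ from ``sphericity'' risks circularity in this paper's logical order: the pivotal (hence spherical) structure is \emph{constructed from} Proposition~\ref{prop:standsolsunitary} in the Appendix, so traciality must instead be taken directly from \cite[Thm 2.2.16]{Neshveyev2013}, whose proof does not pass through the present proposition. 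Alternatively, you can bypass cyclicity entirely: iterating $\tau(h)=\tau(PhP)$ on $h = \id, P^2, P^4, \dots$ gives $\tau(P^{2n}) = \tau(\id)$ for all $n$, and writing $P = \sum_\lambda \lambda\, e_\lambda$ spectrally in the finite-dimensional $C^*$-algebra $\End(X^*)$, faithfulness of $\tau$ (so $\tau(e_\lambda)>0$) forces every eigenvalue $\lambda$ to equal $1$, i.e.\ $P = \id$. With either repair the proof is complete.
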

\begin{lemma}[{\cite[Proof of Theorem 2.2.18]{Neshveyev2013}}]\label{lem:tensorstandsols}
Let $(R_X,\bar{R}_X)$ be a standard solution of the conjugate equations for $X$, and $(R_Y,\bar{R}_Y)$ be a standard solution of the conjugate equations for $Y$. Then the following choice of $R$ and $\bar{R}$ are a standard solution of the conjugate equations for $X \otimes Y$:
\begin{align}
\label{eq:tensorprodconj}
R = (\id_{Y^*} \otimes R_X \otimes \id_Y) R_Y
&&
\bar{R} = (\id_{X} \otimes \bar{R}_Y \otimes \id_{X^*}) \bar{R}_X
\end{align}
\end{lemma}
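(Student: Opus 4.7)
My plan has two steps, corresponding to the two requirements: that $(R,\bar R)$ is a solution of the conjugate equations, and that it is \emph{standard}.

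First, I would dispatch the conjugate equations by applying Proposition~\ref{prop:nestedduals}. Since $[X^*, R_X, \bar R_X^\dagger]$ and $[Y^*, R_Y, \bar R_Y^\dagger]$ are right duals to $X$ and $Y$ respectively (this is just the definition of a solution to the conjugate equations), Proposition~\ref{prop:nestedduals} gives a right dual $[Y^*\otimes X^*, \eta_{X\otimes Y}, \epsilon_{X\otimes Y}]$ for $X\otimes Y$ with
\[
\eta_{X\otimes Y} = (\id_{Y^*}\otimes R_X\otimes \id_Y)R_Y = R, \qquad \epsilon_{X\otimes Y} = \bar R_X^\dagger(\id_X\otimes \bar R_Y^\dagger\otimes \id_{X^*}) = \bar R^\dagger,
\]
where the last equality uses that the dagger of the defining expression for $\bar R$ is precisely $\epsilon_{X\otimes Y}$. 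Hence $[Y^*\otimes X^*, R, \bar R^\dagger]$ is a right dual of $X\otimes Y$, i.e.\ $(R,\bar R)$ solves the conjugate equations.

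Second, I would verify standardness: for every $f\colon X\otimes Y\to X\otimes Y$, I must show
\[
R^\dagger(\id_{Y^*\otimes X^*}\otimes f)R \;=\; \bar R^\dagger(f\otimes \id_{Y^*\otimes X^*})\bar R.
\]
The plan is to massage both sides into the single common expression
\[
T_f := (R_X^\dagger\otimes \bar R_Y^\dagger)(\id_{X^*}\otimes f\otimes \id_{Y^*})(R_X\otimes \bar R_Y).
\]
For the left-hand side, expanding $R$ and using the interchange law rewrites it as $R_Y^\dagger(\id_{Y^*}\otimes \beta_f)R_Y$, where $\beta_f = (R_X^\dagger\otimes\id_Y)(\id_{X^*}\otimes f)(R_X\otimes\id_Y)\colon Y\to Y$. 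Standardness of $(R_Y,\bar R_Y)$ applied to the endomorphism $\beta_f$ gives $R_Y^\dagger(\id_{Y^*}\otimes \beta_f)R_Y = \bar R_Y^\dagger(\beta_f\otimes \id_{Y^*})\bar R_Y$, and one more application of bifunctoriality shows this equals $T_f$.

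Symmetrically, expanding $\bar R$ on the right-hand side yields $\bar R_X^\dagger(\delta_f\otimes\id_{X^*})\bar R_X$, where $\delta_f = (\id_X\otimes \bar R_Y^\dagger)(f\otimes \id_{Y^*})(\id_X\otimes \bar R_Y)\colon X\to X$. Standardness of $(R_X,\bar R_X)$ applied to $\delta_f$ together with bifunctoriality rewrites this again as $T_f$. Both sides collapse to $T_f$, which proves standardness. The whole argument is essentially routine; the only real pitfall is bookkeeping the tensor factors when converting between nested and $\otimes$-parallel forms, which is transparent in the graphical calculus (both sides are just $f$ with its $X$-leg closed using $R_X$ and its $Y$-leg closed using $\bar R_Y$) but mildly fiddly algebraically.
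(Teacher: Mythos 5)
Your proof is correct, and it is essentially the argument behind the cited source: the paper itself states this lemma without proof (deferring to Neshveyev--Tuset), so there is nothing internal to compare against. Your two steps --- Proposition~\ref{prop:nestedduals} for the conjugate equations, then reducing both traces to the common expression $(R_X^\dagger\otimes \bar R_Y^\dagger)(\id_{X^*}\otimes f\otimes \id_{Y^*})(R_X\otimes \bar R_Y)$ via the interchange law and standardness of the two factors --- is exactly the standard verification, and the type-checking of $\beta_f$, $\delta_f$ and the collapses to $T_f$ all go through.
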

\noindent
The following additional result is clear.
\begin{lemma}\label{lem:idstandsols}
The morphisms $R_{\mathbbm{1}}:= \id_{\mathbbm{1}}: \mathbbm{1} \to \mathbbm{1} \otimes \mathbbm{1}$ and $\bar{R}_{\mathbbm{1}}:= \id_{\mathbbm{1}}: \mathbbm{1} \to \mathbbm{1} \otimes \mathbbm{1}$ are standard solutions to the conjugacy equations for the object $\mathbbm{1}$.
\end{lemma}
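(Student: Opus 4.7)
The plan is to verify the two conditions that make $R_{\mathbbm{1}} = \id_{\mathbbm{1}}$ and $\bar{R}_{\mathbbm{1}} = \id_{\mathbbm{1}}$ a standard solution: first, that they are solutions to the conjugate equations at all (i.e., that $[\mathbbm{1}, R_{\mathbbm{1}}, \bar{R}_{\mathbbm{1}}^{\dagger}] = [\mathbbm{1}, \id_{\mathbbm{1}}, \id_{\mathbbm{1}}^{\dagger}]$ is a right dual for $\mathbbm{1}$ in the sense of Definition~\ref{def:conjeqns}), and second, that they satisfy the trace-matching standardness condition.

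For the first point, I would simply unpack the snake equations~\eqref{eq:rightsnakes}. Since $\mathbbm{1} \otimes \mathbbm{1} = \mathbbm{1}$ in a strict monoidal category and all the morphisms involved are identities on $\mathbbm{1}$, both snake composites reduce to $\id_{\mathbbm{1}} \circ \id_{\mathbbm{1}} = \id_{\mathbbm{1}}$, so the conditions hold tautologically. (This already appeared implicitly as the ``$\mathbbm{1}$'' part of Proposition~\ref{prop:nestedduals}.)

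For the second point, I need to check that for every $f: \mathbbm{1} \to \mathbbm{1}$,
\[
R_{\mathbbm{1}}^{\dagger}(\id_{\mathbbm{1}} \otimes f) R_{\mathbbm{1}} = \bar{R}_{\mathbbm{1}}^{\dagger}(f \otimes \id_{\mathbbm{1}}) \bar{R}_{\mathbbm{1}}.
\]
Substituting $R_{\mathbbm{1}} = \bar{R}_{\mathbbm{1}} = \id_{\mathbbm{1}}$ and using that $\id_{\mathbbm{1}} \otimes f = f = f \otimes \id_{\mathbbm{1}}$ (again using strictness and $\mathbbm{1} \otimes \mathbbm{1} = \mathbbm{1}$), both sides reduce immediately to $f$, so the equation holds.

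There is essentially no obstacle here; the lemma is a triviality following from strictness and the fact that $\mathbbm{1}$ is its own tensor unit. The reason to record it as a separate lemma is that it is needed in conjunction with Lemma~\ref{lem:tensorstandsols} in the proof that the pivotal natural isomorphism $\iota$ furnished by Theorem~\ref{thm:c*tenspivdag} is monoidal, specifically to identify the component $\iota_{\mathbbm{1}}$ with $\id_{\mathbbm{1}}$.
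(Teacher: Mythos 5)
Your verification is correct and is exactly the routine check the paper has in mind: the paper offers no proof at all, merely asserting the lemma is ``clear,'' and your unpacking of the snake equations and of the standardness condition $R_{\mathbbm{1}}^{\dagger}(\id_{\mathbbm{1}} \otimes f)R_{\mathbbm{1}} = \bar{R}_{\mathbbm{1}}^{\dagger}(f \otimes \id_{\mathbbm{1}})\bar{R}_{\mathbbm{1}}$ (both sides reducing to $f$ by strictness) is precisely the justification being omitted. Your closing remark about the lemma's role --- feeding into Lemma~\ref{lem:tensorstandsols} and the monoidality of $\iota$, with $\iota_{\mathbbm{1}}$ identified via the unitor --- also matches how the paper uses it.
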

\noindent
In a $C^*$-tensor category $\mathcal{C}$ where standard solutions to the conjugate equations have been chosen, one may consider the usual contravariant right duals functor. 
\begin{theorem}[{\cite[Theorem 2.2.21]{Neshveyev2013}}]\label{thm:natiso}
Where standard solutions to the conjugate equations have been chosen, the right duals functor is unitary and its square is naturally isomorphic to the identity functor.
\end{theorem}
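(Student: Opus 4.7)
The plan is to establish Theorem~\ref{thm:natiso} in two independent parts: unitarity of the right duals functor $( - )^*$ on morphisms, and the construction of a unitary natural isomorphism $\iota: ( - )^{**} \Rightarrow \id$. Throughout, I would work in the graphical calculus of Definition~\ref{def:duals}, bending wires using the chosen standard solutions $(R_X, \bar{R}_X)$.

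First I would address unitarity of the functor $( - )^*$. On morphisms this is the statement $(f^\dagger)^T = (f^T)^\dagger$ for every $f:X\to Y$. Expanding the left-hand side directly via~\eqref{eq:rtranspose} and taking the dagger of the right-hand side — which reflects cups to caps — reduces both expressions to compositions of $R_X, R_Y, \bar R_X, \bar R_Y$ and their adjoints. Equality then follows from the snake equations together with the defining property of standard solutions, namely that $R^\dagger(\id_{X^*}\otimes f)R = \bar R^\dagger(f\otimes \id_{X^*})\bar R$ for every endomorphism $f$. This is a routine graphical check.

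Next I would construct $\iota_X: X^{**}\to X$. The key observation is that by merely swapping the roles of $R_X$ and $\bar R_X$, the triple $[X, \bar R_X, R_X^\dagger]$ is a right dual for $X^*$. The chosen standard solution $(R_{X^*}, \bar R_{X^*})$ also exhibits $X^{**}$ as a right dual for $X^*$. By Proposition~\ref{prop:relateduals} there is a unique isomorphism $\iota_X : X^{**}\to X$ compatible with the two cup-and-cap pairs. To upgrade this to a unitary, I would verify that the swapped solution $(\bar R_X, R_X)$ is itself a \emph{standard} solution for $X^*$: this comes down to the identity $\bar R_X^\dagger(\id_X \otimes g)\bar R_X = R_X^\dagger(g\otimes \id_X)R_X$ for every $g: X^* \to X^*$, which is precisely standardness for $X$ applied to the transpose $g^T: X \to X$ (using the bijectivity of transposition on $\End(X)$). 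Once both dualities are standard, Proposition~\ref{prop:standsolsunitary} gives the unitarity of $\iota_X$ for free. Naturality of $\iota$ in $X$ then reduces to verifying $\iota_Y \circ f^{TT} = f \circ \iota_X$, which is a diagrammatic computation using the snake equations twice and the defining relation~\eqref{eq:relateduals} for $\iota$.

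The main obstacle I anticipate is verifying that the swapped pair $(\bar R_X, R_X)$ is genuinely a standard solution. This is the only step that truly uses the strength of the standardness axiom rather than just the snake and coherence equations, and it is where the $C^*$ structure (via the positivity enforced by standardness) is quietly doing the work. Everything else is bookkeeping in the graphical calculus, and once this point is settled the unitarity and naturality assertions follow without further subtlety.
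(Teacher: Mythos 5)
Your construction of the natural isomorphism is exactly the paper's: both $(\bar R_X,R_X)$ and $(R_{X^*},\bar R_{X^*})$ are standard solutions of the conjugate equations for $X^*$, so Proposition~\ref{prop:standsolsunitary} produces the unitary $\iota_X$. You in fact go further than the paper, which (following~\cite{Neshveyev2013}) merely asserts that the swapped pair is standard; your justification of that assertion is correct, since every $g \in \End(X^*)$ is $f^T$ for a unique $f \in \End(X)$, the closed loops $\bar R_X^{\dagger}(\id_X \otimes f^T)\bar R_X$ and $R_X^{\dagger}(f^T \otimes \id_X)R_X$ untwist via the snake equations to $\bar R_X^{\dagger}(f \otimes \id_{X^*})\bar R_X$ and $R_X^{\dagger}(\id_{X^*} \otimes f)R_X$ respectively, and standardness for $X$ identifies the two. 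Note, however, that this step is pure bookkeeping: contrary to your closing paragraph, no positivity is consumed here.

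The genuine gap is in the first part, the unitarity of the duals functor. The identity $(f^{\dagger})^T = (f^T)^{\dagger}$ for $f:X \to Y$ is \emph{not} a routine graphical check from the snake equations plus the displayed standardness identity: that identity concerns endomorphisms of a single object and produces scalars, whereas here you need an equality of morphisms $X^* \to Y^*$ assembled from the standard solutions of two different objects. As a sanity check in $\Hilb$, take the non-standard solution $R = \sum_i \lambda_i\, \bar e_i \otimes e_i$, $\bar R = \sum_i \lambda_i^{-1} e_i \otimes \bar e_i$ with $\lambda_i > 0$: the snake equations hold, yet the matrix entries of $(f^T)^{\dagger}$ and $(f^{\dagger})^T$ differ by factors $\lambda_i^2/\lambda_j^2$, so any proof must use standardness at full strength and cannot be a purely topological manipulation. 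The route taken in~\cite{Neshveyev2013} is to pass to $X \oplus Y$, use compatibility of standard solutions with direct sums to convert the single-object trace identity into $\Tr\bigl(h^T (f^T)^{\dagger}\bigr) = \Tr\bigl(h^T (f^{\dagger})^T\bigr)$ for all $h: X \to Y$, and then invoke faithfulness of the categorical trace --- a consequence of the $C^*$-positivity axioms --- to upgrade this family of scalar identities to an equality of morphisms. So the positivity you flag is indeed doing quiet work, but in this step rather than in the standardness of the swapped pair; as written, your first part does not go through.
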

\begin{proof}
The definition of the natural isomorphism is important for our considerations. For any object $X$, both $(\bar{R}_{X},R_{X})$ and $(R_{X^*},\bar{R}_{X^*})$ are standard solutions to the conjugate equations for $X^*$. Therefore, by Proposition~\ref{prop:standsolsunitary}, we obtain a unitary $\iota_X: X^{**} \to X$ such that:
\begin{align}
\label{eq:iota}
\bar{R}_X = (\iota_X \otimes \id_{X^*}) R_{X^*}
&&
R_{X}^{\dagger} = \bar{R}_{X^*}^{\dagger} (\id_{X^*} \otimes \iota_X^{\dagger})
\end{align}
These unitaries $\{\iota_X\}$ are the components of the natural isomorphism from the double duals functor to the identity functor.
\end{proof}
\noindent
It remains only to show that the unitary natural isomorphism $\iota$ is monoidal. The double duals functor may be given a monoidal structure as follows. First we observe that, since~\eqref{eq:tensorprodconj} are a standard solution to the conjugacy equations for $X \otimes Y$, by Proposition~\ref{prop:standsolsunitary} there exists a unitary $\mu_{X,Y}: Y^* \otimes X^* \to (X \otimes Y)^*$ such that:
\begin{calign}\label{eq:multiplicators}
\includegraphics[scale=.5]{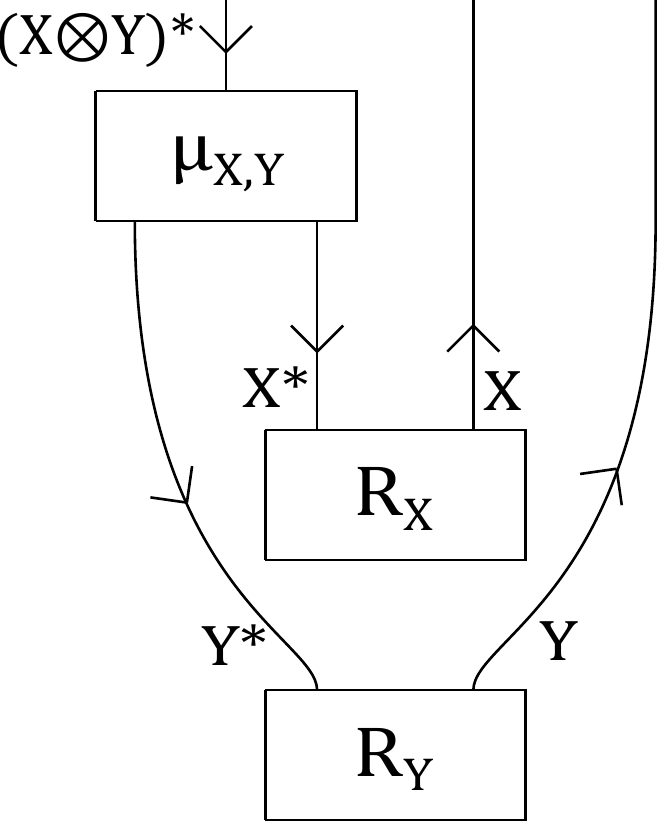}
=
\includegraphics[scale=.5]{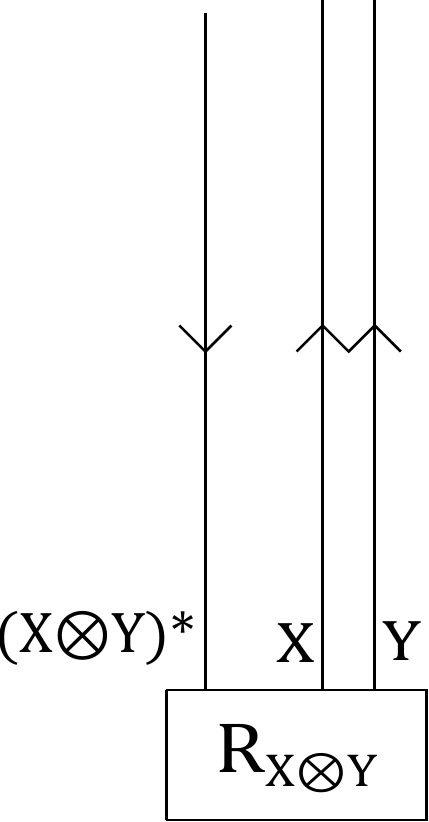}
&&
\includegraphics[scale=.5]{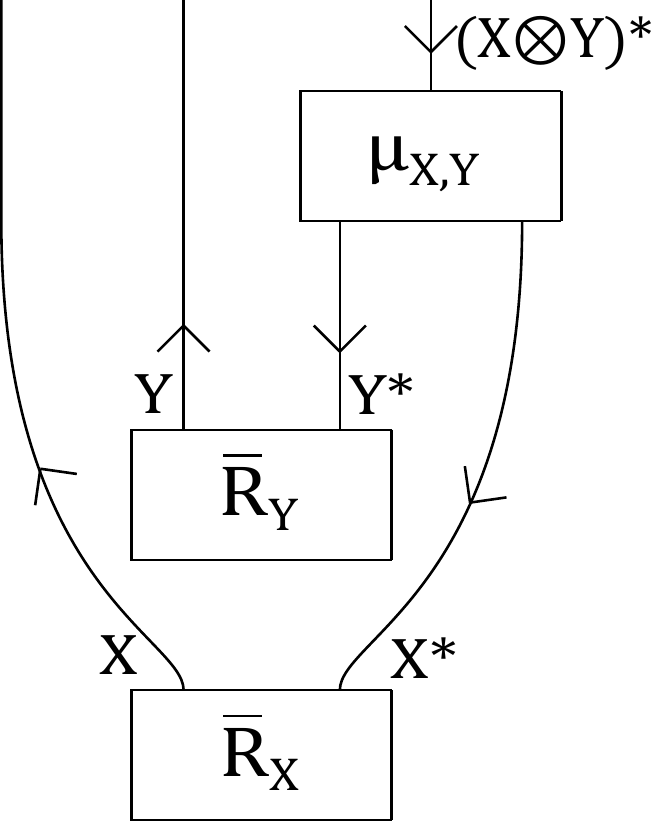}
=
\includegraphics[scale=.5]{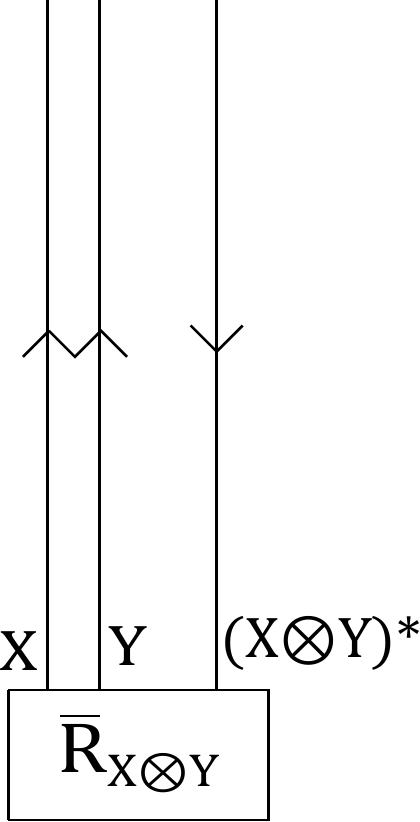}
\end{calign}
We then define the multiplicators of the double duals functor as follows, for any pair of objects $X,Y$ of $\mathcal{C}$ (using a double upwards arrow to denote a double dual object):
\begin{calign}
\includegraphics[scale=.5]{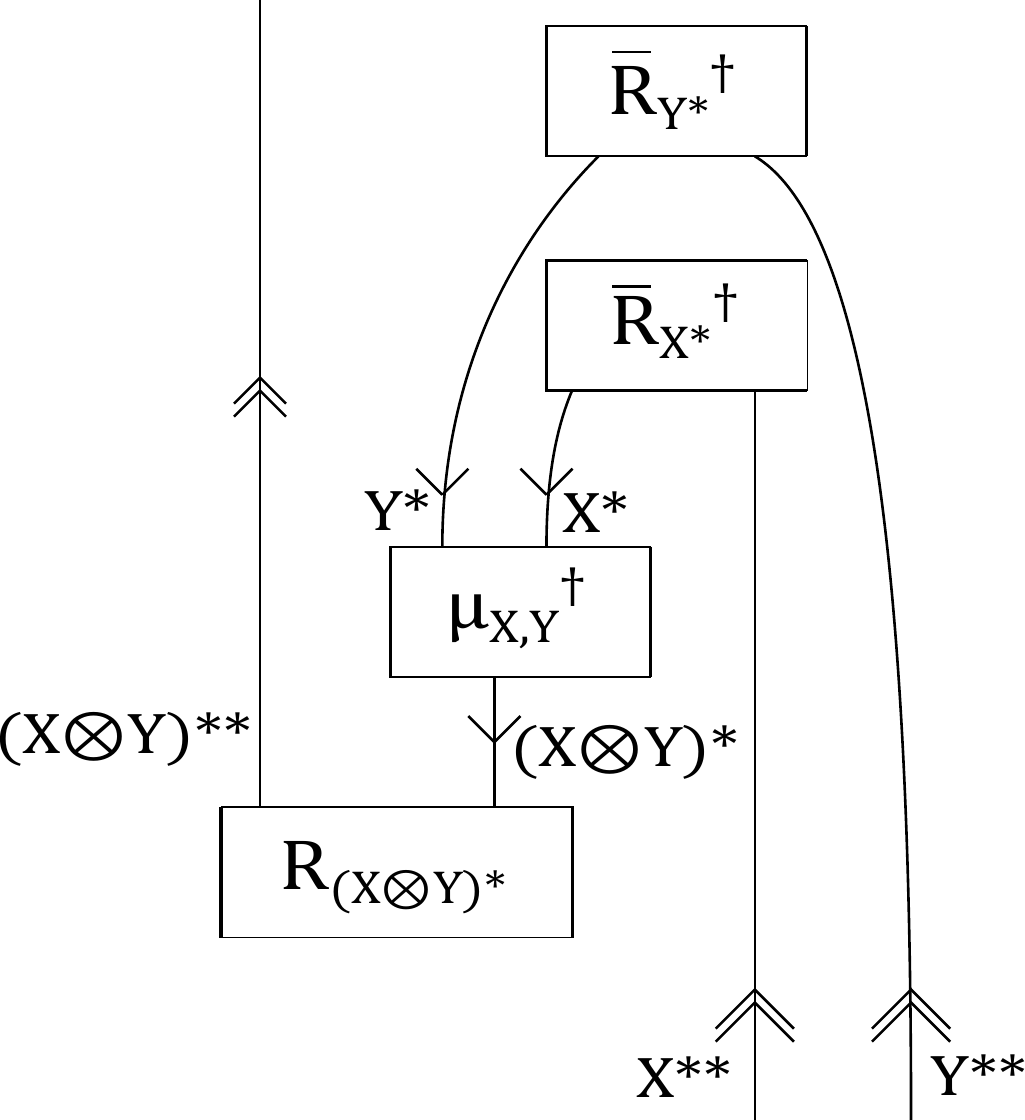}
\end{calign}
We also observe that, by Lemma~\ref{lem:idstandsols} and Proposition~\ref{prop:standsolsunitary}, there exists a unitary $\nu:  \mathbbm{1}^* \to \mathbbm{1}$ such that:
\begin{calign}
\nu \circ R_{\mathbbm{1}} = \id_{\mathbbm{1}} 
&&
\nu \circ \bar{R}_{\mathbbm{1}} = \id_{\mathbbm{1}}
\end{calign}
This simply implies that $\nu^{\dagger} =R_{\mathbbm{1}} = \bar{R}_{\mathbbm{1}}$. 
We then define the unitor of the double duals functor as
$$
(\id_{\mathbbm{1}^{**}} \otimes {R}_{\mathbbm{1}}^{\dagger})R_{\mathbbm{1}^*}.
$$
It is straightforward to check that the unitor and multiplicators just defined are a unitary monoidal structure for the double duals functor.

We also have the following lemma.
\begin{lemma}\label{lem:iota}
For every object $X$ we have equalities
\begin{align*}
\iota_X = (\id_{X} \otimes \bar{R}_{X^*}^{\dagger}) (\bar{R}_{X} \otimes \id_{X^{**}})
&&
\iota_X^{\dagger} = (\id_{X^{**}} \otimes R_X^{\dagger}) (R_{X^*} \otimes \id_X)
\end{align*} 
\end{lemma}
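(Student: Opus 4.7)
The plan is a direct diagrammatic computation starting from the defining equations~\eqref{eq:iota} of $\iota_X$ and using the snake equations for the chosen right dual $[X^{**}, R_{X^*}, \bar R_{X^*}^\dagger]$ of $X^*$.

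For the first identity, I would begin with the right-hand side $(\id_X \otimes \bar R_{X^*}^\dagger)(\bar R_X \otimes \id_{X^{**}})$ and substitute $\bar R_X = (\iota_X \otimes \id_{X^*}) R_{X^*}$ from the first equation of~\eqref{eq:iota}. After this substitution, functoriality (the interchange law) lets $\iota_X$ be pulled out to the left of the whole composite, leaving
$$
\iota_X \;\circ\; (\id_{X^{**}} \otimes \bar R_{X^*}^\dagger)\,(R_{X^*} \otimes \id_{X^{**}}).
$$
The second factor is precisely one of the snake equations for $[X^{**}, R_{X^*}, \bar R_{X^*}^\dagger]$ as right dual of $X^*$, so it collapses to $\id_{X^{**}}$, yielding $\iota_X$. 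Pictorially this is a single yank of a snake formed by $R_{X^*}$ and $\bar R_{X^*}^\dagger$ in the $X^*/X^{**}$ strands.

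For the second identity the cleanest route is an entirely analogous direct computation (rather than daggering the first, which would give an equivalent expression of a different syntactic form). Start from $(\id_{X^{**}} \otimes R_X^\dagger)(R_{X^*} \otimes \id_X)$, replace $R_X^\dagger$ by $\bar R_{X^*}^\dagger(\id_{X^*} \otimes \iota_X^\dagger)$ using the second equation of~\eqref{eq:iota}, and use interchange to slide $\iota_X^\dagger$ (which acts on the rightmost $X$-strand) past $R_{X^*}$ (which produces only the two inserted strands $X^{**}, X^*$) to the right. What remains on the left is $(\id_{X^{**}} \otimes \bar R_{X^*}^\dagger)(R_{X^*} \otimes \id_{X^{**}}) = \id_{X^{**}}$ by the same snake, so the whole expression equals $\iota_X^\dagger$.

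The argument is essentially mechanical, so there is no genuine obstacle; the only point requiring care is to invoke the correct snake equation, namely the one for $X^{**}$ as the chosen right dual of $X^*$ given by $(R_{X^*}, \bar R_{X^*}^\dagger)$, rather than the snake for $X^*$ as right dual of $X$. Since both uses of the snake sit in the $X^*/X^{**}$ variables, keeping this straight is routine, and the two identities follow by genuinely symmetric calculations.
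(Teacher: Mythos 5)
Your proof is correct and is essentially the paper's own argument read in the opposite direction: the paper starts from $\iota_X$, inserts two zig-zags, applies $\bar R_X = (\iota_X\otimes\id_{X^*})R_{X^*}$ and collapses a snake, while you start from the right-hand side, apply the same equation~\eqref{eq:iota} and collapse the same snake for $[X^{**},R_{X^*},\bar R_{X^*}^{\dagger}]$. The second identity is likewise handled in the paper by ``shown similarly,'' matching your symmetric computation.
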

\begin{proof}
For the first equality:
\begin{align*}
\iota_X = (\iota_X \otimes R_X^{\dagger} \otimes \bar{R}_{X^*}^{\dagger})(R_{X^*} \otimes \bar{R}_X \otimes \id_{X^{**}}) &= (\id_{X} \otimes R_X^{\dagger} \otimes \bar{R}_{X^*}^{\dagger}) (\bar{R}_X \otimes \bar{R}_X \otimes \id_{X^{**}}) \\
&=(\id_{X} \otimes \bar{R}_{X^*}^{\dagger}) (\bar{R}_{X} \otimes \id_{X^{**}})
\end{align*}
The second equality is shown similarly.
\end{proof}
\begin{proposition}
The unitary natural isomorphism of Theorem~\ref{thm:natiso} is monoidal. 
\end{proposition}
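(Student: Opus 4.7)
The plan is to verify the two coherence conditions for monoidality of $\iota$: first, that $\iota_{X\otimes Y} \circ m^{**}_{X,Y} = \iota_X \otimes \iota_Y$ (where $m^{**}_{X,Y}\colon X^{**}\otimes Y^{**}\to (X\otimes Y)^{**}$ is the multiplicator of the double-duals functor constructed above from the unitary $\mu_{X,Y}$), and second, that $\iota_{\mathbbm{1}}$ coincides with the unitor $(\id_{\mathbbm{1}^{**}}\otimes R_{\mathbbm{1}}^\dagger) R_{\mathbbm{1}^*}$. The unit case is straightforward: using $R_{\mathbbm{1}}=\bar R_{\mathbbm{1}}=\nu^\dagger$, the expression for $\iota_{\mathbbm{1}}$ given by Lemma~\ref{lem:iota} collapses to exactly the unitor, so I would dispatch it first as a warm-up for the calculus.

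For the tensor case, the key idea is that both $\iota_{X\otimes Y}\circ m^{**}_{X,Y}$ and $\iota_X\otimes\iota_Y$ are morphisms $X^{**}\otimes Y^{**}\to X\otimes Y$, and they can be compared by using the equalities in Lemma~\ref{lem:iota} and the tensor formulas of Lemma~\ref{lem:tensorstandsols}. Concretely, I would expand $\iota_{X\otimes Y}$ via Lemma~\ref{lem:iota} using the expressions $\bar R_{X\otimes Y}=(\id_X\otimes \bar R_Y\otimes \id_{X^*})\bar R_X$ and $\bar R_{(X\otimes Y)^*}=(\id_{(X\otimes Y)^*}\otimes \bar R_{X^*}\otimes \id_{(X\otimes Y)^{**}})\bar R_{Y^*}$, together with the unitarity of $\mu_{X,Y}$ and its dual $\mu_{X,Y}^*$ (which is what makes $m^{**}_{X,Y}$ appear). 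The resulting string diagram has two nested ``snakes'' on the $Y^*,X^*$-wires that I expect to straighten using the snake equations of the right duals, after which the remaining terms should assemble into $\iota_X\otimes\iota_Y$ via Lemma~\ref{lem:iota} applied to each factor.

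The most efficient way to execute this is graphically, working in the pivotal dagger calculus on $\mathcal{C}$: the defining equations~\eqref{eq:multiplicators} for $\mu_{X,Y}$ say precisely that $\mu_{X,Y}$ identifies the nested cup $\bar R_X$-on-top-of-$\bar R_Y$ with the single cup $\bar R_{X\otimes Y}$, and analogously for caps. After substituting, one is left with a tangle built only from $R_X,\bar R_X,R_{X^*},\bar R_{X^*}$ and their $Y$-counterparts, which can be untangled by two applications of the snake equations. Once the dust settles, the resulting morphism is exactly $(\id_X\otimes\bar R_{X^*}^\dagger)(\bar R_X\otimes\id_{X^{**}})\otimes (\id_Y\otimes\bar R_{Y^*}^\dagger)(\bar R_Y\otimes\id_{Y^{**}}) = \iota_X\otimes\iota_Y$.

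The main obstacle will be notational bookkeeping rather than a deep new idea: the unitaries $\mu_{X,Y}$, $\mu_{X^*,Y^*}$, and their transposes conspire to rearrange four layers of duals, and one must be careful to distinguish the induced duality $(X\otimes Y)^*\cong Y^*\otimes X^*$ from the chosen $(X\otimes Y)^*$ throughout. I would therefore adopt the convention of drawing $\mu_{X,Y}$ explicitly as a box so that each invocation of~\eqref{eq:multiplicators} is visible, and use Lemma~\ref{lem:iota} (rather than the original defining equations~\eqref{eq:iota}) wherever possible, since the symmetric form given there makes both the ``cup'' and ``cap'' sides of the calculation transparent simultaneously.
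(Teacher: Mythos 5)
Your proposal follows essentially the same route as the paper: multiplicator compatibility is checked by expanding $\iota$ via its defining equations and Lemma~\ref{lem:iota}, invoking the defining property~\eqref{eq:multiplicators} of $\mu_{X,Y}$, and straightening the resulting nested cups and caps with snake equations, while unitor compatibility reduces to the observation that Lemma~\ref{lem:iota} identifies the unitor of the double-duals functor with $\iota_{\mathbbm{1}}^{\dagger}$. The only slight imprecision is directional: the unitor is $\iota_{\mathbbm{1}}^{\dagger}\colon \mathbbm{1}\to\mathbbm{1}^{**}$ rather than $\iota_{\mathbbm{1}}$ itself, so the coherence condition to verify is $\iota_{\mathbbm{1}}\circ\iota_{\mathbbm{1}}^{\dagger}=\id_{\mathbbm{1}}$, which holds by unitarity exactly as in the paper.
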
 
\begin{proof}
Compatibility with the multiplicators is seen as follows:
\begin{calign}
\includegraphics[scale=.5]{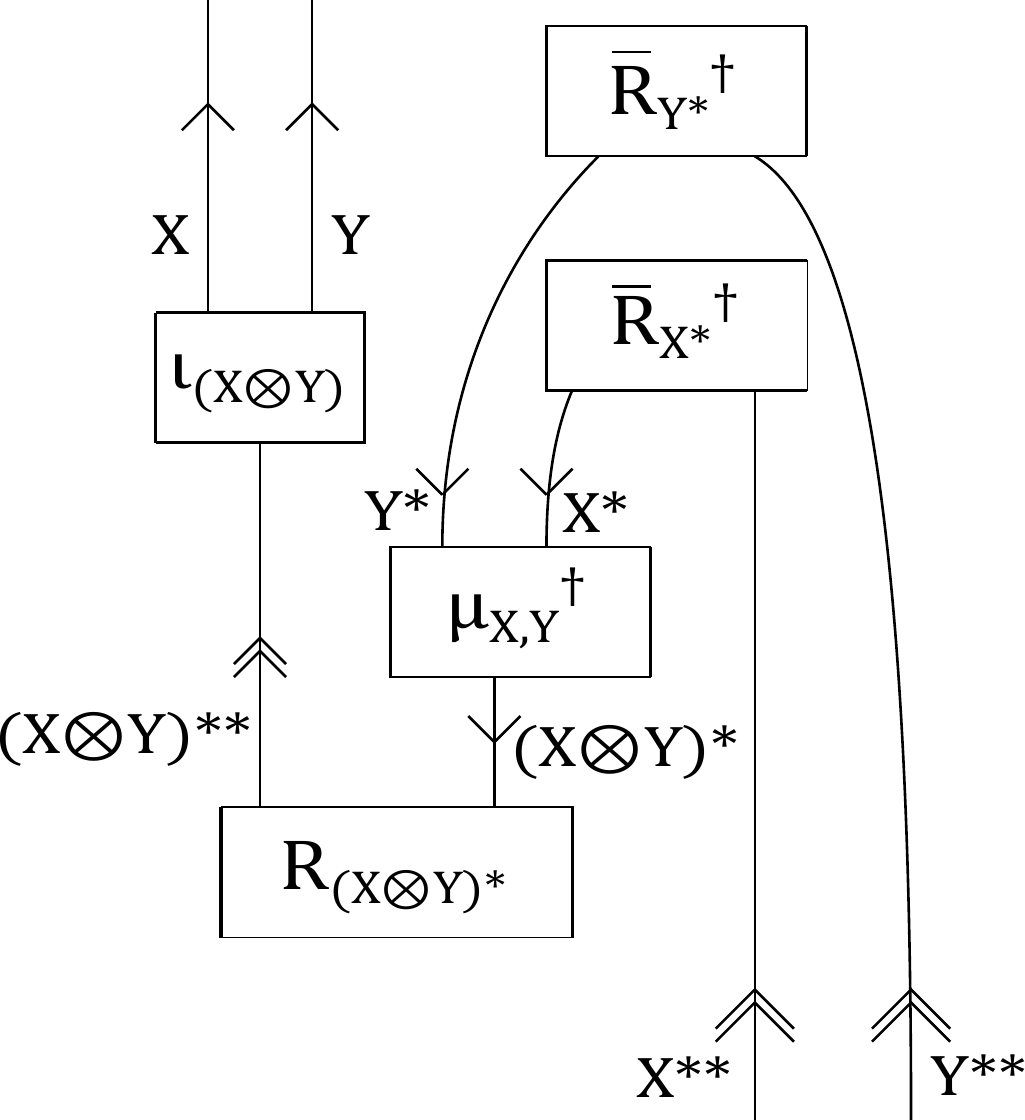}
~~=~~
\includegraphics[scale=.5]{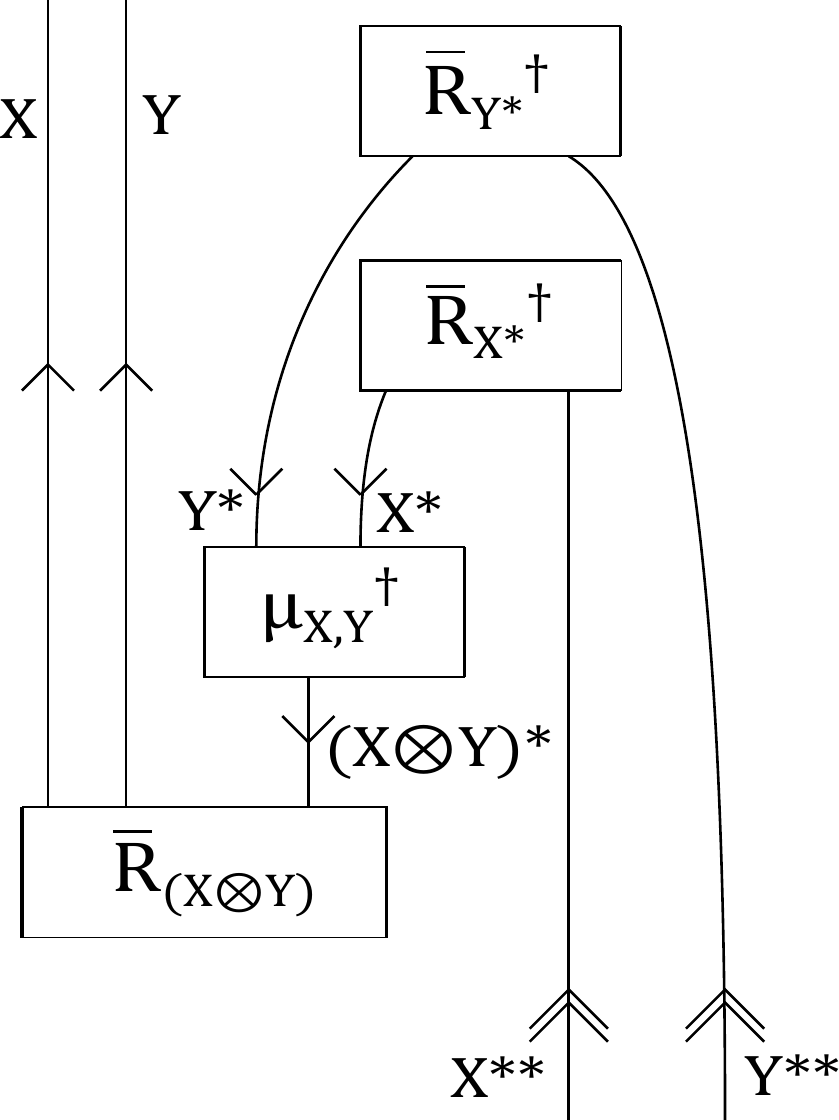}
\\=~~
\includegraphics[scale=.5]{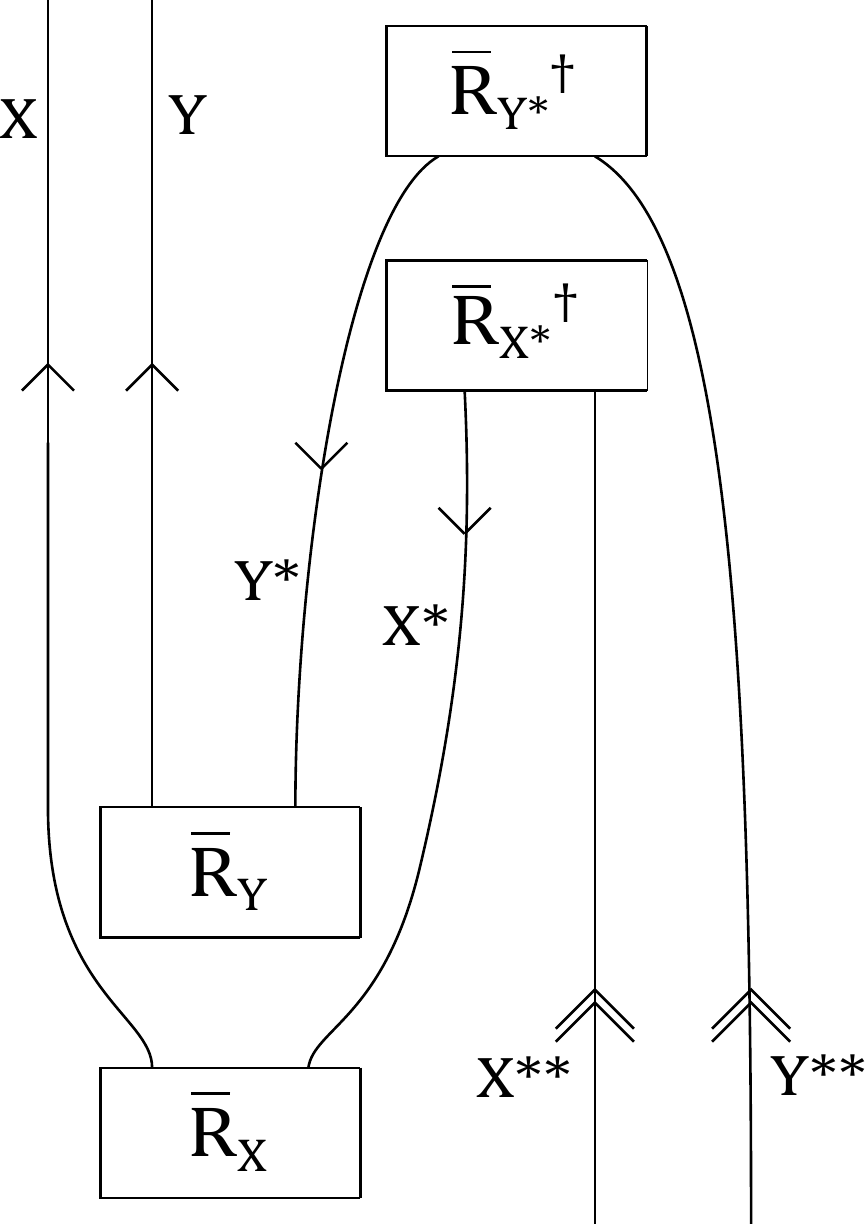}
~~=~~
\includegraphics[scale=.5]{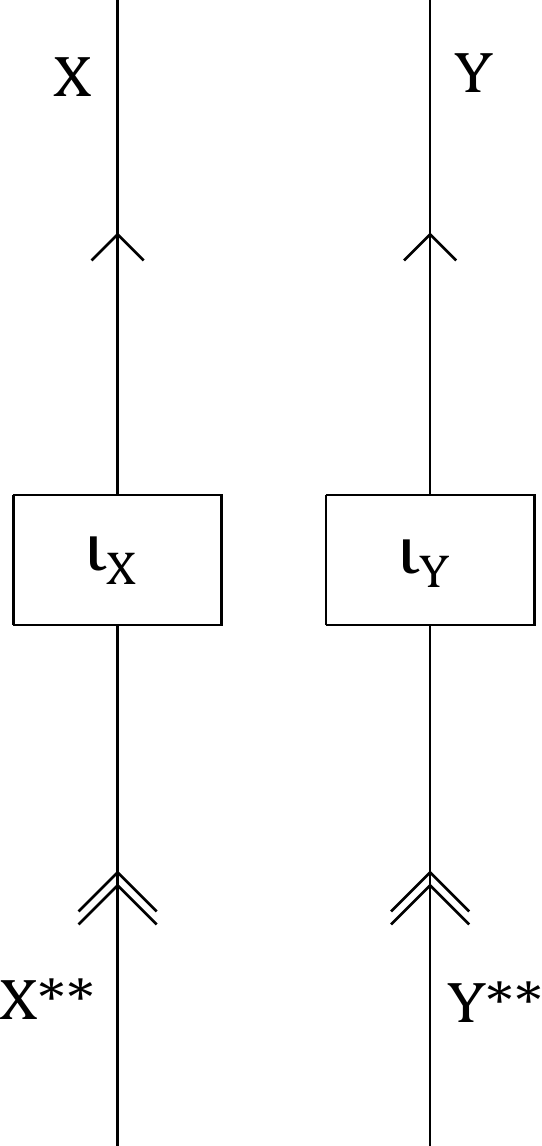}
\end{calign}
Here the first equality is by~\eqref{eq:iota}; the second equality is by~\eqref{eq:multiplicators}; and the third equality is by Lemma~\ref{lem:iota}.

For compatibility with the unitor we observe that, by Lemma~\ref{lem:iota}, the unitor is precisely $\iota_{\mathbbm{1}}^{\dagger}$. But then postcomposing by $\iota_{\mathbbm{1}}$ we have $\iota_{\mathbbm{1}} \iota_{\mathbbm{1}}^{\dagger} = \id_{\mathbbm{1}}$, as desired.
\end{proof}
\noindent
The proof that a rigid $C^*$-tensor category equipped with standard solutions to the conjugacy equations is a pivotal dagger category is therefore complete.

\end{document}